\documentclass[10pt]{amsart}
\usepackage[utf8]{inputenc}
\usepackage[T1]{fontenc}
\usepackage[english]{babel}
\title[Local limit theorems in relatively hyperbolic groups II]{Local limit theorems in relatively hyperbolic groups II : the non-spectrally degenerate case}
\author{Matthieu Dussaule}
\date{}

\usepackage{comment}
\usepackage{amsmath}
\usepackage{amssymb}
\usepackage{dsfont}
\usepackage{amsfonts}
\usepackage{amsthm}
\usepackage{tikz}
\usepackage{graphicx}
\usepackage{fancyhdr}
\usepackage{caption}
\usepackage{enumerate}
\usepackage{mathtools}
\usepackage[colorlinks=true,pdfstartview=FitH,bookmarks=false]{hyperref}
\hypersetup{urlcolor=black,linkcolor=black,citecolor=black,colorlinks=true}

\newcommand{\vertiii}[1]{{\left\vert\kern-0.25ex\left\vert\kern-0.25ex\left\vert #1 
    \right\vert\kern-0.25ex\right\vert\kern-0.25ex\right\vert}}

\newcommand\N{\mathbb{N}}
\newcommand\Z{\mathbb{Z}}

\newcommand\R{\mathbb{R}}
\newcommand\C{\mathbb{C}}

\theoremstyle{plain}
\newtheorem{definition}{Definition}[section]
\newtheorem{proposition}[definition]{Proposition}
\newtheorem{corollary}[definition]{Corollary}
\newtheorem{theorem}[definition]{Theorem}
\newtheorem{lemma}[definition]{Lemma}
\newtheorem*{prop*}{Proposition}
\newtheorem*{lem*}{Lemma}

\theoremstyle{remark}
\newtheorem{remark}{Remark}[section]

\newtheorem*{rem*}{Remark}

\DeclareMathOperator{\Cay}{Cay}
\DeclareMathOperator{\spec}{spec}

\usepackage{etoolbox}
\apptocmd{\sloppy}{\hbadness 10000\relax}{}{}
\apptocmd{\sloppy}{\vbadness 10000\relax}{}{}

\begin{document}

\begin{abstract}
    This is the second of a series of two papers dealing with local limit theorems in relatively hyperbolic groups.
    In this second paper, we restrict our attention to non-spectrally degenerate random walks, which were introduced in \cite{DussauleGekhtman} and we prove precise asymptotics of the probability $p_n(e,e)$ of going back to the origin at time $n$.
    We combine techniques adapted from thermodynamic formalism with the rough estimates of the Green function given by the first paper to show that $p_n(e,e)\sim CR^{-n}n^{-3/2}$, where $R$ is the spectral radius of the random walk.
    This generalizes results of W.~Woess for free products \cite{Woess2} and results of Gou\"ezel for hyperbolic groups \cite{Gouezel1}.
\end{abstract}

\maketitle

\section{Introduction}
Consider a finitely generated group $\Gamma$ and a probability measure $\mu$ on $\Gamma$.
We define the $\mu$-random walk on $\Gamma$, starting at $\gamma \in \Gamma$, as
$X_n^{\gamma}=\gamma g_1...g_n$,
where $(g_k)$ are independent random variables of law $\mu$ in $\Gamma$.
The law of $X_n^{\gamma}$ is denoted by $p_n(\gamma,\gamma')$.
It is given by the convolution powers $\mu^{*n}$ of the measure $\mu$.

We say that $\mu$ is admissible if its support generates $\Gamma$ as a semigroup.
We say that $\mu$ is symmetric if $\mu(\gamma)=\mu(\gamma^{-1})$.
Finally, if $\mu$ is admissible, we say that the random walk is aperiodic if $p_n(e,e)>0$ for large enough $n$.
The local limit problem consists in finding asymptotics of $p_n(e,e)$ when $n$ goes to infinity.
In many situations, if the $\mu$-random walk is aperiodic, one can prove a local limit theorem of the form
\begin{equation}\label{locallimittheoremgeneralform}
p_n(e,e)\sim C R^{-n}n^{-\alpha},
\end{equation}
where $C>0$ is a constant, $R\geq 1$ and $\alpha \in \R$.
In such a case, $\alpha$ is called the critical exponent of the random walk.

If $\Gamma=\Z^d$ and $\mu$ is finitely supported and aperiodic, then classical Fourier computations show that
$p_n(e,e)\sim Cn^{-d/2}$ if the random walk is centered and $p_n(e,e)\sim CR^{-n}n^{-d/2}$ with $R>1$ if the random walk is non-centered.
If $\Gamma$ is a non-elementary Gromov-hyperbolic group and $\mu$ is finitely supported, symmetric and aperiodic, then one has
$p_n(e,e)\sim CR^{-n}n^{-3/2}$, for $R>1$, see \cite{Gouezel1} and references therein.

Free products are a great source of examples for various local limits, see for example \cite{Cartwright1}, \cite{Cartwright2}, \cite{CandelleroGilch}.
W.~Woess proved in \cite{Woess2} that for a special class of nearest neighbor random walks on free products, called "typical case" in \cite{Woess}, one has a local limit of the form~(\ref{locallimittheoremgeneralform}), with $\alpha=3/2$.
This "typical case" should be considered informally as a situation where the random walk only sees the underlying tree structure of the free product, and not what happens inside the free factors.
So in some sense, this coefficient $3/2$ is consistent with the hyperbolic case \cite{Gouezel1}.
Our main goal in this series of two papers is to extend W.~Woess' results to any relatively hyperbolic group.

\medskip
In the first paper, we introduced the notion of spectral positive-recurrence and proved a weaker form of~(\ref{locallimittheoremgeneralform}) under this assumptions, namely that there exists $C$ such that
$C^{-1}R^{-n}n^{-3/2}\leq p_n(e,e)\leq CR^{-n}n^{-3/2}$.
In this second paper, we prove a precise local limit theorem like~(\ref{locallimittheoremgeneralform}), with $\alpha=3/2$, for non-spectrally degenerate measures on relatively hyperbolic groups.
As it was proved in the first paper, non-spectrally degenerate random walks are spectrally positive-recurrent, so our assumptions here are stronger, but we prove a more precise result.
We insist on the fact that our methods in both papers are very different and that this paper is not an enhanced version of the first one, as it uses the results of the first paper.

We will give more details on relatively hyperbolic groups and on (non-)spectrally degenerate measures in Section~\ref{Sectionbackground}.
Recall for now that a finitely generated group $\Gamma$ is relatively hyperbolic if it acts via a geometrically finite action on a proper geodesic Gromov hyperbolic space $X$.
Denote by $\Omega$ the collection of maximal parabolic subgroups, which are the stabilizers of the parabolic limit points for this action
and let $\Omega_0$ be a set of representatives of conjugacy classes of elements of $\Omega$.
Such a set $\Omega_0$ is finite.

Let $\mu$ be a probability measure on a relatively hyperbolic group $\Gamma$.
Denote by $R_\mu$ its spectral radius, that is the radius of convergence of the Green function $G(x,y|r)$, defined as
$$G(x,y|r)=\sum_{n\geq 0}p_n(x,y)r^n.$$
This radius of convergence is independent of $x,y$.
Let $\mathcal{H}\in \Omega_0$ be a parabolic subgroup.
Denote by $p_{\mathcal{H}}$ the first return kernel to $\mathcal{H}$ associated to the measure $R_\mu\mu$.
Say that a probability measure $\mu$ is spectrally degenerate along $\mathcal{H}\in \Omega_0$ if the spectral radius of $p_{\mathcal{H}}$ is 1.
Say that $\mu$ is non-spectrally degenerate if for every $\mathcal{H}\in \Omega_0$, it is not spectrally degenerate along $\mathcal{H}$.
This definition is independent of the choice of $\Omega_0$.
It was introduced in \cite{DussauleGekhtman} and appeared to be crucial in the study of the stability of the Martin boundary of relatively hyperbolic groups.
Our main goal is to prove the following.

\begin{theorem}\label{maintheorem}
Let $\Gamma$ be a non-elementary relatively hyperbolic group.
Let $\mu$ be a finitely supported, admissible and symmetric probability measure on $\Gamma$.
Assume that the corresponding random walk is aperiodic and non-spectrally degenerate along parabolic subgroups.
Then for every $\gamma,\gamma'\in \Gamma$ there exists $C_{\gamma,\gamma'}>0$ such that
$$p_n(\gamma,\gamma')\sim C_{\gamma,\gamma'}R_{\mu}^{-n}n^{-3/2}.$$
If $\mu$ is admissible but the $\mu$-random walk is not aperiodic, similar asymptotics hold for $p_{2n}(\gamma,\gamma')$ if the distance between $\gamma$ and $\gamma'$ is even and for $p_{2n+1}(\gamma,\gamma')$ if this distance is odd.
\end{theorem}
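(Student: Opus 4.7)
My strategy would be the classical one for results of this type: show that the Green function $G(e,e|r) = \sum_{n\geq 0} p_n(e,e) r^n$ admits a square-root branch singularity at $r = R_\mu$, of the form
\[
G(e,e|r) = A - B\sqrt{R_\mu - r} + o\bigl(\sqrt{R_\mu - r}\bigr) \quad \text{as } r \to R_\mu^-,
\]
with $B > 0$, and then invoke a Darboux / Flajolet--Odlyzko transfer theorem to read off $p_n(e,e) \sim \tfrac{B}{2\sqrt{\pi}} R_\mu^{-n} n^{-3/2}$. The off-diagonal case $p_n(\gamma,\gamma')$ follows from the diagonal one by a standard comparison using finitely many extra steps together with the Harnack-type rough bounds from Paper~I; the non-aperiodic case is reduced to the aperiodic case by passing to the parity bipartition of the Cayley graph in the usual way.

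To analyse $G(e,e|r)$ near $R_\mu$, I would follow the Woess--Gou\"ezel template, adapted to relatively hyperbolic geometry. The idea is to decompose every trajectory of the walk into a hyperbolic ``backbone'' visiting a sequence of translates of parabolic subgroups, together with excursions that enter a coset, wander inside it, and then leave. The excursions are encoded by the first-return kernels $p_{\mathcal{H}}$ associated with $R_\mu \mu$ for $\mathcal{H} \in \Omega_0$, while the backbone is controlled by a Markov structure of relative-hyperbolic type. The non-spectrally degenerate hypothesis is exactly what is needed here: it forces each $p_{\mathcal{H}}$ to have spectral radius strictly greater than $1$, so the parabolic first-return Green functions are finite and analytic in a full neighbourhood of $r = R_\mu$, and the symbolic model itself therefore extends analytically across the critical radius. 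One then builds a transfer operator $\mathcal{L}_r$ acting on a suitable Banach space over the coding alphabet, such that $G(e,e|r)$ is expressed, up to analytic factors, as a resolvent-type quantity $G(e,e|r) = F(r)/(1 - \lambda(r)) + H(r)$, where $\lambda(r)$ is the leading eigenvalue of $\mathcal{L}_r$ and $F,H$ are analytic at $R_\mu$.

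Setting up this framework rigorously and then extracting the square-root singularity is the main work of the proof. Quasi-compactness of $\mathcal{L}_r$ and analyticity of $\lambda(r)$ should follow from standard Ionescu-Tulcea--Marinescu / Keller--Liverani perturbation theory once sharp enough bounds on the transition weights are in hand; this is where the rough asymptotics $p_n(e,e) \asymp R_\mu^{-n} n^{-3/2}$ from Paper~I enter quantitatively, to guarantee summability up to and at $r = R_\mu$ of the series defining the model and to justify swapping sums over the infinitely many parabolic cosets with the spectral analysis. Local limit theorems on each parabolic subgroup, which are virtually nilpotent and so accessible by Fourier techniques on $\Z^d$, feed into the analyticity of the excursion generating functions.

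The hardest step, and the technical heart of the argument, is verifying that the passage from the analyticity of $\lambda$ to the singularity of $G(e,e|r)$ produces exactly a $\sqrt{R_\mu - r}$ term with nonzero coefficient $B$. Concretely, one needs a second-derivative non-degeneracy at $r = R_\mu$ of the relation tying $\lambda$ to $r$, so that inversion yields a square root; this is the mechanism responsible for the exponent $\alpha = 3/2$ and is reminiscent of the tree/free-product case in \cite{Woess2}. I expect this to be where Ancona-type inequalities uniform \emph{at} $r = R_\mu$ must be combined with the non-spectrally degenerate hypothesis, using the Paper~I estimates to upgrade the rough two-sided bound into a sharp spectral non-degeneracy statement. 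Once this is established, the desired asymptotic expansion of $G(e,e|r)$ is in place and the Tauberian transfer yields the theorem.
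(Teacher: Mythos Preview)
Your high-level outline is in the right spirit---a transfer operator on a symbolic coding, a square-root singularity for the Green function, then a transfer principle---but several of the concrete mechanisms you propose do not survive contact with the actual setting, and the paper's route is genuinely different in its core step.

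First, two factual errors. Parabolic subgroups of a relatively hyperbolic group are \emph{not} assumed virtually nilpotent; they can be arbitrary finitely generated groups, so Fourier analysis on $\Z^d$ is unavailable and you cannot obtain analyticity of excursion generating functions this way. Second, and more seriously, the family $r\mapsto\mathcal{L}_r$ is \emph{not} continuous in operator norm on the relevant H\"older space (the paper states this explicitly), so Ionescu-Tulcea--Marinescu type perturbation does not give you an analytic, or even $C^1$, eigenvalue $\lambda(r)$. Your resolvent picture $G(e,e|r)=F(r)/(1-\lambda(r))+H(r)$ with $F,H$ analytic and $\lambda$ smooth therefore cannot be set up, and the ``second-derivative non-degeneracy'' you hope to check has no object to act on. For the same reason a Darboux or Flajolet--Odlyzko transfer is out of reach: those require analytic continuation to a complex neighbourhood of $R_\mu$, which you will not get.

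What the paper does instead is to bypass analyticity entirely. It proves the \emph{asymptotic differential equation} $I^{(2)}(r)\sim\xi\,(I^{(1)}(r))^3$ as $r\to R_\mu$ (upgrading the two-sided bound $\asymp$ from Paper~I to a genuine $\sim$), and integrates it to obtain $\frac{d}{dr}G(\gamma_1,\gamma_2|r)\sim C_{\gamma_1,\gamma_2}/\sqrt{R_\mu-r}$ on the real segment only. The transfer operator is used not to produce an eigenvalue curve but to show that certain weighted sums over relative spheres converge; the lack of norm-continuity is handled by Keller--Liverani weak perturbation (continuity of spectral data under a weaker $\|\cdot\|_{s\to w}$ norm), which gives just enough to pin down the constant $\xi$. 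The passage from the real-variable asymptotic of $G'$ to $p_n\sim C R_\mu^{-n}n^{-3/2}$ is then a black-box citation of \cite[Theorem~9.1]{GouezelLalley}, which combines Karamata-type Tauberian arguments with spectral theory of the Markov operator and does not require any complex-analytic extension. Your off-diagonal reduction by ``finitely many extra steps'' is also too crude: the paper instead reruns the transfer-operator argument with the Martin-kernel weight $G(\gamma,\gamma'|r)/G(e,\gamma'|r)$ to get the constant $C_{\gamma_1,\gamma_2}$ directly.
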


This generalize both W.~Woess's results \cite{Woess2} on free products and known results on hyperbolic groups (see \cite{GerlWoess}, \cite{Lalley}, \cite{GouezelLalley} and \cite{Gouezel1}).
As a corollary, we also get the following.

\begin{corollary}\label{maincorollary}
Let $\Gamma$ be a non-elementary relatively hyperbolic group.
Let $\mu$ be a finitely supported, admissible and symmetric probability measure on $\Gamma$.
Assume that the corresponding random walk is aperiodic and non-spectrally degenerate along parabolic subgroups.
Denote by $q_n(x,y)$ the probability that the first visit in positive time at $y$ starting at $x$ is at time $n$.
Then,
$$q_n(\gamma,\gamma')\sim C_{\gamma,\gamma'}R_{\mu}^{-n}n^{-3/2}.$$
\end{corollary}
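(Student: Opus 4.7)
The plan is to deduce the corollary from Theorem~\ref{maintheorem} by means of the classical first-visit identity between Green and first-passage generating functions. Setting $F(x,y|r)=\sum_{n\geq 1}q_n(x,y)r^n$, a standard decomposition of trajectories according to their first visit to $y$ (or first return to $x$) gives
$$G(x,y|r)=F(x,y|r)\,G(y,y|r)\quad (x\neq y),\qquad G(x,x|r)=\frac{1}{1-F(x,x|r)}.$$
Since $p_n(x,y)\sim C_{x,y}R_\mu^{-n}n^{-3/2}$ and $\sum n^{-3/2}<\infty$, every Green function $G(x,y|R_\mu)$ is finite; the identities therefore remain valid at $r=R_\mu$, and in particular $F(x,x|R_\mu)<1$ while $F(x,y|R_\mu)<\infty$ for $x\neq y$.

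The key input I would extract from the proof of Theorem~\ref{maintheorem} is stronger than the bare asymptotic equivalence: the thermodynamic-formalism and Darboux/Flajolet--Odlyzko transfer machinery used there produces a singular expansion of the Green function
$$G(x,y|r)=\varphi_{x,y}(r)-\psi_{x,y}(r)\sqrt{R_\mu-r}$$
in a slit neighbourhood of $r=R_\mu$, with $\varphi_{x,y},\psi_{x,y}$ extending analytically across $R_\mu$ and $\psi_{x,y}(R_\mu)>0$ proportional to $C_{x,y}$ via a universal (gamma-function) transfer constant. Substituting this expansion into the identities above and rationalising the denominator, I obtain an analogous singular expansion
$$F(x,y|r)=\tilde{\varphi}_{x,y}(r)-\tilde{\psi}_{x,y}(r)\sqrt{R_\mu-r},$$
with $\tilde{\varphi}_{x,y},\tilde{\psi}_{x,y}$ analytic at $R_\mu$. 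Applying the same transfer theorem to $F$ then yields $q_n(x,y)\sim C'_{x,y}R_\mu^{-n}n^{-3/2}$, the constant $C'_{x,y}$ being proportional to $\tilde{\psi}_{x,y}(R_\mu)$.

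The hard part is verifying the non-vanishing of $\tilde{\psi}_{x,y}(R_\mu)$, i.e.\ the fact that the square-root terms do not cancel upon division. In the diagonal case this is automatic: $\tilde{\psi}_{x,x}(R_\mu)=\psi_{x,x}(R_\mu)/\varphi_{x,x}(R_\mu)^2>0$. For $x\neq y$ a direct computation gives
$$\tilde{\psi}_{x,y}(R_\mu)=\frac{\psi_{x,y}(R_\mu)\varphi_{y,y}(R_\mu)-\varphi_{x,y}(R_\mu)\psi_{y,y}(R_\mu)}{\varphi_{y,y}(R_\mu)^2},$$
which vanishes precisely when $C_{x,y}/G(x,y|R_\mu)=C_{y,y}/G(y,y|R_\mu)$. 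To rule this equality out I would use the explicit description of the constants $C_{x,y}$ coming out of the proof of Theorem~\ref{maintheorem}: they factorise through a Perron-type eigenvector of the transfer operator encoding the first-return dynamics to parabolic subgroups, a $y$-dependence qualitatively different from that of $G(x,y|R_\mu)$, so the ratio cannot be $x$-independent. Failing that, one can argue indirectly: plugging a hypothetical higher-order vanishing of $F(x,y|r)$ back into the convolution $p_n(x,y)=\sum_k q_k(x,y)p_{n-k}(y,y)$ forces $C_{x,y}=F(x,y|R_\mu)\,C_{y,y}$, a proportionality that can be contradicted by varying $y$ along two elements in the support of $\mu$.
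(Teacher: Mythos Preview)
The paper's own proof is a one-line citation to \cite[Proposition~4.1]{Gouezel1}, applied once Theorem~\ref{maintheorem} is in hand; your first-visit identity $G=FG$ is indeed the framework behind that proposition, but two steps of your argument do not go through. First, you misidentify the machinery behind Theorem~\ref{maintheorem}. The paper does \emph{not} establish an analytic singular expansion $G(x,y|r)=\varphi_{x,y}(r)-\psi_{x,y}(r)\sqrt{R_\mu-r}$ in a slit neighbourhood of $R_\mu$, and does not use any Darboux or Flajolet--Odlyzko transfer. What is proved is only the real-variable asymptotic $G'(x,y|r)\sim C_{x,y}(R_\mu-r)^{-1/2}$ as $r\uparrow R_\mu$ along the reals (Corollary~\ref{equadiffprecise2}); the passage to $p_n\sim C_{x,y}R_\mu^{-n}n^{-3/2}$ is via \cite[Theorem~9.1]{GouezelLalley}, a Tauberian-plus-spectral argument relying on self-adjointness of the transition operator on $\ell^2(\Gamma)$ and on non-negativity of the coefficients $p_n$. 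No analyticity in a complex $\Delta$-domain is ever shown, so the singular expansion you write for $F$ has no input to draw on, and the transfer theorem you invoke is inapplicable.

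Second, your treatment of the non-vanishing $C_{x,y}G(y,y|R_\mu)\neq G(x,y|R_\mu)C_{y,y}$ is not a proof. The ``Perron-type eigenvector'' remark is too vague to pin down the $x$-dependence of $C_{x,y}$, and ``varying $y$ along two elements of the support of $\mu$'' produces no contradiction: the putative identity $C_{x,y}=F(x,y|R_\mu)\,C_{y,y}$ is compatible with all obvious symmetries and with the convolution relation you write down. This positivity question is exactly the place where the paper stresses that symmetry of $\mu$ is essential (final sentence of Section~\ref{SectionFromGreentoLLT}); Gou\"ezel's argument in \cite[Section~4]{Gouezel1} exploits the self-adjointness of $P$ on $\ell^2(\Gamma)$ and the resulting spectral representation of $p_n$ and $q_n$ to obtain the $q_n$-asymptotic and the sign of its constant directly, without having to rule out cancellation by the kind of ad hoc comparison you sketch.
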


In \cite{Gerl}, P.~Gerl conjectured that if a local limit of the form $p_n(e,e)\sim C R^{-n}n^{-\alpha}$ holds for a finitely supported random walk, then $\alpha$ is a group invariant.
This conjuecture was disproved by
D.~Cartwright in \cite{Cartwright2}.
He gave examples of local limit theorems on $\Z^d*\Z^d$, with $\alpha=d/2$ and examples on the same groups with $\alpha=3/2$.
Actually, one can get a critical exponent of the form $d/2$ only when $d\geq 5$, otherwise $\alpha$ is always $3/2$.
There are some computations to explain why in \cite{Cartwright1} (see also \cite{Woess}).
In \cite[Proposition~6.1]{DussauleGekhtman}, we gave a geometric explanation of this fact and proved
that if a parabolic subgroup $\mathcal{H}$ is virtually abelian of rank $d\leq 4$, the random walk cannot be spectrally degenerate along $\mathcal{H}$.
As a particular case, we thus get the following corollary, for Kleinian groups.

\begin{theorem}\label{maintheoremKleinian}
Let $\Gamma$ be the fundamental group of a geometrically finite hyperbolic manifold of dimension $n\leq 5$.
Let $\mu$ be a finitely supported, admissible and symmetric probability measure on $\Gamma$.
Assume that the $\mu$-random walk is aperiodic.
Then for every $\gamma,\gamma'\in \Gamma$ there exists $C_{\gamma,\gamma'}>0$ such that
$$p_n(\gamma,\gamma')\sim C_{\gamma,\gamma'}R_{\mu}^{-n}n^{-3/2}.$$
If the $\mu$-random walk is not aperiodic, similar asymptotics hold for $p_{2n}(\gamma,\gamma')$ if the distance between $\gamma$ and $\gamma'$ is even and for $p_{2n+1}(\gamma,\gamma')$ if it is odd.
\end{theorem}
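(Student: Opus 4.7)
The plan is to derive Theorem \ref{maintheoremKleinian} as a direct consequence of Theorem \ref{maintheorem} by showing that, under the geometric hypothesis on the dimension, the non-spectral degeneracy assumption is automatically satisfied and therefore need not be listed among the hypotheses.

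First I would recall that the fundamental group $\Gamma$ of a geometrically finite hyperbolic $n$-manifold acts properly and geometrically finitely on $\mathbb{H}^n$, and hence is relatively hyperbolic with respect to the maximal parabolic subgroups associated with the cusps. Each such parabolic subgroup stabilizes a horosphere in $\mathbb{H}^n$ and acts on it cocompactly by Euclidean isometries; by Bieberbach's theorem it is therefore virtually abelian of rank at most $n-1$. In particular, under the hypothesis $n\leq 5$, every representative $\mathcal{H}\in\Omega_0$ is virtually abelian of some rank $d_{\mathcal{H}}\leq 4$.

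Next I would invoke \cite[Proposition~6.1]{DussauleGekhtman}, quoted in the introduction above, which asserts exactly that a symmetric finitely supported random walk on a relatively hyperbolic group cannot be spectrally degenerate along a parabolic subgroup that is virtually abelian of rank at most $4$. Applied to each $\mathcal{H}\in\Omega_0$, this shows that the measure $\mu$ is non-spectrally degenerate along every parabolic subgroup. All the hypotheses of Theorem \ref{maintheorem} are then in force (admissibility, symmetry, finite support, and non-spectral degeneracy), so Theorem \ref{maintheorem} applies verbatim and delivers the local limit asymptotics $p_n(\gamma,\gamma')\sim C_{\gamma,\gamma'}R_\mu^{-n}n^{-3/2}$. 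The parity statement in the non-aperiodic case is transported directly from the corresponding statement in Theorem \ref{maintheorem}.

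Since everything reduces to combining two previous results, there is no genuine analytic obstacle. The only point that requires mild care is the bookkeeping translating the topological hypothesis on the manifold into the algebraic hypothesis on the peripheral structure (namely that rank of cusps is bounded by $n-1$ and that Bieberbach applies to produce the required virtual abelianness); once this is secured, the theorem follows.
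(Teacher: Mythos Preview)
Your proposal is correct and follows exactly the approach the paper takes: the introduction explains that parabolic subgroups of such Kleinian groups are virtually abelian of rank $d\leq n-1\leq 4$, invokes \cite[Proposition~6.1]{DussauleGekhtman} to rule out spectral degeneracy, and then Theorem~\ref{maintheoremKleinian} is stated as an immediate corollary of Theorem~\ref{maintheorem}. Your write-up is simply a slightly more detailed version of the paper's one-sentence justification, including the Bieberbach step that the paper leaves implicit.
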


\medskip
Let us now give some details on the proofs.
We will have the same approach as S.~Gou\"ezel and S.~Lalley in \cite{GouezelLalley} and \cite{Gouezel1} and we begin by explaining their work.

The first step in both papers is to get an asymptotic differential equation satisfied by the Green function.
In all this paper, we will use the following notations : if two functions $f$ and $g$ satisfy that there exists some constant $C\geq 0$ such that $f\leq C g$, then we write $f\lesssim g$.
Also, if $f\lesssim g$ and $g\lesssim f$, then we write $f\asymp g$.
Whenever we need to be specific about the constant, or about its dependence over some parameters, we will write the full inequalities to avoid being unclear.
In \cite{GouezelLalley} and \cite{Gouezel1}, the authors prove that
\begin{equation}\label{hyperbolicroughtequadiff}
    \frac{d^2}{dr^2}G(e,e|r)\asymp \left (\frac{d}{dr}G(e,e|r)\right )^3,
\end{equation}
the implicit constant not depending on $r$.
Integrating these inequalities yields
$$\left (\frac{d}{dr}G(e,e|r)\right )^{-2}-\left (\frac{d}{dr}G(e,e|R_{\mu})\right )^{-2}\asymp R_{\mu}-r,$$
so that, assuming $\frac{d}{dr}G(e,e|R_{\mu})=+\infty$ (which is proved in \cite{GouezelLalley} and \cite{Gouezel1}),
one gets
$$\frac{d}{dr}G(e,e|r)\asymp \frac{1}{\sqrt{R_{\mu}-r}}.$$
The rigorous way to proceed is to transform these a priori estimates~(\ref{hyperbolicroughtequadiff}) into an equivalent when $r$ tends to $R_{\mu}$, that is,
\begin{equation}\label{hyperbolicpreciseequadiff}
    \frac{d^2}{dr^2}G(e,e|r)\underset{r\rightarrow R_{\mu}}{\sim}C\left (\frac{d}{dr}G(e,e|r)\right )^3.
\end{equation}
Once this is established, one can prove that
$$\frac{d}{dr}G(e,e|r)\underset{r\rightarrow R_{\mu}}{\sim} \frac{C'}{\sqrt{R_{\mu}-r}}.$$
Finally, one can get asymptotics of $p_n(e,e)$ from asymptotics of $\frac{d}{dr}G(e,e|r)$ using Tauberian theorems and spectral theory.
To go from~(\ref{hyperbolicroughtequadiff}) to~(\ref{hyperbolicpreciseequadiff}), S.~Lalley and S.~Gou\"ezel use thermodynamic formalism.
Precisely, they use Cannon's result and choose a finite automaton that encodes shortlex geodesics in the hyperbolic group $\Gamma$.
They then define some H\"older functions depending on $r$ on the path space of this automaton, using the Martin kernel $K$ defined as a quotient of the Green function.
Precisely,
$K(\gamma,\gamma')=G(\gamma,\gamma')/G(e,\gamma')$, where $e$ is the neutral element of the group.
To prove that this Martin kernel is H\"older continuous, they use the strong Ancona inequalities (see Section~\ref{SectionAnconainequalities} for more details).
They then use the Ruelle-Perron-Frobenius theorem to derive asymptotic properties of this Martin kernel, when $r$ tends to $R_\mu$, which in turn leads to~(\ref{hyperbolicpreciseequadiff}).
We will give more details on thermodynamic formalism in Section~\ref{Sectionthermodynamicformalism}.

\medskip
We will adapt their proofs to the relatively hyperbolic case.
The first step is given by the results proved in the first paper.
Precisely, \cite[Theorem~1.5]{DussauleLLT1} shows that~(\ref{hyperbolicroughtequadiff}) holds again in our situation.

In the present paper, we use thermodynamic formalism to derive from these a priori estimates some precise equivalent (see Theorem~\ref{preciseequadiff}).
There will be several difficulties here.
First, we do not have a finite automaton encoding geodesics.
Anyway, geodesics are not so much interesting for our purpose.
Indeed, Ancona inequalities that are used in \cite{GouezelLalley} and \cite{Gouezel1} to prove H\"older continuity do not hold along geodesics, but along relative geodesics in relatively hyperbolic groups.
On the other hand, we proved in the first paper that there exists an automaton with finite set of vertices and countable set of edges that encodes relative geodesics, see precisely \cite[Theorem~4.2]{DussauleLLT1}.
We will use instead this automaton.
However, the associated path space will not be finite but countable.
We will thus have to use thermodynamic formalism for countable Markov shifts, which is more delicate than thermodynamic formalism for Markov shifts of finite type.
For example, there are situations where Ruelle-Perron-Frobenius theorem does not hold for countable shifts.
We will thus prove that the H\"older continuous function introduced in \cite{GouezelLalley} and \cite{Gouezel1} is positive recurrent (using the terminology of O.~Sarig in \cite{Sarig1}), which will be sufficient to mimic some of the arguments of S.~Lalley and S.~Gou\"ezel.

Another difficulty will be that the family of transfer operators $(\mathcal{L}_r)_{r\leq R_\mu}$ we introduce will not vary continuously in $r$ for the operator norm.
However, looking carefully at the proofs of \cite{GouezelLalley} and \cite{Gouezel1}, one only needs continuity of the spectral data associated to this family of operators.
We will use perturbations results due to G.~Keller and C.~Liverani \cite{KellerLiverani} to prove this sort of continuity.

Finally, the final step (getting the local limit theorem from the asymptotics of the Green function) is a combination of Tauberian theorems and spectral theory.
We will be able to use directly the results of \cite{GouezelLalley} and so we have nothing to prove there to conclude.
We will also deduce Corollary~\ref{maincorollary} from Theorem~\ref{maintheorem} using directly results of \cite{GouezelLalley}.

\subsection{Organization of the paper}

In Section~\ref{Sectionbackground}, we compile the tools and results we will need in the following.
We first give more details about relatively hyperbolic groups
and we give a proper statement on the existence of an automaton encoding relative geodesics.
We then give more details on spectrally degenerate measures and state some technical results about the Green function proved in the first paper \cite{DussauleLLT1}.
We finally recall weak and strong relative Ancona inequalities that were proved in \cite{DussauleGekhtman} and that will be used all along the paper.

In Section~\ref{Sectionthermodynamicformalism}, we review results of O.~Sarig on thermodynamic formalism for countable shifts.
This will give a general framework for the following.
We also recall perturbation theorems of G.~Keller and C.~Liverani  (see \cite{KellerLiverani}) that we will use later to obtain precise asymptotics of spectral data of a transfer operator $\mathcal{L}_r$ associated with a suitable potential $\varphi_r$.

In Section~\ref{Sectionapplicationthermodynamicformalism},
we use thermodynamic formalism to obtain a precise asymptotic of the first derivative of the Green function in terms of the dominant eigenvalue of $\mathcal{L}_r$.
This part is similar to \cite[Section~3]{Gouezel1}, although several changes have to be made.
The main difference is that our automaton that encodes relative geodesics has an infinite set of edges, so that the corresponding Markov shift has a countable number of states.
This yields several difficulties, related to the lack of compactness of the path space of the Markov shift.
The major difficulty is to find continuity of $r\mapsto \mathcal{L}_r$.
We prove a weak form of continuity, using the perturbation theorem of G.~Keller and C.~Liverani cited above.
Another difficulty comes from the fact that our automaton is not strongly connected and might have several maximal components.
We thus also have to prove that the spectral data of the transfer operator does not depend on the components of the Markov shift.
This problem already occurs in \cite{Gouezel1} and we use the same global strategy to solve it, which is itself based on the work of D.~Calegari and K.~Fujiwara, see \cite{CalegariFujiwara}.
However, we again have new difficulties here, coming from lack of compactness on the one hand and coming from the lack of knowledge on the Martin boundary on the other hand.

In Section~\ref{Sectionproofofpreciseequadiff}, we study the second derivative of the Green function $G''(e,e|r)$.
This allows us to
transform the rough a priori estimates~(\ref{hyperbolicroughtequadiff}) into a precise asymptotic, proving~(\ref{hyperbolicpreciseequadiff}).
We still follow the strategy of \cite{Gouezel1}, but again, due to lack of compactness, many of the arguments have to be changed.
All this section is very technical and we encourage the reader to first get familiar with the hyperbolic case in \cite[Section~3.6]{Gouezel1}.

To conclude, in Section~\ref{SectionFromGreentoLLT}, we recall the results of \cite{Gouezel1} and \cite{GouezelLalley} which explain how to get asymptotics of the convolution powers $\mu^{*n}$ of $\mu$ from asymptotics of the Green function.
We end there the proof of our main theorem.

\subsection{Acknowledgements}\label{Sectionacknowledgements}
The author thanks S.~Gou\"ezel for his advice and explanations on \cite{Gouezel1} and \cite{GouezelLalley}.
He also thanks
I.~Gekhtman and L.~Potyagailo for many helpful conversations about relatively hyperbolic groups.

\section{Some background}\label{Sectionbackground}
\subsection{Relatively hyperbolic groups}
We first recall definitions and basic properties of relatively hyperbolic groups.
More details are given in the first paper \cite{DussauleLLT1}.
Consider a finitely generated group $\Gamma$ acting discretely and by isometries on a proper and geodesic hyperbolic space $(X,d)$.
We denote the limit set of $\Gamma$ by $\Lambda \Gamma$, that is the set of accumulation points in the Gromov boundary $\partial X$ of an orbit $\Gamma \cdot o$, $o\in X$.
A point $\xi\in \Lambda \Gamma$ is called conical if there is a sequence $(\gamma_{n})$ of $\Gamma$ and distinct points $\xi_1,\xi_2$ in $\Lambda \Gamma$ such that
$\gamma_{n}\xi$ converges to $\xi_1$ and $\gamma_{n}\zeta$ converges to $\xi_2$ for all $\zeta\neq \xi$ in $\Lambda \Gamma$.
A point $\xi\in \Lambda \Gamma$ is called parabolic if its stabilizer in $\Gamma$ is infinite, fixes exactly $\xi$ in $\Lambda\Gamma$ and contains no loxodromic element.
A parabolic limit point $\xi$ in $\Lambda \Gamma$ is called bounded parabolic if is stabilizer in $\Gamma$ is infinite and acts cocompactly on $\Lambda \Gamma \setminus \{\xi\}$.
Say that the action is geometrically finite if the limit set only consists of conical limit points and bounded parabolic limit points.

Then, say that $\Gamma$ is relatively hyperbolic with respect to $\Omega$ if it acts geometrically finitely on such a hyperbolic space $(X,d)$ such that the stabilizers of the parabolic limit points are exactly the elements of $\Omega$.
In this situation, $\Gamma$ is said to be non-elementary if its limit set is infinite.

One might choose different spaces $X$ on which $\Gamma$ can act geometrically finitely.
However, different choices of $X$ give rise to equivariantly homeomorphic limit sets $\Lambda \Gamma$.
We call this limit set the Bowditch boundary of $\Gamma$ and we denote it by $\partial_B\Gamma$.

\medskip
Let $\Gamma$ be a relatively hyperbolic group, let $\Omega$ be the collection of parabolic subgroup and let $\Omega_0$ be a finite set of representatives of conjugacy classes of elements of $\Omega$.
Fix a finite generating set $S$ for $\Gamma$.
Denote by $\hat{\Gamma}$ the Cayley graph associated with the infinite generating set consisting of the union of $S$ and of all parabolic subgroups $\mathcal{H}\in \Omega_0$.
Endowed with the graph distance, that we write $\hat{d}$, the graph $\hat{\Gamma}$ is hyperbolic.

A relative geodesic is a geodesic in the graph $\hat{\Gamma}$.
A relative quasi-geodesic is a path of \textit{adjacent} vertices in $\hat{\Gamma}$, which is a quasi-geodesic for the distance $\hat{d}$.
We  say that a path is without backtracking if once it has left a coset $\gamma \mathcal{H}$, for $\mathcal{H}\in \Omega_0$, it never goes back to it.
Relative geodesic and relative quasi-geodesic satisfy the following property, called the BCP property.
for all $\lambda,c$, there exists a constant $C_{\lambda,c}$ such that for every pair $(\alpha_1,\alpha_2)$ of relative $(\lambda,c)$-quasi geodesic paths without backtracking, starting and ending at the same point in $\Gamma$, the following holds
\begin{enumerate}
\item if $\alpha_1$ travels more than $C_{\lambda,c}$ in a coset, then $\alpha_2$ enters this coset,
\item if $\alpha_1$ and $\alpha_2$ enter the same coset, the two entering points and the two exit points are $C_{\lambda,c}$-close to each other in $\Cay(\Gamma,S)$.
\end{enumerate}

\medskip
We will both need to study geodesics in $\Cay(\Gamma,S)$ and relative geodesics in the following.
We will use the following terminology.
Let $\alpha$ be a geodesic in $\Cay(\Gamma,S)$ and let $\eta_1,\eta_2\geq 0$.
A point $\gamma$ on $\alpha$ is called an $(\eta_1,\eta_2)$-transition point if for any coset $\gamma_0\mathcal{H}$ of a parabolic subgroup, the part of $\alpha$ consisting of points at distance at most $\eta_2$ from $\gamma$ is not contained in the $\eta_1$-neighborhood of $\gamma_0\mathcal{H}$.

Transition points are of great importance in relatively hyperbolic groups.
They stay close to points on relative geodesics in the following sense.

\begin{lemma}\label{projectiontransitionpoints}\cite[Proposition~8.13]{Hruska}
Fix a generating set $S$.
For every large enough $\eta_1,\eta_2>0$, there exists $r\geq0$ such that the following holds.
Let $\alpha$ be a geodesic in $\Cay(\Gamma,S)$
and let $\hat{\alpha}$ be a relative geodesic path with the same endpoints as $\alpha$.
Then, for the distance in $\Cay(\Gamma,S)$, any $(\eta_1,\eta_2)$-transition point on $\alpha$ is within $r$ of a point on $\hat{\alpha}$
and conversely, any point on $\hat{\alpha}$ is within $r$ of an $(\eta_1,\eta_2)$-transition point on $\alpha$.
\end{lemma}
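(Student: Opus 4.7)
The statement is attributed to Hruska, so the plan is to reconstruct the standard argument rather than invent something new. The essential bridge between a geodesic $\alpha$ in $\Cay(\Gamma,S)$ and a relative geodesic $\hat\alpha$ with the same endpoints is the BCP property applied to a well-chosen relative quasi-geodesic associated to $\alpha$. Concretely, I would first build out of $\alpha$ a relative quasi-geodesic $\tilde\alpha$ without backtracking as follows: scan $\alpha$ and whenever it enters a coset $\gamma_0\mathcal H$ and eventually leaves it, replace the portion of $\alpha$ inside $\gamma_0\mathcal H$ by the single edge of $\hat\Gamma$ joining the entry and exit points. A standard argument shows that $\tilde\alpha$ is a $(\lambda,c)$-quasi-geodesic in $(\hat\Gamma,\hat d)$ for constants depending only on $\Gamma$ and $S$, and it is without backtracking by construction.

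Next, I would apply the BCP property to the pair $(\tilde\alpha,\hat\alpha)$, both of which are relative $(\lambda,c)$-quasi-geodesics without backtracking with the same endpoints. For $\eta_1,\eta_2$ sufficiently large (larger than the BCP constant $C_{\lambda,c}$ and larger than the diameter of the bounded image of parabolic coset projections used in the construction), the following picture emerges. Let $x$ be an $(\eta_1,\eta_2)$-transition point on $\alpha$. By definition, the $\eta_2$-piece of $\alpha$ around $x$ is not contained in the $\eta_1$-neighborhood of any coset $\gamma_0\mathcal H$; if $x$ were hidden inside a long coset excursion in the construction of $\tilde\alpha$, this would force such an inclusion, contradicting the transition property. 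So $x$ lies uniformly close in $\Cay(\Gamma,S)$ to a vertex of $\tilde\alpha$, and BCP then gives a vertex of $\hat\alpha$ within the desired $r$.

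For the converse direction, pick a vertex $y$ on $\hat\alpha$. By BCP applied to $(\hat\alpha,\tilde\alpha)$, $y$ is within a bounded distance of a vertex $y'$ of $\tilde\alpha$. The vertex $y'$ corresponds to a point $x$ on $\alpha$ that is either outside all coset excursions, or lies at an entry/exit of one. In either case, enlarging $\eta_1,\eta_2$ if necessary, $x$ is an $(\eta_1,\eta_2)$-transition point of $\alpha$: outside all long coset penetrations, no $\eta_1$-neighborhood of a coset can swallow the $\eta_2$-segment around $x$, and at an entry/exit point, $\alpha$ immediately leaves the coset's $\eta_1$-neighborhood on one side. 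Combining the two bounded estimates gives the desired $r$.

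The technical obstacle is bookkeeping: one must choose $\eta_1,\eta_2$ after the BCP constants $C_{\lambda,c}$ and the quasi-geodesic parameters of $\tilde\alpha$, and verify that an $(\eta_1,\eta_2)$-transition point on $\alpha$ is \emph{not} swallowed by any long parabolic excursion of $\alpha$ itself, which is what translates a geometric condition on $\alpha$ (the transition property) into a combinatorial condition on $\tilde\alpha$ (not lying in the interior of a shortcut edge). This is precisely the point where the definition of transition point is tailored to match BCP, and once the order of quantifiers is correctly set up the two parts of the lemma follow in parallel.
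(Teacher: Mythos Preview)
The paper does not give its own proof of this lemma: it is stated with the citation \cite[Proposition~8.13]{Hruska} and used as a black box throughout. So there is nothing in the paper to compare your argument against.

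That said, your sketch is a reasonable outline of the standard argument behind Hruska's result. The construction of the derived relative quasi-geodesic $\tilde\alpha$ by collapsing maximal coset excursions, the use of BCP to compare $\tilde\alpha$ with $\hat\alpha$, and the observation that transition points on $\alpha$ are exactly those not swallowed by a long coset excursion (hence close to vertices of $\tilde\alpha$) are the right ingredients. The one place where your bookkeeping would need tightening in a full write-up is the claim that $\tilde\alpha$ is a relative $(\lambda,c)$-quasi-geodesic without backtracking: this is true but not entirely trivial, since a geodesic in $\Cay(\Gamma,S)$ can re-enter the same coset, and one has to argue that after collapsing maximal excursions the result is still without backtracking and still a quasi-geodesic in $\hat\Gamma$. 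Hruska handles this carefully; your sketch correctly identifies it as the main technical point.
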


The reason for introducing both definitions in the present paper, despite this lemma, is as follows.
On the one hand, a lot of known results in literature are stated using transition points.
On the other hand, it will be more convenient to use relative geodesics because of our relative automatic structure that we now introduce.

\subsection{Relatively automatic groups}
Hyperbolic groups are known to be strongly automatic, meaning that for every such generating set $S$, there exists a finite directed graph $\mathcal{G}=(V,E,v_*)$ encoding geodesics.
It is quite implicit in Farb's work \cite{Farbthesis}, \cite{Farb}, although not formally stated, that relatively hyperbolic groups are relatively automatic in the following sense.

Let $\Gamma$ be a finitely generated group and let $\Omega$ be a collection of subgroups invariant by conjugacy and such that there is a finite set $\Omega_0$ of conjugacy classes representatives of subgroups in $\Omega$.

\begin{definition}\label{definitionautomaticstructure}
A relative automatic structure for $\Gamma$ with respect to the collection of subgroups $\Omega_0$ and with respect to some finite generating set $S$ is a directed graph $\mathcal{G}=(V,E,v_*)$ with distinguished vertex $v_*$ called the starting vertex, where the set of vertices $V$ is finite and with a labelling map $\phi:E\rightarrow S\cup \bigcup_{\mathcal{H}\in \Omega_0}\mathcal{H}$ such that the following holds.
If $\omega=e_1,...,e_n$ is a path of adjacent edges in $\mathcal{G}$, define $\phi(e_1,...,e_n)=\phi(e_1)...\phi(e_n)\in \Gamma$.
Then,
\begin{itemize}
    \item no edge ends at $v_*$, except the trivial edge starting and ending at $v_*$,
    \item every vertex $v\in V$ can be reached from $v_*$ in $\mathcal{G}$,
    \item for every path $\omega=e_1,...,e_n$, the path $e,\phi(e_1),\phi(e_1e_2),...,\phi(\gamma)$ in $\Gamma$ is a relative geodesic from $e$ to $\phi(\gamma)$, that is the image of $e,\phi(e_1),\phi(e_1e_2),...,\phi(\gamma)$ in $\hat{\Gamma}$ is a geodesic for the metric $\hat{d}$,
    \item the extended map $\phi$ is a bijection between paths in $\mathcal{G}$ starting at $v_*$ and elements of $\Gamma$.
\end{itemize}
\end{definition}

Note that the union $S\cup \bigcup_{\mathcal{H}\in \Omega_0}\mathcal{H}$ is not required to be a disjoint union.
Actually, the intersection of two distinct subgroups $\mathcal{H},\mathcal{H}'\in \Omega_0$ can be non-empty.
Also note that we require the vertex set $V$ to be finite.
However, the set of edges is infinite, except if the parabolic subgroups $\mathcal{H}$ are finite (in which case the group $\Gamma$ is hyperbolic).

If there exists a relative automatic structure for $\Gamma$ with respect to $\Omega_0$ and $S$, we say that $\Gamma$ is automatic relative to $\Omega_0$ and $S$.
The following was proved in the first paper.

\begin{theorem}\cite[Theorem~4.2]{DussauleLLT1}\label{thmcodingrelhypgroups}
Let $\Gamma$ be a relatively hyperbolic group and let $\Omega_0$ be a finite set of representatives of conjugacy classes of the maximal parabolic subgroups.
For every symmetric finite generating set $S$ of $\Gamma$,
$\Gamma$ is automatic relative to $\Omega_0$ and $S$.
\end{theorem}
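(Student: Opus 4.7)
The plan is to adapt Cannon's construction of the geodesic automaton for hyperbolic groups to the relative setting, working in the coned-off Cayley graph $\hat{\Gamma}$ in place of $\Cay(\Gamma,S)$. The essential tools are the Gromov hyperbolicity of $\hat{\Gamma}$ and the BCP property, which together play the role that hyperbolicity of the ordinary Cayley graph plays in Cannon's theorem. First, I would fix a total order on the alphabet $A=S\cup\bigcup_{\mathcal{H}\in\Omega_0}\mathcal{H}$, well-ordering each (possibly infinite) parabolic summand, and extend it shortlex-style to finite words in $A$. For each $\gamma\in\Gamma$ let $\sigma_\gamma$ be the shortlex-least such word whose successive partial products spell out a relative geodesic in $\hat{\Gamma}$ from $e$ to $\gamma$. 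Uniqueness of $\sigma_\gamma$ is automatic, so $\gamma\mapsto\sigma_\gamma$ is a canonical section of the evaluation map, and the automaton $\mathcal{G}$ will be designed so that its accepted words are precisely the $\sigma_\gamma$.

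The central object is a relative cone type: declare $\gamma\sim\gamma'$ when, for every $a\in A$, the equality $\sigma_\gamma\cdot a=\sigma_{\gamma a}$ holds if and only if $\sigma_{\gamma'}\cdot a=\sigma_{\gamma' a}$. The heart of the argument is to show that $\sim$ has only finitely many classes. Whether $a$ extends $\sigma_\gamma$ to a shortlex relative geodesic depends only on the configuration of relative geodesics in a bounded $\hat{d}$-ball around $\gamma$ (by hyperbolicity of $\hat{\Gamma}$), together with the datum of which parabolic coset, if any, the last letter of $\sigma_\gamma$ traversed. BCP collapses the latter datum, modulo the left action of the relevant coset stabilizer, into a finite bookkeeping alphabet tracking the conjugacy class of the coset and the entering-side information needed to forbid backtracking. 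Combining these two bounded pieces of data yields finiteness.

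Once finiteness is secured, take $V$ to be the set of cone types with $v_*=[e]$, and for each $[\gamma]\in V$ and each admissible $a\in A$ (meaning $\sigma_\gamma\cdot a=\sigma_{\gamma a}$) place an edge from $[\gamma]$ to $[\gamma a]$ labeled $a$. The four conditions of Definition~\ref{definitionautomaticstructure} then follow directly: transitions are well defined by the definition of $\sim$; the bijection between paths starting at $v_*$ and elements of $\Gamma$ follows from uniqueness of the shortlex normal form; reachability is automatic; and the no-incoming-edges condition at $v_*$ is arranged by splitting off the starting class if necessary. The main obstacle is the finiteness of cone types, which is far from routine because $A$ is infinite: the subtle point is behavior at entry into and exit from parabolic cosets, where a single letter of $\sigma_\gamma$ may correspond to an arbitrarily long path in $\Cay(\Gamma,S)$, and one must use BCP together with symmetry of $S$ to reduce the parabolic memory carried by a state to finitely much equivariant data, compatibly with translation by the relevant coset stabilizer.
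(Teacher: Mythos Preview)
This theorem is not proved in the present paper: it is quoted from the companion paper \cite[Theorem~4.2]{DussauleLLT1}, so there is no proof here to compare against. Your outline is the natural Cannon-style approach and is essentially what one expects the proof in \cite{DussauleLLT1} to look like (and is in the spirit of Antol\'in--Ciobanu's work on geodesic languages in relatively hyperbolic groups).

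That said, the step you flag as ``the main obstacle'' is genuinely the whole proof, and your sketch of it does not close the gap. You assert that whether a letter $a$ extends $\sigma_\gamma$ depends only on the configuration of relative geodesics in a bounded $\hat d$-ball around $\gamma$, but balls in $\hat\Gamma$ are infinite, so this alone gives infinitely many configurations. Your reduction via BCP to ``finite bookkeeping'' is too vague to be a proof: you need to exhibit an explicit finite invariant of $\gamma$ (for instance, a suitable $(\hat d\text{-bounded})$ truncated cone type together with, for each adjacent parabolic coset, only the \emph{conjugacy class} of the coset and a bounded amount of entry/exit data modulo the action of that coset's stabilizer) and then prove, using BCP and hyperbolicity of $\hat\Gamma$, that two elements with the same invariant really do have the same set of admissible extensions. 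The delicate point---which your sketch does not address---is that an admissible extension $a$ may itself be an arbitrary element of a parabolic subgroup, so the ``set of admissible $a$'' is infinite for each state; one must show that this infinite set is determined by the finite invariant, typically by showing it is a union of orbits under the relevant parabolic action plus a finite exceptional set. Without making this precise, the finiteness of $V$ is asserted rather than proved.
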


Along the proof of this theorem, a lot of technical lemmas about relative geodesics were proved in \cite{DussauleLLT1}.
We will use some of them repeatedly in this paper, so we restate them for convenience.
We use the same notations as above and so we fix a relatively hyperbolic group $\Gamma$ and a finite set $\Omega_0$ of conjugacy classes of parabolic subgroups.

\begin{lemma}\cite[Lemma~4.9]{DussauleLLT1}\label{Proposition315Osininfinite}
For every $(\lambda,c)$ and $K\geq 0$, there exists $C\geq 0$ such that the following holds.
Let $(x_1,...,x_n,...)$ and $(x_1',...,x_m',...)$ be two infinite relative geodesics such that
$d(x_1,x'_1)\leq K$ and $x_n$ and $x_m'$ converge to the same conical limit point $\xi$.
Then for every $j\geq 1$, there exists $i_j$ such that $d(x_j,x_{i_j}')\leq C$.
\end{lemma}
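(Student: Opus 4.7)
The plan is to combine two ingredients: hyperbolic fellow-traveling in the coned-off Cayley graph $\hat{\Gamma}$, and the BCP property for relative quasi-geodesics. The main obstacle will be to upgrade fellow-traveling in the metric $\hat{d}$ to fellow-traveling in the word metric $d$, since two points lying in a common parabolic coset can be at $\hat{d}$-distance at most $1$ while being arbitrarily far in $d$.

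First, because $\xi$ is a conical (rather than parabolic) limit point, it corresponds to a point in the Gromov boundary of the hyperbolic graph $\hat{\Gamma}$. Both sequences $(x_n)$ and $(x'_m)$ are thus infinite geodesic rays in $\hat{\Gamma}$ starting within $\hat{d}$-distance $K$ of one another and converging to the common boundary point $\xi$. Standard hyperbolic geometry then yields a constant $C_0=C_0(K,\delta)$, depending only on $K$ and the hyperbolicity constant $\delta$ of $\hat{\Gamma}$, such that for every $j\geq 1$ one can choose $i_j$ with $\hat{d}(x_j,x'_{i_j})\leq C_0$.

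To convert this into a bound in $d$, I would fix $j$ and a large $N\geq j$, and form a closed quadrilateral whose long sides are the two initial segments $\alpha=(x_1,\dots,x_N)$ and $\alpha'=(x'_1,\dots,x'_M)$ — with $M$ chosen so that $\hat{d}(x_N,x'_M)\leq C_0$ — and whose short sides are a path of $d$-length at most $K$ from $x_1$ to $x'_1$ and a path of $\hat{d}$-length at most $C_0$ from $x'_M$ to $x_N$. Concatenating one long side with the two short sides produces a relative quasi-geodesic from $x_1$ to $x_N$ whose parameters depend only on $K$ and $C_0$. Applying the BCP property to this concatenation and to $\alpha$ itself produces a correspondence between the parabolic cosets traversed by the two rays, with corresponding entry and exit vertices lying within the uniform BCP-constant $C_{\lambda,c}$ in $\Cay(\Gamma,S)$.

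For vertices $x_j$ which are entry or exit points of some coset traversed by $\alpha$, the correspondence just produced gives the required bound directly. For the remaining vertices — those straddled by generator edges in $S$ rather than by parabolic jumps — the $\hat{d}$-bound already implies a $d$-bound, since a short $\hat{d}$-path between two vertices not both adjacent to the same coset cannot accumulate large word-length without entering some coset, and any such entrance is itself controlled by the BCP step. The crucial point, which I expect to be the main difficulty, is to keep the final constant independent of $N$ and obtain a bound that holds for every $j$ simultaneously. This works precisely because the BCP constant depends only on $(\lambda,c)$ and not on the lengths of the relative quasi-geodesics involved, while conicality of $\xi$ ensures that both rays extend indefinitely; thus $N$ may be chosen arbitrarily large, yielding the desired uniform control and completing the argument.
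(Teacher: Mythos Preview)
The paper does not prove this lemma; it is quoted verbatim from the companion paper \cite{DussauleLLT1} (the label \texttt{Proposition315Osininfinite} indicates that the original argument is an infinite-ray version of Osin's Proposition~3.15). So there is no proof here to compare against, and your outline should be judged on its own merits.

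Your two-step strategy --- fellow-traveling in $\hat{\Gamma}$ from hyperbolicity, then upgrading via BCP --- is exactly the right plan, and is the standard route to results of this type. However, the execution has two genuine gaps. First, the BCP property as stated in the paper applies only to relative quasi-geodesics \emph{without backtracking}; your concatenation of $\alpha'$ with the two short sides need not satisfy this, since the short side from $x'_M$ to $x_N$ (which has bounded $\hat d$-length but possibly large $d$-length) may revisit cosets that $\alpha'$ already traversed. You need either to argue that backtracking cannot occur, or to apply BCP directly to the pair $(\alpha,\alpha')$ after closing up the ends with geodesic segments and checking the quasi-geodesic constants. Second, your treatment of the ``remaining vertices'' is not correct as written: a vertex $x_j$ flanked by generator edges still only comes with a $\hat d$-bound to some $x'_{i_j}$, and a bounded $\hat d$-path between them can contain a single long parabolic jump, so the $\hat d$-bound does not by itself give a $d$-bound. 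The usual fix is to observe that \emph{every} vertex on a relative geodesic is within bounded $d$-distance of an entry or exit point of some coset (or of the initial vertex), since consecutive generator edges contribute at most $1$ to $d$; then the BCP correspondence on coset transitions propagates to all vertices. Once these two points are handled carefully, your argument goes through.
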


\begin{lemma}\cite[Lemma~4.16]{DussauleLLT1}\label{lemmarelativetripod}
Let $(e,\gamma_1,...,\gamma_n)$ and $(e,\gamma_1',...,\gamma_m')$ be two relative geodesics. 
Assume that the nearest point projection of $\gamma'_m$ on $(e,\gamma_1,...,\gamma_n)$ is at $\gamma_l$.
If there are several such nearest point projection, choose the closest to $\gamma_n$.
Then, any relative geodesic from $\gamma_m'$ to $\gamma_n$ passes within a bounded distance (for the distance $d$) of $\gamma_{l}$.
Moreover, if $\gamma_l\neq e$, then any relative geodesic from $e$ to $\gamma_m'$ passes within a bounded distance of $\gamma_{l-1}$.
\end{lemma}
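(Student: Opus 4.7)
The lemma is a ``tripod'' statement for triangles of relative geodesics: if $\gamma_l$ is the nearest-point projection (for $\hat{d}$) of $\gamma'_m$ onto $\pi_1:=(e,\gamma_1,\ldots,\gamma_n)$, then every relative geodesic from $\gamma'_m$ to $\gamma_n$ must shadow $\gamma_l$ in the word metric $d$, and $\pi_2:=(e,\gamma'_1,\ldots,\gamma'_m)$ must shadow $\gamma_{l-1}$. The analog with $\hat{d}$ in place of $d$ is immediate from Gromov hyperbolicity of $\hat{\Gamma}$; the content of the lemma is the upgrade to the word metric $d$, which is strictly stronger because $d$-distances can blow up inside a shared parabolic coset penetration. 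My plan is thus to first establish the $\hat{d}$-version using thin triangles, and then upgrade via BCP together with Lemma~\ref{projectiontransitionpoints}.

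For the hyperbolic step, let $\alpha$ be any relative geodesic from $\gamma'_m$ to $\gamma_n$. The sides $\pi_1,\pi_2,\alpha$ form a $\hat\delta$-thin triangle in $\hat{\Gamma}$, and the standard Gromov-product computation identifies $\gamma_l$ with the branching vertex of the tripod approximation up to an additive $O(\hat\delta)$ error in $\hat{d}$; in particular there is a vertex $\gamma''\in\alpha$ with $\hat{d}(\gamma'',\gamma_l)\leq K=K(\hat\delta)$. To reach the word-metric conclusion, I would introduce the concatenation $\alpha':=\rho \ast \pi_1|_{[\gamma_l,\gamma_n]}$, where $\rho$ is any relative geodesic from $\gamma'_m$ to $\gamma_l$. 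The Gromov-product estimate at the projection gives $\hat{d}(\gamma'_m,\gamma_l)+\hat{d}(\gamma_l,\gamma_n)\leq \hat{d}(\gamma'_m,\gamma_n)+O(\hat\delta)$, so $\alpha'$ is a relative $(\lambda,c)$-quasi-geodesic from $\gamma'_m$ to $\gamma_n$ with uniform constants, and after pruning any backtracking at the cost of worsening $(\lambda,c)$ by a bounded amount, BCP applies to $(\alpha,\alpha')$.

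The upgrade then splits into two cases according to the local structure of $\alpha'$ at $\gamma_l$. If $\gamma_l$ is the exit of a long parabolic excursion of $\alpha'$, BCP directly produces a corresponding exit vertex of $\alpha$ at $d$-distance $O(1)$ from $\gamma_l$. Otherwise, for suitable $\eta_1,\eta_2$, the vertex $\gamma_l$ is an $(\eta_1,\eta_2)$-transition point of any $\Cay(\Gamma,S)$-geodesic shadowing $\alpha'$, and Lemma~\ref{projectiontransitionpoints} combined with BCP transfers this transition point to a vertex of $\alpha$ at $d$-distance $O(1)$ from $\gamma_l$. The second assertion follows by running the same scheme with $\pi_2$ playing the role of $\alpha$, using $\pi_1|_{[e,\gamma_{l-1}]}$ concatenated with a relative geodesic from $\gamma_{l-1}$ to $\gamma'_m$ as the auxiliary quasi-geodesic. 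Here the tie-breaking convention ``projection closest to $\gamma_n$'' is essential: it forces the shared parabolic excursion of $\pi_1$ and this new auxiliary path to exit $\pi_1$ exactly at $\gamma_{l-1}$, for otherwise one could exhibit a vertex of $\pi_1$ strictly between $\gamma_l$ and $\gamma_n$ at $\hat{d}$-distance at most $\hat{d}(\gamma'_m,\gamma_l)$ from $\gamma'_m$, contradicting the choice of $\gamma_l$.

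The main obstacle I expect is precisely this upgrade from $\hat{d}$-closeness to $d$-closeness. Two relative geodesics can run at uniformly bounded $\hat{d}$-distance while being arbitrarily far apart in $d$ whenever they enter or exit a common parabolic coset at different elements, and this is exactly the situation that arises around the branching vertex of a relative tripod. The role of BCP together with Lemma~\ref{projectiontransitionpoints} is to pin down the correct entry or exit vertex of $\pi_1$ in each of the two conclusions, and the role of the tie-breaking rule on $\gamma_l$ is to select $\gamma_l$ for the first assertion (the exit on the $\gamma_n$ side) and $\gamma_{l-1}$ for the second (the entry on the $e$ side). Once these identifications are in place, the lemma follows from a direct application of BCP to the quasi-geodesic pairs constructed above.
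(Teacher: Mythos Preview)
The paper does not contain a proof of this lemma: it is restated here for convenience but is quoted verbatim from the companion paper \cite[Lemma~4.16]{DussauleLLT1}, so there is no proof in the present text to compare against. Your outline is the natural strategy and almost certainly matches the original: one first gets the $\hat d$-statement from thinness of triangles in $\hat\Gamma$, then upgrades to $d$ by building an auxiliary relative quasi-geodesic through the projection vertex and invoking BCP, with the tie-breaking convention singling out $\gamma_l$ versus $\gamma_{l-1}$ according to which side of the possible parabolic excursion one is on.

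One point deserves more care than you give it. When you concatenate $\rho$ with $\pi_1|_{[\gamma_l,\gamma_n]}$ and then ``prune backtracking'', the pruned path may no longer pass through $\gamma_l$: if $\rho$ enters the coset of $\gamma_{l-1}^{-1}\gamma_l$ before reaching $\gamma_l$, the pruning can replace the two consecutive jumps inside that coset by a single one, and your anchor vertex disappears. You do handle the resulting dichotomy (either $\gamma_l$ is a coset exit, or it is a transition point), but the argument that BCP still pins down a vertex of $\alpha$ within bounded $d$-distance of $\gamma_l$ specifically, rather than of some nearby vertex on the pruned path, relies on the tie-breaking rule in a way you only sketch. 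It would be cleaner to argue directly that the projection hypothesis forces any relative geodesic $\rho$ from $\gamma'_m$ to $\gamma_l$ to enter the coset containing the edge $\gamma_{l-1}\gamma_l$ (if at all) only at its terminal vertex, so that the concatenation is already without backtracking at $\gamma_l$; this is where ``closest to $\gamma_n$'' is genuinely used. With that in hand the rest of your argument goes through.
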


\subsection{Spectrally degenerate measures}
We now consider a group $\Gamma$, hyperbolic relative to a collection of peripheral subgroups $\Omega$ and we fix a finite collection $\Omega_0=\{\mathcal{H}_1,...,\mathcal{H}_N\}$ of representatives of conjugacy classes of $\Omega$.
We assume that $\Gamma$ is non-elementary.
Let $\mu$ be a probability measure on $\Gamma$, $R_{\mu}$ the spectral radius of the $\mu$-random walk and $G(\gamma,\gamma'|r)$ the associated Green function, evaluated at $r$, for $r\in [0,R_{\mu}]$.
If $\gamma=\gamma'$, we simply use the notation $G(r)=G(\gamma,\gamma|r)=G(e,e|r)$.

\medskip
We denote by $p_k$ the first return transition kernel to $\mathcal{H}_k$.
Namely, if $h,h'\in \mathcal{H}_k$, then
$p_k(h,h')$ is the probability that the $\mu$-random walk, starting at $h$, eventually comes back to $\mathcal{H}_k$ and that its first return to $\mathcal{H}_k$ is at $h'$.
In other words,
$$p_k(h,h')=\mathbb{P}_h(\exists n\geq 1, X_n=h',X_1,...,X_{n-1}\notin \mathcal{H}_k).$$
More generally, for $r\in [0,R_{\mu}]$, we denote by $p_{k,r}$ the first return transition kernel to $\mathcal{H}_k$ for $r\mu$.
Precisely, if $h,h'\in \mathcal{H}_k$, then
$$p_{k,r}(h,h')=\sum_{n\geq 1}\sum_{\underset{\notin \mathcal{H}_k}{\gamma_1,...,\gamma_{n-1}}}r^n\mu(h^{-1}\gamma_1)\mu(\gamma_1^{-1}\gamma_2)...\mu(\gamma_{n-2}^{-1}\gamma_{n-1})\mu(\gamma_{n-1}^{-1}h').$$

We then denote by $p_{k,r}^{(n)}$ the convolution powers of this transition kernel, by $G_{k,r}(h,h'|t)$ the associated Green function, evaluated at $t$ and by $R_k(r)$ the associated spectral radius, that is, the radius of convergence of $t\mapsto G_{k,r}(h,h'|t)$.
For simplicity, write $R_k=R_k(R_{\mu})$.
As for the initial Green function, if $h=h'$, we will simply use the notation $G_{k,r}(t)=G_{k,r}(h,h|t)=G_{k,r}(e,e|t)$.

According to \cite[Lemma~3.4]{DussauleLLT1}, for any $r\in [0,R_{\mu}]$, for any $k\in \{1,...,N\}$,
$$G_{k,r}(h,h'|1)=G(h,h'|r).$$
Also, since $\Gamma$ is non-elementary, it contains a free group and hence is non-amenable.
It follows from a result of Guivarc'h (see \cite[p.~85, remark~b)]{Guivarch}) that $G(R_{\mu})<+\infty$.
Thus, $G_{k,R_{\mu}}(1)<+\infty$.
In particular, $R_k\geq 1$.

\begin{definition}
We say that $\mu$ (or equivalently the random walk) is spectrally degenerate along $\mathcal{H}_k$ if $R_k=1$.
We say it is non-spectrally degenerate if for every $k$, $R_k>1$.
\end{definition}

This definition was introduced in \cite{DussauleGekhtman} to study the homeomorphism type of the Martin boundary at the spectral radius.
As explained in \cite[Section~3.3]{DussauleLLT1}, it should be thought as a notion of spectral gap between the spectral radius of the random walk on the whole group and the spectral radii of the induced walks on the the parabolic subgroups.

\medskip
Let us now recall some consequences of spectral degenerescence proved in \cite{DussauleLLT1}.
We introduce the following notations.
We write
$$I^{(k)}(r)=\sum_{\gamma^{(1)},...,\gamma^{(k)}\in \Gamma}G(\gamma,\gamma^{(1)}|r)G(\gamma^{(1)},\gamma^{(2)}|r)...G(\gamma^{(k-1)},\gamma^{(k)}|r)G(\gamma^{(k)},\gamma'|r).$$
Then, $I^{(k)}(r)$ is related to the $k$th derivative of the Green function.
For a precise statement, we refer to \cite[Lemma~3.2]{DussauleLLT1}.
For instance, we have the following.

\begin{lemma}\cite[Lemma~3.1]{DussauleLLT1}\label{lemmafirstderivative}
For every $\gamma,\gamma'\in \Gamma$, for every $r\in [0,R_{\mu}]$, we have
$$\frac{d}{dr}(rG(\gamma_1,\gamma_2|r))=\sum_{\gamma\in \Gamma}G(\gamma_1,\gamma|r)G(\gamma,\gamma_2|r).$$
\end{lemma}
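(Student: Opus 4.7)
The plan is to expand everything into power series and compare coefficients, using the Chapman--Kolmogorov identity to recognize the convolution.

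First I would write out both sides as formal power series in $r$ with non-negative coefficients. By definition,
\[
rG(\gamma_1,\gamma_2|r)=\sum_{n\geq 0}p_n(\gamma_1,\gamma_2)r^{n+1}.
\]
For $r$ strictly less than the radius of convergence $R_\mu$, one can differentiate term-by-term to get
\[
\frac{d}{dr}\bigl(rG(\gamma_1,\gamma_2|r)\bigr)=\sum_{n\geq 0}(n+1)p_n(\gamma_1,\gamma_2)r^n.
\]
For the right-hand side, writing each factor as a power series and using Fubini--Tonelli (all $p_n$ are non-negative, so swapping sums is legal in $[0,+\infty]$), I would expand
\[
\sum_{\gamma\in\Gamma}G(\gamma_1,\gamma|r)G(\gamma,\gamma_2|r)
=\sum_{m,n\geq 0}r^{m+n}\sum_{\gamma\in\Gamma}p_m(\gamma_1,\gamma)p_n(\gamma,\gamma_2).
\]
Next I would invoke the Chapman--Kolmogorov identity (equivalently, the fact that the transition kernels of the random walk are given by convolution powers of $\mu$), namely
\[
\sum_{\gamma\in\Gamma}p_m(\gamma_1,\gamma)p_n(\gamma,\gamma_2)=p_{m+n}(\gamma_1,\gamma_2),
\]
and then regroup according to $k=m+n$, giving
\[
\sum_{\gamma\in\Gamma}G(\gamma_1,\gamma|r)G(\gamma,\gamma_2|r)=\sum_{k\geq 0}(k+1)p_k(\gamma_1,\gamma_2)r^k,
\]
which matches the expansion of the left-hand side.

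The only minor subtlety, and the step I would flag as the main thing to justify, is the endpoint $r=R_\mu$. For $r<R_\mu$ both sides are finite and the power-series manipulations above are immediate. At $r=R_\mu$ both sides may a priori be infinite, but since every coefficient is non-negative, the two sides are monotone increasing limits of their values on $[0,R_\mu)$ (apply monotone convergence separately on each side), so the identity passes to the closed interval as an equality in $[0,+\infty]$. This gives the claim for all $r\in[0,R_\mu]$.
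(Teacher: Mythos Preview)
Your argument is correct and is the standard one. Note that the present paper does not actually prove this lemma: it is quoted from the companion paper \cite[Lemma~3.1]{DussauleLLT1} without proof, so there is nothing to compare against here, but the power-series/Chapman--Kolmogorov computation you give is precisely the expected justification.
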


If $\mathcal{H}$ is a parabolic subgroup, we also write
$$I^{(k)}_{\mathcal{H}}(r)=\sum_{\gamma^{(1)},...,\gamma^{(k)}\in \mathcal{H}}G(\gamma,\gamma^{(1)}|r)G(\gamma^{(1)},\gamma^{(2)}|r)...G(\gamma^{(k-1)},\gamma^{(k)}|r)G(\gamma^{(k)},\gamma'|r).$$
Once $\Omega_0=\{\mathcal{H}_1,...,\mathcal{H}_N\}$ is fixed, we also write $I^{(k)}_{j}(r)=I^{(k)}_{\mathcal{H}_j}(r)$ for simplicity.

One of the main result of the first paper is that whenever $I^{(2)}_{\mathcal{H}}(r)<+\infty$ for every parabolic subgroup $\mathcal{H}\in \Omega_0$, then
$$I^{(2)}(r)\asymp \left ( I^{(1)}(r)\right )^3.$$
This is a consequence of the following result.

\begin{proposition}\cite[Proposition~5.6]{DussauleLLT1}\label{roughequadiff}
For every $r\in [0,R_{\mu})$, we have
$$\frac{I^{(2)}(r)}{\left (I^{(1)}(r)\right )^3}\asymp 1+\sum_{j}I^{(2)}_{j}(r).$$
In particular, if $\mu$ is non-spectrally degenerate, then
$$I^{(2)}(r)\asymp \left ( I^{(1)}(r)\right )^3.$$
\end{proposition}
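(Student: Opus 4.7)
The plan is to decompose the triple sum defining $I^{(2)}(r)$ according to the tripod structure of relative geodesic triangles in $\hat\Gamma$, and to isolate the contributions coming from the interiors of parabolic cosets via the strong relative Ancona inequalities. For any pair $(\gamma^{(1)},\gamma^{(2)})$, the triangle in $\hat\Gamma$ with vertices $e,\gamma^{(1)},\gamma^{(2)}$ is $\delta$-thin and therefore comes with an approximate tripod centre. By Lemma~\ref{lemmarelativetripod} together with the BCP property, this centre either sits at bounded $\hat d$-distance from every parabolic coset (the \emph{generic} case) or is absorbed by a single coset $\gamma_0\mathcal{H}_j$ (the \emph{parabolic} case).

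In the generic case, pick a transition point $c$ close to the centre and apply the strong relative Ancona inequalities along each of the three relative geodesic sides. Using the symmetry of $G$, this yields
$$G(e,\gamma^{(1)}|r)\,G(\gamma^{(1)},\gamma^{(2)}|r)\,G(\gamma^{(2)},e|r)\asymp G(e,c|r)^2\,G(c,\gamma^{(1)}|r)^2\,G(c,\gamma^{(2)}|r)^2.$$
Summing first over $\gamma^{(1)}$, then $\gamma^{(2)}$, then $c$, and using that $\sum_\gamma G(c,\gamma|r)^2=I^{(1)}(r)$ (by symmetry and left-invariance), the generic case contributes $\asymp (I^{(1)}(r))^3$ to $I^{(2)}(r)$, which accounts for the ``$1$'' in the claim.

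In the parabolic case, let $h_e,h_1,h_2\in\mathcal{H}_j$ be the entry/exit points at which the three sides meet the coset; by BCP they are mutually close. Factorizing each Green function at its entry and exit yields, after using symmetry,
$$G(e,\gamma_0 h_e|r)^2\,G(\gamma_0 h_1,\gamma^{(1)}|r)^2\,G(\gamma_0 h_2,\gamma^{(2)}|r)^2\cdot G(h_e,h_1|r)\,G(h_1,h_2|r)\,G(h_2,h_e|r).$$
The second block, summed over $h_1,h_2\in\mathcal{H}_j$ with $h_e$ fixed, equals $I^{(2)}_j(r)$ by translation invariance in $\mathcal{H}_j$ together with the identity $G_{j,r}(h,h'|1)=G(h,h'|r)$ from \cite{DussauleLLT1}. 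Summing $G(e,\gamma_0 h_e|r)^2$ over $(\gamma_0,h_e)$, which parametrises $\Gamma=\bigsqcup_{\gamma_0}\gamma_0\mathcal{H}_j$, produces an $I^{(1)}(r)$ factor; summing each of the other two squared factors over $\gamma^{(1)}$ resp.\ $\gamma^{(2)}$ produces two more. Summing over $j$ yields a parabolic contribution $\asymp (I^{(1)}(r))^3\sum_j I^{(2)}_j(r)$, and combining with the generic case gives the announced asymptotic. The ``in particular'' statement is then immediate, since non-spectral degeneracy means $R_j>1$, so $I^{(2)}_j(r)$ stays bounded as $r\to R_\mu$.

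The main obstacle is the bookkeeping needed to avoid double-counting at the interface between the generic and parabolic regimes: the tripod centre is defined only up to bounded $\hat d$-distance, and pairs $(\gamma^{(1)},\gamma^{(2)})$ whose centre lies near the boundary of a coset could a priori fall into both cases. Handling this requires a partition of $\Gamma\times\Gamma$ into finitely many classes indexed by the geometric position of the centre, together with uniform BCP and transition-point estimates from Lemma~\ref{projectiontransitionpoints}. Crucially, the Ancona constants must be uniform in $r\in[0,R_\mu)$, which is the essential input provided by the weak and strong relative Ancona inequalities of \cite{DussauleGekhtman}.
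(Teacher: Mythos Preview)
This proposition is not proved in the present paper; it is imported from \cite[Proposition~5.6]{DussauleLLT1}, the first paper in the series, and stated here without proof in the background section. So there is no argument in this paper to compare against.

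Your outline is nonetheless the correct strategy and is essentially what is carried out in \cite{DussauleLLT1}: one decomposes the product $G(e,\gamma^{(1)}|r)G(\gamma^{(1)},\gamma^{(2)}|r)G(\gamma^{(2)},e|r)$ according to the tripod structure of the triangle $(e,\gamma^{(1)},\gamma^{(2)})$ in $\hat\Gamma$, separates the case where the approximate centre is near a transition point from the case where it lies deep in a parabolic coset, and uses weak relative Ancona to factorise each side at the centre (or at the coset entry/exit points). The factorisations you write down are the right ones, and your final paragraph correctly identifies where the work lies.

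Two places where your sketch is genuinely incomplete, beyond the bookkeeping you flag. First, the two directions of $\asymp$ are not symmetric: the upper bound $I^{(2)}(r)\lesssim (I^{(1)}(r))^3(1+\sum_j I_j^{(2)}(r))$ follows from your decomposition more or less as written, but the matching lower bound does not come for free from ``the parabolic case contributes $\asymp$''. For the parabolic lower bound you must \emph{construct} configurations: fix a coset, fix $h_1,h_2$, and let $\gamma^{(1)},\gamma^{(2)}$ range over elements whose relative geodesics from $e$ genuinely enter the coset and exit near $h_1,h_2$; then verify there is no overcollapse and that the tails beyond the coset recover a full $I^{(1)}$ factor each (this uses Lemma~\ref{finitesumalongspheres} in reverse, i.e.\ that relative spheres carry enough mass). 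Second, the lower bound $I^{(2)}(r)\gtrsim (I^{(1)}(r))^3$ for the ``$1$'' term likewise requires exhibiting a three-parameter family of generic tripods, which is a separate (short) argument using that $\hat\Gamma$ branches at every vertex. Both points are handled in \cite{DussauleLLT1}, but your sketch as written only delivers the upper bound.
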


The following results were also proved in the first paper.
Note that we do not need to assume that $\mu$ is non-spectrally degenerate.

\begin{lemma}\cite[Lemma~5.4]{DussauleLLT1}\label{finitesumalongspheres}
There exists some uniform $C\geq 0$ such that for every $r\in [0,R_{\mu}]$, for every $m$,
$$\sum_{\gamma\in \hat{S}_m}H(e,\gamma|r)\leq C.$$
\end{lemma}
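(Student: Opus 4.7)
My plan has three parts. First, reduce the bound on $H$ to a bound on $G$: from the standard identity $G(e,\gamma|r)=H(e,\gamma|r)G(\gamma,\gamma|r)$ and translation invariance one has $H(e,\gamma|r)=G(e,\gamma|r)/G(r)$; combined with $1\leq G(r)\leq G(R_\mu)<+\infty$ (the upper bound being Guivarc'h's criterion, applicable since a non-elementary relatively hyperbolic group contains a free subgroup and is hence non-amenable), it is enough to produce a uniform constant $C'$ with $\sum_{\gamma\in\hat{S}_m}G(e,\gamma|r)\leq C'$ for all $m\geq 0$ and $r\in[0,R_\mu]$.

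Second, a pointwise comparison along relative geodesics. For each $\gamma\in\hat{S}_m$, extend a chosen relative geodesic from $e$ to $\gamma$ by one step in the generating set $S$, obtaining $\bar\gamma=\gamma s\in \hat{S}_{m+1}$. The trivial Green-function lower bound yields $G(e,\bar\gamma|r)\geq r\mu(s)G(e,\gamma|r)$, so $G(e,\gamma|r)\lesssim G(e,\bar\gamma|r)$ with implicit constant depending only on $\min_{s\in S}\mu(s)$ and on $r\geq r_0>0$ (the range $r<r_0$ being trivial). By the BCP property and Lemma~\ref{lemmarelativetripod}, the assignment $\gamma\mapsto\bar\gamma$ has uniformly bounded fibers, which shows that passing from $\hat S_m$ to $\hat S_{m+1}$ costs at most a fixed multiplicative constant.

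The third and main step invokes the strong relative Ancona inequality to convert the sum into a measure-theoretic quantity on the Bowditch boundary $\partial_B\Gamma$. For each $\gamma\in\hat S_m$ define the shadow $V_\gamma\subset\partial_B\Gamma$ to be the set of conical limit points approached by relative geodesic rays through a bounded neighborhood of $\gamma$. Strong Ancona makes $G(e,\gamma|r)$ comparable to $\nu_r(V_\gamma)$ for a suitable $r$-Patterson-Sullivan-type measure $\nu_r$ with total mass controlled by $G(R_\mu)$; by Lemmas~\ref{Proposition315Osininfinite} and~\ref{lemmarelativetripod}, as $\gamma$ varies in $\hat S_m$ the shadows $V_\gamma$ cover $\partial_B\Gamma$ with bounded multiplicity. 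Summing then gives
$$\sum_{\gamma\in\hat S_m}G(e,\gamma|r)\lesssim \sum_{\gamma\in\hat S_m}\nu_r(V_\gamma)\lesssim \nu_r(\partial_B\Gamma)\lesssim G(R_\mu),$$
which is the sought uniform bound.

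The main obstacle is the uniformity in $r$ up to and including $r=R_\mu$. For $r<1$ an elementary probabilistic bound $\sum_\gamma G(e,\gamma|r)\leq 1/(1-r)$ is available, but it blows up as $r\uparrow 1$ and is useless for $r\in[1,R_\mu]$, where the $r$-weighted walk is no longer subprobabilistic. The substitute is precisely the strong relative Ancona inequalities proved in \cite{DussauleGekhtman}, whose constants are uniform on $[0,R_\mu]$; a secondary technical point is to ensure that the Patterson-Sullivan-type measure $\nu_r$ can be constructed with a uniform-in-$r$ total mass bound, which can be done as in \cite{Gouezel1} by taking a weak limit of the normalized measures $G(R_\mu)^{-1}\sum_\gamma G(e,\gamma|r)\delta_\gamma$ on a suitable compactification of $\Gamma$.
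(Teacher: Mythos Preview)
The paper does not prove this lemma: it is quoted from the companion paper \cite[Lemma~5.4]{DussauleLLT1} and used as a black box, so there is no in-paper argument to compare your proposal to. I can still assess the proposal on its own terms, and there are two genuine gaps.

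First, you have misidentified $H$. In this paper $H(e,\gamma|r):=G(e,\gamma|r)\,G(\gamma,e|r)$ is the symmetrized product of the Green function (see the recollection in Section~\ref{Sectionpressure} and its use in the definition of $\varphi_r$ in Section~\ref{SectiontransferoperatorGreen}); it is \emph{not} the first-visit Green function $F$ of~(\ref{defF}). The identity you invoke, $G(e,\gamma|r)=H(e,\gamma|r)\,G(\gamma,\gamma|r)$, holds for $F$, not for $H$. With the correct definition and symmetric $\mu$ one has $H(e,\gamma|r)=G(e,\gamma|r)^2$, so your Step~1 reduction from $\sum H$ to $\sum G$ is simply wrong.

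Second, your Step~3 is circular. You postulate a boundary measure $\nu_r$ with total mass bounded uniformly in $r\le R_\mu$ and satisfying a shadow estimate $G(e,\gamma|r)\asymp \nu_r(V_\gamma)$. But the construction you sketch, a weak limit of $G(R_\mu)^{-1}\sum_\gamma G(e,\gamma|r)\delta_\gamma$, has total mass $G(R_\mu)^{-1}\sum_\gamma G(e,\gamma|r)$, which by Lemma~\ref{lemmafirstderivative} is essentially $G'(r)$ and diverges as $r\to R_\mu$ (Proposition~\ref{relationR_kG'(R)}). If instead you restrict the sum to a single relative sphere $\hat S_m$, the mass is exactly the quantity you are trying to bound. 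More tellingly, the Patterson--Sullivan-type measure $\lambda_{R_\mu}$ that the paper itself constructs (Proposition~\ref{convergencePattersonSullivanlike}, Lemma~\ref{conformality}) uses Lemma~\ref{finitesumalongspheres} as an input, not the other way around. Your Step~2 does not help: it shows at best that the sphere sums are approximately non-decreasing in $m$, which is the wrong direction for an upper bound.
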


\begin{corollary}\cite[Corollary~5.5]{DussauleLLT1}\label{derivativeparabolicGreenfinite}
For every parabolic subgroup $\mathcal{H}\in \Omega_0$ and every $r\in [0,R_{\mu}]$, we have
$I^{(1)}_{\mathcal{H}}(r)<+\infty$.
\end{corollary}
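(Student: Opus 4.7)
The plan is to bound $I^{(1)}_{\mathcal{H}}(r) = \sum_{h \in \mathcal{H}} G(\gamma, h|r) G(h, \gamma'|r)$ uniformly in $r \in [0, R_\mu]$ by exploiting the fact that every parabolic coset has diameter one in the relative metric $\hat{d}$, and then invoking Lemma~\ref{finitesumalongspheres}.

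First I would uniformly bound one of the two factors. By Guivarc'h's theorem recalled above, $G(R_\mu) < +\infty$; combined with the standard decomposition $G(x, y|r) = F(x, y|r) G(y, y|r) \leq G(r)$ (where $F$ is the first-passage generating function, so $F \leq 1$), this yields $G(h, \gamma'|r) \leq G(R_\mu)$ uniformly in $h$ and $r \leq R_\mu$. It therefore suffices to prove the single-factor bound $\sum_{h \in \mathcal{H}} G(\gamma, h|r) < +\infty$, uniformly in $r$.

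By translation invariance of the walk, this sum equals $\sum_{h' \in \gamma^{-1}\mathcal{H}} G(e, h'|r)$, where $\gamma^{-1}\mathcal{H}$ is a single parabolic coset. The key geometric observation is that in the relative Cayley graph $\hat{\Gamma}$, any two elements of a parabolic coset are joined by a single edge, so if $m_0 = \hat{d}(e, \gamma^{-1}\mathcal{H})$, then the whole coset is contained in $\hat{S}_{m_0} \cup \hat{S}_{m_0+1}$ by the triangle inequality. Lemma~\ref{finitesumalongspheres} now supplies the sphere-wise uniform bound $\sum_{\xi \in \hat{S}_m} H(e, \xi|r) \leq C$ for every $m$ and every $r \leq R_\mu$. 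Combining this with the first-passage relation $G(e, \xi|r) \leq G(R_\mu) \cdot H(e, \xi|r)$ (the factorization through which $H$ is defined in \cite{DussauleLLT1}) yields $\sum_{\xi \in \hat{S}_m} G(e, \xi|r) \leq C \cdot G(R_\mu)$ for every relative sphere. Summing over the at most two relevant spheres and combining with the previous paragraph gives $I^{(1)}_{\mathcal{H}}(r) \leq 2C \cdot G(R_\mu)^2 < +\infty$.

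The main obstacle is securing the quantitative relation $G \lesssim G(R_\mu) \cdot H$ that converts Lemma~\ref{finitesumalongspheres} into a statement about the Green function itself; this rests on the precise definition of $H$ adopted in the first paper. Once that identification is in hand, the remainder is elementary geometric bookkeeping driven by the collapse of parabolic cosets to $\hat{d}$-diameter one, which is precisely the feature that distinguishes the relative metric from the word metric and makes the otherwise potentially non-summable sum $\sum_{h \in \mathcal{H}} G(e, h|r)$ tractable even up to the spectral radius.
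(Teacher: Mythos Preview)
Your approach has a genuine error at the step where you claim $G(e,\xi|r)\leq G(R_\mu)\cdot H(e,\xi|r)$. In this paper (and in \cite{DussauleLLT1}) the quantity $H$ is defined by $H(e,\xi|r)=G(e,\xi|r)\,G(\xi,e|r)$; it is not a first-passage object. Your inequality would therefore read $1\leq G(R_\mu)\,G(\xi,e|r)$, which fails as soon as $d(e,\xi)$ is large since $G(\xi,e|r)\to 0$. In particular the single-factor sum $\sum_{h\in\mathcal H}G(e,h|r)$ cannot be controlled by Lemma~\ref{finitesumalongspheres} in the way you propose, and there is no reason to expect that sum to be finite up to $R_\mu$ without further input.

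The repair is simply not to throw away the second Green factor. Taking $\gamma=\gamma'=e$ (the case that is actually used throughout the paper, cf.\ Lemma~\ref{corollary3.3Gouezel}), one has directly
\[
I^{(1)}_{\mathcal H}(r)=\sum_{h\in\mathcal H}G(e,h|r)\,G(h,e|r)=\sum_{h\in\mathcal H}H(e,h|r).
\]
Since $\mathcal H\in\Omega_0$ is part of the generating set of $\hat\Gamma$, every nontrivial element of $\mathcal H$ lies in $\hat S_1$, so $\mathcal H\subset\{e\}\cup\hat S_1$ and Lemma~\ref{finitesumalongspheres} gives $I^{(1)}_{\mathcal H}(r)\leq H(e,e|r)+C\leq G(R_\mu)^2+C$. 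For general fixed $\gamma,\gamma'$ one reduces to this case via the Harnack-type comparison $G(\gamma,h|r)\asymp_\gamma G(e,h|r)$. Your geometric observation about parabolic cosets having small relative diameter is exactly the right idea; the mistake is only in how you connected it to the lemma.
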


Finally, we will also use the following.

\begin{proposition}\cite[Proposition~5.8]{DussauleLLT1}\label{relationR_kG'(R)}
If $\mu$ is non-spectrally degenerate, then
$$\frac{d}{dr}_{|r=R_\mu}G(e,e|R_{\mu})=+\infty.$$
\end{proposition}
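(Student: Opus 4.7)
My plan is to prove $\frac{d}{dr}G(e,e|r)\big|_{r=R_\mu}=+\infty$ by a short argument exploiting the weak local limit theorem from the first paper.

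I would first perform a reduction. Applying Lemma~\ref{lemmafirstderivative} with $\gamma_1=\gamma_2=e$ gives $I^{(1)}(r)=G(e,e|r)+r\,\frac{d}{dr}G(e,e|r)$. By Guivarc'h's estimate (applicable since $\Gamma$ is non-elementary and hence non-amenable, as recalled just before the definition of spectral degeneracy), $G(e,e|R_\mu)<+\infty$; so the claim becomes equivalent to $I^{(1)}(R_\mu)=+\infty$, i.e.\ to the divergence of $\sum_{n\ge 1}n\,p_n(e,e)R_\mu^n$. The introduction recalls two facts about non-spectrally degenerate measures: such a $\mu$ is automatically spectrally positive-recurrent, and under spectral positive-recurrence the first paper establishes the two-sided bound $C^{-1}R_\mu^{-n}n^{-3/2}\le p_n(e,e)\le C R_\mu^{-n}n^{-3/2}$. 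Plugging the lower bound into the series, I obtain
$$\frac{d}{dr}G(e,e|r)\Big|_{r=R_\mu}=\sum_{n\ge 1}n\,p_n(e,e)R_\mu^{n-1}\ \ge\ C^{-1}R_\mu^{-1}\sum_{n\ge 1}n^{-1/2}\ =\ +\infty,$$
which finishes the proof.

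A variant more in line with the thermodynamic strategy of the rest of the present paper would route through Proposition~\ref{roughequadiff}: the same lower bound on $p_n$ yields $I^{(2)}(R_\mu)=\sum_N\binom{N+2}{2}p_N(e,e)R_\mu^N\gtrsim\sum_N N^{1/2}=+\infty$, and then the equivalence $I^{(2)}\asymp(I^{(1)})^3$ (valid under non-spectral degeneracy) forces $I^{(1)}(R_\mu)=+\infty$ as well.

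The main obstacle I see is logical rather than analytical: I would need to verify that the weak local limit theorem of \cite{DussauleLLT1} does not itself rely on the statement I am proving, as otherwise the argument would be circular. This should be the case—the weak two-sided bound only exploits spectral positive-recurrence together with Ancona-type estimates along relative geodesics—but it is the one point requiring care. A proof strictly internal to the tools already gathered in the excerpt (Proposition~\ref{roughequadiff}, Lemma~\ref{finitesumalongspheres}, Corollary~\ref{derivativeparabolicGreenfinite}) would be significantly more delicate, because the pure differential inequality $I^{(2)}\asymp(I^{(1)})^3$ is compatible with both $I^{(1)}(R_\mu)<+\infty$ and $I^{(1)}(R_\mu)=+\infty$; ruling out the former would likely require an independent divergence of, for instance, $\sum_\gamma \hat{d}(e,\gamma)G(e,\gamma|R_\mu)^2$, obtained by combining the strong Ancona inequalities with the relative automatic structure of Theorem~\ref{thmcodingrelhypgroups} to translate the sum into a criticality statement about an appropriate transfer operator.
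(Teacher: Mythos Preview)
The present paper does not prove this proposition at all: it is quoted verbatim from \cite{DussauleLLT1} and used as a black box. So the relevant comparison is between your proposal and the proof in the first paper, not anything in this one.

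Your main worry is well founded, and in fact it is fatal. The introduction spells out the logical order explicitly: from the rough inequality $G''\asymp (G')^3$ one integrates to obtain
\[
\left(\frac{d}{dr}G(e,e|r)\right)^{-2}-\left(\frac{d}{dr}G(e,e|R_\mu)\right)^{-2}\asymp R_\mu-r,
\]
and only \emph{after} assuming $\frac{d}{dr}G(e,e|R_\mu)=+\infty$ does one deduce $G'\asymp (R_\mu-r)^{-1/2}$; the weak two-sided bound $p_n(e,e)\asymp R_\mu^{-n}n^{-3/2}$ is then extracted from the latter via Tauberian theory. So the weak local limit theorem of \cite{DussauleLLT1} sits strictly downstream of Proposition~\ref{relationR_kG'(R)}, and both versions of your argument (the direct series estimate and the $I^{(2)}=+\infty$ variant) are circular. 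Your diagnosis in the last paragraph is exactly right: the inequality $I^{(2)}\asymp (I^{(1)})^3$ by itself cannot distinguish the divergent from the convergent regime, and the actual proof in \cite{DussauleLLT1} has to supply an independent mechanism---tied specifically to non-spectral degeneracy---that forces divergence. That mechanism is not reproduced here, so you cannot reconstruct it from the excerpt; the point is simply that appealing to the weak LLT does not provide it.
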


\subsection{Relative Ancona inequalities}\label{SectionAnconainequalities}
We consider a finitely generated group $\Gamma$, hyperbolic relative to a collection of subgroups $\Omega$.
Let $\Omega_0$ be a finite set of representatives of conjugacy classes.
We also consider a probability measure $\mu$ on $\Gamma$, whose finite support generates $\Gamma$ as a semigroup and denote by $R_{\mu}$ its spectral radius.
As soon as $\Gamma$ is non-elementary, it is non-amenable, so that $R_{\mu}>1$ according to Kesten's results \cite{Kesten}.

In the case where $\Gamma$ is hyperbolic, A.~Ancona proved that the Green function $G$ is roughly multiplicative along geodesics.
Precisely, there exists $C\geq 1$ such that if $x,y,z$ are elements along a geodesic in this order, then
\begin{equation}\label{weakAncona}
    \frac{1}{C}G(x,y)G(y,z)\leq G(x,z)\leq C G(x,y)G(y,z).
\end{equation}
See \cite{Ancona} for more details.
The proof also works for the Green function evaluated at $r$, when $r<R_{\mu}$.
Actually, the lower bound is always true, so that the content of Ancona inequalities really is
$$G(x,z)\leq C G(x,y)G(y,z).$$
In \cite{Gouezel1}, S.~Gouëzel proved that these inequalities still hold at $r=R_{\mu}$ when the measure is symmetric.
He also gave a strengthened version of them.
Namely, if $x,x',y,y'$ are four points such that geodesics $[x,y]$ from $x$ to $y$ and $[x',y']$ from $x'$ to $y'$ fellow travel for a time at least $n$, then for $r\in [1,R_{\mu}]$,
\begin{equation}\label{strongAncona}
    \left | \frac{G(x,y|r)G(x',y'|r)}{G(x',y|r)G(x,y'|r)} -1\right |\leq C\rho^n,
\end{equation}
where $C\geq 0$ and $0<\rho<1$ (see \cite[Theorem~2.9]{Gouezel1}).
This strengthened version of Ancona inequalities was proved at $r<R_{\mu}$ in \cite{IzumiNeshveyevOkayasu}.
It was also already proved at the spectral radius by S.~Gouëzel and S.~Lalley in the case of co-compact Fuchsian groups (see \cite[Theorem~4.6]{GouezelLalley}).
It allowed the authors to get Hölder regularity for Martin kernels on the Martin boundary and then to use thermodynamic formalism to deduce local limit theorems in hyperbolic groups in \cite{Gouezel1} and \cite{GouezelLalley}.

\medskip
Back to relatively hyperbolic groups.
Inequalities similar to~(\ref{weakAncona}) were obtained by I.~Gekhtman, V.~Gerasimov, L.~Potyagailo and W.-Y.~Yang in \cite{GGPY}.
Recall that if $\alpha$ is a geodesic in the Cayley graph $\Cay(\Gamma,S)$, a point on $\alpha$ is called a transition point if it is not deep in a parabolic subgroup.
It is proved in \cite{GGPY} that if $x,y,z$ are elements on a geodesic in this order and if $y$ is a transition point on this geodesic, then~(\ref{weakAncona}) holds.
The proof actually works for $G(\cdot,\cdot|r)$ whenever $r<R_{\mu}$.


As explained, because of our automatic structure, it will be more convenient for us to work with relative geodesics rather than transition points on actual geodesics.
We fix a generating set $S$ and consider the Cayley graph and the graph $\hat{\Gamma}$ associated with $S$.

\begin{definition}
Let $\Gamma$ be a relatively hyperbolic group and let $\mu$ be a probability measure on $\Gamma$ with Green function $G$ and specral radius $R_{\mu}$.
If $r\in [1,R_{\mu}]$, say that $\mu$ satisfies the weak $r$-relative Ancona inequalities if there exists $C\geq 0$ (which depends on $r$) such that
for every $x,y,z\in \Gamma$ such that their images in $\hat{\Gamma}$ lie in this order on a relative geodesic,
$$\frac{1}{C}G(x,y|r)G(y,z|r)\leq G(x,z|r)\leq C G(x,y|r)G(y,z|r).$$
Say that $\mu$ satisfies the weak relative Ancona inequalities up to the spectral radius if it satisfies the $r$-relative Ancona inequalities for every $r\in [1,R_{\mu}]$
with a constant $C$ not depending on $r$.
\end{definition}

Lemma~\ref{projectiontransitionpoints}, together with \cite[Corollary~5.10]{GerasimovPotyagailo} and \cite[Theorem~5.2]{GGPY} show that if the support of $\mu$ is finite and generates $\Gamma$ as a semigroup, then $\mu$ satisfies the weak 1-relative Ancona inequalities.
We will also need the following enhanced version of relative Ancona inequalities.

\begin{definition}
We say that two relative geodesic $[x,y]$ and $[x',y']$ $c$-fellow travel for a time $n$, for some $c\geq 0$, if there exist distinct points $\gamma_1,...,\gamma_n$ which are at distance in $\Cay(\Gamma,S)$ at most $c$ from points on $[x,y]$ and points on $[x',y']$.
\end{definition}

\begin{definition}
Let $\Gamma$ be a relatively hyperbolic group and let $\mu$ be a probability measure on $\Gamma$ with Green function $G$ and spectral radius $R_{\mu}$.
If $r\in [1,R_{\mu}]$, say that $\mu$ satisfies the strong $r$-relative Ancona inequalities if for every $c\geq 0$, there exist $C\geq 0$ and $0<\rho<1$ such that
if $x,x',y,y'$ are four points such that relative geodesics $[x,y]$ from $x$ to $y$ and $[x',y']$ from $x'$ to $y'$ $c$-fellow travel for a time at least $n$, then
$$\left | \frac{G(x,y|r)G(x',y'|r)}{G(x',y|r)G(x,y'|r)} -1\right |\leq C\rho^n.$$
Say that $\mu$ satisfies the strong relative Ancona inequalities up to the spectral radius if it satisfies the strong $r$-relative Ancona inequalities for every $r\in [1,R_{\mu}]$ with constants $C$ and $\rho$ not depending on $r$.
\end{definition}

\begin{remark}
If $\mu$ satisfies the strong or weak relative Ancona inequalities, then the reflected measure $\check{\mu}$, defined by $\check{\mu}(\gamma)=\mu(\gamma^{-1})$ also satisfies them.
Indeed, if $\check{G}$ is the Green function of the reflected measure, then $\check{G}(x,y)=G(y,x)$.
\end{remark}

The following is proved in \cite{DussauleGekhtman}.

\begin{theorem}
Let $\Gamma$ be a non-elementary relatively hyperbolic group and let $\mu$ be a symmetric probability measure on $\Gamma$ whose finite support generates $\Gamma$.
Then $\mu$ satisfies both the weak and strong relative Ancona inequalities up to the spectral radius.
\end{theorem}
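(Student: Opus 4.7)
\emph{Approach.} My plan is to prove the weak inequalities first and then bootstrap them to the strong form, broadly following Gou\"ezel's strategy for hyperbolic groups but working with relative geodesics via Lemma~\ref{projectiontransitionpoints} rather than with genuine geodesics. For the weak form at $r < R_\mu$, the results of Gerasimov--Potyagailo and Gekhtman--Gerasimov--Potyagailo--Yang cited above already yield $G(x,z|r) \asymp G(x,y|r)\,G(y,z|r)$ when $y$ is a transition point on a geodesic from $x$ to $z$ in $\Cay(\Gamma, S)$. Every point on a relative geodesic from $x$ to $z$ lies within a bounded $d$-distance of such a transition point, and conversely. A Harnack-type estimate coming from the finiteness of the support of $\mu$, namely that one can move a point by a bounded $d$-amount at the cost of a uniform multiplicative constant, therefore transfers the weak inequality to arbitrary points on relative geodesics.

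The delicate point is uniformity in $r$ up to the spectral radius. Because $\Gamma$ is non-elementary it is non-amenable, so Guivarc'h's theorem gives $G(R_\mu)<\infty$; moreover symmetry of $\mu$ makes $G(\cdot,\cdot|r)$ the kernel of a bounded self-adjoint operator on $\ell^2(\Gamma)$. The idea is to pass the weak Ancona constant obtained for $r < R_\mu$ to the limit $r \to R_\mu^-$ without it degenerating. Here symmetry is essential: it yields $G(x,y|R_\mu) = G(y,x|R_\mu)$ together with a uniform bound on $G(R_\mu)$, which combined with a compactness/continuity argument on the set of admissible triples rules out any blow-up.

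For the strong inequalities I would follow the approach of \cite[Theorem~2.9]{Gouezel1}. Given relative geodesics $[x,y]$ and $[x',y']$ that $c$-fellow travel for a time $n$, decompose the contributions to the Green functions according to the first and last visit to a thin tube around the common fellow-travel segment. Weak Ancona factorises each Green function into a before-tube, an inside-tube, and an after-tube part, and the ratio $G(x,y|r)G(x',y'|r)/(G(x',y|r)G(x,y'|r))$ collapses to a ratio of inside-tube parts. A coupling/mixing argument along the common segment, using that at each of the $n$ fellow-travel steps the walk has a uniformly positive probability of leaving the tube, then yields $1 + O(\rho^n)$ for some $\rho \in (0,1)$. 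Uniformity of $\rho$ in $r \in [1,R_\mu]$ follows from the uniformity of the weak Ancona constants.

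\emph{Main obstacle.} The hardest part is the interplay between fellow-traveling relative geodesics and shared parabolic cosets. Contrary to the purely hyperbolic case, two such relative geodesics can share long excursions through a common coset $\gamma\mathcal{H}$ where the geometry is not hyperbolic and no a priori mixing rate is available. By the BCP property recalled in Section~\ref{Sectionbackground}, their entry and exit points into any shared coset are uniformly close in $\Cay(\Gamma, S)$, so one can insert mixing steps at each coset exit rather than at each step and still recover an exponential rate. Carrying this through uniformly in $r$, and especially at $r = R_\mu$, is where the combination of symmetry, the Guivarc'h finiteness bound, and the BCP property becomes indispensable.
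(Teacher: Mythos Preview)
The paper does not prove this theorem from scratch: it is cited from \cite{DussauleGekhtman}, where it is established in terms of the Floyd distance, and the paper's contribution is only the translation step via \cite[Corollary~5.10]{GerasimovPotyagailo} (Floyd distance $\leftrightarrow$ transition points) and Lemma~\ref{projectiontransitionpoints} (transition points $\leftrightarrow$ points on relative geodesics). So there is nothing to compare at the level of proof strategy; the paper simply imports the result.

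Your outline, by contrast, attempts a self-contained proof, and it has a genuine gap at the most important step. The passage ``pass the weak Ancona constant obtained for $r<R_\mu$ to the limit $r\to R_\mu^-$ without it degenerating\ldots\ a compactness/continuity argument on the set of admissible triples rules out any blow-up'' is not an argument. The set of admissible triples $(x,y,z)$ is infinite, and nothing prevents the implicit Ancona constant $C(r)$ from blowing up as $r\uparrow R_\mu$: pointwise limits $G(x,z|r)\to G(x,z|R_\mu)$ do not control a supremum over infinitely many triples. In the hyperbolic case \cite{Gouezel1} this is exactly the hard step and requires new super-exponential decay estimates for hitting probabilities (not just finiteness of $G(R_\mu)$ or self-adjointness); in the relatively hyperbolic case \cite{DussauleGekhtman} carries out the analogous work using the Floyd metric and the parabolic structure. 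Symmetry and Guivarc'h's bound are necessary ingredients but are far from sufficient on their own.

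The same issue recurs in your strong-inequality sketch: ``uniformity of $\rho$ in $r\in[1,R_\mu]$ follows from the uniformity of the weak Ancona constants'' presupposes precisely what you have not established. The paragraph on parabolic excursions correctly identifies the geometric difficulty, but ``insert mixing steps at each coset exit'' is a heuristic, not a proof; making it quantitative and uniform in $r$ is again the content of \cite{DussauleGekhtman}. In short, the translation part of your argument (GGPY plus Lemma~\ref{projectiontransitionpoints}) matches what the paper does, but the analytic core --- uniform Ancona at the spectral radius --- is missing, and that is the entire substance of the cited theorem.
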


Actually, these inequalities are stated in \cite{DussauleGekhtman} using the Floyd distance, which is a suitable rescaling of the distance in $\Cay(\Gamma,S)$.
However, \cite[Corollary~5.10]{GerasimovPotyagailo} relates the Floyd distance with transition points and Lemma~\ref{projectiontransitionpoints} relates transition points with points on a relative geodesic.
We deduce the above theorem combining these two results with \cite[Theorem~1.6]{DussauleGekhtman}.

\section{Thermodynamic formalism}\label{Sectionthermodynamicformalism}

\subsection{Transfer operators with countably many symbols}\label{Sectionthermodynamicformalismsetup}
We follow here the terminology of O.~Sarig and recall some facts proved in \cite{Sarig1} and \cite{Sarig2}.
We consider a countable set $\Sigma$ (the set of symbols), and a matrix $A=(a_{s,s'})_{s,s'\in \Sigma}$, with entries zeroes and ones (the transition matrix).
We then define
$$\Sigma_A^*=\{x=(x_1,...,x_n), x_i\in \Sigma, n\geq 0,\forall i, a_{x_i,x_{i+1}}=1\}$$
and
$$\partial \Sigma_A^*=\{x=(x_1,...,x_n,...), x_i\in \Sigma, \forall i, a_{x_i,x_{i+1}}=1\}.$$
Note that in the definition of $\Sigma_A^*$, $n$ can be 0, so that the empty sequence, that we denote by $\emptyset$ is in $\Sigma_A^*$.
We also define $$\overline{\Sigma}_A=\Sigma_A^*\cup \partial \Sigma_A^*.$$
If $s_1,...,s_k\in \Sigma$, we define the cylinder
$[s_1,...,s_k]$ as $\{x\in \overline{\Sigma}_A,x_1=s_1,...,x_k=s_k\}$.

Let $T:\overline{\Sigma}_A\rightarrow \overline{\Sigma}_A$ be given by
$T((x_1,...,x_n))=(x_2,...,x_n)$ if $(x_1,...,x_n)\in \Sigma_A^*$
and $T((x_1,...,x_n,...)=(x_2,...,x_n,...)$ if $(x_1,...,x_n,...)\in \partial \Sigma_A^*$.
We call $T$ the shift map and we call the pair $(\overline{\Sigma}_A,T)$ a Markov shift.

We say that the Markov shift is irreducible if for every $s,s'\in \Sigma$, there exists $N_{a,b}$ such that
there exists $x\in \overline{\Sigma}_A$ with $x_1=s$ and $x'\in \overline{\Sigma}_A$ with $x'_1=s'$ such that
$T^{N_{a,b}}x=x'$.
In other words, one can reach any cylinder $[s']$ from any cylinder $[s]$ with a finite number of iterations of the shift. 
We say it is topologically mixing if for every $s,s'\in \Sigma$, there exists $N_{a,b}$ such that
for every $n\geq N_{a,b}$,
there exists $x\in \overline{\Sigma}_A$ with $x_1=s$ and $x'\in \overline{\Sigma}_A$ with $x'_1=s'$ such that
$T^{n}x=x'$.

In \cite{Sarig1}, everything is stated only using $\partial \Sigma_A^*$.
However, up to considering a cemetery symbol $x_{\dag}$, we can consider finite sequences $(x_1,...,x_n)$ in $\Sigma_A^*$ as infinite ones, of the form $(x_1,...,x_n,x_{\dag},...,x_{\dag},...)$.
Thus, we can apply the terminology and results of~\cite{Sarig1} to $\overline{\Sigma}_A$.
Also, for technical reasons, it will be convenient to assume that the empty sequence is not a preimage of itself by the shift.
This can be done for example using a second cemetery symbol.

\medskip
We also define a metric on $\overline{\Sigma}_A$, setting
$d(x,y)=2^{-n}$, where $n$ is the first time that the two sequences $x$ and $y$ differ.

If $\varphi: \overline{\Sigma}_A\rightarrow \R$ is a function,
define
$$V_n(\varphi)=\sup \{|\varphi(x)-\varphi(y)|,x_1=y_1,...,x_n=y_n\}.$$
For $\rho\in (0,1)$, such a function $\varphi$ is called $\rho$-locally H\"older continuous if it satisfies
$$\exists C,\forall n\geq 1,V_n(\varphi)\leq C\rho^n.$$
It is called locally H\"older continuous if it is $\rho$-locally H\"older continuous for some $\rho$.
Notice that nothing is required for $V_0(\varphi)$ and in particular, $\varphi$ can be unbounded.
We can always change the metric $d$ on $\overline{\Sigma}_A$, defining $d_{\rho}(x,y)=\rho^{n}$, where $n$ is the first time that the two sequences $x$ and $y$ differ (and $0<\rho<1$).
A $\rho$-locally H\"older continuous function is then a locally Lipschitz function for this new metric.

\medskip
If $\varphi$ is locally H\"older continuous, we denote by $\varphi_n=\sum_{k=0}^{n-1}\varphi\circ T^{k}$ its $n$th Birkhoff sum.
We also define its transfer operator $\mathcal{L}_{\varphi}$ as
$$\mathcal{L}_{\varphi}f(x)=\sum_{Ty=x}\mathrm{e}^{\varphi(y)}f(y)$$
It acts on several spaces of functions.
We will be interested in some particular one, described below, on which the transfer operator has a spectral gap.
By definition, we have
$$\left (\mathcal{L}_{\varphi}^nf\right )(x)=\sum_{T^ny=x}\mathrm{e}^{\varphi_n(y)}f(y).$$
We also define, for $s\in\Sigma$,
$$Z_n(\varphi,s)=\sum_{\underset{x_1=s}{T^nx=x}}\mathrm{e}^{\varphi_n(x)}.$$
For every $s$, $\frac{1}{n}\log Z_n(\varphi,s)$ has a limit $P(\varphi,s)$.
If the Markov shift is irreducible, then it is independent of $s$ and we denote it by $P(\varphi)$.
Moreover, $P(\varphi,s)>-\infty$ and if $\|\mathcal{L}_{\varphi}1\|_{\infty}<+\infty$, then $P(\varphi,s)<+\infty$.
We refer to \cite[Theorem~1]{Sarig1} for a proof.
Independence of $s$ is proved under the assumption that the Markov shift is topologically mixing, although the proof only requires that it is irreducible.
We call $P(\varphi,s)$ the Gurevic pressure of $\varphi$ at $s$, or simply its pressure.

We say that $\varphi$ is positive recurrent if for every $s\in \Sigma$, there exist $M_{s}$ and $\lambda_{s}$ such that for every large enough $n$,
$\frac{Z_n(\varphi,s)}{\lambda_{s}^n}\in [M_{s}^{-1},M_{s}]$.
If it is the case, then one necessarily has $\lambda_{s}=P(\varphi,s)$.
The main result of \cite{Sarig1} is that positive recurrence is a necessary and sufficient condition for convergence of the iterates of the transfer operator $\mathcal{L}_{\varphi}^n$ (see \cite[Theorem~4]{Sarig1} for a precise statement).

If the set of symbols $\Sigma$ is finite, then every H\"older continuous function is positive recurrent.
Actually, we can say a little more in this case.
The convergence of $\mathcal{L}_{\varphi}^n$ is exponentially fast.
Precisely, if the Markov shift is topologically mixing, there exist $\lambda$, a positive function $h$ and a measure $\nu$ and constants $C\geq 0$ and $0<\theta<1$ satisfying, for all $\rho$-H\"older continuous function $f$ and all $n\in \N$,
$$\left \|\lambda^{-n}\mathcal{L}_{\varphi}^nf-h\int f d\nu\right \|\leq C\theta^n\|f\|.$$
This is the so-called Ruelle-Perron-Frobenius theorem.
Equivalently, $\lambda$ is a positive eigenvalue of the operator $\mathcal{L}_{\varphi}$ acting on the space of H\"older continuous functions and the remainder of the spectrum is contained in a disk of radius strictly smaller than $\lambda$.
In other words, $\mathcal{L}_{\varphi}$ acts on this space with a spectral gap

\medskip
When the set of symbols is countable, it can happen that the convergence is not exponentially fast (see \cite[Example~1]{Sarig1}).
However, there are sufficient conditions for this to hold, studied by J.~Aaronson, M.~Denker and M.~Urba\'{n}ski among others (see \cite{AaronsonDenkerUrbanski} and \cite{AaronsonDenker}, see also \cite{Gouezelthesis}).

\begin{definition}
Say that the Markov shift $(\overline{\Sigma}_A,T)$ has finitely many images if the set
$\{T[s],s \in \Sigma\}$ is finite.
Equivalently, there is only a finite number of different rows (and thus a finite number of different columns) in the matrix $A$.
\end{definition}


Fix $0<\rho<1$.
Let $\beta$ be the partition generated by the image sets, that is, $\beta$ is the $\sigma$-algebra generated by $\{T[s],s \in \Sigma\}$.
Then, define for a $\rho$-locally H\"older continuous function $f$,
$$D_{\rho,\beta} f=\sup_{b\in \beta}\sup_{x,y\in b}\frac{|f(x)-f(y)|}{d_{\rho}(x,y)},$$
where we recall that $d_{\rho}(x,y)=\rho^n$, where $n$ is the first time that the two sequences $x$ and $y$ differ.
Denote then $\|f\|_{\rho,\beta}=\|f\|_{\infty}+D_{\rho,\beta}f$ and define
$$\mathcal{B}_{\rho,\beta}=\{f, \|f\|_{\rho,\beta}<+\infty\}.$$
Then, $(\mathcal{B}_{\rho,\beta},\|\cdot \|_{\rho,\beta})$ is a Banach space.

Having finitely many images is a sufficient condition to have a spectral gap on $(\mathcal{B}_{\rho,\beta},\|\cdot \|_{\rho,\beta})$.
Indeed, R.~Mauldin and M.~Urba\'{n}ski introduced in \cite{MauldinUrbanski} the BIP property, which is automatically satisfied if the shift has finitely many images. Moreover, they proved that the BIP property is a sufficient condition for locally H\"older functions to have a Gibbs measure, whereas O.~Sarig proved in \cite{Sarig1} that having a Gibbs measure is a sufficient condition to be positive recurrent and to have a spectral gap.
In particular, we have the following two results.

\begin{proposition}\label{finitelymanyimagesimpliespositiverecurrent}
Let $(\overline{\Sigma}_A,T)$ be a topologically mixing countable Markov shift having finitely many images.
Let $\varphi$ be a locally H\"older continuous function with finite pressure $P(\varphi)$.
Then, $\varphi$ is positive recurrent.
\end{proposition}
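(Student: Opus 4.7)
The plan is to chain together three established facts. I would verify that the ``finitely many images'' hypothesis implies the Big Images and Preimages (BIP) property of Mauldin--Urba\'{n}ski, then invoke their theorem that BIP combined with local H\"older regularity and finite pressure produces a Gibbs measure for $\varphi$, and finally apply Sarig's result that the existence of a Gibbs measure forces positive recurrence.

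First I would check BIP, which asks for a finite set $\{s_1,\ldots,s_N\}\subset\Sigma$ such that for every $s\in\Sigma$ there exist indices $i,j$ with $a_{s_i,s}=a_{s,s_j}=1$. By hypothesis there are only finitely many distinct rows $R_1,\ldots,R_k$ of $A$. A short combinatorial argument shows this also forces finitely many distinct columns: writing $\mathrm{type}(s)$ for the unique $i$ with $\mathrm{row}(s)=R_i$, each column $\mathrm{col}(s')$ equals $\bigcup_{i\in I(s')}\mathrm{type}^{-1}(i)$ where $I(s')=\{i\colon s'\in R_i\}\subset\{1,\ldots,k\}$, so there are at most $2^k$ columns. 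Selecting one representative for each row type and one for each column type, topological mixing allows us to concatenate appropriate transitions to produce a single finite list of symbols that witnesses BIP simultaneously for incoming and outgoing transitions.

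Once BIP holds I would invoke \cite{MauldinUrbanski}: a locally H\"older continuous $\varphi$ with $P(\varphi)<+\infty$ admits a $T$-invariant Gibbs measure $\mu_\varphi$ on $\partial\Sigma_A^*$ satisfying
\[
C^{-1}\leq \frac{\mu_\varphi\bigl([x_1,\ldots,x_n]\bigr)}{\exp\bigl(\varphi_n(x)-nP(\varphi)\bigr)}\leq C
\]
uniformly over admissible words $(x_1,\ldots,x_n)$ and $x$ in the corresponding cylinder. Then \cite{Sarig1} asserts that the existence of such a Gibbs measure implies positive recurrence of $\varphi$; concretely, applying the Gibbs bounds to the cylinders $[s,x_2,\ldots,x_n]$ with $x_n$ ranging over periodic points starting at $s$, and summing, one recovers the two-sided estimate $M_s^{-1}\mathrm{e}^{nP(\varphi)}\leq Z_n(\varphi,s)\leq M_s\,\mathrm{e}^{nP(\varphi)}$ required by the definition.

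The main obstacle is really only the first step, i.e.\ checking BIP. The finitely-many-images condition is a one-sided statement about rows of $A$, whereas BIP is two-sided, so one has to combine the combinatorial count of column types with topological mixing to produce a single joint witness set. Once BIP is in hand, the remaining two steps are direct citations of \cite{MauldinUrbanski} and \cite{Sarig1} and contribute no new content.
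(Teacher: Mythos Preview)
Your proposal is correct and follows essentially the same three-step chain as the paper: finitely many images $\Rightarrow$ BIP $\Rightarrow$ existence of a Gibbs measure $\Rightarrow$ positive recurrence. The paper's proof is more terse---it asserts the implication ``finitely many images $\Rightarrow$ BIP'' in the paragraph preceding the proposition without the combinatorial justification you supply, and it cites \cite[Theorem~1]{Sarig2} rather than \cite{MauldinUrbanski} for the Gibbs measure step and \cite[Theorem~8]{Sarig1} for the final step---but the logical structure is identical.
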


\begin{proof}
Since $\varphi$ is locally H\"older, the sum
$$\sum_{n\geq 1} V_n(\varphi)=\sum_{n\geq 1}\sup \{|\varphi(x)-\varphi(y)|,x_1=y_1,...,x_n=y_n\}\leq C\sum_{n\geq 1}\rho^n$$
is finite.
Thus, \cite[Theorem~1]{Sarig2} shows that $\varphi$ has a Gibbs measure.
Consequently, \cite[Theorem~8]{Sarig1} shows that $\varphi$ is positive recurrent.
\end{proof}

\begin{theorem}\label{Theorem5Sarig}\cite[Corollary~3]{Sarig2},\cite[Theorem~4]{Sarig1}
Let $(\overline{\Sigma}_A,T)$ be a topologically mixing countable Markov shift having finitely many images.
Let $\varphi$ be a locally H\"older continuous function with finite pressure $P(\varphi)$.
Then there exist a $\sigma$-finite measure $\nu$ and a function $h$ bounded away from 0 and infinity such that $\mathcal{L}_{\varphi}^*\nu=\mathrm{e}^{P(\varphi)}\nu$ and $\mathcal{L}_{\varphi}h=\mathrm{e}^{P(\varphi)}h$.
There also exist $C\geq 0$ and $0<\theta<1$ such that for every $f\in \mathcal{B}_{\rho,\beta}$,
$$\left \|\mathrm{e}^{-nP(\varphi)}\mathcal{L}_{\varphi}^nf-h\int f d\nu\right \|_{\rho,\beta}\leq C\theta^n\|f\|_{\rho,\beta}.$$
Moreover, $\nu$ is supported on $\partial \Sigma_A^*$ and both measures $\nu$ and $m$ defined by $dm=hd\nu$ are ergodic.
\end{theorem}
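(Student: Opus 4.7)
The plan is to verify that the hypotheses here are exactly those under which Sarig's and Mauldin--Urba\'nski's machinery applies, and then to assemble the conclusions from \cite{Sarig1}, \cite{Sarig2}, \cite{MauldinUrbanski} in a single statement. First, I would observe that the finitely-many-images assumption forces the shift to satisfy the Big Images and Preimages (BIP) property of Mauldin--Urba\'nski: since the collection of image cylinders $\{T[s]:s\in\Sigma\}$ is finite, there are finitely many states $s_1,\dots,s_k\in\Sigma$ such that for every $s\in\Sigma$ one can find $i,j$ with $a_{s_i,s}=a_{s,s_j}=1$. This is the combinatorial condition under which Sarig's thermodynamic formalism behaves as in the finite-alphabet case.

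Having this, I would first extract the eigenmeasure and eigenfunction. Local H\"older continuity gives $\sum_n V_n(\varphi)<+\infty$, so by \cite[Theorem~1]{Sarig2} the potential $\varphi$ admits a Gibbs measure $m$. In particular, Proposition~\ref{finitelymanyimagesimpliespositiverecurrent} yields positive recurrence, and the generalised Ruelle--Perron--Frobenius theorem \cite[Theorem~4]{Sarig1} then produces a $\sigma$-finite conformal measure $\nu$ satisfying $\mathcal{L}_{\varphi}^{*}\nu=e^{P(\varphi)}\nu$ and a continuous positive eigenfunction $h$ with $\mathcal{L}_{\varphi}h=e^{P(\varphi)}h$. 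The BIP property plus finite pressure guarantees that $h$ is in fact bounded and bounded away from $0$, and that $dm=h\,d\nu$ is the Gibbs measure already constructed. The support of $\nu$ lies in $\partial\Sigma_A^{*}$ because the identity $\mathcal{L}_{\varphi}^{*}\nu=e^{P(\varphi)}\nu$ iterated backwards forces $\nu([x_1,\dots,x_n])>0$ only for cylinders that admit infinite extensions, while ergodicity of $\nu$ and $m$ is a consequence of the Gibbs inequality, which forces any $T$-invariant set to have measure $0$ or $1$ by a standard density argument.

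The genuine work is the exponential convergence in the norm $\|\cdot\|_{\rho,\beta}$, and this is where I would invoke \cite[Corollary~3]{Sarig2}. The idea is that on the Banach space $\mathcal{B}_{\rho,\beta}$ the transfer operator $\mathcal{L}_{\varphi}$ has a spectral gap: $e^{P(\varphi)}$ is a simple isolated eigenvalue with one-dimensional eigenspace $\mathbb{R}h$ and complementary spectral projector $f\mapsto h\int f\,d\nu$; the rest of the spectrum is contained in a disk of radius $e^{P(\varphi)}\theta$ for some $\theta\in(0,1)$. Writing the Hennion--Nussbaum type decomposition
\[
e^{-nP(\varphi)}\mathcal{L}_{\varphi}^{n}f \;=\; h\int f\,d\nu \;+\; e^{-nP(\varphi)}N^{n}f,
\]
where $N$ is the complementary part, yields the announced inequality with $\theta$ the spectral radius of $e^{-P(\varphi)}N$. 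The subtle point, which is the main obstacle one has to confront when transposing the classical proof to the countable setting, is that one cannot work with the usual H\"older norm based on oscillations over the cylinders $[s]$ — because $\Sigma$ is infinite, this does not see a Doeblin-type contraction. The cure, due to Sarig, is precisely the partition $\beta$ generated by the finitely many image sets: $D_{\rho,\beta}$ only measures oscillation within these finitely many blocks, so that $\mathcal{L}_{\varphi}$ acts quasi-compactly on $\mathcal{B}_{\rho,\beta}$ and the spectral gap follows by a Lasota--Yorke inequality. Once this is granted, the statement of the theorem is immediate.
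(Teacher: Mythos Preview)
Your proposal is correct and follows essentially the same approach as the paper. The paper does not give a proof at all; it simply cites \cite[Corollary~3]{Sarig2} and \cite[Theorem~4]{Sarig1} for the main assertion, and then adds three clarifying remarks: ergodicity of $\nu$ comes from \cite[Corollary~2]{Sarig2}, ergodicity of $m$ from the remarks after \cite[Theorem~4]{Sarig1}, and boundedness of $h$ from \cite[Proposition~2]{Sarig1}. Your write-up is a more fleshed-out version of this assembly, correctly identifying that the finitely-many-images hypothesis gives BIP, which feeds into Sarig's Gibbs-measure machinery and the spectral gap on $\mathcal{B}_{\rho,\beta}$.
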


The fact that $\nu$ is ergodic is not stated in Corollary~3 but in Corollary~2 of \cite{Sarig2}.
Ergodicity of $m$ follows (see the remarks after \cite[Theorem~4]{Sarig1}).
Finally, the fact that $h$ is bounded away from 0 and infinity is deduced from the fact that the shift has finitely many images, see \cite[Proposition~2]{Sarig1}.

Actually, we will never really use the $\|\cdot \|_{\rho,\beta}$ norm and all our bounds in the following will be on the $\rho$-H\"older norm, that is, we will both bound $\|\cdot \|_{\infty}$ and $D_{\rho}$, which is defined by
$$D_{\rho}f=\sup_{x,y\in \overline{\Sigma}_A}\frac{|f(x)-f(y)|}{d_{\rho}(x,y)}.$$
Obviously, a bound on $D_{\rho}$ is stronger than a bound $D_{\rho,\beta}$.
Moreover, when bounding $D_{\rho}f$, we will have to bound $\frac{|f(x)-f(y)|}{d_{\rho}(x,y)}$.
We will always assume that $x$ and $y$ start with the same element, otherwise $d(x,y)=\rho$ and one can thus bound $\frac{|f(x)-f(y)|}{d_{\rho}(x,y)}$ by $2\rho^{-1}\|f\|_{\infty}$.

\medskip
One issue we will have to deal with, when applying these results to random walks on relatively hyperbolic groups in the next subsection,
is that our Markov shift will not be topologically mixing.
It will not even be irreducible (but will have only finitely many recurrent classes).
This issue was already addressed in \cite{Gouezel1} for Markov shift with finitely many symbols, as we now explain.

In the following, we consider a countable Markov shift $(\overline{\Sigma}_A,T)$ with set of symbols $\Sigma$ and transition matrix $A$ and we assume that it has finitely many images.
If the Markov shift is irreducible, but not topologically mixing, then there is a minimal period $p>1$ such that for any symbol $s \in \Sigma$,
if $T^{-n}[s]\cap [s]\neq \emptyset$, then $n=pk$ for some $k\geq 0$.
Then, one can decompose the set of symbols as a finite union
$\Sigma=\Sigma_A^{(1)}\sqcup \Sigma_A^{(2)}\sqcup \cdots \sqcup \Sigma_A^{(p)}$,
such that for $i\in \Z/p\Z$, if $a_{s,s'}=1$ and $s\in \Sigma^{(i)}$, then
$s'\in \Sigma^{(i+1)}$.
We call such a decomposition a cyclic decomposition.
We denote by $\overline{\Sigma}_A^{(i)}$ the subset of $\overline{\Sigma}_A$ of sequences that begin with an element of $\Sigma_A^{(i)}$, so that the shift map $T$ maps $\overline{\Sigma}_A^{(i)}$ to $\overline{\Sigma}_A^{(i+1)}$.
Moreover, in this case, $T^p$ acts on $\overline{\Sigma}_A^{(i)}$ and the induced Markov shift is topologically mixing.
Using this decomposition together with Theorem~\ref{Theorem5Sarig}, we get that if $\varphi$ is locally H\"older continuous function with finite pressure $P(\varphi)$ and if the Markov shift has finitely many images, then there are positive functions $h^{(i)}$ on $\overline{\Sigma}_A^{(i)}$ and probability measures $\nu^{(i)}$ on $\overline{\Sigma}_A^{(i)}$ with $\int h^{(i)}d\nu^{(i)}=1$ such that for $f\in \mathcal{B}_{\rho,\beta}$,
$$\left \|\mathrm{e}^{-nP(\varphi)}\mathcal{L}_{\varphi}^nf-\sum_{i=1}^ph^{(i)}\int f d\nu^{((i-n) \text{ mod } p)}\right \|_{\rho,\beta}\leq C\theta^n\|f\|_{\rho,\beta}.$$

Assume that the Markov shift is not irreducible.
Then, since it has finitely many images, one can first decompose $\Sigma$ as
$\Sigma=\Sigma_{A,0}\sqcup \Sigma_{A,1}\sqcup  \cdots \Sigma_{A,q}$,
such that if a path starts at $s\in \Sigma_{A,0}$, then it never reaches $s$ again and for $s,s'\in \Sigma$, one can reach $s'$ starting at $s$ and conversely if and only if $s$ and $s'$ are in the same subset $\Sigma_{A,j}$, $j\geq 1$.
More formally, the decomposition of $\Sigma$ satisfies
the following properties.
\begin{itemize}
    \item If $s\in \Sigma_{A,0}$, then for all $n\geq 1$, $T^{-n}[s]\cap [s]=\emptyset$.
    \item If there exist $n,n'$ such that $T^{-n}[s]\cap [s']\neq \emptyset$ and $T^{-n'}[s']\cap [s]\neq \emptyset$, then there exists $j\geq 1$ such that $s,s'\in \Sigma_{A,j}$.
    \item Conversely, if $s,s'$ lie in the same $\Sigma_{A,j}$, $j\geq 1$, then there exist $n,n'$ such that $T^{-n}[s]\cap [s']\neq \emptyset$ and $T^{-n'}[s']\cap [s]\neq \emptyset$.
\end{itemize}
We call $\Sigma_{A,0}$ the transient component of $\Sigma$ and the sets $\Sigma_{A,j}$, $j\geq 1$, the biconnected components of $\Sigma$.
All the non-trivial dynamical behaviour of the Markov shift happens in the biconnected components.
We denote by $\overline{\Sigma}_{A,j}$ the subset of $\overline{\Sigma}_A$ of sequences $x$ that stay in $\Sigma_{A,j}$, that is, for every $n$, $x_n\in \Sigma_{A,j}$.
We similarly call the sets $\overline{\Sigma}_{A,j},j\geq 1$ the biconnected components of $\overline{\Sigma}_A$.

Then, $\overline{\Sigma}_{A,j}$ is stable under the shift map $T$ and we can apply the above discussion to $\overline{\Sigma}_{A,j}$.
If $\varphi$ is a locally H\"older continuous function on $\overline{\Sigma}_A$, denote by $\varphi_j$ its restriction to the component $\overline{\Sigma}_{A,j}$, with associated transfer operator $\mathcal{L}_{\varphi_j}$.
Denote the pressure of $\varphi_j$ by $P_j(\varphi)$.
Then, $\mathcal{L}_{\varphi_j}$ has a spectral gap and $P_j(\varphi)$ is its dominant eigenvalue.

Let $P(\varphi)$ be the maximum of all the $P_j(\varphi)$ and call a component $\overline{\Sigma}_{A,j}$ maximal if $P_j(\varphi)=P(\varphi)$.

\begin{definition}
We say that $\varphi$ is semisimple if one cannot reach a maximal component from another.
That is, for every two maximal components $\overline{\Sigma}_{A,j}$, $\overline{\Sigma}_{A,j'}$, $j\neq j'$, for any two symbols $s\in \Sigma_{A,j}$, $s'\in \Sigma_{A,j'}$, for any $n\geq 1$, $T^{-n}[s]\cap [s']=\emptyset$.
\end{definition}

Elaborating on ideas of D.~Calegari and K.~Fujiwara from \cite{CalegariFujiwara}, S.~Gou\"ezel proved in \cite{Gouezel1} a spectral gap theorem for the transfer operator of a semisimple H\"older continuous function, when the set of symbols is finite.
His proof works for a countable set of symbols, if the Markov shift has finitely many images and the H\"older continuous function is positive recurrent, since it is based on a spectral decomposition over the sets $\overline{\Sigma}_{A,j}$, on which one applies the Ruelle-Perron-Frobenius theorem (that we replace here with Theorem~\ref{Theorem5Sarig}).
Thus, combining Theorem~\ref{Theorem5Sarig} and the proof of \cite[Theorem~3.8]{Gouezel1}, we get the following.

\begin{theorem}\label{GouezelSarig}
Let $(\overline{\Sigma}_A,T)$ be a countable Markov shift with finitely many images.
Let $\varphi$ be a locally H\"older continuous function with finite maximal pressure $P(\varphi)$.
Assume that $\varphi$ is semisimple.
Denote by $\overline{\Sigma}_{A,1}$,...$\overline{\Sigma}_{A,k}$ the maximal components, with corresponding period $p_1,...,p_k$ and consider a cyclic decomposition
$$\Sigma_{A,j}=\Sigma_{A,j}^{(1)}\sqcup \cdots \Sigma_{A,j}^{(p_j)}.$$
Then, there exist functions $h_j^{(i)}$ and probability measures $\nu_j^{(i)}$ with $\int h_{j}^{(i)}d\nu_j^{(i)}=1$ and such that
$\mathcal{L}_{\varphi}^*\nu_{j}^{(i)}=\nu_j^{(i-1) \text{ mod } p_j}$ and $\mathcal{L}_{\varphi}h_{j}^{(i)}=h_j^{(i-1) \text{ mod } p_j}$.
Moreover, for $f\in \mathcal{B}_{\rho,\beta}$,
$$\left \|\mathrm{e}^{-nP(\varphi)}\mathcal{L}_{\varphi}^nf-\sum_{j=1}^k\sum_{i=1}^{p_j}h_j^{(i)}\int f d\nu_j^{((i-n) \text{ mod } p_j)}\right \|_{\rho,\beta}\leq C\theta^n\|f\|_{\rho,\beta},$$
for some $C\geq 0$ and $0<\theta<1$.
Finally, the functions $h_j^{(i)}$ are bounded away from 0 and infinity on the support of $\nu_j^{(i)}$.
\end{theorem}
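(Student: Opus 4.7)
The plan is to reduce to \cite[Theorem~3.8]{Gouezel1} by working component-by-component, following the Calegari--Fujiwara strategy: first apply a Ruelle--Perron--Frobenius-type result on each maximal biconnected component, then extend the resulting eigendata to $\overline{\Sigma}_A$, and finally glue the contributions using the semisimple hypothesis. The only novelty compared to the finite-alphabet case in \cite{Gouezel1} is that the classical Ruelle theorem must be replaced by Theorem~\ref{Theorem5Sarig}, which already covers the countable Markov shifts we are dealing with, provided the shift has finitely many images.

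First, for each maximal biconnected component $\overline{\Sigma}_{A,j}$, I would pass to the cyclic decomposition $\Sigma_{A,j}=\Sigma_{A,j}^{(1)}\sqcup\cdots\sqcup\Sigma_{A,j}^{(p_j)}$, so that each $(\overline{\Sigma}_{A,j}^{(i)}, T^{p_j})$ is topologically mixing, inherits the finitely-many-images property, and carries a restriction $\varphi_j$ of pressure $P(\varphi)<\infty$. Theorem~\ref{Theorem5Sarig} then produces, on each piece $\overline{\Sigma}_{A,j}^{(i)}$, a positive function $h_j^{(i)}$ (bounded away from $0$ and infinity on its support) and a probability measure $\nu_j^{(i)}$ with $\int h_j^{(i)}\,d\nu_j^{(i)}=1$, satisfying the claimed eigenequations together with the component-wise convergence
$$\left\|\mathrm{e}^{-nP(\varphi)}\mathcal{L}_{\varphi_j}^n f - \sum_i h_j^{(i)}\int f\,d\nu_j^{((i-n)\text{ mod }p_j)}\right\|_{\rho,\beta}\leq C\theta^n\|f\|_{\rho,\beta}.$$

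Next I would extend these objects to all of $\overline{\Sigma}_A$. The measures $\nu_j^{(i)}$ extend by zero, which is compatible with the $\mathcal{L}_\varphi^*$-action because no forward orbit leaves a biconnected component. The functions $h_j^{(i)}$ are extended to the transient component by iterated pull-back through $\mathrm{e}^{-nP(\varphi)}\mathcal{L}_\varphi^n$, the contributing preimages being exactly those that eventually enter $\overline{\Sigma}_{A,j}$. The series converges in $\|\cdot\|_{\rho,\beta}$ because finite pressure combined with the finitely-many-images hypothesis yields $\|\mathcal{L}_\varphi 1\|_\infty<\infty$; meanwhile the semisimple hypothesis is precisely what guarantees a well-defined limit, by preventing a single preimage tree from branching into two distinct maximal components.

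Finally, I would split $\mathcal{L}_\varphi^n f(x)$ according to where length-$n$ preimages of $x$ sit. Preimages entirely contained in a maximal component produce the main terms via the component-wise estimate above, modulo an $O(\theta^n)$ error; preimages staying in a non-maximal biconnected component $\overline{\Sigma}_{A,j'}$ contribute at most of order $\mathrm{e}^{nP_{j'}(\varphi)}\|f\|_\infty$ with $P_{j'}(\varphi)<P(\varphi)$, hence exponentially small after normalization; preimages that pass through the transient component before entering a maximal one are controlled by the geometric decay of the fraction of orbits of length $n$ avoiding the maximal components. The main obstacle lies in this last bookkeeping: in the countable setting, compactness of $\overline{\Sigma}_A$ exploited in \cite{Gouezel1} fails, so the uniform exponential bounds have to be rebuilt directly from the finitely-many-images hypothesis, using $\|\mathcal{L}_\varphi 1\|_\infty<\infty$ and finite pressure to secure a uniform strict subdominance of the non-maximal pressures with respect to $P(\varphi)$. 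Summing the contributions over $j$ and $i$ then yields the stated estimate.
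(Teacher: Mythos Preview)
Your proposal is correct and follows essentially the same approach as the paper: the paper's proof is simply a reference to \cite[Theorem~3.8]{Gouezel1}, noting that Gou\"ezel's argument carries over verbatim to the countable-alphabet setting once the classical Ruelle--Perron--Frobenius theorem is replaced by Theorem~\ref{Theorem5Sarig} on each maximal component, which is exactly the reduction you describe. Your sketch is in fact more detailed than the paper's own treatment, which does not spell out the extension of the eigendata or the bookkeeping of transient preimages.
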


We also get the following result, again proved in \cite{Gouezel1} for finite sets of symbols, which applies in our situation (see \cite[Lemma~3.7]{Gouezel1}).
\begin{lemma}\label{lemmaGouezelSarig}
Let $(\overline{\Sigma}_A,T)$ be a countable Markov shift with finitely many images.
Let $\varphi$ be a locally H\"older continuous function with finite maximal pressure $P(\varphi)$.
Let $s\in \Sigma$ and assume that there is a path starting with $s$ that visits $k$ maximal components.
Then, for any non-negative function $f$ with $f\geq 1$ on the set of paths starting with $s$, one has
$$\mathcal{L}^n_{\varphi}f(\emptyset)\geq Cn^{k-1}\mathrm{e}^{nP(\varphi)},$$
where we recall that $\emptyset$ is the empty sequence in $\Sigma_A^*$.
In particular, for $k=2$, if $\varphi$ is not semisimple, then
$$\mathcal{L}^n_{\varphi}1(\emptyset)\geq Cn \mathrm{e}^{nP(\varphi)}.$$
\end{lemma}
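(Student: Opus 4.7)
The plan is to lower bound $\mathcal{L}^n_{\varphi}f(\emptyset)$ by restricting the defining sum to admissible paths that follow a prescribed skeleton visiting the $k$ maximal components in order. Writing out
$$\mathcal{L}^n_{\varphi}f(\emptyset)=\sum_{y:\,T^n y=\emptyset}\mathrm{e}^{\varphi_n(y)}f(y)$$
and using that $f\geq 1$ on paths starting with $s$, it suffices to lower bound $\sum_y \mathrm{e}^{\varphi_n(y)}$ over a suitable subfamily of length-$n$ admissible paths beginning with $s$.

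Using the hypothesis that some path starting with $s$ visits $k$ maximal components, I would first fix once and for all symbols $a_1,b_1,\dots,a_k,b_k$ with $a_l,b_l\in\overline{\Sigma}_{A,j_l}$ (the $l$-th maximal component visited by this path), together with finite admissible skeleton pieces $\omega_0$ from $s$ to $a_1$ and $\omega_l$ from $b_l$ to $a_{l+1}$ for $1\leq l\leq k-1$. Let $m_l$ be their lengths and $M=m_0+\cdots+m_{k-1}$.

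For every decomposition $(n_1,\dots,n_k)$ with $n_1+\cdots+n_k=n-M$, I would restrict the sum to paths that follow $\omega_0$, then stay inside $\overline{\Sigma}_{A,j_1}$ going from $a_1$ to $b_1$ in $n_1$ steps, then follow $\omega_1$, and so on, ending with an $n_k$-step excursion from $a_k$ to $b_k$ inside the last maximal component. Each skeleton piece $\omega_l$ contributes a fixed positive weight, bounded below by a constant $\varepsilon>0$ depending only on the finite data $\omega_0,\dots,\omega_{k-1}$ and the values of $\varphi$ along them. The $n_l$-step contribution inside $\overline{\Sigma}_{A,j_l}$ is a value of $\mathcal{L}_{\varphi_{j_l}}^{n_l}\mathbf{1}_{[a_l]}$ evaluated at a sequence starting with $b_l$, where $\varphi_{j_l}$ is the restriction of $\varphi$. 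Applying Theorem~\ref{Theorem5Sarig} to the biconnected component $\overline{\Sigma}_{A,j_l}$ (topologically mixing after passing to its cyclic decomposition), together with maximality $P_{j_l}(\varphi)=P(\varphi)$ and the fact that the eigenfunctions $h_{j_l}^{(i)}$ are bounded away from zero on the supports of the $\nu_{j_l}^{(i)}$, this contribution is at least $c\,\mathrm{e}^{n_l P(\varphi)}$ provided $n_l$ is large enough and lies in an appropriate residue class modulo the period $p_{j_l}$.

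Combining these estimates and summing over admissible tuples gives a lower bound of the form
$$\mathcal{L}^n_{\varphi}f(\emptyset)\geq C\,\mathrm{e}^{(n-M)P(\varphi)}\cdot\#\bigl\{(n_1,\dots,n_k):n_l\geq N_0,\,n_l\equiv r_l\bmod p_{j_l},\,\textstyle\sum_l n_l=n-M\bigr\}.$$
An elementary stars-and-bars count shows that this cardinality is of order $n^{k-1}$, yielding $\mathcal{L}^n_\varphi f(\emptyset)\gtrsim n^{k-1}\mathrm{e}^{nP(\varphi)}$. The main obstacle I anticipate is ensuring the RPF-type asymptotics of Theorem~\ref{Theorem5Sarig} apply uniformly across a polynomial number of decompositions despite the periodicities $p_{j_l}$ and the countability of the symbol set; this is handled by restricting to fixed residue classes, which only changes constants without affecting the $n^{k-1}$ order.
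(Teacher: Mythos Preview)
Your proposal is correct and follows essentially the same approach as the one the paper invokes: the paper does not give its own argument but refers to \cite[Lemma~3.7]{Gouezel1}, whose proof is precisely the skeleton-plus-excursions lower bound you outline, with the observation that it carries over to the countable-alphabet setting once one has Proposition~\ref{finitelymanyimagesimpliespositiverecurrent} and Theorem~\ref{Theorem5Sarig}. One small point you glide over: the Birkhoff weight $\mathrm{e}^{\varphi_{n_l}(\cdot)}$ along an excursion depends on the full future (which leaves the component), not just on the restriction $\varphi_{j_l}$; this is handled by the standard bounded-distortion estimate coming from local H\"older continuity of $\varphi$ (the correction is $O(1)$ uniformly in $n_l$), and you should make that explicit.
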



\subsection{Perturbation of the pressure}
The following perturbation result is also proved in \cite{Gouezel1} for finite sets of symbols (see precisely \cite[Proposition~3.10]{Gouezel1}). Its proof remains valid for countable shifts with finitely many images.
Denote by $\vertiii{\cdot}_{\rho,\beta}$ the operator norm for operators acting on $(\mathcal{B}_{\rho,\beta},\|\cdot \|_{\rho,\beta})$.

\begin{proposition}\label{propperturbation}
Let $(\overline{\Sigma}_A,T)$ be a countable Markov shift with finitely many images.
Let $\varphi$ and $\psi$ be locally H\"older continuous functions with finite maximal pressure.
Assume that $\varphi$ is semisimple and denote by $h_j^{(i)}$ and $\nu_j^{(i)}$ the functions and measures given by Theorem~\ref{GouezelSarig}.
If $\vertiii{\mathcal{L}_{\varphi}-\mathcal{L}_{\psi}}_{\rho,\beta}$ is small enough, then there exists numbers $\tilde{P}_j(\psi)$ and there exist eigenfunctions $\tilde{h}_{j}^{(i)}$ and eigenmeasures $\tilde{\nu}_j^{(i)}$ of $\mathcal{L}_{\psi}$ associated with the eigenvalue $\mathrm{e}^{\tilde{P}_j(\psi)}$ such that
$$\left \|\mathrm{e}^{-nP(\varphi)}\mathcal{L}_{\psi}^nf-\sum_{j=1}^k\mathrm{e}^{n(\tilde{P}_j(\psi)-P(\varphi))}\sum_{i=1}^{p_j}\tilde{h}_j^{(i)}\int f d\tilde{\nu}_j^{((i-n) \text{ mod } p_j)}\right \|_{\rho,\beta}\leq C\theta^n\|f\|_{\rho,\beta},$$
for some $C\geq 0$ and $0<\theta<1$.
The functions $\tilde{h}_{j}^{(i)}$ and the measures $\tilde{\nu}_j^{(i)}$ have the same support as $h_j^{(i)}$ and $\nu_j^{(i)}$ respectively.
Moreover, the functions
$\mathcal{L}_{\psi}\mapsto \tilde{P}_j(\psi)$, $\mathcal{L}_{\psi}\mapsto \tilde{h}_j^{(i)}$ and $\mathcal{L}_{\psi}\mapsto \tilde{\nu}_{j}^{(i)}$ are analytic and
$$\tilde{P}_j(\psi)=P(\varphi)+\int \psi dm_j + O\left (\vertiii{\mathcal{L}_{\varphi}-\mathcal{L}_{\psi}}_{\rho,\beta}^2\right ),$$
where $dm_j=\frac{1}{p_j}\sum_{i=1}^{p_j}h_{j}^{(i)}d\nu_j^{(i)}$.
\end{proposition}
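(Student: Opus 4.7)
The plan is to adapt \cite[Proposition~3.10]{Gouezel1} by substituting Theorem~\ref{GouezelSarig} for the finite-alphabet Ruelle-Perron-Frobenius theorem. Since the hypothesis is stated directly in the operator norm on $(\mathcal{B}_{\rho,\beta},\|\cdot\|_{\rho,\beta})$, the argument can use standard Kato analytic perturbation theory for bounded operators on a Banach space; no Keller-Liverani machinery is needed at this stage, in contrast to the later applications in the paper.

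First, I would extract from Theorem~\ref{GouezelSarig} the spectral decomposition $\mathcal{L}_\varphi = e^{P(\varphi)} \Pi + N$ on $\mathcal{B}_{\rho,\beta}$, where $\Pi$ is the finite-rank projection whose image is spanned by the $h_j^{(i)}$ and $N$ has spectral radius at most $\theta\, e^{P(\varphi)}$ for some $\theta < 1$. The semisimplicity assumption is precisely what ensures that no generalized eigenvector bridges two distinct maximal components, so $\mathcal{L}_\varphi$ acts diagonalizably on $\mathrm{Im}\,\Pi$ with peripheral eigenvalues $e^{P(\varphi)} e^{2\pi i \ell / p_j}$, each isolated and of finite algebraic multiplicity.

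Next, I would apply Kato's analytic perturbation theorem to $\mathcal{L}_\psi$ viewed as a small operator-norm perturbation of $\mathcal{L}_\varphi$. The total spectral projection onto a small neighborhood of the peripheral spectrum perturbs analytically and keeps the same total rank, while the residual spectrum remains inside a disk of radius $\theta' e^{P(\varphi)}$ for some $\theta' < 1$ uniformly close to $\theta$. Inside each block $j$, the unique real positive perturbed eigenvalue is $e^{\tilde P_j(\psi)}$, simple on the $T^{p_j}$-invariant subspace, and the associated right eigenfunctions $\tilde h_j^{(i)}$ and left eigenmeasures $\tilde\nu_j^{(i)}$, normalized so that $\int \tilde h_j^{(i)}\, d\tilde\nu_j^{(i)} = 1$, depend analytically on $\mathcal{L}_\psi$. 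The support statement follows from $\tilde h_j^{(i)} \to h_j^{(i)}$ and $\tilde\nu_j^{(i)} \to \nu_j^{(i)}$ in the norm topology. The quantitative decay bound in the conclusion is then immediate from the uniform spectral gap of $\mathcal{L}_\psi$.

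For the first-order expansion, I would apply the standard derivative formula to the simple eigenvalue $e^{p_j \tilde P_j(\psi)}$ of $\mathcal{L}_\psi^{p_j}$ restricted to the $j$-th invariant block, parameterizing along $\psi_t = \varphi + t(\psi - \varphi)$ and using $\partial_t \mathcal{L}_{\psi_t}(f) = \mathcal{L}_{\psi_t}((\psi - \varphi) f)$. Expanding the product rule along the cycle $h_j^{(i_0)} \to h_j^{(i_0-1)} \to \cdots \to h_j^{(i_0)}$ and using the left eigenvector equation for the $\nu_j^{(i)}$, the derivative of the pressure at $t = 0$ collapses to $\int (\psi - \varphi)\, dm_j$ with $m_j = \frac{1}{p_j}\sum_i h_j^{(i)} d\nu_j^{(i)}$, the factor $1/p_j$ arising from the cyclic average; integrating in $t$ yields the claimed first-order formula. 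The main obstacle is bookkeeping: cleanly tracking the cyclic structure through the perturbation so as to extract the correct first-order coefficient with its $1/p_j$ normalization, and verifying that the total spectral projection of $\mathcal{L}_\psi$ decomposes along a perturbed cyclic basis in the way the statement asserts. The perturbation theory itself, given the unperturbed spectral gap supplied by Theorem~\ref{GouezelSarig}, is entirely standard, which is why the proof reduces to a line-by-line transcription of \cite[Proposition~3.10]{Gouezel1} once the countable-alphabet spectral input has been secured.
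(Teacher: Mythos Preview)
Your proposal is correct and matches the paper's approach exactly: the paper gives no proof at all beyond the remark that \cite[Proposition~3.10]{Gouezel1} ``remains valid for countable shifts with finitely many images,'' and your sketch is precisely the transcription of that argument with Theorem~\ref{GouezelSarig} substituted for the finite-alphabet Ruelle--Perron--Frobenius theorem. One small point worth flagging: your first-order term $\int(\psi-\varphi)\,dm_j$ is the one that actually comes out of the eigenvalue derivative computation, whereas the displayed formula in the statement reads $\int\psi\,dm_j$; the two agree only when $\int\varphi\,dm_j=0$, so the statement as printed appears to carry a typo that your computation silently corrects.
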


Estimating $\vertiii{\mathcal{L}_{\varphi}-\mathcal{L}_{\psi}}_{\rho,\beta}^2$ will be very difficult in this paper, so we need finer results.
First, we state a theorem adapted from G.~Keller and C.~Liverani that will allow us to guaranty the existence of $\tilde{P}_j(\psi)$, $\tilde{h}_{j}^{(i)}$ and $\tilde{\nu}_j^{(i)}$ as above under weaker estimates on $\mathcal{L}_{\varphi}-\mathcal{L}_{\psi}$.
Then we prove a second result that will yield an asymptotic of $\tilde{P}_j(\psi)$ not involving $\vertiii{\mathcal{L}_{\varphi}-\mathcal{L}_{\psi}}_{\rho,\beta}^2$.

\medskip
Consider a Banach $(V,\|\cdot\|)$ endowed with a norm $|\cdot |_w$, satisfying
$|\cdot |_w\leq C \|\cdot \|$ for some uniform $C$.
Letting $\mathcal{L}:V\to V$ be a linear operator, let
$$\vertiii{\mathcal{L}}=\sup \left \{\|\mathcal{L}v\|,\|v\|\leq 1\right \}$$
denote the operator norm of $\mathcal{L}$ associated with $\|\cdot \|$ and let
$$\vertiii{\mathcal{L}}_{s\to w}=\sup \left \{|\mathcal{L}v|_w,\|v\|\leq 1\right \}$$
be the operator norm of $\mathcal{L}:(V,\|\cdot \|)\to (V,|\cdot |_w)$.

\begin{theorem}\label{KellerLiverani}
Consider a family of bounded operators $\mathcal{L}_r:(V,\|\cdot \|)\to (V,\|\cdot \|)$, with $r$ varying in $(0,R]$.
Assume there exist $0<\sigma<M$ and $C\geq 0$ and there exists a function $\tau(r)$ converging to 0 as $r$ tends to $R$ such that the following holds.
\begin{enumerate}[(i)]
    \item For every $n$, for every $v\in V$, $|\mathcal{L}_R^nv|_w\leq CM^n|v|_w$.
    \item For every $r\leq R$, for every $n$, for every $v\in V$,
    $\|\mathcal{L}_r^n v\|\leq C\sigma^n\|v\|+CM^n|v|_w$.
    \item For every $r\leq R$, $\vertiii{\mathcal{L}_r-\mathcal{L}_R}_{s\to w}\leq \tau(r)$.
\end{enumerate}
For fixed $\rho>0$ and $\rho'>0$ let
$$A_{\rho,\rho'}=\{z\in \C,|z|\geq \sigma +\rho,d(z,\spec(\mathcal{L}_R))\geq \rho'\}.$$
Then, for any $\rho,\rho'>0$ there exist $\beta_0<1$ and $K_0\geq 0$ and there exists $r_0$ such that for every $\beta\leq \beta_0$, for every $r\in [r_0,R]$ for every z $z\in A_{\rho,\rho'}$,
\begin{enumerate}[(a)]
    \item the operator $zI-\mathcal{L}_r:(V,\|\cdot\|)\to (V,\|\cdot\|)$ is invertible,
    \item the operator norm $\vertiii{(zI-\mathcal{L}_r)^{-1}}$ is bounded independently of $r$,
    \item the norm $\vertiii{\cdot}_{s\to w}$ satisfies $\vertiii{(zI-\mathcal{L}_r)^{-1}-(zI-\mathcal{L}_R)^{-1}}_{s\to w}\leq K_0 \tau(r)^\beta$.
\end{enumerate}
Moreover, $\beta_0$ only depends on $\rho$ and can be explicitly computed
whenever $\sigma+\rho\leq M$.
Indeed, one can then choose
$$\beta_0=\frac{\log\left (\frac{\sigma+\rho}{\sigma}\right )}{\log \left (\frac{M}{\sigma}\right )}.$$
In particular, $\beta_0$ converges to 0 as $\rho$ tends to 0 and converges to 1 as $\rho$ tends to $M-\sigma$.
\end{theorem}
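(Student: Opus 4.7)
This theorem is essentially the perturbation result of Keller and Liverani \cite{KellerLiverani} transcribed into our two-norm framework, and I would follow their proof. The starting point is that assumption (ii) is a Lasota-Yorke/Doeblin-Fortet inequality, which by Hennion's argument forces the spectrum of $\mathcal{L}_R$ on $(V,\|\cdot\|)$ outside the disk of radius $\sigma$ to consist only of isolated eigenvalues of finite multiplicity. Combined with the hypothesis $d(z,\spec(\mathcal{L}_R))\geq \rho'$ built into $A_{\rho,\rho'}$, this yields a uniform bound on $\vertiii{(zI-\mathcal{L}_R)^{-1}}$ for $z\in A_{\rho,\rho'}$ (for large $|z|$ via the Neumann series, for moderate $|z|$ via compactness of $A_{\rho,\rho'}\cap\{|z|\leq K\}$ and continuity of the resolvent on the resolvent set).

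The central tool is the telescoping identity
$$\mathcal{L}_r^n-\mathcal{L}_R^n=\sum_{k=0}^{n-1}\mathcal{L}_r^{n-1-k}(\mathcal{L}_r-\mathcal{L}_R)\mathcal{L}_R^k.$$
In each summand the right factor $\mathcal{L}_R^k$ is controlled on $\|\cdot\|$ using (ii), the middle factor $\mathcal{L}_r-\mathcal{L}_R$ is controlled by (iii) to gain the small quantity $\tau(r)$ while simultaneously switching to the weak norm, and the left factor $\mathcal{L}_r^{n-1-k}$ is controlled on $|\cdot|_w$ using (i) together with (ii). Summing against $z^{-n-1}$ handles the easy range $|z|>M$: both Neumann series for $(zI-\mathcal{L}_r)^{-1}$ and $(zI-\mathcal{L}_R)^{-1}$ converge, and their difference is bounded by $C\tau(r)$ in the $\vertiii{\cdot}_{s\to w}$ norm.

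The heart of the argument is extending these estimates to all of $A_{\rho,\rho'}$, where $|z|$ may be much smaller than $M$ and the direct Neumann expansion diverges. Setting $Q_n^{(r)}(z)=\sum_{k=0}^{n-1}z^{-k-1}\mathcal{L}_r^k$, the identity $(zI-\mathcal{L}_r)Q_n^{(r)}(z)=I-z^{-n}\mathcal{L}_r^n$ combined with (ii) produces an approximate resolvent whose error splits into a contracting strong-norm piece of size $(\sigma/|z|)^n$ and a weak-norm piece proportional to $(M/|z|)^n|v|_w$. Together with the telescoping identity, I would choose $n=n(r)$ of order $\log(1/\tau(r))$ so that the weak-norm error $\tau(r)M^n$ remains bounded while the strong contraction picks up a factor $(\sigma/|z|)^n$; optimising this balance yields both invertibility of $zI-\mathcal{L}_r$ for $r$ close enough to $R$ and the estimate $\vertiii{R(z,r)-R(z,R)}_{s\to w}\leq K_0\tau(r)^{\beta_0}$. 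The explicit formula for $\beta_0$ arises from eliminating $n$ in the balance conditions $M^n\tau(r)\asymp 1$ and $(\sigma/|z|)^n\asymp\tau(r)^{\beta_0}$ at $|z|=\sigma+\rho$, and the limits of $\beta_0$ as $\rho\to 0$ and as $\rho\to M-\sigma$ are then immediate.

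The main obstacle is precisely this interpolation: because the perturbation $\mathcal{L}_r-\mathcal{L}_R$ is small only in the weaker $\vertiii{\cdot}_{s\to w}$ norm, one necessarily loses a H\"older exponent $\beta_0<1$ when converting this weak estimate into a strong-to-weak resolvent estimate, and the delicate counting of iterations of $\mathcal{L}_r$ and $\mathcal{L}_R$ through the telescoping identity cannot be bypassed. A secondary subtlety is obtaining the uniform bound (b) on $\vertiii{(zI-\mathcal{L}_r)^{-1}}$ in the strong norm; this comes out of the same truncated-resolvent argument, because once invertibility is established, the identity $R(z,r)=Q_n^{(r)}(z)+z^{-n}\mathcal{L}_r^n R(z,r)$ combined with (ii) allows one to trade a factor of the weak norm of $R(z,r)$ (which is uniformly controlled) against a factor $(\sigma/|z|)^n<1$, absorbing the latter into the left-hand side.
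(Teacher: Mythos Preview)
Your sketch follows the standard Keller--Liverani argument and is correct in outline. The paper, however, does not give a proof of this theorem at all: it simply refers to \cite[A.3]{Baladi}, adding only the remark that the reference assumes $|\mathcal{L}_r^n v|_w\leq CM^n|v|_w$ for every $r$, whereas here condition~(i) is stated only at $r=R$, and that inspection of the proof in \cite{Baladi} shows this is the only case actually used. So your proposal is not so much a different route as an expansion of what the paper outsources to a citation.
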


For a proof, we refer to \cite[A.3]{Baladi}.
Note that it is asked there that for every $r$, $|\mathcal{L}_r^nv|_w\leq CM^n|v|_w$, whereas our condition~(i) only requires that this holds for $r=R$.
However, the proof in \cite[A.3]{Baladi} only uses this inequality for $r=R$.

Let us apply this to transfer operators.
Consider a countable shift with finitely many images $(X_A,T)$ and a family of locally H\"older functions $f_r$, for $r\in(0,R]$.
Let $\mathcal{L}_r=\mathcal{L}_{f_r}$ be the associated transfer operator
Assume that for every $r$, the maximal pressure $P_r$ of $f_r$ is finite and that $f_r$ is semisimple.
Let $h_{j}^{(i)}$ and $\nu_j^{(i)}$ be the functions and measures given by Theorem~\ref{GouezelSarig}, associated with $\mathcal{L}_{R}$.
Define the measure $m_j$ as $dm_j=\frac{1}{p_j}\sum_{i=1}^{p_j}h_{j}^{(i)}d\nu_j^{(i)}$.

Let $m=\sum m_j$.
Consider the Banach space $(V=H_{\rho,\beta},\|\cdot \|=\|\cdot \|_{\rho,\beta})$ on $V=H_{\rho,\beta}$, endowed with the norm $|\cdot|_w=\|\cdot\|_{L^1(m)}$.
Since $m$ is finite, we have $|\cdot|_w\leq C \|\cdot\|$.
We deduce from Theorem~\ref{KellerLiverani} the following.

\begin{theorem}\label{KellerLiveranitransfert}
With the same notations as above,
assume that there exists $\sigma$ such that $0<\sigma<\mathrm{e}^{P_R}$ and that there exist $C\geq 0$ and a function $\tau(r)$ converging to 0 as $r$ tends to $R$ such that the following holds.
\begin{enumerate}
    \item[($\alpha$)] For every $r\leq R$, for every $n$, for every $v\in V$,
    $$\|\mathcal{L}_r^n v\|\leq C\sigma^n\|v\|+C\mathrm{e}^{nP_R}|v|_w.$$
    \item[($\beta$)] For every $r\leq R$, $\vertiii{\mathcal{L}_r-\mathcal{L}_R}_{s\to w}\leq \tau(r)$.
\end{enumerate}
Then, for every $r$ which is close enough to $R$, there exist numbers $\tilde{P}_j(r)$ and eigenfunctions $\tilde{h}_{j,r}^{(i)}$ and eigenmeasures $\tilde{\nu}_{j,r}^{(i)}$ of $\mathcal{L}_r$ associated with the eigenvalue $\mathrm{e}^{\tilde{P}_j(r)}$ such that
$$\left \|\mathrm{e}^{-nP_R}\mathcal{L}_{r}^ng-\sum_{j=1}^k\mathrm{e}^{n(\tilde{P}_j(r)-P_R)}\sum_{i=1}^{p_j}\tilde{h}_{j,r}^{(i)}\int g d\tilde{\nu}_{j,r}^{((i-n) \text{ mod } p_j)}\right \|_{\rho,\beta}\leq C\theta^n\|g\|_{\rho,\beta}.$$
The functions $\tilde{h}_{j}^{(i)}$ and the measures $\tilde{\nu}_j^{(i)}$ have the same support as $h_j^{(i)}$ and $\nu_j^{(i)}$ respectively.
Moreover, $\left \|\tilde{h}_{j,r}^{(i)}\right \|$ is uniformly bounded.
Finally, $\left |\tilde{h}_{j,r}^{(i)}-h_j^{(i)}\right |_w$ converges to 0 as $r$ tends to $R$
and $\tilde{\nu}_{j,r}^{(i)}$ weakly converges to $\nu_j^{(i)}$ as $r$ tends to $R$.
\end{theorem}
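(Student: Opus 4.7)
The plan is to apply the abstract perturbation theorem (Theorem~\ref{KellerLiverani}) to the family $(\mathcal{L}_r)_{r\leq R}$, taking $V=\mathcal{B}_{\rho,\beta}$ with $\|\cdot\|_{\rho,\beta}$ as the strong norm and $\|\cdot\|_{L^1(m)}$ as the weak norm, and setting $M=\mathrm{e}^{P_R}$ together with the $\sigma$ furnished by hypothesis~($\alpha$). Hypothesis~(ii) of Theorem~\ref{KellerLiverani} is exactly~($\alpha$) and hypothesis~(iii) is exactly~($\beta$), so only~(i) requires checking. This follows from the fact that $m=\sum_j m_j$ is, by Theorem~\ref{GouezelSarig} and the cyclic identities for the $h_j^{(i)}$ and $\nu_j^{(i)}$, an eigenmeasure of $\mathcal{L}_R^*$ with eigenvalue $\mathrm{e}^{P_R}$, so that
$$|\mathcal{L}_R^n v|_w=\int|\mathcal{L}_R^n v|\,dm\leq \int \mathcal{L}_R^n|v|\,dm=\mathrm{e}^{nP_R}|v|_w.$$

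Once the hypotheses are verified, Theorem~\ref{KellerLiverani} yields, for any small $\rho,\rho'>0$ and for $r$ close enough to $R$, a resolvent $(zI-\mathcal{L}_r)^{-1}$ on $A_{\rho,\rho'}$ whose $\|\cdot\|_{\rho,\beta}$-operator norm is bounded uniformly in $r$ and which converges in the $s\to w$ norm to $(zI-\mathcal{L}_R)^{-1}$. By Theorem~\ref{GouezelSarig}, for a suitable choice of $\sigma$ the spectrum of $\mathcal{L}_R$ on $\mathcal{B}_{\rho,\beta}$ outside $\{|z|\leq\sigma\}$ consists only of the finitely many peripheral eigenvalues $\lambda_{j,\ell}=\mathrm{e}^{P_R}\mathrm{e}^{2i\pi\ell/p_j}$, $1\leq j\leq k$, $0\leq\ell<p_j$, each of them associated to the eigenspace $\mathrm{span}(h_j^{(i)})$ in the appropriate cyclic direction. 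Choosing small disjoint circles $\Gamma_{j,\ell}\subset A_{\rho,\rho'}$ around each $\lambda_{j,\ell}$, I define spectral projections
$$\Pi_{r,j,\ell}=\frac{1}{2i\pi}\int_{\Gamma_{j,\ell}}(zI-\mathcal{L}_r)^{-1}\,dz.$$
Assertion~(b) of Theorem~\ref{KellerLiverani} gives a uniform bound on $\|\Pi_{r,j,\ell}\|_{\rho,\beta}$, while assertion~(c) gives $\Pi_{r,j,\ell}\to\Pi_{R,j,\ell}$ in the $s\to w$ norm.

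Grouping, for each maximal component $j$, the projections associated to the $p_j$ peripheral eigenvalues produces a $p_j$-dimensional $\mathcal{L}_r$-invariant subspace $E_{j,r}$ converging (in $s\to w$) to $E_{j,R}=\mathrm{span}(h_j^{(i)})$. The action of $\mathcal{L}_R$ on $E_{j,R}$ is, by Theorem~\ref{GouezelSarig}, the cyclic permutation of the $h_j^{(i)}$ scaled by $\mathrm{e}^{P_R}$; by continuity of the finite-dimensional spectral data, the action of $\mathcal{L}_r$ on $E_{j,r}$ retains the same cyclic structure for $r$ near $R$, which forces its spectrum to consist of $p_j$ eigenvalues of the shape $\mathrm{e}^{\tilde{P}_j(r)}\mathrm{e}^{2i\pi\ell/p_j}$ with a real $\tilde{P}_j(r)$ (reality because $\mathcal{L}_r$ is a real positive operator and admits a positive leading eigenfunction). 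Diagonalizing this cyclic action gives the eigenfunctions $\tilde{h}_{j,r}^{(i)}$; the eigenmeasures $\tilde{\nu}_{j,r}^{(i)}$ are obtained dually from the spectral projections of $\mathcal{L}_r^*$.

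It remains to put together the expansion. Writing $g=\sum_{j,\ell}\Pi_{r,j,\ell}g+g^\perp$, the complementary component satisfies $\|\mathcal{L}_r^n g^\perp\|_{\rho,\beta}\leq C(\sigma+\rho)^n\|g\|_{\rho,\beta}$ by a standard Cauchy-integral bound using the uniform resolvent bound from assertion~(b) on circles of radius $\sigma+\rho<\mathrm{e}^{P_R}$, while the cyclic action of $\mathcal{L}_r$ on each $E_{j,r}$ produces exactly the sum $\mathrm{e}^{n\tilde{P}_j(r)}\sum_i\tilde{h}_{j,r}^{(i)}\int g\,d\tilde{\nu}_{j,r}^{((i-n)\text{ mod }p_j)}$; dividing by $\mathrm{e}^{nP_R}$ and setting $\theta=(\sigma+\rho)/\mathrm{e}^{P_R}$ gives the claimed bound. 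The main obstacle is that Theorem~\ref{KellerLiverani} delivers convergence only in the weak $s\to w$ norm, which is too weak for the pointwise statements about $\tilde{h}_{j,r}^{(i)}$ and $\tilde{\nu}_{j,r}^{(i)}$. Uniform $\|\cdot\|_{\rho,\beta}$-bounds on $\tilde{h}_{j,r}^{(i)}$ are obtained by applying the projection $\Pi_{r,j,\ell}$ (which is uniformly bounded in $\|\cdot\|_{\rho,\beta}$) to the fixed unperturbed $h_j^{(i)}$; the support statements and the convergences $|\tilde{h}_{j,r}^{(i)}-h_j^{(i)}|_w\to 0$ and $\tilde{\nu}_{j,r}^{(i)}\rightharpoonup\nu_j^{(i)}$ then follow from assertion~(c) combined with the explicit formulas describing $\Pi_{R,j,\ell}$ in terms of the pairs $(h_j^{(i)},\nu_j^{(i)})$.
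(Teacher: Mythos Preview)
Your proof is correct and follows the same approach as the paper: verify the hypotheses of Theorem~\ref{KellerLiverani} with $M=\mathrm{e}^{P_R}$ and the weak norm $|\cdot|_w=\|\cdot\|_{L^1(m)}$, then read off the spectral consequences. The paper's own proof is much terser---it checks~(i) via the eigenmeasure property of the $\nu_j^{(i)}$, notes that~(ii) and~(iii) are exactly~($\alpha$) and~($\beta$), and then simply invokes Theorem~\ref{KellerLiverani}---whereas you have spelled out in detail how the resolvent bounds of Theorem~\ref{KellerLiverani} produce the eigendata $\tilde P_j(r)$, $\tilde h_{j,r}^{(i)}$, $\tilde\nu_{j,r}^{(i)}$ via spectral projections around the peripheral eigenvalues $\mathrm{e}^{P_R}\mathrm{e}^{2i\pi\ell/p_j}$ and how the uniform bounds and the $s\to w$ convergence yield the stated conclusions; this is standard Keller--Liverani machinery and your account of it is accurate.
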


\begin{proof}
Since $\mathcal{L}_R$ has a spectral gap according to Theorem~\ref{GouezelSarig}, there exists $\sigma_0<\mathrm{e}^{P_R}$ such that the spectrum of $\mathcal{L}_R$ outside of the disc of radius $\sigma_0$ exactly consists of the eigenvalue $\mathrm{e}^{P_R}$, with eigenfunctions $h_j^{(i)}$ and eigenmeasures $\nu_j^{(i)}$.
The result is then a consequence of Theorem~\ref{KellerLiverani}, choosing $\rho$ such that $\sigma_0<\sigma+\rho<\mathrm{e}^{P_R}$.
Indeed, condition~(i) there is satisfied with $M=\mathrm{e}^{P_R}$ since $\nu_j^{(i)}$ is an eigenmeasure of $\mathcal{L}_R$ associated with $\mathrm{e}^{P_R}$.
Also, conditions~(ii) and~(iii) are direct consequences of assumptions~($\alpha$) and~($\beta$).
\end{proof}

Note that $\int \tilde{h}_{j,r}^{(i)}d\nu_j^{(i)}\neq 0$ for $r$ close enough to $R$, since $\int h_j^{(i)}d\nu_j^{(i)}=1$ and $\left |\tilde{h}_{j,r}^{(i)}-h_j^{(i)}\right |_w$ converges to 0.
One can thus normalize $\tilde{h}_{j,r}$ declaring $\int \tilde{h}_{j,r}^{(i)}d\nu_j^{(i)}=1$.
We will make this assumption in the following.
We still have that $\left \|\tilde{h}_{j,r}^{(i)}\right \|$ is uniformly bounded and that $\left |\tilde{h}_{j,r}^{(i)}-h_j^{(i)}\right |_w$ converges to 0.
The following result will allow us to get a precise asymptotic of $\tilde{P}_{j,r}-P_R$ in the next section.


\begin{proposition}\label{theoremeperturbation}
Under the assumptions of Theorem~\ref{KellerLiveranitransfert},
$$\mathrm{e}^{\tilde{P}_j(r)-P_R}-1= \int \left (\mathrm{e}^{f_r-f_R}-1\right )dm_j+ \int \left (\mathrm{e}^{f_r-f_R}-1\right )\frac{1}{p_j}\left (\sum_{i=0}^{p_j-1}h_j^{(i)}-\tilde{h}_{j,r}^{(i)}\right)d\nu_j^{(i)}.$$
\end{proposition}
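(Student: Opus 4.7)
The strategy is to derive an exact identity for $\mathrm{e}^{\tilde{P}_j(r)-P_R}$ by combining the factorization $\mathcal{L}_r=\mathcal{L}_R\circ M$ (where $M$ denotes multiplication by $\mathrm{e}^{f_r-f_R}$) with the eigenvalue equations for $\mathcal{L}_r$ and $\mathcal{L}_R^*$, and then to split $\tilde{h}_{j,r}^{(i)}$ as $h_j^{(i)}+(\tilde{h}_{j,r}^{(i)}-h_j^{(i)})$. The starting observation is that, for every function $g$ and every $x\in \overline{\Sigma}_A$,
$$\mathcal{L}_r g(x)=\sum_{Ty=x}\mathrm{e}^{f_r(y)}g(y)=\sum_{Ty=x}\mathrm{e}^{f_R(y)}\mathrm{e}^{f_r(y)-f_R(y)}g(y)=\mathcal{L}_R\bigl(\mathrm{e}^{f_r-f_R}g\bigr)(x),$$
so the perturbation from $\mathcal{L}_R$ to $\mathcal{L}_r$ factors through multiplication by $\mathrm{e}^{f_r-f_R}$, which pairs well with $\mathcal{L}_R^*$-eigenmeasures.

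Applying this to $g=\tilde{h}_{j,r}^{(i)}$, I would integrate against the eigenmeasure $\nu_j^{(i')}$ such that $\mathcal{L}_R^*\nu_j^{(i')}=\mathrm{e}^{P_R}\nu_j^{(i)}$ (that is, $i'$ is the image of $i$ under one step of the cyclic shift on indices) and compute the same integral in two ways. On the one hand, from $\mathcal{L}_r\tilde{h}_{j,r}^{(i)}=\mathrm{e}^{\tilde{P}_j(r)}\tilde{h}_{j,r}^{(i')}$ together with the normalization $\int \tilde{h}_{j,r}^{(i')}\,d\nu_j^{(i')}=1$ made after Theorem~\ref{KellerLiveranitransfert},
$$\int \mathcal{L}_r\tilde{h}_{j,r}^{(i)}\,d\nu_j^{(i')}=\mathrm{e}^{\tilde{P}_j(r)}.$$
On the other hand, using the factorization and duality,
$$\int \mathcal{L}_r\tilde{h}_{j,r}^{(i)}\,d\nu_j^{(i')}=\int \mathrm{e}^{f_r-f_R}\tilde{h}_{j,r}^{(i)}\,d\bigl(\mathcal{L}_R^*\nu_j^{(i')}\bigr)=\mathrm{e}^{P_R}\int \mathrm{e}^{f_r-f_R}\tilde{h}_{j,r}^{(i)}\,d\nu_j^{(i)}.$$
Equating both expressions yields the exact identity $\mathrm{e}^{\tilde{P}_j(r)-P_R}=\int \mathrm{e}^{f_r-f_R}\tilde{h}_{j,r}^{(i)}\,d\nu_j^{(i)}$, valid for every $i$.

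To conclude, I would average over $i\in\{0,\ldots,p_j-1\}$ and subtract $1=\frac{1}{p_j}\sum_i\int \tilde{h}_{j,r}^{(i)}\,d\nu_j^{(i)}$, obtaining
$$\mathrm{e}^{\tilde{P}_j(r)-P_R}-1=\frac{1}{p_j}\sum_{i=0}^{p_j-1}\int \bigl(\mathrm{e}^{f_r-f_R}-1\bigr)\tilde{h}_{j,r}^{(i)}\,d\nu_j^{(i)}.$$
Writing $\tilde{h}_{j,r}^{(i)}=h_j^{(i)}+(\tilde{h}_{j,r}^{(i)}-h_j^{(i)})$ inside the integral and recalling the formula $dm_j=\frac{1}{p_j}\sum_i h_j^{(i)}\,d\nu_j^{(i)}$ then produces exactly the claimed decomposition. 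The main (and essentially only) obstacle is bookkeeping: one must identify the correct shifted index $i'$ under the period-$p_j$ cyclic structure so that the duality step lands on the same piece $\overline{\Sigma}_{A,j}^{(i)}$ as the normalization; everything else is a formal algebraic manipulation, the integrals being finite thanks to the uniform $\mathcal{B}_{\rho,\beta}$-bounds on $\tilde{h}_{j,r}^{(i)}$ (hence uniform $L^\infty$-bounds) supplied by Theorem~\ref{KellerLiveranitransfert}, together with the finiteness of $\mathrm{e}^{\tilde{P}_j(r)-P_R}$ itself.
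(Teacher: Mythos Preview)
Your proof is correct and follows the same approach as the paper: factor $\mathcal{L}_r g = \mathcal{L}_R(\mathrm{e}^{f_r-f_R}g)$, pair the eigenfunction equation for $\tilde{h}_{j,r}^{(i)}$ with the eigenmeasure equation for $\nu_j^{(i)}$ and the normalization $\int \tilde{h}_{j,r}^{(i)}\,d\nu_j^{(i)}=1$, then split $\tilde{h}_{j,r}^{(i)}=h_j^{(i)}+(\tilde{h}_{j,r}^{(i)}-h_j^{(i)})$ and average over $i$. You are in fact more careful than the paper about the cyclic index shift $i\mapsto i'$ (which the paper suppresses and which is harmless after averaging); note also that both your derivation and the paper's own proof yield the difference $\tilde{h}_{j,r}^{(i)}-h_j^{(i)}$ rather than $h_j^{(i)}-\tilde{h}_{j,r}^{(i)}$ as displayed, a sign typo in the statement itself.
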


\begin{proof}
Since $\tilde{h}_{j,r}^{(i)}$ are eigenfunctions of $\mathcal{L}_r$ and $\tilde{h}_{j,r}$ is normalized, we have
$$\mathrm{e}^{\tilde{P}_j(r)}=\int \mathcal{L}_r \tilde{h}_{j,r}^{(i)}d\nu_j^{(i)}.$$
Consequently,
$$\mathrm{e}^{\tilde{P}_j(r)}-\mathrm{e}^{P_R}=\int \left ( \mathcal{L}_r \tilde{h}_{j,r}^{(i)}-\mathcal{L}_Rh_j^{(i)}\right )d\nu_j^{(i)}.$$
Note that for any function $g$, $\mathcal{L}_rg=\mathcal{L}_R\left (\mathrm{e}^{f_r-f_R}g\right )$.
In particular,
$$\mathrm{e}^{\tilde{P}_j(r)}-\mathrm{e}^{P_R}=\int  \mathcal{L}_R\left ( \mathrm{e}^{f_r-f_R}\tilde{h}_{j,r}^{(i)}-h_j^{(i)}\right )d\nu_j^{(i)}.$$
Using that $d\nu_j^{(i)}$ is an eigenmeasure of $\mathcal{L}_R$ associated with the eigenvalue $\mathrm{e}^{P_R}$, we get
\begin{align*}
    \mathrm{e}^{\tilde{P}_j(r)-P_R}-1&=\int  \left ( \mathrm{e}^{f_r-f_R}\tilde{h}_{j,r}^{(i)}-h_j^{(i)}\right )d\nu_j^{(i)}\\
    &=\int  \left ( \mathrm{e}^{f_r-f_R}-1\right )\left (\tilde{h}_{j,r}^{(i)}-h_j^{(i)}\right )d\nu_j^{(i)}\\
&\hspace{2cm}+\int \tilde{h}_{j,r}^{(i)}d\nu_j^{(i)}-\int \mathrm{e}^{f_r-f_R}h_j^{(i)}d\nu_j^{(i)} .
\end{align*}
Since $\int h_j^{(i)}d\nu_j^{(i)}=\int \tilde{h}_{j,r}^{(i)}d\nu_j^{(i)}=1$, we thus get
$$\mathrm{e}^{\tilde{P}_j(r)-P_R}-1= \int  \left ( \mathrm{e}^{f_r-f_R}-1\right )\left (\tilde{h}_{j,r}^{(i)}-h_j^{(i)}\right )d\nu_j^{(i)}+\int \left (\mathrm{e}^{f_r-f_R}-1\right )h_j^{(i)}d\nu_j^{(i)}.$$
This holds for every $i$, which concludes the proof summing over $i$ and then dividing by $p_j$.
\end{proof}

\section{Asymptotic of the first derivative of the Green function}\label{Sectionapplicationthermodynamicformalism}
In this section, we assume that $\Gamma$ is hyperbolic relative to $\Omega$
and choose a system of representatives of conjugacy classes $\Omega_0=\{\mathcal{H}_1,...,\mathcal{H}_N\}$ of elements of $\Omega$.
We consider a probability measure $\mu$ on $\Gamma$ that satisfies weak and strong relative Ancona inequalities up to the spectral radius
and we denote by $R_{\mu}$ this spectral radius.
Note that we do not need to assume that the measure $\mu$ is symmetric but only that relative Ancona inequalities are satisfied.

We assume that $\mu$ is not spectrally degenerate.
According to Proposition~\ref{relationR_kG'(R)}, we have $\frac{d}{dr}_{|r=R_\mu}G(e,e|r)=+\infty$, or equivalently $I^{(1)}(R_\mu)=+\infty$ by Lemma~\ref{lemmafirstderivative}.
Also,
Proposition~\ref{roughequadiff} shows that
$$I^{(2)}(r)\asymp \left (I^{(1)}(r)\right )^3.$$
Our goal in the two following sections is to get a more precise statement, transforming $\asymp$ into $\sim$, when $r\to R_{\mu}$.
Precisely, we prove the following.

\begin{theorem}\label{preciseequadiff}
Under these assumptions, there exists $\xi>0$ such that
$$I^{(2)}(r)\underset{r\to R_{\mu}}{\sim} \xi\left (I^{(1)}(r)\right )^3.$$
\end{theorem}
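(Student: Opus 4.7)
The plan is to encode group elements by paths in the relative automaton of Theorem~\ref{thmcodingrelhypgroups} and then translate the asymptotics of $I^{(1)}(r)$ and $I^{(2)}(r)$ into spectral asymptotics of a well-chosen family of transfer operators $\mathcal{L}_r$, following the blueprint of \cite[Section~3]{Gouezel1} but in the countable setting of Section~\ref{Sectionthermodynamicformalism}. Let $(\overline{\Sigma}_A,T)$ be the countable Markov shift associated to the automaton $\mathcal{G}=(V,E,v_*)$ whose edges are labelled by $S\cup\bigcup_{\mathcal{H}\in\Omega_0}\mathcal{H}$. Since $|V|<\infty$, the shift has finitely many images, so the results of Subsection~\ref{Sectionthermodynamicformalismsetup} apply. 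Each path $\omega=(e_1,\ldots,e_n)$ starting at $v_*$ determines $\gamma(\omega)=\phi(e_1)\cdots\phi(e_n)\in\Gamma$, and the sequence $e,\phi(e_1),\ldots,\gamma(\omega)$ is a relative geodesic. Following \cite{Gouezel1}, define a potential $\varphi_r$ on $\overline{\Sigma}_A$ essentially by
\[
\varphi_r(\omega)=\log\frac{G(\phi(e_1),\gamma(\omega)\,|\,r)}{G(e,\gamma(\omega)\,|\,r)}+\log r,
\]
so that Birkhoff sums telescope and $\mathcal{L}_r^n 1(\emptyset)$ is comparable to $\sum_{|\omega|=n}rG(e,\gamma(\omega)|r)$. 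By the strong $r$-relative Ancona inequalities, $\varphi_r$ is $\rho$-locally Hölder with constants uniform in $r\in[1,R_\mu]$, and by the weak inequalities the pressure $P(\varphi_r)$ is finite; moreover $P(\varphi_{R_\mu})=0$ since $I^{(1)}(R_\mu)=+\infty$ is a critical statement.

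Next, I would check that $\varphi_{R_\mu}$ is \emph{semisimple}. This is the most delicate structural point: the automaton is not strongly connected and may have several maximal biconnected components. I would adapt the Calegari--Fujiwara argument used in \cite[Sec.~3.5]{Gouezel1}: if two distinct maximal components could be chained, then Lemma~\ref{lemmaGouezelSarig} would give $\mathcal{L}_{\varphi_{R_\mu}}^n 1(\emptyset)\gtrsim n$, hence $I^{(1)}(R_\mu)\gtrsim \sum n$ which, combined with Proposition~\ref{roughequadiff} and the non-spectral degeneracy estimates, contradicts the explicit divergence rate of $I^{(1)}(r)$. Lack of compactness forces one to carry out this argument relative to the measures $m_j$ from Theorem~\ref{GouezelSarig}, using that $h_j^{(i)}$ is bounded away from $0$ and $\infty$. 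Once semisimplicity is granted, Theorem~\ref{GouezelSarig} yields a spectral decomposition of $\mathcal{L}_{\varphi_{R_\mu}}$.

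The main obstacle is then getting a \emph{precise} asymptotic of the perturbed eigenvalues $\tilde{P}_j(r)$ as $r\nearrow R_\mu$, since the family $r\mapsto\mathcal{L}_r$ is only continuous in the weak norm $|\cdot|_w=\|\cdot\|_{L^1(m)}$, not in $\vertiii{\cdot}_{\rho,\beta}$. Here I would apply Theorem~\ref{KellerLiveranitransfert}: assumption~($\alpha$) (Lasota--Yorke type) follows from a standard distortion estimate on $\mathcal{L}_r$ combined with the uniform Hölder control of $\varphi_r$ inherited from strong Ancona, while assumption~($\beta$) reduces, via bounded convergence in $L^1(m)$, to the pointwise convergence $\mathrm{e}^{\varphi_r}\to\mathrm{e}^{\varphi_{R_\mu}}$ and a uniform $L^1$-domination by $\mathcal{L}_{R_\mu}1$. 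Then Proposition~\ref{theoremeperturbation} gives
\[
\mathrm{e}^{\tilde{P}_j(r)}-1=\int(\mathrm{e}^{\varphi_r-\varphi_{R_\mu}}-1)\,dm_j+o\!\left(\int(\mathrm{e}^{\varphi_r-\varphi_{R_\mu}}-1)\,dm_j\right),
\]
using that $\tilde h_{j,r}^{(i)}\to h_j^{(i)}$ in $L^1(\nu_j^{(i)})$.

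To conclude, I would show, as in \cite[Sec.~3.6]{Gouezel1}, that both $I^{(1)}(r)$ and $I^{(2)}(r)$ can be read off from the dominant spectral data of $\mathcal{L}_r$: summing the telescoped identity over all paths gives, up to bounded multiplicative error,
\[
I^{(1)}(r)\asymp\sum_{n\ge 0}\mathrm{e}^{n\tilde P(r)}\asymp\frac{1}{1-\mathrm{e}^{\tilde P(r)}},\qquad I^{(2)}(r)\asymp\sum_{n\ge 0}n\,\mathrm{e}^{n\tilde P(r)}\asymp\frac{1}{(1-\mathrm{e}^{\tilde P(r)})^2}\cdot\frac{d}{dr}\tilde P(r),
\]
where $\tilde P(r)=\max_j\tilde P_j(r)$. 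Differentiating the Proposition~\ref{theoremeperturbation} expansion in $r$ and using the non-spectral degeneracy to ensure that $\frac{d}{dr}\tilde P(r)$ has a finite nonzero limit as $r\to R_\mu$ (this is exactly where the hypothesis enters, via the fact that $I^{(2)}_{\mathcal{H}}(R_\mu)<\infty$), one extracts a constant $\xi>0$ such that
\[
\frac{I^{(2)}(r)}{(I^{(1)}(r))^3}\;\xrightarrow[r\to R_\mu]{}\;\xi,
\]
which is Theorem~\ref{preciseequadiff}. The hardest step is the semisimplicity plus the verification of the Keller--Liverani hypotheses in the weak-norm topology, where the non-compactness of the parabolic strata of the shift makes the usual finite-state arguments of \cite{Gouezel1} insufficient.
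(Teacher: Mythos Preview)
Your outline tracks the paper reasonably well through the setup of the transfer operator, semisimplicity, and the Keller--Liverani continuity, but it breaks down at the final step, which is in fact the entire content of Section~\ref{Sectionproofofpreciseequadiff} and the real difficulty of the theorem.

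The displayed relation
\[
I^{(2)}(r)\asymp\sum_{n\ge 0}n\,\mathrm{e}^{n\tilde P(r)}\asymp\frac{1}{(1-\mathrm{e}^{\tilde P(r)})^2}\cdot\frac{d}{dr}\tilde P(r)
\]
is not correct. First, $\sum_{n\ge 0} n\,\mathrm{e}^{n\tilde P(r)}\asymp 1/|\tilde P(r)|^2\asymp I^{(1)}(r)^2$, not $I^{(1)}(r)^3$; the extra factor $\frac{d}{dr}\tilde P(r)$ does not arise from that sum and is inserted without justification. Second, your assertion that non-spectral degeneracy forces $\frac{d}{dr}\tilde P(r)$ to have a \emph{finite} nonzero limit at $R_\mu$ is false: since $|P(r)|\asymp\sqrt{R_\mu-r}$ (Lemma~\ref{corollary3.3Gouezel} and~\eqref{3.6Gouezel}), one has $P'(r)\asymp 1/\sqrt{R_\mu-r}\to+\infty$. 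Third, even if one writes $I^{(1)}(r)\sim\xi(r)/|P(r)|$ as in~\eqref{3.7Gouezel}, differentiating an asymptotic equivalence is illegitimate without extra regularity, so you cannot simply recover $I^{(2)}(r)$ by taking a derivative.

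What the paper actually does is to express $I^{(2)}(r)=\sum_\gamma H(e,\gamma|r)\Phi_r(\gamma)$ with $\Phi_r(\gamma)=\sum_{\gamma'}G(e,\gamma'|r)G(\gamma',\gamma|r)/G(e,\gamma|r)$, and then show (Proposition~\ref{propestimeePhietPsi}) that $\Phi_r(\gamma)=I^{(1)}(r)\sum_{k}\Psi_r(T^k[e,\gamma])+O(I^{(1)}(r))$ for a function $\Psi_r$ built from a carefully constructed partition of unity $\kappa_\alpha$ on the Bowditch compactification. The function $\Psi_r$ is \emph{not} bounded or H\"older in the countable-state setting, so one must truncate it to $\Psi_r^{(D,N)}$ (Proposition~\ref{differencePsietPsitronque}), pass from the two-sided to the one-sided shift (Lemma~\ref{passagededoubleshiftashiftsimple}), and only then feed the result into the transfer operator to get convergence of $I^{(2)}(r)/I^{(1)}(r)^3$. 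This whole mechanism, which is where non-spectral degeneracy is used via the bounds $I^{(2)}_{\mathcal{H}}(R_\mu)<\infty$ and~\eqref{Upsilonuniformementbornee}, is absent from your sketch.

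A secondary omission: turning~\eqref{3.6Gouezel} into~\eqref{3.7Gouezel} requires Proposition~\ref{pressureindependentcomponents}, namely that $\tilde P_j(r)/P(r)\to 1$ for \emph{every} maximal component $j$. The paper proves this via an ergodicity argument for a conformal measure $\lambda_{R_\mu}$ on the Bowditch boundary (the four-step argument in Section~4.4), not merely from Proposition~\ref{theoremeperturbation}. Your proposal does not address why the limiting constant $\int(\varphi_r-\varphi_{R_\mu})\,dm_j$ should be independent of $j$.
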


To do so, we use thermodynamic formalism, adapting \cite{Gouezel1} and \cite{GouezelLalley}.

\subsection{Transfer operator for the Green function}\label{SectiontransferoperatorGreen}
We choose a generating set $S$ of $\Gamma$ as in Theorem~\ref{thmcodingrelhypgroups}, so that $\Gamma$ is automatic relative to $\Omega_0$ and $S$, where $\Omega_0$ is a finite set of representatives of conjugacy classes of the parabolic subgroups 
Let $\mathcal{G}=(V,E,v_*)$ be a graph and $\phi:E\rightarrow S\cup \bigcup_{\mathcal{H}\in \Omega_0}\mathcal{H}$ be a labelling map as in the definition of a relative automatic structure.

The set of vertices $V$ is finite.
Moreover, if $\sigma \in \Sigma_0= S\cup \bigcup_{\mathcal{H}\in \Omega_0}\mathcal{H}$ and if $v\in V$, there is at most one edge that leaves $v$ and that is labelled with $\sigma$.
Thus, the set of edges $E$ is countable.
Set $\Sigma=E$
and consider the transition matrix $A=(a_{s,s'})_{s,s'\in \Sigma}$, defined by $a_{s,s'}=1$ if the edges $s$ and $s'$ are adjacent in $\mathcal{G}$ and $a_{s,s'}=0$ otherwise.
We then define $\Sigma_A^*$, $\partial \Sigma_A$ and $\overline{\Sigma}_A$ as above.
According to the definition of a relative automatic structure, elements of $\overline{\Sigma}_A$ represent relative geodesics and relative geodesic rays.

\medskip
We decompose $\Sigma_0= S\cup \bigcup_{\mathcal{H}\in \Omega_0}\mathcal{H}$ as follows.
The sets $\mathcal{H}_j\cap \mathcal{H}_k$ are finite if $j\neq k$ (see e.g.\ \cite[Lemma~4.7]{DrutuSapir}).
We can thus consider $\mathcal{H}'_k=\mathcal{H}_k\setminus \cup_{j\neq k}\mathcal{H}_j$ and $\mathcal{H}'_0=\Sigma_0\setminus \cup_k \mathcal{H}'_k$.
Then, $\mathcal{H}'_0$ still is finite and the sets $\mathcal{H}'_k$ are disjoint.
By analogy with free factors in a free product, we introduce the following terminology.
\begin{definition}\label{deffactors}
We call the sets $\mathcal{H}'_k$ the factors of the relatively automatic structure
\end{definition}

Paths of length $n$ in $\mathcal{G}$ beginning at $v_*$ are in bijection with the relative sphere $\hat{S}_n$.
Moreover, infinite paths in $\mathcal{G}$ starting at $v_*$ give relative geodesic rays starting at $e$.
Denote by $E_*\subset E$ the set of edges that starts at $v_*$
The labelling map $\phi$ can be extended to infinite paths.
When restricted to infinite words starting in $E_*$, it gives a surjective map from paths beginning at $v_*$ to the Gromov boundary of the graph $\hat{\Gamma}$, which is by definition the set of conical limit points of $\Gamma$, included in the Bowditch boundary.
Restricting the distance $d_\rho(x,y)=\rho^{-n}$ to $E_*$, this induced map is continuous, endowing the Bowditch boundary with the usual topology.
A formal way of restricting our attention to elements of the group and to conical limit points is to consider the function $1_{E_*}$ on $\overline{\Sigma}_A$ which takes value 1 on sequences in $\overline{\Sigma}_A$ beginning with an edge in $E_*$ and that takes value 0 elsewhere.
This function $1_{E_*}$ is locally H\"older continuous.

We have the following, which proves that every locally H\"older continuous function with finite pressure is positive recurrent, according to Proposition~\ref{finitelymanyimagesimpliespositiverecurrent}.

\begin{lemma}
The Markov shift $(\overline{\Sigma}_A,T)$ has finitely many images.
\end{lemma}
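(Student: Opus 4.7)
The plan is to observe that the image $T[s]$ of a cylinder $[s]$ depends only on the terminal vertex of the edge $s$ in the automaton $\mathcal{G}$, and then use the finiteness of the vertex set $V$.

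More precisely, recall that $\Sigma = E$ is the set of edges of $\mathcal{G}$ and the transition matrix $A$ is defined so that $a_{s,s'}=1$ iff the edges $s,s'$ are adjacent in $\mathcal{G}$, i.e.\ iff the terminal vertex of $s$ equals the initial vertex of $s'$. For an edge $s \in E$, denote by $\tau(s) \in V$ its terminal vertex. Then $s'$ is adjacent to $s$ (meaning $a_{s,s'}=1$) precisely when $s'$ starts at $\tau(s)$. Consequently,
\begin{equation*}
    T[s] \;=\; \bigl\{\, y \in \overline{\Sigma}_A \,:\, y_1 \in E \text{ starts at } \tau(s)\,\bigr\},
\end{equation*}
which depends on $s$ only through $\tau(s)$.

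Since $V$ is finite (by Definition~\ref{definitionautomaticstructure} and Theorem~\ref{thmcodingrelhypgroups}), the map $s \mapsto \tau(s)$ takes finitely many values. Hence the set $\{T[s] : s \in \Sigma\}$ has cardinality at most $|V|$, and in particular is finite, which is exactly the definition of having finitely many images. There is no hard step here: the finite-image property is built into the automaton structure, because the countability of the edge set comes entirely from the parabolic factors, while the transitions between edges are governed by the finite state space $V$.
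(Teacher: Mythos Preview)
Your proof is correct and takes essentially the same approach as the paper: you observe that $T[s]$ (equivalently, the row of $s$ in $A$) depends only on the terminal vertex of the edge $s$, and then invoke the finiteness of $V$. The paper phrases this as ``two edges ending at the same vertex have the same row in $A$,'' which is exactly your argument.
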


\begin{proof}
If an edge $s$ in $\Sigma$ ends at some vertex $v$ in $\mathcal{G}$, then the only edges $s'$ such that $a_{s,s'}=1$
are those that start at $v$.
Thus, if two edges end at the same vertex $v$, they have the same row in the matrix $A$.
The lemma follows, since there is only a finite number of vertices.
\end{proof}

Recall that $R_{\mu}$ is the spectral radius of the $\mu$-random walk on $\Gamma$.
Also recall that for $r\in [0,R_{\mu}]$, we write
$H(e,\gamma|r)=G(e,\gamma|r)G(\gamma,e|r)$.
For $r\in [1,R_{\mu}]$, we define the function $\varphi_r$ on $\Sigma_A^*$ by $\varphi_r(\emptyset)=1$ and
$$\varphi_r(x=x_1,...,x_n)=\log \left (\frac{H(e,\phi(x)|r)}{H(e,\phi(T x)|r)}\right ) = \log \left (\frac{H(e,\phi(x_1...x_n)|r)}{H(e,\phi(x_2...x_n)|r)}\right ).$$
Using equivariance of the Green function, we also have
$$\varphi_r(x=x_1,...,x_n)=\log \left (\frac{H(e,\phi(x_1...x_n)|r)}{H(\phi(x_1),\phi(x_1...x_n)|r)}\right ).$$

\begin{lemma}\label{GreenlocallyHolder}
For every $r\in [1,R_{\mu}]$, the function $\varphi_r$ can be extended to $\overline{\Sigma}_A$.
It is then locally H\"older continuous on $\overline{\Sigma}_A$.
\end{lemma}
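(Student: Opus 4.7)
The plan is to first recast $\varphi_r$ in a form that exposes Martin kernels, then derive the Hölder estimate from the strong relative Ancona inequalities, and finally extend $\varphi_r$ to $\overline{\Sigma}_A$ by continuity.

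For any non-empty finite word $x = (x_1, \dots, x_n) \in \Sigma_A^*$, put $\sigma = \phi(x_1)$ and $\gamma = \phi(x)$. The $\Gamma$-equivariance of $G$ and the definition of $\varphi_r$ give
\[
\varphi_r(x) = \log\frac{G(e,\gamma|r)}{G(\sigma,\gamma|r)} + \log\frac{G(\gamma,e|r)}{G(\gamma,\sigma|r)},
\]
which is the sum of $-\log$ of a Martin kernel for $\mu$ and $-\log$ of a Martin kernel for the reflected measure $\check\mu$. By the remark in Section~\ref{SectionAnconainequalities}, $\check\mu$ enjoys the same Ancona properties as $\mu$ up to the spectral radius, so both summands can be treated identically.

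To estimate $V_n(\varphi_r)$ for $n \geq 1$, take $x, y \in \Sigma_A^*$ sharing their first $n$ letters, set $\sigma = \phi(x_1) = \phi(y_1)$, $\gamma_x = \phi(x)$, $\gamma_y = \phi(y)$, and write
\[
\varphi_r(x) - \varphi_r(y) = \log\frac{G(e,\gamma_x|r)\,G(\sigma,\gamma_y|r)}{G(\sigma,\gamma_x|r)\,G(e,\gamma_y|r)} + \log\frac{G(\gamma_x,e|r)\,G(\gamma_y,\sigma|r)}{G(\gamma_x,\sigma|r)\,G(\gamma_y,e|r)}.
\]
By Definition~\ref{definitionautomaticstructure}, the relative geodesic coded by $x$ (respectively by $y$) visits $\sigma, \phi(x_1 x_2), \dots, \phi(x_1 \dots x_n)$ before continuing to $\gamma_x$ (respectively $\gamma_y$); consequently the relative geodesics $[e, \gamma_x]$ and $[\sigma, \gamma_y]$ share the $n$ common vertices $\sigma, \phi(x_1 x_2), \dots, \phi(x_1 \dots x_n)$ and in particular $0$-fellow travel for time at least $n$. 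The strong $r$-relative Ancona inequalities then bound the first summand by $C\rho^n$ with $C, \rho$ independent of $r \in [1, R_\mu]$, and an identical argument applied to the reflected walk handles the second summand. Both logarithms are well-defined because the weak relative Ancona inequalities confine each individual ratio to a compact subset of $(0, \infty)$.

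This estimate shows that $(\varphi_r(x|_k))_k$ is Cauchy for every infinite $x \in \partial \Sigma_A^*$, so $\varphi_r$ extends uniquely by continuity; passing the bound to limits then yields $V_n(\varphi_r) \lesssim \rho^n$ on all of $\overline{\Sigma}_A$, which is the desired local Hölder continuity. The one delicate step is the verification of the fellow-travel hypothesis, which is precisely why the automaton of Theorem~\ref{thmcodingrelhypgroups} is built to encode relative geodesics rather than ordinary word geodesics: along word geodesics, the strong Ancona inequalities are only available at transition points (via Lemma~\ref{projectiontransitionpoints}), and the decay rate in $n$ would have to be replaced by a decay rate in the number of transition points in the shared prefix, which is insufficient for what follows.
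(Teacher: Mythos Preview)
Your proof is correct and follows essentially the same route as the paper's: both use the strong relative Ancona inequalities applied to the pair of relative geodesics $[e,\phi(x)]$ and $[\phi(x_1),\phi(y)]$, which fellow-travel along the common prefix of length $\asymp n$, to obtain the exponential decay of $V_n(\varphi_r)$, and then extend to $\partial\Sigma_A^*$ by a Cauchy argument. Your only organizational difference is splitting $\varphi_r$ into a forward and a reflected Martin kernel and treating each summand separately (invoking that $\check\mu$ satisfies the same Ancona estimates), whereas the paper works directly with the symmetrized quantity $H$; this is cosmetic.
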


\begin{proof}
Let $n\geq 1$ and let $x,y\in \Sigma_A^*$ be such that $x_1=y_1$,...,$x_n=y_n$.
Then, $\phi(x_1)=\phi(y_1)$,...,$\phi(x_n)=\phi(y_n)$ in $\Gamma$.
This means that relative geodesics from $e$ to $\phi(x)$ and from $\phi(x_1)=\phi(y_1)$ to $\phi(y)$ fellow-travel for a time at least $n-1$.
According to strong relative Ancona inequalities, we thus have, for some $C\geq 0$ and $0<\rho<1$,
$$\left | \frac{G(e,\phi(x)|r)G(\phi(y_1),\phi(y)|r)}{G(\phi(x_1),\phi(x)|r)G(e,\phi(x)|r)} -1\right |\leq C\rho^n.$$

Weak relative Ancona inequalities also show that
$$G(e,\phi(x)|r)G(\phi(y_1),\phi(y)|r)\geq \frac{1}{C}G(e,\phi(x_1)|r)G(\phi(x_1),\phi(x)|r)G(\phi(y_1),\phi(y)|r)$$
and since $x_1=y_1$, we get
$$G(e,\phi(x)|r)G(\phi(y_1),\phi(y)|r)\geq \frac{1}{C^2}G(e,\phi(y)|r)G(\phi(x_1),\phi(x)|r).$$
Thus, $\frac{G(e,\phi(x)|r)G(\phi(y_1),\phi(y)|r)}{G(\phi(x_1),\phi(x)|r)G(e,\phi(x)|r)}$
is bounded away from 0, so that
\begin{equation}\label{equationAnconaHolder}
    \begin{split}
    &\left | \log \left (\frac{H(e,\phi(x)|r)}{H(\phi(x_1),\phi(x)|r)}\right ) - \log \left (\frac{H(e,\phi(y)|r)}{H(\phi(y_1),\phi(y)|r)}\right )\right |\\
    &\leq C_1\left | \frac{G(e,\phi(x)|r)G(\phi(y_1),\phi(y)|r)}{G(\phi(x_1),\phi(x)|r)G(e,\phi(y)|r)} -1\right | \leq C\rho^n.
    \end{split}
\end{equation}
This proves that if $x=(x_1,...,x_n,...)\in \partial \Sigma_A^*$, then the sequence $\varphi_r(x_1,...,x_k)$ is Cauchy, so that it converges to some well-defined limit $\varphi_r(x)$.
This extended function $\varphi_r$ on $\overline{\Sigma}_A$ still satisfies~(\ref{equationAnconaHolder}), so that it is locally H\"older continuous.
\end{proof}

We denote by $\mathcal{L}_r$ the transfer operator associated with the function $\varphi_r$.

\begin{lemma}\label{finitepressureGreen}
For every $r\in [1,R_{\mu}]$, the function $\varphi_r$ has finite pressure and is positive recurrent.
\end{lemma}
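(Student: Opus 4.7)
The plan is to first establish the uniform bound $\|\mathcal{L}_r 1\|_\infty<+\infty$, which immediately gives finite pressure by the remark following the definition of the Gurevic pressure, and then to deduce positive recurrence by invoking Proposition~\ref{finitelymanyimagesimpliespositiverecurrent} on each biconnected component of the shift.

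First I would compute $\mathcal{L}_r 1(x)$ for $x=(x_1,\ldots,x_n)\in \Sigma_A^*$. The preimages of $x$ under $T$ are the words $y=(s,x_1,\ldots,x_n)$ with $a_{s,x_1}=1$, and on each such $y$ one has
$$e^{\varphi_r(y)}=\frac{H(e,\phi(s)\phi(x)|r)}{H(e,\phi(x)|r)}.$$
Since $(s,x_1,\ldots,x_n)$ is a valid path in the automaton, the group sequence $e,\phi(s),\phi(s)\phi(x_1),\ldots,\phi(s)\phi(x)$ is a relative geodesic from $e$ to $\phi(s)\phi(x)$. The weak relative Ancona inequalities up to the spectral radius, combined with equivariance of the Green function, then yield
$$H(e,\phi(s)\phi(x)|r)\asymp H(e,\phi(s)|r)\, H(\phi(s),\phi(s)\phi(x)|r) = H(e,\phi(s)|r)\, H(e,\phi(x)|r),$$
with constants independent of $r\in[1,R_\mu]$ and of $s,x$. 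Summing over $s$,
$$\mathcal{L}_r 1(x)\lesssim \sum_{s:\,a_{s,x_1}=1}H(e,\phi(s)|r)\leq |V|\sum_{\sigma\in\Sigma_0}H(e,\sigma|r),$$
the second inequality using that for each starting vertex in the finite set $V$ and each label $\sigma\in\Sigma_0=S\cup\bigcup_k\mathcal{H}_k$, there is at most one edge labelled $\sigma$ leaving that vertex. The finite generating set $S$ contributes harmlessly; for each parabolic $\mathcal{H}_k$, its non-trivial elements lie in $\hat{S}_1$, so Lemma~\ref{finitesumalongspheres} gives a uniform bound on $\sum_{\sigma\in\mathcal{H}_k}H(e,\sigma|r)$, and summing over the finitely many $\mathcal{H}_k\in\Omega_0$ keeps the total finite. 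This yields $\mathcal{L}_r 1(x)\leq C$ for all finite $x$, and the bound extends to $x\in\partial\Sigma_A^*$ since $\varphi_r$ on $\overline{\Sigma}_A$ is obtained as a uniform limit of the same expressions, by Lemma~\ref{GreenlocallyHolder}.

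From $\|\mathcal{L}_r 1\|_\infty<+\infty$ we conclude $P(\varphi_r)<+\infty$. For positive recurrence, the Markov shift has finitely many images and $\varphi_r$ is locally H\"older with summable variations, so Proposition~\ref{finitelymanyimagesimpliespositiverecurrent} applies to the restriction of $\varphi_r$ to each biconnected component of $\overline{\Sigma}_A$, after passing to a power of $T$ via the cyclic decomposition so as to reach the topologically mixing regime. This yields positive recurrence of $\varphi_r$ on every maximal component. The main obstacle in this argument is the first step: one must carefully identify the preimage $y=(s,x_1,\ldots,x_n)$ with a relative geodesic extension of the path encoded by $x$ in order to legitimately apply the relative Ancona inequalities, and one must check that the contribution of each parabolic factor is summable in spite of $\Sigma_0$ being infinite. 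Once these points are handled, the rest reduces to Lemma~\ref{finitesumalongspheres} and the thermodynamic machinery of Section~\ref{Sectionthermodynamicformalism}.
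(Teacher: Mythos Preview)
Your proof is correct and follows essentially the same approach as the paper: bound $\|\mathcal{L}_r 1\|_\infty$ via weak relative Ancona inequalities, reduce to summability of $H(e,\sigma|r)$ over parabolic subgroups, and then invoke Proposition~\ref{finitelymanyimagesimpliespositiverecurrent}. The only cosmetic differences are that the paper cites Corollary~\ref{derivativeparabolicGreenfinite} rather than Lemma~\ref{finitesumalongspheres} (the former being an immediate consequence of the latter), and that the paper does not spell out the passage to biconnected components and cyclic decomposition for positive recurrence, which you handle more carefully.
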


\begin{proof}
As noted above, since the Markov shift has finitely many images, Proposition~\ref{finitelymanyimagesimpliespositiverecurrent} shows that any locally H\"older function with finite pressure is positive recurrent.
Thus, we only need to prove that $\varphi$ has finite pressure, which is equivalent to proving that $\|\mathcal{L}_r1\|_\infty <+\infty$ by \cite[Theorem~1]{Sarig1}.
For $x\in \overline{\Sigma}_A$, let $X_x^1$ be the set of symbols that can precede $x$ in the automaton $\mathcal{G}$.
Then, by weak relative Ancona inequalities,
$$\mathcal{L}_r1(x)=\sum_{\sigma\in X_x^1}\frac{H(e,\sigma x|r)}{H(\sigma,\sigma x|r)}\lesssim \sum_{k=0}^N\sum_{\sigma\in \mathcal{H}_k'}H(e,\sigma|r).$$
This last sum is bounded by Corollary~\ref{derivativeparabolicGreenfinite}, which concludes the proof.
\end{proof}

Let $P_j(r)$ be the pressure of the restriction of $\varphi_r$ to a component $\overline{\Sigma}_{A,j}$ of the Markov shift
and let $P(r)$ be the maximal pressure, that is the maximum of the $P_j(r)$.
Recall that we declared that the empty sequence is not a preimage of the empty sequence.
This will simplify the following

The main reason for introducing this function $\varphi_r$ is that
$$\mathcal{L}_r^n1_{E_*}(\emptyset)=\frac{1}{H(e,e|r)}\sum_{\gamma \in \hat{S}_n}H(e,\gamma|r),$$
where we recall that $1_{E_*}$ is the function on $\overline{\Sigma}_A$ that takes value 1 on paths that start at $v_*$ in the automaton $\mathcal{G}$ and 0 elsewhere.
Indeed, to prove Theorem~\ref{preciseequadiff}, we want to understand $I^{(1)}(r)=\sum_{\gamma\in \Gamma}H(e,\gamma|r)$.
Thus, we want to understand the behavior of $\sum_{\gamma \in \hat{S}_n}H(e,\gamma|r)$, which is thus the same as understanding the behavior of $\mathcal{L}_r^n1_{E_*}(\emptyset)$.

\subsection{Continuity properties of the transfer operator}\label{Sectioncontinuitytransferoperator}
Our goal in this subsection is to prove that the map $r\mapsto \mathcal{L}_r$ is continuous in a weak sense.
We begin by the following result.

\begin{lemma}\label{corollary3.3Gouezel}
There exists $C>0$ such that for all $r\in [1,R_{\mu})$,
$$\frac{1}{C}\frac{1}{\sqrt{R_{\mu}-r}}\leq \sum_{\gamma\in \Gamma}H(e,\gamma|r)\leq C \frac{1}{\sqrt{R_{\mu}-r}}.$$
\end{lemma}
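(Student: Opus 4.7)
The plan is to exploit the fact that the sum on the left is exactly $I^{(1)}(r)$, and then to turn the estimate of Proposition~\ref{roughequadiff} into a differential inequality on $I^{(1)}$ that can be integrated explicitly.

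First I would identify the sum. Expanding the Green functions as power series and using the convolution identity $\sum_{\gamma}p_{n_1}(e,\gamma)p_{n_2}(\gamma,e)=p_{n_1+n_2}(e,e)$ gives
\begin{equation*}
I^{(1)}(r)=\sum_{\gamma\in\Gamma}H(e,\gamma|r)=\sum_{N\geq 0}(N+1)p_N(e,e)r^N,
\end{equation*}
which is consistent with Lemma~\ref{lemmafirstderivative} ($I^{(1)}(r)=\tfrac{d}{dr}(rG(r))$). The same direct series computation applied to $I^{(2)}(r)$ gives $I^{(2)}(r)=\sum_N\tfrac{(N+1)(N+2)}{2}p_N(e,e)r^N$, and comparing coefficients yields the clean identity
\begin{equation*}
\frac{d}{dr}I^{(1)}(r)=\frac{2}{r}\bigl(I^{(2)}(r)-I^{(1)}(r)\bigr).
\end{equation*}

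Next, since $\mu$ is non-spectrally degenerate, Proposition~\ref{roughequadiff} furnishes $I^{(2)}(r)\asymp(I^{(1)}(r))^3$ on $[0,R_\mu)$. By Proposition~\ref{relationR_kG'(R)}, $G'(R_\mu)=+\infty$, so $I^{(1)}(r)\to +\infty$ as $r\to R_\mu$. Consequently the term $I^{(1)}(r)$ is negligible compared to $I^{(2)}(r)\asymp (I^{(1)}(r))^3$ for $r$ close to $R_\mu$, hence there exists $r_0<R_\mu$ and constants $c_1,c_2>0$ such that
\begin{equation*}
c_1\bigl(I^{(1)}(r)\bigr)^3\;\leq\;\frac{d}{dr}I^{(1)}(r)\;\leq\;c_2\bigl(I^{(1)}(r)\bigr)^3\qquad \text{for } r\in[r_0,R_\mu).
\end{equation*}

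Then I would integrate this differential inequality. Rewriting it as $-\tfrac{1}{2}\tfrac{d}{dr}\bigl(I^{(1)}(r)^{-2}\bigr)\asymp 1$ and integrating from $r$ to $s<R_\mu$,
\begin{equation*}
\frac{1}{2\,I^{(1)}(r)^{2}}-\frac{1}{2\,I^{(1)}(s)^{2}}\;\asymp\;s-r.
\end{equation*}
Letting $s\to R_\mu$ and using $I^{(1)}(R_\mu)=+\infty$ kills the second term, giving $I^{(1)}(r)^{-2}\asymp R_\mu-r$, i.e.\ $I^{(1)}(r)\asymp(R_\mu-r)^{-1/2}$ on $[r_0,R_\mu)$.

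Finally, on the compact range $[1,r_0]$ both sides of the desired inequality are bounded above and bounded away from $0$ (the left side because $I^{(1)}$ is a continuous positive power series, the right side because $R_\mu-r$ is bounded and bounded away from $0$), so the comparison extends to all of $[1,R_\mu)$ by adjusting $C$. The only subtle point, and the main thing one must verify carefully, is that the lower-order term $I^{(1)}(r)$ can be absorbed into the approximation $\tfrac{d}{dr}I^{(1)}(r)\asymp I^{(2)}(r)$; but this is automatic from Proposition~\ref{relationR_kG'(R)} combined with Proposition~\ref{roughequadiff}, so the argument is essentially a one-page integration once the identity for $\tfrac{d}{dr}I^{(1)}$ is in hand.
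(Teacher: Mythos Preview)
Your proposal is correct and follows essentially the same integration strategy as the paper's proof. The only cosmetic difference is that the paper works with $F(r)=r^{2}I^{(1)}(r)$, for which \cite[Lemma~3.2]{DussauleLLT1} gives the exact identity $F'(r)=2rI^{(2)}(r)$, so the lower-order $I^{(1)}(r)$ term never appears and no absorption argument near $R_\mu$ (nor separate treatment of $[1,r_0]$) is needed.
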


\begin{proof}
Let $I_1(r)=\sum_\gamma H(e,\gamma|r)$ and $F(r)=r^2I_1(r)$.
According to \cite[Lemma~3.2]{DussauleLLT1},
$$F'(r)=2r\sum_{\gamma,\gamma'}G(e,\gamma'|r)G(\gamma',\gamma|r)G(\gamma,e|r).$$
Proposition~\ref{roughequadiff} gives
$$\frac{1}{C}\leq \frac{F'(r)}{F(r)^3}\leq C$$
and Proposition~\ref{relationR_kG'(R)} gives $F(R_{\mu})=+\infty$.
Thus, integrating the inequality above between $r$ and $R_{\mu}$, we get
$$\frac{1}{C}(R_{\mu}-r)\leq \frac{1}{F(r)^2}\leq C(R_{\mu}-r),$$ which is the desired inequality.
\end{proof}

We also prove the following.
\begin{lemma}\label{pressureatthespectralradius}
For any $r\in [1,R_{\mu}]$, $P(r)\leq 0$.
Moreover, $P(R_{\mu})=0$ and $\varphi_{R_{\mu}}$ is semisimple.
\end{lemma}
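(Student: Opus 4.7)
The plan is to leverage the identity $\mathcal{L}_r^n 1_{E_*}(\emptyset)=H(e,e|r)^{-1}\sum_{\gamma\in\hat{S}_n}H(e,\gamma|r)$ from Subsection~\ref{SectiontransferoperatorGreen}, paired with two complementary inputs. The upper bound $\sup_n \mathcal{L}_r^n 1_{E_*}(\emptyset)\lesssim 1$ follows from Lemma~\ref{finitesumalongspheres}, uniformly in $r\in[1,R_\mu]$, while the divergence $\sum_n \mathcal{L}_{R_\mu}^n 1_{E_*}(\emptyset)=I^{(1)}(R_\mu)/H(e,e|R_\mu)=+\infty$ comes from Proposition~\ref{relationR_kG'(R)} and Lemma~\ref{lemmafirstderivative}. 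Every application of Lemma~\ref{lemmaGouezelSarig} below will use an edge $s\in E_*$ chosen via the fact that $\mathcal{G}$ has a finite vertex set all reachable from $v_*$.

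For $P(r)\le 0$, I would fix a maximal biconnected component $\overline{\Sigma}_{A,j}$ and pick $s\in E_*$ lying on a path from $v_*$ that eventually enters $\overline{\Sigma}_{A,j}$. Since $s\in E_*$, the function $1_{E_*}$ is at least $1$ on every cylinder starting with $s$. Lemma~\ref{lemmaGouezelSarig} with $k=1$ then yields $\mathcal{L}_r^n 1_{E_*}(\emptyset)\ge c\,e^{nP(r)}$, and combining with the uniform upper bound forces $P(r)\le 0$.

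Next, suppose $\varphi_{R_\mu}$ were not semisimple: there would be a path visiting at least two maximal components, and by prepending a path from $v_*$ to its tail vertex we produce $s'\in E_*$ such that some path starting with $s'$ visits $k=2$ maximal components. Lemma~\ref{lemmaGouezelSarig} then gives $\mathcal{L}_{R_\mu}^n 1_{E_*}(\emptyset)\ge c\,n\,e^{nP(R_\mu)}$, which together with the uniform bound and $P(R_\mu)\le 0$ forces $nP(R_\mu)\to-\infty$, hence $P(R_\mu)<0$ strictly. So it is enough to rule out $P(R_\mu)<0$: this will simultaneously deliver $P(R_\mu)=0$ and the semisimplicity of $\varphi_{R_\mu}$.

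The main obstacle is this last step: showing $P(R_\mu)\ge 0$. By Lemma~\ref{finitepressureGreen} and Proposition~\ref{finitelymanyimagesimpliespositiverecurrent}, $\varphi_{R_\mu}$ is positive recurrent on each biconnected component, and since $\mathcal{G}$ has a finite vertex set the biconnected components of $\Sigma$ are finite in number. I would then adapt the spectral decomposition of \cite[Section~3]{Gouezel1} from finite to countable alphabets, using Theorem~\ref{Theorem5Sarig} in place of the classical Ruelle--Perron--Frobenius theorem on each component and bookkeeping the combinatorial cost of transitions between components. The upshot should be a bound of the form $\mathcal{L}_{R_\mu}^n 1_{E_*}(\emptyset)\le C\,n^{K-1}\,e^{nP(R_\mu)}$ with $K$ the number of maximal components. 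If $P(R_\mu)<0$, this bound would be summable, contradicting $\sum_n \mathcal{L}_{R_\mu}^n 1_{E_*}(\emptyset)=+\infty$. Controlling the transitions uniformly in the countable-alphabet setting is the delicate point; the rest is a careful repackaging of Sarig's theory and Gou\"ezel's decomposition.
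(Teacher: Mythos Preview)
Your approach is correct and is essentially the paper's proof: the same three inputs (Lemma~\ref{finitesumalongspheres} for the uniform upper bound on $\mathcal{L}_r^n 1_{E_*}(\emptyset)$, Proposition~\ref{relationR_kG'(R)} with Lemma~\ref{lemmafirstderivative} for the divergence $\sum_n \mathcal{L}_{R_\mu}^n 1_{E_*}(\emptyset)=+\infty$, and Lemma~\ref{lemmaGouezelSarig} for the lower bounds) are used in the same way.

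There is one organizational difference worth noting. To rule out $P(R_\mu)<0$, the paper simply cites Theorem~\ref{GouezelSarig} to get exponential decay of $\mathcal{L}_{R_\mu}^n 1_{E_*}(\emptyset)$, hence summability; but Theorem~\ref{GouezelSarig} as stated presupposes semisimplicity, which at that point has not yet been established. You avoid this apparent circularity by aiming directly for the weaker bound $\mathcal{L}_{R_\mu}^n 1_{E_*}(\emptyset)\le C\,n^{K-1}e^{nP(R_\mu)}$ without any semisimplicity hypothesis, and then deriving both $P(R_\mu)=0$ and semisimplicity from it. This is cleaner logically, but the ``delicate point'' you flag is not really an obstacle: the shift has finitely many images, so there are only finitely many biconnected components, Theorem~\ref{Theorem5Sarig} controls each one, and the inter-component bookkeeping is finite; this is exactly the machinery behind Theorem~\ref{GouezelSarig} and Lemma~\ref{lemmaGouezelSarig}, just read as an upper bound rather than an asymptotic.
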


\begin{proof}
If $P(r)$ were positive, then Theorem~\ref{GouezelSarig} would show that
$$\mathcal{L}_r1_{E_*}(\emptyset)=\frac{1}{H(e,e|r)}\sum_{\gamma\in \hat{S}_n}H(e,\gamma|r)$$
tends to infinity.
However, Lemma~\ref{finitesumalongspheres} shows that this quantity is bounded, so we get a contradiction.

If $P(R_{\mu})$ were negative, then
$\sum_{\gamma \in \hat{S}_n}H(e,\gamma|r)$ would converge to 0 exponentially fast, according to Theorem~\ref{GouezelSarig}.
In particular, $\sum_{\gamma \in \Gamma}H(e,\gamma|r)$ would be finite, that is, using Lemma~\ref{lemmafirstderivative}, $\frac{d}{dr}_{|r=R_\mu}G(e,e|r)$ would be finite.
This would be a contradiction with Proposition~\ref{relationR_kG'(R)}.

Finally, if $\varphi_{R_{\mu}}$ were not semisimple, then Lemma~\ref{lemmaGouezelSarig} would again show that
$\sum_{\gamma\in \hat{S}_n}H(e,\gamma|r)$ would tend to infinity, since we already know that $P(R_{\mu})=0$.
Again, Lemma~\ref{finitesumalongspheres} shows that this quantity is bounded.
\end{proof}

Let $h_{j}^{(i)}$ and $\nu_j^{(i)}$ be the functions and measures given by Theorem~\ref{GouezelSarig}, associated with $\mathcal{L}_{R_\mu}$.
Let $m_j$ be the measure defined as $dm_j=\frac{1}{p_j}\sum_{i=1}^{p_j}h_{j}^{(i)}d\nu_j^{(i)}$.
According to \cite[Proposition~4]{Sarig1}, $m_j$ is a Gibbs measure.
However, we have to apply this proposition to each component $\overline{\Sigma}_{A,j}$ of the shift, so that we do not have that $m_j([x_1,...,x_n])\asymp H(e,x_1...x_n|R_\mu)$ for any cylinder $[x_1,...,x_n]$.
We still deduce that there exists $C\geq 0$ such that for any $n$, for any cylinder $[x_1,...,x_n]$,
\begin{equation}\label{dominationmesuredeGibbs}
    m_j([x_1...x_n]) \leq C H(e,x_1...x_n|R_\mu).
\end{equation}
Furthermore, letting $m=\sum_j m_j$, there exists $C\geq 0$
such that for any $n$, for any cylinder $[x_1...x_n]$ in the support of one of the measures $m_j$,
\begin{equation}\label{equivalencemesuredeGibbs}
    \frac{1}{C} H(e,x_1...x_n|R_\mu)\leq m([x_1,...,x_n]) \leq C  H(e,x_1...x_n|R_\mu).
\end{equation}

\begin{proposition}\label{preparationcontinuiteoperateurtransfert}
There exists a non-negative function $\varphi$  on $\overline{X}_A$, possibly taking the value $+\infty$ on $\partial \Sigma_A$, which is integrable with respect to the measure $m$ and such that for every $x\in \Sigma_A$ and for every $1\leq r,r'\leq R_\mu$,
$$|\varphi_r(x)-\varphi_{r'}(x)|\leq 2\varphi(x)\sqrt{|r-r'|}.$$
\end{proposition}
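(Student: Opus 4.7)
The plan is to reduce the bound to an $L^2$-estimate on $\partial_s\varphi_s(x)$ via Cauchy--Schwarz. For each $x\in \Sigma_A^*$, the map $s\mapsto \varphi_s(x)$ is real-analytic on $[1,R_\mu)$ (since so is $s\mapsto H(e,\gamma|s)$ for every $\gamma\in\Gamma$) and continuous at $R_\mu$. The fundamental theorem of calculus together with Cauchy--Schwarz then yields
\[
|\varphi_r(x)-\varphi_{r'}(x)|\,\leq\, \sqrt{|r-r'|}\,\left(\int_1^{R_\mu}|\partial_s\varphi_s(x)|^2\,ds\right)^{1/2}
\]
for $1\leq r,r'\leq R_\mu$. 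Defining
\[
\varphi(x)\,:=\,\tfrac12\left(\int_1^{R_\mu}|\partial_s\varphi_s(x)|^2\,ds\right)^{1/2}\in[0,+\infty],
\]
one gets the claimed inequality with constant $2$. It then remains to show $\varphi\in L^1(m)$. Since $m$ is a finite measure, Cauchy--Schwarz reduces this to $\varphi\in L^2(m)$, which by Fubini is equivalent to
\[
\int_1^{R_\mu}\int_{\overline{\Sigma}_A}|\partial_s\varphi_s(x)|^2\,dm(x)\,ds\,<\,+\infty.
\]

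To control the inner integral, Lemma~\ref{lemmafirstderivative} gives an explicit expression $\partial_s\varphi_s(x)=\ell_s(\phi(x))-\ell_s(\phi(Tx))$ with
\[
\ell_s(\gamma)\,=\,\frac{1}{s}\!\left(\sum_{\eta}\frac{G(e,\eta|s)G(\eta,\gamma|s)}{G(e,\gamma|s)}+\sum_{\eta}\frac{G(\gamma,\eta|s)G(\eta,e|s)}{G(\gamma,e|s)}-2\right).
\]
Using the left-equivariance of $G$ together with the factorization $\phi(x)=\phi(x_1)\phi(Tx)$, the difference $\ell_s(\phi(x))-\ell_s(\phi(Tx))$ rewrites as a sum over $\eta$ of differences of Martin-kernel-type ratios. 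The strong relative Ancona inequalities, whose constants do not depend on $s\in[1,R_\mu]$, produce exponential decay of such differences as soon as $\eta$ is away from a neighbourhood of $e$ and of the branching point $\phi(x_1)$; combined with the control on projections given by Lemma~\ref{projectiontransitionpoints} and Lemma~\ref{lemmarelativetripod}, this lets one split the sums and cancel their tails. The remaining terms, localized near $e$ and $\phi(x_1)$, then reduce via Corollary~\ref{derivativeparabolicGreenfinite} and Lemma~\ref{finitesumalongspheres} to local quantities. Finally, integration of the bound against $dm(x)$ via the Gibbs estimate~(\ref{equivalencemesuredeGibbs}) should give a function of $s$ integrable on $[1,R_\mu]$.

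The main obstacle is that both $\ell_s(\phi(x))$ and $\ell_s(\phi(Tx))$ individually diverge as $s\to R_\mu$: their natural scale is that of $I^{(1)}(s)\asymp (R_\mu-s)^{-1/2}$ by Lemma~\ref{corollary3.3Gouezel}, a function not in $L^2([1,R_\mu])$. The whole estimate therefore depends on quantifying the cancellation precisely via strong Ancona, and on checking that the Ancona remainders, once summed against $dm$ and then integrated in $s$, stay finite. A second difficulty is that the Gibbs measure $m$ is calibrated at $R_\mu$ while the estimates on $\ell_s$ must hold for all $s\in[1,R_\mu]$; the fact that the Ancona constants are uniform in $s$ is critical. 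A further subtlety is that our alphabet is countable: the factors $\mathcal{H}'_k$ of Definition~\ref{deffactors} contribute initial symbols $x_1$ of arbitrarily large $S$-length, and the growth of $\log H(e,\phi(x_1)|R_\mu)$ must be absorbed against the weight $H(e,\phi(x_1)|R_\mu)$ arising from~(\ref{equivalencemesuredeGibbs}).
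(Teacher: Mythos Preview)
Your Cauchy--Schwarz reduction has a genuine gap: it demands $s\mapsto\partial_s\varphi_s(x)$ to lie in $L^2([1,R_\mu])$, but this fails for every $x\in\Sigma_A^*$. You correctly observe that each $\ell_s$ individually scales like $I^{(1)}(s)\asymp(R_\mu-s)^{-1/2}$, and you hope that the strong Ancona cancellation in the difference $\ell_s(\phi(x))-\ell_s(\phi(Tx))$ beats this rate. It does not. The cancellation only tames the \emph{$x$-dependence}: after grouping $\eta$ by its projection on $[e,\phi(x)]$, the dominant contribution comes from those $\eta$ projecting near $e$, where the Ancona factor $|1-\cdot|$ is of order one and the sum over such $\eta$ is still $\asymp I^{(1)}(s)$. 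What strong Ancona buys is that the contributions from deeper projections decay geometrically in $k$, so the coefficient in front of $(R_\mu-s)^{-1/2}$ is an $m$-integrable function of $x$ rather than something itself blowing up. The upshot is a pointwise bound
\[
|\partial_s\varphi_s(x)|\ \le\ \varphi(x)\,(R_\mu-s)^{-1/2},\qquad \varphi\in L^1(m),
\]
and this is essentially sharp in $s$. Consequently $\int_1^{R_\mu}|\partial_s\varphi_s(x)|^2\,ds=+\infty$ for every $x$, your $\varphi$ is identically $+\infty$, and the $L^2(m)$ bound you aim for cannot hold.

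The paper avoids Cauchy--Schwarz entirely and instead integrates the pointwise bound directly:
\[
|\varphi_r(x)-\varphi_{r'}(x)|\ \le\ \int_{r}^{r'}|\partial_s\varphi_s(x)|\,ds\ \le\ \varphi(x)\int_r^{r'}\frac{ds}{\sqrt{R_\mu-s}}\ \le\ 2\varphi(x)\sqrt{|r-r'|},
\]
using that $2(\sqrt{R_\mu-r}-\sqrt{R_\mu-r'})\le 2\sqrt{r'-r}$. This exploits that $(R_\mu-s)^{-1/2}$ is in $L^1$ with an antiderivative that is H\"older-$\tfrac12$, which is exactly the borderline your $L^2$ approach misses. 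So the work is not in the integration step but in establishing the pointwise derivative bound with an $L^1(m)$ coefficient $\varphi$; the paper does this via Lemma~\ref{lemmarelativetripod}, an estimate of the type $\sum_{y\in\mathcal H}G(e,y)G(y,x)\le (\Lambda d(e,x)+\Lambda)G(e,x)$ coming from non-spectral-degeneracy, and the exponential-moment argument of Lemma~\ref{souslemmecontinuite2} to show the resulting $\varphi(x)\asymp\sum_k\rho^k d(x_k,x_{k+1})$ is $m$-integrable.
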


Integrating over $r$, this proposition is a direct consequence of the following lemma.

\begin{lemma}\label{lemmecontinuiteoperateurtransfert}
There exists a non-negative function $\varphi$  on $\overline{X}_A$, possibly taking the value $+\infty$ on $\partial \Sigma_A$, which is integrable with respect to the measure $m$ and such that for every $x\in \Sigma_A$ and for every $1\leq r< R_\mu$,
$$\left |\frac{d}{dr}\varphi_r(x)\right |\leq \varphi(x) \frac{1}{\sqrt{R_\mu-r}}.$$
\end{lemma}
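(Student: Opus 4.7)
Write $\gamma=\phi(x)$, $\gamma'=\phi(Tx)$, so $\gamma=x_1\gamma'$ and, using equivariance of $G$, $\varphi_r(x)=\log H(e,\gamma|r)-\log H(x_1,\gamma|r)$. Differentiating in $r$ gives
$$\frac{d}{dr}\varphi_r(x)=\frac{H'(e,\gamma|r)}{H(e,\gamma|r)}-\frac{H'(x_1,\gamma|r)}{H(x_1,\gamma|r)}.$$
Applying Lemma~\ref{lemmafirstderivative} to each of the four Green functions involved in $H$, the derivative can be expanded as
$$\frac{d}{dr}\varphi_r(x)=\frac{1}{r}\sum_{\eta\in\Gamma}G(\eta,\gamma|r)\Bigl[\tfrac{G(e,\eta|r)}{G(e,\gamma|r)}-\tfrac{G(x_1,\eta|r)}{G(x_1,\gamma|r)}\Bigr]+\frac{1}{r}\sum_{\eta\in\Gamma}G(\gamma,\eta|r)\Bigl[\tfrac{G(\eta,e|r)}{G(\gamma,e|r)}-\tfrac{G(\eta,x_1|r)}{G(\gamma,x_1|r)}\Bigr].$$
The first step of the proof is thus an explicit computation of this form; the two sums are symmetric (one for each factor of $H$) and the argument below is identical for each, so I concentrate on the first.

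Next I would control the bracketed factor by means of the strong relative Ancona inequalities. Since
$$\tfrac{G(e,\eta|r)}{G(e,\gamma|r)}-\tfrac{G(x_1,\eta|r)}{G(x_1,\gamma|r)}=\tfrac{G(x_1,\eta|r)}{G(x_1,\gamma|r)}\Bigl[\tfrac{G(e,\eta|r)G(x_1,\gamma|r)}{G(x_1,\eta|r)G(e,\gamma|r)}-1\Bigr],$$
and the relative geodesics $[e,\gamma]$ and $[x_1,\eta]$ fellow travel for a time comparable to the initial common segment shared by relative geodesics $[e,\eta]$ and $[e,\gamma]$, the strong inequality gives a bound $C\rho^{n(\eta)}$ on the bracket, where $n(\eta)$ is essentially this fellow-traveling time. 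The typical term of the first sum is therefore dominated by $C\rho^{n(\eta)}G(x_1,\eta|r)G(\eta,\gamma|r)/G(x_1,\gamma|r)$. Now the identity $\sum_\eta G(x_1,\eta|r)G(\eta,\gamma|r)=rG'(x_1,\gamma|r)+G(x_1,\gamma|r)$ combined with Cauchy--Schwarz (which turns this sum into a bound by $I^{(1)}(r)$) and Lemma~\ref{corollary3.3Gouezel} yields the $1/\sqrt{R_\mu-r}$ factor. Organising the sum over $\eta$ according to the value of $n(\eta)$ (i.e.\ by slicing relative geodesics at each transition point, and using Lemma~\ref{Proposition315Osininfinite} and the BCP property to see that up to bounded error $n(\eta)$ really is the distance in $\hat\Gamma$ between $\eta$ and the common part of $[e,\gamma]$ and $[x_1,\gamma]$), the exponentially decaying weight $\rho^{n(\eta)}$ concentrates the mass near the first letters of $x$, producing an overall bound of the form $\varphi(x)/\sqrt{R_\mu-r}$ where
$$\varphi(x)\;\lesssim\;\sum_{\eta}\rho^{n(\eta,x)}\;\tfrac{H(e,\eta|R_\mu)}{H(e,x_1|R_\mu)}\;+\;\text{similar symmetric term},$$
with $n(\eta,x)$ depending only on $\eta$ and on the first few letters of $x$.

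Finally, I would verify that this $\varphi$ is $m$-integrable. By inequality~(\ref{dominationmesuredeGibbs}), $m([s_1,\ldots,s_k])\lesssim H(e,s_1\cdots s_k|R_\mu)$, so after applying Fubini and regrouping, the integral $\int\varphi\,dm$ reduces to a weighted sum of $H(e,\cdot|R_\mu)$ over the parabolic subgroups multiplied by the geometric factor $\rho^{n}$. Corollary~\ref{derivativeparabolicGreenfinite} ensures that $\sum_{h\in\mathcal{H}}H(e,h|R_\mu)<+\infty$ for each parabolic subgroup $\mathcal{H}\in\Omega_0$, and the geometric decay $\rho^n$ makes the sum over the remaining indices converge.

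The main difficulty will be step two: organising the sum over $\eta$ so that the Ancona exponential factor $\rho^{n(\eta)}$ is correctly paired with the residual Green function quotient $G(x_1,\eta|r)G(\eta,\gamma|r)/G(x_1,\gamma|r)$. In the hyperbolic setting of \cite{Gouezel1} this is straightforward because every $\eta$ close to the geodesic is automatically close in the Cayley graph; here one must handle points $\eta$ lying deep in parabolic cosets traversed by the relative geodesic, where the weight $\rho^n$ does not decay along the coset but the corresponding sum of Green functions is still finite thanks to Corollary~\ref{derivativeparabolicGreenfinite}. Careful use of the BCP property and Lemma~\ref{lemmarelativetripod} to control how relative geodesics from $e$ and from $x_1$ to points $\eta$ diverge is needed to make this bookkeeping uniform in $r\in[1,R_\mu)$.
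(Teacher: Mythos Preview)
Your overall architecture matches the paper's: compute $\frac{d}{dr}\varphi_r(x)$ via Lemma~\ref{lemmafirstderivative}, split into two symmetric sums, use strong relative Ancona to extract a factor $\rho^{k}$ according to where $\eta$ projects on the relative geodesic $[e,\gamma]$, and pull out $I^{(1)}(r)\asymp (R_\mu-r)^{-1/2}$ from the remaining sum over $\eta$. You also correctly flag the deep-parabolic cosets as the hard case.

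However, there is a genuine gap precisely at that hard case. After the projection-at-$x_k$ decomposition, the contribution from $\eta$'s whose projection on the coset $x_{k-1}\mathcal{H}_k$ is at $x_{k-1}\sigma$ produces the factor
\[
\sum_{\sigma\in\mathcal{H}_k}\frac{G(x_{k-1},x_{k-1}\sigma|r)\,G(x_{k-1}\sigma,x_k|r)}{G(x_{k-1},x_k|r)}
\;=\;\frac{G^{(1)}_{k,r}(e,\sigma_k)}{G(e,\sigma_k|r)},\qquad \sigma_k=x_{k-1}^{-1}x_k,
\]
and this quantity is \emph{not} controlled by Corollary~\ref{derivativeparabolicGreenfinite} (which only gives $I^{(1)}_{\mathcal{H}}<\infty$). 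It grows with $d(e,\sigma_k)$, and the paper has to invoke the non-spectrally-degenerate hypothesis to prove the sublemma $\sum_{y\in\mathcal{H}}G(e,y|r)G(y,x|r)\le(\Lambda d(e,x)+\Lambda)\,G(e,x|r)$. This is what forces $\varphi(x)$ to involve $\sum_k\rho^k d(x_k,x_{k+1})$ rather than a quantity depending only on the first letters of $x$. Your proposed $\varphi(x)\lesssim\sum_\eta \rho^{n(\eta,x)}H(e,\eta|R_\mu)/H(e,x_1|R_\mu)$ is not usable as written: for $n(\eta)=0$ the sum over $\eta$ is essentially $I^{(1)}(R_\mu)=+\infty$.

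The integrability step has the same issue. Once $\varphi$ involves $d(x_k,x_{k+1})$, showing $\int\varphi\,dm<\infty$ reduces to $\sum_{\sigma\in\mathcal{H}}d(e,\sigma)H(e,\sigma|R_\mu)<\infty$, and the paper deduces this from a converse inequality $d(e,x)G(e,x|R_\mu)\lesssim\sum_{y\in\mathcal{H}}G(e,y|R_\mu)G(y,x|R_\mu)$ (obtained via exponential moments of the first-return kernel) together with $I^{(2)}_{\mathcal{H}}(R_\mu)<\infty$ --- again non-spectral degeneracy. Corollary~\ref{derivativeparabolicGreenfinite} alone, which holds without any spectral assumption, cannot close either of these two steps.
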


\begin{proof}
Fix $x\in \Sigma_A$.
We compute the derivative of $r\mapsto \varphi_r(x)$.
To simplify the notations, we identify $x$ with $\phi(x)\in \Gamma$.
We get
$$\frac{d}{dr}\varphi_r(x)=\frac{d}{dr}\log \frac{H(e,x|r)}{H(x_1,x|r)}=\frac{d}{dr}\log r^2H(e,x|r)-\frac{d}{dr}\log r^2H(x_1,x|r).$$
\cite[Lemma~3.2]{DussauleLLT1} shows that
\begin{equation}\label{lemmecontinuite1}
\begin{split}
    &\frac{d}{dr}\varphi_r(x)=\sum_{y\in \Gamma}\frac{G(e,y|r)G(y,x|r)G(x,e|r)+G(e,x|r)G(x,y|r)G(y,e|r)}{rH(e,x|r)}\\
    &-\sum_{y\in \Gamma}\frac{G(x_1,y|r)G(y,x|r)G(x,x_1|r)+G(x_1,x|r)G(x,y|r)G(y,x_1|r)}{rH(x_1,x|r)}.
\end{split}
\end{equation}
We first give an upper-bound for
$$\left |\frac{\sum_{y\in \Gamma}G(e,y|r)G(y,x|r)G(x,e|r)}{rH(e,x|r)}-\frac{\sum_{y\in \Gamma}G(x_1,y|r)G(y,x|r)G(x,x_1|r)}{rH(x_1,x|r)}\right |.$$
The remaining term in~(\ref{lemmecontinuite1}) will be bounded in the same way.
Putting together these two sums, we get
\begin{align*}
    &\frac{1}{rH(e,x|r)}\sum_{y\in \Gamma}G(e,y|r)G(y,x|r)G(x,e|r)\\
    &\hspace{3cm}-G(x_1,y|r)G(y,x|r)G(x,x_1|r)\frac{H(e,x|r)}{H(x_1,x|r)}.
\end{align*}

We rewrite this as
$$\frac{1}{rH(e,x|r)}\sum_{y\in \Gamma}G(e,y|r)G(y,x|r)G(x,e|r) \left ( 1-\frac{G(x_1,y|r)G(e,x|r)}{G(x_1,x|r)G(e,y|r)}\right ).$$
We decompose the sum over $\Gamma$ in the following way.
Let $n=\hat{d}(e,x)$, so that the relative geodesic $[e,x]$ has length $n$.
Denote by $e,x_1,...,x_{n}$ successive points on this relative geodesic.
Also, for $0\leq k \leq n$, let $\Gamma_k$ be the set of $y\in \Gamma$ whose projection on $[e,x]$ which is closest to $x$ is exactly at $x_k$.

We will use several times Lemma~\ref{lemmarelativetripod}.
Let us first focus on the sum over $\Gamma_0$.
If $y\in \Gamma_0$, then any relative geodesic from $y$ to $x$ passes within a bounded distance of $e$.
Weak relative Ancona inequalities show that
$$G(y,x|r)\lesssim G(y,e|r)G(e,x|r).$$
Similarly, any relative geodesic from $x_1$ to $y$ passes within a bounded distance of $e$, hence
$$G(x_1,y|r)\lesssim G(x_1,e|r)G(e,y|r).$$
We also have
$$G(e,x|r)\lesssim G(e,x_1|r)G(x_1,x|r),$$
so that
\begin{align*}
    &\left |\frac{1}{rH(e,x|r)}\sum_{y\in \Gamma_0}G(e,y|r)G(y,x|r)G(x,e|r) \left ( 1-\frac{G(x_1,y|r)G(e,x|r)}{G(x_1,x|r)G(e,y|r)}\right )\right |\\
    &\hspace{2cm} \lesssim \sum_{y\in \Gamma}H(e,y|r)\left (1+H(e,x_1|r)\right ).
\end{align*}
Since $H(e,x_1|r)$ is uniformly bounded, we deduce from Lemma~\ref{corollary3.3Gouezel} that
\begin{align*}
    &\left |\frac{1}{rH(e,x|r)}\sum_{y\in \Gamma_0}G(e,y|r)G(y,x|r)G(x,e|r) \left ( 1-\frac{G(x_1,y|r)G(e,x|r)}{G(x_1,x|r)G(e,y|r)}\right )\right |\\
    &\hspace{2cm}\lesssim \frac{1}{\sqrt{R_\mu-r}}.
\end{align*}

Let us focus on the sum over $\Gamma_1$ now.
Let $\mathcal{H}_1$ be the union of parabolic subgroups containing $x_1$.
Let $y\in \Gamma_1$ and denote by $\sigma$ its projection on $\mathcal{H}_1$.
Then, any relative geodesic from $e$ to $y$ passes within a bounded distance of $\sigma$ and any relative geodesic from $y$ to $x$ passes first to a point within a bounded distance of $\sigma$, then to a point within bounded distance of $x_1$.
We thus get
$$G(e,y|r)\lesssim G(e,\sigma|r)G(\sigma,y|r)$$
and
$$G(y,x|r)\lesssim G(y,\sigma|r)G(\sigma,x_1|r)G(x_1,x|r).$$
Similarly,
$$\frac{G(x_1,y|r)G(e,x|r)}{G(x_1,x|r)G(e,y|r)}\lesssim \frac{G(x_1,\sigma|r)G(e,x_1|r)}{G(e,\sigma|r)}\lesssim 1.$$
Letting $\Gamma_1^{\sigma}$ be the set of $y$ whose projection on $\mathcal{H}_1$ is at $\sigma$, we get
\begin{align*}
    &\left |\frac{1}{rH(e,x|r)}\sum_{y\in \Gamma_1}G(e,y|r)G(y,x|r)G(x,e|r) \left ( 1-\frac{G(x_1,y|r)G(e,x|r)}{G(x_1,x|r)G(e,y|r)}\right )\right |\\
    &\hspace{2cm} \lesssim \sum_{\sigma\in \mathcal{H}_1}\sum_{y\in \Gamma_1^\sigma}\frac{G(e,\sigma|r)G(\sigma,x_1|r)}{G(e,x_1|r)}H(\sigma,y|r).
\end{align*}
We bound the sum over $y\in \Gamma_1^\sigma$ by a sum over $y\in \Gamma$, so that
\begin{align*}
&\left |\frac{1}{rH(e,x|r)}\sum_{y\in \Gamma_1}G(e,y|r)G(y,x|r)G(x,e|r) \left ( 1-\frac{G(x_1,y|r)G(e,x|r)}{G(x_1,x|r)G(e,y|r)}\right )\right |\\
&\hspace{2cm} \lesssim \frac{1}{\sqrt{R_\mu-r}}\sum_{\sigma\in \mathcal{H}_1}\frac{G(e,\sigma|r)G(\sigma,x_1|r)}{G(e,x_1|r)}.
\end{align*}

Suppose now that $k\geq 2$ and consider the sum over $\Gamma_k$.
For any $y\in \Gamma_k$, relative geodesic from $x_1$ to $y$ and from $e$ to $x$ travel together for a time at least $k-1$.
We deduce from strong relative Ancona inequalities that
$$\left |1-\frac{G(x_1,y|r)G(e,x|r)}{G(x_1,x|r)G(e,y|r)}\right |\lesssim \rho^k$$
for some $0<\rho<1$.
Letting $\mathcal{H}_k$ be the union of parabolic subgroups containing $x_{k-1}^{-1}x_{k}$, we also get
\begin{align*}
&\left |\frac{1}{rH(e,x|r)}\sum_{y\in \Gamma_k}G(e,y|r)G(y,x|r)G(x,e|r) \left ( 1-\frac{G(x_1,y|r)G(e,x|r)}{G(x_1,x|r)G(e,y|r)}\right )\right |\\
&\hspace{2cm} \lesssim \rho^k\frac{1}{\sqrt{R_\mu-r}}\sum_{\sigma\in \mathcal{H}_k}\frac{G(x_{k-1},x_{k-1}\sigma|r)G(x_{k-1}\sigma,x_{k}|r)}{G(x_{k-1},x_{k}|r)}.
\end{align*}
Putting everything together and letting $x_0=e$, we get
\begin{equation}\label{lemmecontinuite2}
\begin{split}
    &\left |\frac{1}{rH(e,x|r)}\sum_{y\in \Gamma}G(e,y|r)G(y,x|r)G(x,e|r) \left ( 1-\frac{G(x_1,y|r)G(e,x|r)}{G(x_1,x|r)G(e,y|r)}\right )\right |\\
    &\hspace{2cm} \lesssim \frac{1}{\sqrt{R_\mu-r}} \left (1+\sum_{k=0}^{n-1}\rho^k\sum_{\sigma\in \mathcal{H}_k}\frac{G(x_k,x_k\sigma|r)G(x_k\sigma,x_{k+1}|r)}{G(x_k,x_{k+1}|r)} \right ).
\end{split}
\end{equation}
    
We now bound $\sum_{k=0}^n\rho^k\sum_{\sigma\in \mathcal{H}_k}\frac{G(x_k,x_k\sigma|r)G(x_k\sigma,x_{k+1}|r)}{G(x_k,x_{k+1}|r)}$ by an integrable function independently of $r$ and $n$.
We first prove the following.

\begin{lemma}\label{souslemmecontinuite1}
There exists $\Lambda$ such that the following holds.
Let $\mathcal{H}$ be a parabolic subgroup.
For every $x$ in $\mathcal{H}$ and for any $1\leq r\leq R_\mu$, we have
$$\sum_{y\in \mathcal{H}}G(e,y|r)G(y,x|r)\leq( \Lambda d(e,x)+\Lambda )G(e,x|r).$$
\end{lemma}

\begin{proof}
We write $G^{(1)}_{r}(e,x)=\frac{d}{dt}_{|t=1}\left (G_{r}(e,x|t)\right )$, where $G_r$ is the Green function associated with the first return kernel $p_{r}$ to $\mathcal{H}$.
According to Lemma~\ref{lemmafirstderivative}, it is enough to prove that
$$G^{(1)}_{r}(e,x)\leq ( \Lambda d(e,x)+\Lambda )G(e,x|r).$$
Since we are assuming that $\mu$ is non-spectrally degenerate along $\mathcal{H}$, there exists $\rho<1$ such that for any $x$, for any $n$,
$$p_{r}^{(n)}(e,x)\leq p_{R_\mu}^{(n)}(e,x)\leq \rho^{n},$$
where $p_{r}^{(n)}$ denotes the $n$th power of convolution of $p_{r}$.
By definition,
$$G^{(1)}_{r}(e,x)=\sum_{n\geq 0}np_{r}^{(n)}(e,x).$$
Notice then that
$$\sum_{n\geq \Lambda d(e,x)+\Lambda}np_{r}^{(n)}(e,x)\lesssim (\rho')^{\Lambda d(e,x)+\Lambda}.$$
Since $r\geq 1$,
for any $x$, $G(e,x|r)\geq p^{d(e,x)}$ for some $p<1$ and so
$$G^{(1)}_{r}(e,x)\geq G(e,x|r)\geq p^{d(e,x)}.$$
If $\Lambda$ is large enough, we thus have
$$\sum_{n\geq \Lambda d(e,x)+\Lambda}np_{r}^{(n)}(e,x)\leq \frac{1}{2}G^{(1)}_{r}(e,x),$$
so that
$$G^{(1)}_{r}(e,x)\leq 2\sum_{n\leq \Lambda d(e,x)+\Lambda}np_{r}^{(n)}(e,x)\leq (2\Lambda d(e,x)+2\Lambda)G(e,x|r).$$
This concludes the proof.
\end{proof}

Going back to the proof of Lemma~\ref{lemmecontinuiteoperateurtransfert}, we get the upper-bound
$$\sum_{k=0}^n\rho^k\sum_{\sigma\in \mathcal{H}_k}\frac{G(x_k,x_k\sigma|r)G(x_k\sigma,x_{k+1}|r)}{G(x_k,x_{k+1}|r)}\leq \sum_{k=0}^n\rho^k(\Lambda d(x_{k},x_{k+1})+\Lambda).$$

We fix $N$ and define $\varphi_1^{(N)}$ by
\begin{equation}\label{defvarphi}
    \varphi_1^{(N)}(x)=1+\sum_{k=0}^{n-1}\rho^k(\Lambda d(x_{k},x_{k+1})+\Lambda)
\end{equation}
for any word $x$ of length $n\leq N$.
We extend $\varphi_1^{(N)}$ to a function on $\overline{\Sigma}_A$ declaring $\varphi_1^{(N)}$ to be constant on cylinders of length $N$.
For fixed $x$, the sequence $\varphi_1^{(N)}(x)$ is non-decreasing.
Let $\varphi_{1}(x)$ be its limit, possibly infinite if $x\in \partial\Sigma_A$.
We now prove that $\varphi_1$ is integrable with respect to $m$.
This will be based on the following.

\medskip
Letting $E$ be a set, a transition kernel $p$ is a function
$p:E\times E\rightarrow [0,+\infty)$.
We fix a base point $x_0\in E$.
We say that $p$ is finite if its total mass is finite, that is
$$\sum_{x\in E}p(x_0,x)<+\infty.$$
If the total mass is 1, then $p$ defines a Markov chain $Z_n$ on $E$.
Otherwise, $p$ still defines a chain $Z_n$ with transition given by $p$.
We let $z_n$ be the increments of this chain.
Whenever $E$ is endowed with a distance $d$, we say that $p$ is $C$-quasi-invariant if there exists $C$ such that for any $k$,
$$d(Z_k,Z_{k+1})\leq C d(e,z_{k+1}).$$

\begin{lemma}\label{souslemmecontinuite2}
Let $p$ be a finite $C$-quasi-invariant transition kernel on a countable metric space $(E,d)$.
Let $x_0$ be a fixed point in $E$.
Assume that $p$ has exponential moments in the sense that
$$\sum_{x\in E}p(x_0,x)\mathrm{e}^{\alpha d(x_0,x)}$$
for some positive $\alpha$.
Then, for any $\beta>0$, there exists $\lambda>0$ and $C_{\lambda}$ such that for any $x\in E$,
$$\sum_{n\leq d(e,x)/\lambda}p^{(n)}(x_0,x)\leq C_{\lambda}\mathrm{e}^{-\beta d(e,x)},$$
where $p^{(n)}$ denotes the $n$th power of convolution of $p$.
\end{lemma}

\begin{proof}
The proof is contained in the proof of \cite[Lemma~3.6]{BlachereHassinskyMathieu}, although the statement and the assumptions there are different, so we rewrite it for convenience.
To simplify the notations, we assume that the total mass of $p$ is 1, so that $p$ defines a Markov chain $Z_n$.
The general proof is the same.
By assumption, we have
$$\mathbb{E}\left (\mathrm{e}^{\alpha d(x_0,Z_1)}\right )=E<+\infty.$$
For any $\lambda$, Markov inequality shows that
$$\mathbb{P}\left (\sup_{1\leq k\leq n}d(e,Z_k)\geq \lambda n\right )\leq \mathrm{e}^{-\frac{\alpha}{C} \lambda n}\mathbb{E}\left (\mathrm{e}^{\frac{\alpha}{C} \sup_{1\leq k\leq n}d(e,Z_k)}\right ).$$
Since $p$ is $C$-quasi-invariant, we have for any $k\leq n$, letting $Z_0=x_0$,
$$d(x_0,Z_k)\leq \sum_{0\leq j\leq n-1}d(Z_j,Z_{j+1})\leq C \sum_{1\leq j\leq n}d(x_0,z_j).$$
Since the $z_j$ are independent and follow the same law as $Z_1$, we get
$$\mathbb{P}\left (\sup_{1\leq k\leq n}d(e,Z_k)\geq \lambda n\right )\leq \mathrm{e}^{-\frac{\alpha}{C} \lambda n}E^n\leq \mathrm{e}^{n(-\frac{\alpha}{C} \lambda +\log E)}.$$
We choose $\lambda$ large enough so that $-\frac{\alpha}{C} \lambda +\log E\leq -2\beta$.
Then,
$$\sum_{n\leq d(e,x)/\lambda}p^{(n)}(x_0,x)\leq \frac{d(e,x)}{\lambda}\mathrm{e}^{-2\beta d(e,x)}\lesssim \mathrm{e}^{-\beta d(e,x)}.$$
This concludes the proof.
\end{proof}

We apply this in our situation.
Let $\mathcal{H}$ be a parabolic subgroup.
For any $\eta>0$, we let $p_{\eta,R_\mu}$ be the first return kernel associated with $R_\mu\mu$ to the $\eta$-neighborhood $\mathcal{N}_{\eta}(\mathcal{H})$ of $\mathcal{H}$.
Then, \cite[Lemma~4.5]{DussauleGekhtman} shows that if $\eta$ is large enough, then $p_{\eta,R_\mu}$ has exponential moments.
Since it is defined as the first return associated with $R_\mu\mu$, it is $C$-quasi-invariant for the induced metric on $\mathcal{N}_{\eta}(\mathcal{H})$ (it is actually invariant for this distance).
Thus, for any $\beta>0$, there exists $\lambda$ and $C_\lambda$ such that
$$\sum_{n\leq d(e,x)/\lambda}p_{\eta,R_\mu}^{(n)}(e,x)\leq C_{\lambda}\mathrm{e}^{-\beta d(e,x)}.$$
The Green function associated with $p_{\eta,R_\mu}$ coincides with the restriction of the Green function associated with $R_\mu\mu$ on $\mathcal{N}_{\eta}(\mathcal{H})$, see \cite[Lemma~3.4]{DussauleLLT1} for a proof.
Since there exists $q<1$ such that $G(e,x|R_\mu)\geq q^{d(e,x)}$, we can choose $\lambda$ so that
$$\sum_{n\leq d(e,x)/\lambda}p_{\eta,R_\mu}^{(n)}(e,x)\leq \frac{1}{2}G(e,x|R_\mu)$$
and so
$$ G(e,x|R_\mu)\leq 2\sum_{n\geq d(e,x)/\lambda}p_{\eta,R_\mu}^{(n)}(e,x)\leq \frac{2\lambda}{d(e,x)}\sum_{n\geq d(e,x)}np_{\eta,R_\mu}^{(n)}(e,x).$$
According to Lemma~\ref{lemmafirstderivative},
$$G(e,x|R_\mu)\leq \frac{2\lambda}{d(e,x)}\sum_{y\in \mathcal{N}_{\eta}(\mathcal{H})}G(e,y|R_\mu)G(y,x|R_\mu).$$
Finally, any point in $\mathcal{N}_\eta(\mathcal{H})$ is within $\eta$ of a point in $\mathcal{H}$, hence for any $x\in \mathcal{H}$,
\begin{equation}
   d(e,x) G(e,x|R_\mu)\lesssim \sum_{y\in \mathcal{H}}G(e,y|R_\mu)G(y,x|R_\mu),
\end{equation}
since $\eta$ is fixed.

Recall that we want to prove that $\varphi_1$ is integrable with respect to $m$.
Since $\varphi_1^{(n)}$ is non decreasing, it is enough to show that there exists a uniform $C\geq 0$ such that for any $n$,
$$\int \varphi_1^{(n)}dm\leq C.$$
By definition, $\varphi_1^{(n)}$ is constant on cylinders of the form $[x_1,...,x_n]$.
According to~(\ref{dominationmesuredeGibbs}), we just need to show that for every $n$,
\begin{equation}\label{sommefiniedefvarphi}
    \sum_{x\in \hat{S}^N}H(e,x|R_\mu)\sum_{k=0}^{n-1}\rho^kd(x_k,x_{k+1})
\end{equation}
is uniformly bounded.

We decompose $x\in \hat{S}^n$ as $x=x_1...x_n$.
For any $y\in \hat{S}^k$, denote by $X^y_1$ the set of symbols $\sigma$ which can follow $y$ in the automaton $\mathcal{G}$.
More generally, denote by $X^y_j$ the set of words of length $j$ which can follow $y$.
For fixed $k$ writing $\sigma_{k+1}=x_{k}^{-1}x_{k+1}$ and $y=x_{k+1}^{-1}x$, we have, using weak relative Ancona inequalities,
\begin{align*}
    &\sum_{x\in \hat{S}^n}H(e,x|R_\mu) d(x_k,x_{k+1})\\
    &\lesssim \sum_{x_k\in \hat{S}^k}\sum_{\sigma_{k+1} \in X^{x_k}_1}\sum_{y\in X^{x_{k+1}}_{n-k-1}}H(e,x_k|R_\mu)H(e,y|R_\mu)\\
    &\hspace{5cm}d(e,\sigma_{k+1})G(e,\sigma_{k+1}|R_\mu)G(\sigma_{k+1},e|R_\mu)\\
    &\lesssim \sum_{x_k\in \hat{S}^k}\sum_{\sigma_{k+1} \in X^{x_k}_1}\sum_{y\in X^{x_{k+1}}_{n-k-1}}
    H(e,x_k|R_\mu)H(e,y|R_\mu)\\
    &\hspace{5cm}\sum_{\sigma\in \mathcal{H}_k}G(e,\sigma|R_\mu)G(\sigma,\sigma_{k+1}|R_\mu)G(\sigma_{k+1},e|R_\mu).
\end{align*}

Lemma~\ref{finitesumalongspheres} shows that
$$\sum_{y\in X^{x_{k+1}}_{n-k-1}}H(e,y|R_\mu)\lesssim 1.$$
Since $\mu$ is not spectrally degenerate,
$$\textbf{}\sum_{\sigma\in \mathcal{H}_k}G(e,\sigma|R_\mu)G(\sigma,\sigma_{k+1}|R_\mu)G(\sigma_{k+1},e|R_\mu)\lesssim 1.$$
Using again Lemma~\ref{finitesumalongspheres},
$$\sum_{x_k\in \hat{S}^k}H(e,x_k|R_\mu)\lesssim 1.$$
Finally, we find that~(\ref{sommefiniedefvarphi}) is bounded by $C\sum_{k=0}^{n-1}\rho^k$ for some $C$.
Since $\rho<1$, this last sum is uniformly bounded.

To conclude, we give a similar bound for the remaining term in~(\ref{lemmecontinuite1}), with an integrable function $\varphi_2$.
We set $\varphi=\varphi_1+\varphi_2$.
This concludes the proof.
\end{proof}

The function $\varphi$ is constructed as the non-decreasing limit of functions $\varphi^{(n)}$ which are uniformly integrable and satisfy that for any word $x$ of length $n$,
\begin{equation}\label{majorationderiveeparphin}
    \left |\frac{d}{dr}\varphi_r(x)\right |\leq \varphi^{(n)}(x) \frac{1}{\sqrt{R_\mu-r}}.
\end{equation}
Let us notice that we proved something a bit stronger than
$\int \varphi^{(n)}dm\lesssim 1$.
Indeed, we proved there exists $C\geq 0$ such that for every $n$,
\begin{equation}\label{meilleureestimeeintegralephin}
    \sum_{x\in \hat{S}^n}H(e,x|R_\mu)\varphi^{(n)}(x)\leq C.
\end{equation}
We will both use~(\ref{majorationderiveeparphin}) and~(\ref{meilleureestimeeintegralephin}) in the following.
However, to simplify the notations, we only stated Lemma~\ref{lemmecontinuiteoperateurtransfert} using $\varphi$ and $m$.

\medskip
We also prove the following result.
We will not use in full generality, but only for $x=\emptyset$.

\begin{proposition}\label{continuiteoperateurtransfert}
For every $x\in \Sigma_A$, there exists $C_x$ such that for every $r,r'\leq R_{\mu}$ and for every bounded function $f$,
$$\left | (\mathcal{L}_rf)(x)-(\mathcal{L}_{r'}f)(x)\right |\leq C_x\|f\|_{\infty}\sqrt{|r-r'|}.$$
\end{proposition}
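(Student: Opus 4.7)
The plan is to estimate the transfer-operator difference $(\mathcal{L}_r f)(x)-(\mathcal{L}_{r'}f)(x)$ term by term over the set $X_x^1$ of symbols that can precede $x$ in the automaton, using Proposition~\ref{preparationcontinuiteoperateurtransfert} to extract the correct square-root modulus, weak Ancona to tame the exponential factors, and the full strength of non-spectral degeneracy to sum the resulting weighted series. By definition, each summand is $(e^{\varphi_r(s\cdot x)}-e^{\varphi_{r'}(s\cdot x)})f(s\cdot x)$, so applying $|e^a-e^b|\le(e^a+e^b)|a-b|$ together with Proposition~\ref{preparationcontinuiteoperateurtransfert} would give
$$|e^{\varphi_r(s\cdot x)}-e^{\varphi_{r'}(s\cdot x)}|\le 2\varphi(s\cdot x)\bigl(e^{\varphi_r(s\cdot x)}+e^{\varphi_{r'}(s\cdot x)}\bigr)\sqrt{|r-r'|}.$$
The weak relative Ancona inequalities then bound, for $u\in\{r,r'\}$,
$$e^{\varphi_u(s\cdot x)}=\frac{H(e,\phi(s\cdot x)|u)}{H(\phi(s),\phi(s\cdot x)|u)}\lesssim H(e,\phi(s)|u)\le H(e,\phi(s)|R_\mu),$$
where the last step uses monotonicity of $r\mapsto H(e,\gamma|r)$. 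The proposition is therefore reduced, for fixed $x$, to the finiteness of $\sum_{s\in X_x^1}H(e,\phi(s)|R_\mu)\,\varphi(s\cdot x)$.

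From the construction of $\varphi=\varphi_1+\varphi_2$ inside the proof of Lemma~\ref{lemmecontinuiteoperateurtransfert}, the explicit formula $\varphi_1(w)=1+\sum_{j=1}^{|w|}\rho^{j-1}(\Lambda d(e,\phi(w_j))+\Lambda)$ (and the analogous one for $\varphi_2$) yields the recursion
$$\varphi(s\cdot x)\le C_x\bigl(1+d(e,\phi(s))\bigr),$$
with a constant $C_x$ depending only on the finite word $x$. The unweighted part $\sum_s H(e,\phi(s)|R_\mu)$ is controlled as in Lemma~\ref{finitepressureGreen} via Corollary~\ref{derivativeparabolicGreenfinite}. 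For the weighted part it suffices to bound, for each parabolic subgroup $\mathcal{H}_k$, the series $\sum_{\sigma\in\mathcal{H}_k}d(e,\sigma)H(e,\sigma|R_\mu)$. Reusing the intermediate estimate
$$d(e,\sigma)G(e,\sigma|R_\mu)\lesssim\sum_{y\in\mathcal{H}_k}G(e,y|R_\mu)G(y,\sigma|R_\mu)$$
proved inside Lemma~\ref{lemmecontinuiteoperateurtransfert}, multiplying by $G(\sigma,e|R_\mu)$ and summing over $\sigma\in\mathcal{H}_k$ gives the upper bound $I^{(2)}_{\mathcal{H}_k}(R_\mu)$.

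The main obstacle lies precisely in this last weighted sum: the factor $d(e,\sigma)$ can grow unboundedly over a parabolic subgroup of exponential growth, so the plain summability of $H(e,\sigma|R_\mu)$ provided by Corollary~\ref{derivativeparabolicGreenfinite} is not sufficient. What rescues the argument is the finiteness of $I^{(2)}_{\mathcal{H}_k}(R_\mu)$, which is exactly the non-spectral degeneracy hypothesis as encoded in Proposition~\ref{roughequadiff}; this is the essential input that makes the whole estimate go through and yields the constant $C_x$ with the prescribed square-root dependence on $|r-r'|$.
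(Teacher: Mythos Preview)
Your proposal is correct and follows essentially the same approach as the paper: write the difference term by term over $X_x^1$, control each summand via the pointwise bound on $\varphi_r-\varphi_{r'}$ (the paper differentiates and uses Lemma~\ref{lemmecontinuiteoperateurtransfert}, you use the integrated version Proposition~\ref{preparationcontinuiteoperateurtransfert} together with $|e^a-e^b|\le(e^a+e^b)|a-b|$, which is equivalent), apply weak Ancona to replace $e^{\varphi_r(\sigma x)}$ by $H(e,\sigma)$, and reduce to the finiteness of $\sum_{\sigma\in X_x^1}\varphi(\sigma x)H(e,\sigma|R_\mu)$. Your treatment of this last sum is just an unpacking of the paper's remark ``done exactly like showing that the sum~(\ref{sommefiniedefvarphi}) is bounded'': you make explicit the splitting $\varphi(s\cdot x)\lesssim C_x(1+d(e,\phi(s)))$ and invoke the intermediate estimate $d(e,\sigma)G(e,\sigma|R_\mu)\lesssim\sum_{y\in\mathcal{H}_k}G(e,y|R_\mu)G(y,\sigma|R_\mu)$ together with non-spectral degeneracy to bound the weighted piece by $I^{(2)}_{\mathcal{H}_k}(R_\mu)<\infty$, which is precisely the mechanism behind~(\ref{sommefiniedefvarphi}).
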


\begin{proof}
Fix $x\in \Sigma_A$ and let $n=\hat{d}(e,x)$.
Let $X_x^1$ be the set of symbols which can precede $x$ in the automaton $\mathcal{G}$.
Then,
$$(\mathcal{L}_rf)(x)-(\mathcal{L}_{r'}f)(x)=\sum_{\sigma\in X_x^1}\left (\mathrm{e}^{\varphi_r(\sigma x)}-\mathrm{e}^{\varphi_{r'}(\sigma x)}\right )f(\sigma x).$$
Differentiating in $r$ the quantity
$$\sum_{\sigma\in X_x^1}\mathrm{e}^{\varphi_r(\sigma x)}f(\sigma x),$$
we get
$$\sum_{\sigma\in X_x^1}\left (\frac{d}{dr}\varphi_r(\sigma x)\right )\mathrm{e}^{\varphi_r(\sigma x)}f(\sigma x).$$
Using Lemma~\ref{lemmecontinuiteoperateurtransfert}, this is bounded by
$$\|f\|_{\infty}\frac{1}{\sqrt{R_\mu-r}}\sum_{\sigma\in X_x^1}\varphi(\sigma x) \mathrm{e}^{\varphi_r(\sigma x)}.$$
We deduce from weak relative Ancona inequalities that this is bounded by
$$\|f\|_{\infty}\frac{1}{\sqrt{R_\mu-r}}\sum_{\sigma\in X_x^1}\varphi(\sigma x) H(e,\sigma|r).$$
Since $\sum_{\sigma\in X_x^1}\varphi(\sigma x) H(e,\sigma|r)$ only depends on $x$, it is enough to show that this last sum is bounded independently of $r$.
This is done exactly like showing that the sum~(\ref{sommefiniedefvarphi}) is bounded.
\end{proof}

We want to apply Theorem~\ref{KellerLiveranitransfert}, so we now prove that the assumptions of this theorem are satisfied, for $\tau(r)=\sqrt{R_\mu-r}$.
Recall that $m=\sum_j m_j$.

\begin{proposition}\label{prophypothesesKellerLiveranisatisfaites}
There exist constants $0<\sigma<1$ and $C\geq 0$ such that
for every $1 \leq r\leq R_\mu$, for every $n$, for every function $f\in H_{\rho,\beta}$,
$$\|\mathcal{L}_r^n f\|_{\rho,\beta}\leq C\sigma^n\|f\|_{\rho,\beta}+C\int |f| dm$$
and
$$\int \left |\left (\mathcal{L}_r-\mathcal{L}_{R_\mu}\right )f\right |dm\leq C\|f\|_{\rho,\beta}\sqrt{R_\mu-r}.$$
\end{proposition}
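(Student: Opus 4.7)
The plan rests on the telescoping identity
\[
\mathrm{e}^{(\varphi_r)_n(y)} = \frac{H(e,\phi(y)|r)}{H(e,\phi(T^n y)|r)},
\]
which is immediate from the definition of $\varphi_r$ together with equivariance of $H$. Applying weak relative Ancona inequalities with constants uniform in $r\in[1,R_\mu]$ gives $\mathrm{e}^{(\varphi_r)_n(y)}\asymp H(e,\phi(y_1\cdots y_n)|r)$, and since $H(e,\cdot|r)$ is monotone in $r$, this yields the key pointwise comparison
\[
|\mathcal{L}_r^n f(x)|\leq C\,\mathcal{L}_{R_\mu}^n|f|(x)
\]
for all $r\in[1,R_\mu]$, with $C$ independent of both $r$ and $n$. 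Taking $f=1$ and using Lemma~\ref{finitesumalongspheres} also gives $\|\mathcal{L}_r^n 1\|_\infty\leq C$ uniformly.

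For the first inequality, I would use the standard pairing argument for Lasota--Yorke estimates. Take $f$ in the Banach space and let $x,x'$ lie in a common $\beta$-atom with $d_\rho(x,x')=\rho^{k+1}$; the shift having finitely many images allows pairing preimages as $y=y_1\cdots y_n x$, $y'=y_1\cdots y_n x'$, themselves in a common $\beta$-atom at distance $\rho^{n+k+1}$. Splitting $\mathcal{L}_r^n f(x)-\mathcal{L}_r^n f(x')$ into a functional part, controlled by $\rho^{n+k+1}D_{\rho,\beta}(f)\cdot \mathcal{L}_r^n 1(x)$, and a potential part, controlled by the uniform H\"older estimate $V_k(\varphi_r)\leq C\rho^k$ (which holds with $C$ and $\rho$ independent of $r$, being inherited directly from strong relative Ancona as in Lemma~\ref{GreenlocallyHolder}), one obtains
\[
D_{\rho,\beta}(\mathcal{L}_r^n f)\leq C\rho^n D_{\rho,\beta}(f) + C\|\mathcal{L}_r^n |f|\|_\infty.
\]
Applying Theorem~\ref{GouezelSarig} to $\mathcal{L}_{R_\mu}^n|f|$ and bounding $\bigl|\int|f|\,d\nu_j^{(i)}\bigr|\leq C\int|f|\,dm$ via the fact that the eigenfunctions $h_j^{(i)}$ are bounded away from $0$ on $\operatorname{supp}(\nu_j^{(i)})$ gives $\|\mathcal{L}_{R_\mu}^n|f|\|_\infty\leq C\theta^n\|f\|_{\rho,\beta}+C\int|f|\,dm$. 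Combining with the pointwise comparison produces the first inequality with $\sigma=\max(\rho,\theta)<1$.

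For the second inequality, I would start from $(\mathcal{L}_r-\mathcal{L}_{R_\mu})f=\mathcal{L}_{R_\mu}\bigl((\mathrm{e}^{\varphi_r-\varphi_{R_\mu}}-1)f\bigr)$. Since the ratio $\mathrm{e}^{\varphi_r-\varphi_{R_\mu}}$ is uniformly bounded above (by weak Ancona and monotonicity of $G(e,\cdot|r)$ in $r$), the mean value theorem combined with Proposition~\ref{preparationcontinuiteoperateurtransfert} yields $|\mathrm{e}^{\varphi_r-\varphi_{R_\mu}}-1|\leq C\varphi\sqrt{R_\mu-r}$ with $C$ uniform. Let $\nu_*=\sum_{j,i}\nu_j^{(i)}$; the cyclic relation $\mathcal{L}_{R_\mu}^*\nu_j^{(i)}=\nu_j^{(i-1)\bmod p_j}$ implies $\mathcal{L}_{R_\mu}^*\nu_*=\nu_*$, and the two-sided bound on the $h_j^{(i)}$ on their supports makes $\nu_*$ and $m$ equivalent. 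Hence
\[
\int|(\mathcal{L}_r-\mathcal{L}_{R_\mu})f|\,dm\leq C\int \mathcal{L}_{R_\mu}\bigl(|\mathrm{e}^{\varphi_r-\varphi_{R_\mu}}-1|\cdot|f|\bigr)\,d\nu_* = C\int|\mathrm{e}^{\varphi_r-\varphi_{R_\mu}}-1|\cdot|f|\,d\nu_*,
\]
and the desired bound follows from $\|f\|_\infty\leq\|f\|_{\rho,\beta}$ and $\int\varphi\,d\nu_*<\infty$ (equivalent to $\int\varphi\,dm<\infty$).

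The principal obstacle is obtaining uniformity in $r\in[1,R_\mu]$ in both estimates: a naive iteration of a pointwise bound $\mathcal{L}_r\leq C\mathcal{L}_{R_\mu}$ would lose a factor $C^n$ and spoil everything. The telescoping formula for the Birkhoff sum bypasses this cleanly, because the weak Ancona constant enters only \emph{once} rather than at each iteration; this single multiplicative constant then reduces both inequalities to estimates on the single operator $\mathcal{L}_{R_\mu}$, for which the spectral gap from Theorem~\ref{GouezelSarig} is available.
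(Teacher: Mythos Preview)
Your argument is correct. For the Lasota--Yorke inequality, your route and the paper's are essentially the same: both use weak Ancona once on the telescoped Birkhoff sum to reduce uniformly to $r=R_\mu$, then invoke the spectral gap. The paper makes the $\|\mathcal{L}_{R_\mu}^n|f|\|_\infty$ bound explicit by replacing $f$ with a cylinder-constant approximation $f^{(n)}$ and splitting the sum over $\hat{S}^n$ into maximal and non-maximal components via the Gibbs inequalities~(\ref{dominationmesuredeGibbs})--(\ref{equivalencemesuredeGibbs}); you instead apply Theorem~\ref{GouezelSarig} directly to $|f|$. These are the same mechanism in different dress.

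For the second inequality your route is genuinely shorter. The paper approximates $(\mathcal{L}_r-\mathcal{L}_{R_\mu})f$ by a cylinder-constant function $\tilde f_r$, differentiates termwise, and then controls $\int|\tilde f_r|\,dm$ via the refined estimate~(\ref{meilleureestimeeintegralephin}) on $\sum_{y\in\hat S^{n+1}}H(e,y|R_\mu)\varphi^{(n+1)}(y)$. You bypass this by exploiting $\mathcal{L}_{R_\mu}^*\nu_*=\nu_*$ (which uses $P(R_\mu)=0$ from Lemma~\ref{pressureatthespectralradius}) and the equivalence $m\asymp\nu_*$, so the integral passes through $\mathcal{L}_{R_\mu}$ and reduces immediately to $\|f\|_\infty\int\varphi\,d\nu_*$. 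The only point to make explicit is that the bound $|\varphi_r-\varphi_{R_\mu}|\le 2\varphi\sqrt{R_\mu-r}$ from Proposition~\ref{preparationcontinuiteoperateurtransfert} is stated for $x\in\Sigma_A^*$ while $\nu_*$ is supported on $\partial\Sigma_A^*$; this extends by taking limits along cylinders, since $\varphi_r$ is continuous on $\overline{\Sigma}_A$ and $\varphi(x_1,\ldots,x_m)=\varphi^{(m)}(x)\uparrow\varphi(x)$ by construction. What the paper's longer argument buys is that it never leaves the level of finite words and the explicit Gibbs bound, so it does not need to check this extension; what your argument buys is that it avoids the cylinder truncation and the auxiliary estimate~(\ref{meilleureestimeeintegralephin}) altogether.
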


\begin{proof}
Let $f\in H_{\rho,\beta}$ and let $x\in \overline{\Sigma}_A$.
Denote by $S_n\varphi_r$ the $n$th Birkhoff sum of $\varphi_r$ and let $X_x^n$ the set of words of length $n$ which can precede $x$ in the automaton $\mathcal{G}$.
Then,
$$\mathcal{L}_r^n f(x)=\sum_{\gamma\in X_x^n}\mathrm{e}^{S_n\varphi_r(\gamma x)}f(\gamma x).$$
Let $f^{(n)}$ be the function which is constant on cylinders of length $n$ and which is equal to $f$ elsewhere.
In particular,
$$f^{(n)}(\gamma x)=f(\gamma)$$
for $x\in \overline{\Sigma}_A$ and $\gamma\in X_x^n$.
Since $f$ is $\rho$-locally H\"older,
$$|f^{(n)}(\gamma x)-f(\gamma x)|\leq \rho^n D_{\rho,\beta}(f).$$
Hence,
$$\left |\mathcal{L}_r^n f(x)\right |\leq \rho^nD_{\rho,\beta}(f)\sum_{\gamma\in X_x^n}\mathrm{e}^{S_n\varphi_r(\gamma x)}+\sum_{\gamma\in X_x^n}\mathrm{e}^{S_n\varphi_r(\gamma x)}|f^{(n)}(\gamma x)|.$$
To simplify, we identify an element $\gamma \in X_x^1$ with the corresponding element in $\hat{S}^n\subset \Gamma$.
Note that $\mathrm{e}^{S_n\varphi_r(\gamma x)}=\frac{H(e,\gamma x|r)}{H(\gamma,\gamma x|r)}$.
Using weak relative Ancona inequalities, we obtain
$$\left |\mathcal{L}_r^n f(x)\right |\lesssim \rho^nD_{\rho,\beta}(f)\sum_{\gamma \in \hat{S}^n}H(e,\gamma|R_\mu) +\sum_{\gamma \in \hat{S}^n}H(e,\gamma|R_\mu) |f^{(n)}(\gamma x)|.$$

For every $\gamma\in \Gamma$, we can use the automaton $\mathcal{G}$ and choose a relative geodesic from $e$ to $\gamma$ whose increments we denote by $x_1,...,x_n$.
Let $[\gamma]$ be the corresponding cylinder $[x_1,...,x_n]$.
Let $\hat{S}^n_{\mathrm{max}}$ be the set of $\gamma\in \hat{S}^n$ such that the cylinder $[\gamma]$ is in a maximal component.
Since $f^{(n)}$ is constant on cylinders of length $n$,~(\ref{equivalencemesuredeGibbs}) shows that
$$\sum_{\gamma \in \hat{S}^n_{\mathrm{max}}}H(e,\gamma|R_\mu) |f^{(n)}(\gamma x)|\lesssim \int |f^{(n)}|dm.$$
Also, by definition of maximal components, there exists $\rho'<e^{P(R_\mu)}=1$ such that
$$\sum_{\gamma \in \hat{S}^n\setminus \hat{S}^n_{\mathrm{max}}}H(e,\gamma|R_\mu) |f^{(n)}(\gamma x)|\lesssim (\rho')^n\|f\|_\infty.$$
Using that $f$ is $\rho$-locally H\"older, we get
$$\int |f^{(n)}|dm\lesssim \rho^n D_{\rho,\beta}(f)+\int |f|dm.$$
Lemma~\ref{finitesumalongspheres} shows that the sum $\sum_{\gamma \in \hat{S}^n}H(e,\gamma|R_\mu)$ is bounded independently of $n$.
We thus get
\begin{equation}\label{premierpartieKellerLiveranisatisfait1}
    \left |\mathcal{L}_r^n f(x)\right |\lesssim 2\rho^n D_{\rho,\beta}(f)+(\rho')^n\|f\|_\infty + \int |f|dm\lesssim \sigma^n \|f\|_{\rho,\beta}+\int |f|dm,
\end{equation}
where $\sigma=\max(\rho,\rho')$.
We can thus control $\|\mathcal{L}_r^n f\|_\infty$.

We now focus on $D_{\rho,\beta}(\mathcal{L}_rf)$.
Let $x,x'\in \overline{\Sigma}_A$ and let $\rho^m=d_{\rho}(x,x')$, $m\geq 1$.
We have
\begin{align*}
    \mathcal{L}_r^nf(x)-\mathcal{L}_r^nf(x')=&\sum_{\gamma\in X_x^n}\left (\mathrm{e}^{S_n\varphi_r(\gamma x)}-\mathrm{e}^{S_n\varphi_r(\gamma x')}\right )f(\gamma x)\\
    &\hspace{1cm}+\sum_{\gamma\in X_x^n}\mathrm{e}^{S_n\varphi_r(\gamma x')}(f(\gamma x)-f(\gamma x')).
\end{align*}
On the one hand, using that $f$ is $\rho$-locally H\"older and using weak relative Ancona inequalities to bound $\mathrm{e}^{S_n\varphi_r(\gamma x)}$ by $H(e,\gamma|r)$ as above, we get
$$\sum_{\gamma\in X_x^n}\mathrm{e}^{S_n\varphi_r(\gamma x')}|(f(\gamma x)-f(\gamma x'))|\lesssim \sum_{\gamma\in \hat{S}^n}H(e,\gamma|R_\mu) \rho^{n+m}D_{\rho,\beta}(f).$$
It follows from Lemma~\ref{finitesumalongspheres} that the sum $\sum_{\gamma\in \hat{S}^n}H(e,\gamma|R_\mu)$ is bounded and since $\rho^m=d_\rho(x,x')$, we have
$$\sum_{\gamma\in X_x^n}\mathrm{e}^{S_n\varphi_r(\gamma x')}|(f(\gamma x)-f(\gamma x'))|\lesssim \rho^n\|f\|_{\rho,\beta}d_\rho(x,x').$$
On the other hand,
\begin{align*}
    &\sum_{\gamma\in X_x^n}\left (\mathrm{e}^{S_n\varphi_r(\gamma x)}-\mathrm{e}^{S_n\varphi_r(\gamma x')}\right )f(\gamma x)\\
    &\hspace{3cm}=\sum_{\gamma\in X_x^n}\mathrm{e}^{S_n\varphi_r(\gamma x)}\left (1-\mathrm{e}^{S_n\varphi_r(\gamma x')-S_n\varphi_r(\gamma x)}\right )f(\gamma x).
\end{align*}
By definition,
$$\left (1-\mathrm{e}^{S_n\varphi_r(\gamma x')-S_n\varphi_r(\gamma x)}\right )=\left (1-\frac{H(e,\gamma x|r)H(\gamma,\gamma x'|r)}{H(\gamma,\gamma x|r)H(e,\gamma x'|r)}\right ).$$
Since relative geodesics $[e,x]$ and $[e,x']$ fellow travel for a time at least $m$ and since $\gamma$ both precedes $x$ and $x'$ in the automaton $\mathcal{G}$, relative geodesics $[e,\gamma x]$ and $[\gamma,\gamma x']$ also fellow travel for a time at least $m$.
Strong relative Ancona inequalities thus yield
$$\left |1-\mathrm{e}^{S_n\varphi_r(\gamma x')-S_n\varphi_r(\gamma x)}\right |\lesssim \rho^m$$
and so
$$\left |\sum_{\gamma\in X_x^n}\left (\mathrm{e}^{S_n\varphi_r(\gamma x)}-\mathrm{e}^{S_n\varphi_r(\gamma x')}\right )f(\gamma x)\right |\lesssim \rho^m\sum_{\gamma\in X_x^n}\mathrm{e}^{S_n\varphi_r(\gamma x)}|f(\gamma x)|.$$
We bound $\sum_{\gamma\in X_x^n}\mathrm{e}^{S_n\varphi_r(\gamma x)}|f(\gamma x)|$ by $\sigma^n\|f\|_{\rho,\beta}+\int |f|dm$ like above
to obtain
\begin{equation}\label{premierpartieKellerLiveranisatisfait2}
\left |\mathcal{L}_rf(x)-\mathcal{L}_rf(x')\right |\lesssim \left (\sigma^n\|f\|_{\rho,\beta}+\int |f|dm\right )d_\rho(x,x').
\end{equation}
We deduce from~(\ref{premierpartieKellerLiveranisatisfait1}) and~(\ref{premierpartieKellerLiveranisatisfait2}) that
$$\left \|\mathcal{L}_r^nf\right \|_{\rho,\beta}\lesssim \sigma^n \|f\|_{\rho,\beta}+\int |f|dm,$$
which concludes the first part of the proposition.

We now prove that
\begin{equation}\label{deuxiemepartiepropKellerLiveranisatisfait}
    \int \left |\left (\mathcal{L}_r-\mathcal{L}_{R_\mu}\right )f\right |dm\leq C\|f\|_{\rho,\beta}\sqrt{R_\mu-r}.
\end{equation}
The function $f$ is $\rho$-locally H\"older.
Since the operator $\mathcal{L}_r-\mathcal{L}_{R_\mu}$ is bounded on $(H_{\rho,\beta},\|\cdot \|_{\rho,\beta})$, for every word $x=x_1...x_n$ of length $n$ and for every $y\in [x_1,...,x_n]$,
$$\left |\left (\mathcal{L}_r-\mathcal{L}_{R_\mu}\right )f(y)-\left (\mathcal{L}_r-\mathcal{L}_{R_\mu}\right )f(x)\right |\lesssim \rho^n\|f\|_{\rho,\beta}.$$
Hence,
$$\left |\left (\mathcal{L}_r-\mathcal{L}_{R_\mu}\right )f(y)\right |\lesssim \rho^n\|f\|_{\rho,\beta}+\left |\left (\mathcal{L}_r-\mathcal{L}_{R_\mu}\right )f(x)\right |.$$
Fixing $r$, we choose $n$ large enough so that $\rho^n\leq \sqrt{R_\mu-r}$.
Let $\tilde{f}_r$ be the function which is constant on cylinders of length $n$ and which is equal to $\left (\mathcal{L}_r-\mathcal{L}_{R_\mu}\right )f(x)$ elsewhere.
To prove~(\ref{deuxiemepartiepropKellerLiveranisatisfait}), we just need to show that
$$\int \left |\tilde{f}_r\right | dm\lesssim \|f\|_{\rho,\beta}\sqrt{R_\mu-r}.$$
For every $x$ of length $n$ and for every $y$ in $[x_1,...,x_n]$,
we have
$$\tilde{f}_r(y)=\sum_{\sigma\in X_x^1}\left (\mathrm{e}^{\varphi_r(\sigma x)}-\mathrm{e}^{\varphi_{R_\mu}(\sigma x)}\right )f(\sigma x).$$
Differentiating this, we get
$$\sum_{\sigma\in X_x^1}\left (\frac{d}{dr}\varphi_r(\sigma x)\right )\mathrm{e}^{\varphi_r(\sigma x)}f(\sigma x).$$
Using~(\ref{majorationderiveeparphin}), we bound the absolute value of this term by
$$\frac{1}{\sqrt{R_\mu-r}}\|f\|_{\infty}\sum_{\sigma\in X_x^1}\varphi^{(n+1)}(\sigma x)\mathrm{e}^{\varphi_r(\sigma x)}.$$
Inverting the sum and the derivative is legitimate since weak relative Ancona inequalities show that $\mathrm{e}^{\varphi_r(\sigma x)}\lesssim H(e,\sigma)$.
As in the proof of Lemma~\ref{lemmecontinuiteoperateurtransfert}, we show that the sum
$$\sum_{\sigma\in X_x^1}\varphi^{(n+1)}(\sigma x)\mathrm{e}^{\varphi_r(\sigma x)}$$
is finite.
Therefore,
$$\left |\frac{d}{dr}\tilde{f}_r(y)\right |\lesssim  \frac{1}{\sqrt{R_\mu-r}}\|f\|_{\infty} \sum_{\sigma\in X_x^1}H(e,\sigma|R_\mu)\varphi^{(n+1)}(\sigma x).$$
Integrating this, we get
$$\int \left |\tilde{f}_r\right |dm\lesssim \|f\|_{\rho,\beta}\sqrt{R_\mu-r}\int \sum_{\sigma\in X_x^1}H(e,\sigma|R_\mu)\varphi^{(n+1)}(\sigma x)dm(x).$$
Since $x\mapsto \varphi^{(n+1)}(\sigma x)$ is constant on cylinders of length $n$,~(\ref{equivalencemesuredeGibbs}) shows that
\begin{align*}
    \int \left |\tilde{f}_r\right |dm&\lesssim \|f\|_{\rho,\beta}\sqrt{R_\mu-r}\sum_{x\in \hat{S}^n}\sum_{\sigma\in X_x^1}H(e,\sigma|R_\mu)H(e,x|R_\mu)\varphi^{(n+1)}(\sigma x)\\
    &\lesssim \|f\|_{\rho,\beta}\sqrt{R_\mu-r}\sum_{y\in \hat{S}^{n+1}}H(e,y|R_\mu)\varphi^{(n+1)}(y).
\end{align*}
According to~(\ref{meilleureestimeeintegralephin}), we thus have
$$\int \left |\tilde{f}_r\right |dm\lesssim \|f\|_{\rho,\beta}\sqrt{R_\mu-r},$$
which concludes the proof.
\end{proof}

We obtain the following corollary from Theorem~\ref{KellerLiveranitransfert}.

\begin{corollary}\label{coroKellerLiveranisatisfait}
For every $r$ close enough of $R_\mu$, there exist numbers $\tilde{P}_j(r)$, eigenfunctions $\tilde{h}_{j,r}^{(i)}$ and eigenmeasures $\tilde{\nu}_{j,r}^{(i)}$ of $\mathcal{L}_r$ associated with the eigenvalue $\mathrm{e}^{\tilde{P}_j(r)}$such that for every $g\in H_{\rho,\beta}$,
$$\left \|\mathcal{L}_{r}^ng-\sum_{j=1}^k\mathrm{e}^{n(\tilde{P}_j(r))}\sum_{i=1}^{p_j}\tilde{h}_{j,r}^{(i)}\int g d\tilde{\nu}_{j,r}^{((i-n) \text{ mod } p_j)}\right \|_{\rho,\beta}\leq C\theta^n\|g\|_{\rho,\beta},$$
where $C\geq 0$ and $0<\theta<1$.
The functions $\tilde{h}_{j}^{(i)}$ and the measures $\tilde{\nu}_j^{(i)}$ have the same support as the functions $h_j^{(i)}$ and the measures $\nu_j^{(i)}$ respectively.
Moreover, $\left \|\tilde{h}_{j,r}^{(i)}\right \|_{\rho,\beta}$ is uniformly bounded.
Finally,
$$\int \left |\tilde{h}_{j,r}^{(i)}-h_j^{(i)}\right |dm\underset{r\to R_\mu}{\longrightarrow}0$$
and $\tilde{\nu}_{j,r}^{(i)}$ weakly converges to $\nu_j^{(i)}$ as $r$ tends to $R_\mu$.
\end{corollary}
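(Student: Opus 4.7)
The plan is to realize Corollary~\ref{coroKellerLiveranisatisfait} as a direct application of Theorem~\ref{KellerLiveranitransfert} to our family $\mathcal{L}_r$. All the hard analytic work has already been packaged in Proposition~\ref{prophypothesesKellerLiveranisatisfaites}, so this really is a verification plus a bookkeeping step.

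First I would fix the functional-analytic setup required to invoke Theorem~\ref{KellerLiveranitransfert}. Take $V=\mathcal{B}_{\rho,\beta}$ with norm $\|\cdot\|=\|\cdot\|_{\rho,\beta}$ and weak norm $|\cdot|_w=\|\cdot\|_{L^1(m)}$, where $m=\sum_jm_j$ is the finite measure built from the spectral data of $\mathcal{L}_{R_\mu}$. Since $m$ is finite, one has $|\cdot|_w\lesssim\|\cdot\|$. By Lemma~\ref{pressureatthespectralradius}, $P(R_\mu)=0$ and $\varphi_{R_\mu}$ is semisimple, so Theorem~\ref{GouezelSarig} applies to $\mathcal{L}_{R_\mu}$ and produces the reference eigenfunctions $h_j^{(i)}$ and eigenmeasures $\nu_j^{(i)}$, together with a spectral gap: the spectrum of $\mathcal{L}_{R_\mu}$ outside a disc of radius $\sigma_0<1$ consists precisely of the $p_j$-th roots of unity, with these eigenfunctions and eigenmeasures.

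Next I would check hypotheses $(\alpha)$ and $(\beta)$ of Theorem~\ref{KellerLiveranitransfert}. These are exactly the content of Proposition~\ref{prophypothesesKellerLiveranisatisfaites}: the first inequality there (with some $\sigma<1=e^{P(R_\mu)}$) is $(\alpha)$ after identifying the $L^1(m)$-norm with $|\cdot|_w$, and the second inequality gives $(\beta)$ with the explicit modulus $\tau(r)=\sqrt{R_\mu-r}$, which tends to $0$ as $r\to R_\mu$. Choosing $\rho$ small enough so that $\sigma_0<\sigma+\rho<1$, all the abstract hypotheses are in place.

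Finally I would invoke Theorem~\ref{KellerLiveranitransfert} to conclude. It produces, for every $r$ sufficiently close to $R_\mu$, numbers $\tilde{P}_j(r)$ and eigenfunctions/eigenmeasures $\tilde{h}_{j,r}^{(i)}$, $\tilde{\nu}_{j,r}^{(i)}$ for $\mathcal{L}_r$ at eigenvalue $e^{\tilde{P}_j(r)}$, with the spectral expansion stated in the corollary; using $P(R_\mu)=0$ removes the factor $e^{-nP_R}$ and yields exactly the displayed inequality. The coincidence of supports between $\tilde{h}_{j,r}^{(i)}$, $\tilde{\nu}_{j,r}^{(i)}$ and $h_j^{(i)}$, $\nu_j^{(i)}$, together with the uniform bound on $\|\tilde{h}_{j,r}^{(i)}\|_{\rho,\beta}$, the $L^1(m)$-convergence of $\tilde{h}_{j,r}^{(i)}$ to $h_j^{(i)}$, and the weak convergence $\tilde{\nu}_{j,r}^{(i)}\to\nu_j^{(i)}$ are exactly the continuity conclusions of Theorem~\ref{KellerLiveranitransfert}; the normalization $\int\tilde{h}_{j,r}^{(i)}\,d\nu_j^{(i)}=1$ (which is available for $r$ close enough to $R_\mu$ by the remark following that theorem) is what locks these convergences to the desired limits. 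There is no serious obstacle here beyond confirming that each hypothesis of the abstract Keller--Liverani machinery matches the estimates already proved; the real work is in Proposition~\ref{prophypothesesKellerLiveranisatisfaites}.
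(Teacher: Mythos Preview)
Your proposal is correct and matches the paper's approach exactly: the paper simply states that the corollary follows from Theorem~\ref{KellerLiveranitransfert}, and your write-up spells out precisely this deduction, checking hypotheses $(\alpha)$ and $(\beta)$ via Proposition~\ref{prophypothesesKellerLiveranisatisfaites} and using $P(R_\mu)=0$ from Lemma~\ref{pressureatthespectralradius}.
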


To conclude this section, note that Proposition~\ref{continuiteoperateurtransfert} yields
\begin{equation}\label{howtogettildehtendsimplementversh}
    |\mathcal{L}_rf(\emptyset)-\mathcal{L}_{r'}f(\emptyset)|\lesssim \|f\|_{\rho,\beta}\sqrt{|r'-r|}.
\end{equation}
Consequently,
\begin{equation}\label{tildehtendsimplementversh}
    |\tilde{h}_{j,r}^{(i)}(\emptyset)-h_j^{(i)}(\emptyset)|\underset{r\to R_\mu}{\longrightarrow}0.
\end{equation}
Indeed, this last estimate~(\ref{howtogettildehtendsimplementversh}) shows that we can replace the norm $|\cdot|_w$ when applying Theorem~\ref{KellerLiveranitransfert} with the norm $|\cdot|'_w$ defined by
$$|f|'_w=|f(\emptyset)|+\int |f|dm.$$
Although we will use~(\ref{tildehtendsimplementversh}) in the following, we preferred using the norm $|\cdot|_w$ in the statements and in the proofs for convenience.

\subsection{Evaluating the pressure}\label{Sectionpressure}
Our goal in the next two subsections is to estimate the numbers $\tilde{P}_j(r)$ given by Corollary~\ref{coroKellerLiveranisatisfait} and to compare them with the maximal pressure $P(r)$.
This allows us to obtain a precise estimate of $I^{(1)}(r)$.

According to Corollary~\ref{coroKellerLiveranisatisfait}, if $R_{\mu}-r$ is small enough, then
$$\mathcal{L}_{r}^n1_{E_*}(\emptyset)=\sum_{j=1}^k\mathrm{e}^{n\tilde{P}_j(\varphi_r)}\sum_{i=1}^{p_j}\tilde{h}_j^{(i)}(\emptyset)\int 1_{E_*} d\tilde{\nu}_j^{((i-n) \text{ mod } p_j)}+O\left (\theta^n\right ).$$
Here, $0<\theta<1$ and $k$ is the number of maximal components for the function $\varphi_{R_{\mu}}$.
Denote by $p$ the least common multiple of the periods of these components, so that if $n\geq 0$ and $0\leq q < p$, then $d\tilde{\nu}_j^{((i-np+q) \text{ mod } p_j)}$ only depends on $q$.
In particular, we can write
$$\mathcal{L}_{r}^{np+q}1_{E_*}(\emptyset)=\sum_{j=1}^k\mathrm{e}^{(np+q)\tilde{P}_j(\varphi_r)}\xi_{q,j}(r)+O\left (\theta^{np+q}\right ),$$
where $\xi_{q,j}$ is a non-negative function of $r$ defined on some fixed neighborhood of $R_{\mu}$.
Note that $\xi_{q,j}(r)$ only depends on $\tilde{h}_{j,r}(\emptyset)$ and on $\int 1_{E_*}d\tilde{\nu}_{j,r}^{(i)}$
and that it is continuous in $r$ according to Corollary~\ref{coroKellerLiveranisatisfait} and~(\ref{tildehtendsimplementversh}).

If $r<R_{\mu}$, then $\sum_{\gamma}H(e,\gamma|r)$ is finite, so the numbers $\tilde{P}_j(\varphi_r)$ are negative.
Summing over $n$ and $q\in \{0,...,p-1\}$, we get
\begin{equation}\label{3.6Gouezel}
    \sum_{\gamma \in \Gamma}H(e,\gamma|r)=H(e,e|r)\sum_{n,q}\mathcal{L}_{r}^{np+q}1_{E_*}(\emptyset)=\sum_{j=1}^k\frac{\xi_j(r)}{|\tilde{P}_j(\varphi_r)|}+O(1), r\to R_{\mu},
\end{equation}
for some non-negative functions $\xi_j$, which are continuous in $r$ on some neighborhood of $R_{\mu}$.

We have the following result, which shows that the pressure is asymptotically independent of the maximal components.
Its proof is postponed to the next subsection.
\begin{proposition}\label{pressureindependentcomponents}
For every $j\in \{1,...,k\}$,
$\frac{\tilde{P}_j(\varphi_r)}{P(r)}$ tends to 1 when $r$ tends to $R_{\mu}$,
where $P(r)$ is the maximal pressure of the function $\varphi_r$.
\end{proposition}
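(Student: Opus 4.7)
My plan is to combine the perturbation formula of Proposition~\ref{theoremeperturbation} with a comparison argument between Gibbs measures on distinct maximal components, adapting the Calegari--Fujiwara type strategy used in \cite{Gouezel1} to the countable shift setting. First, using Corollary~\ref{coroKellerLiveranisatisfait} together with the continuity of the pressure (via Proposition~\ref{finitelymanyimagesimpliespositiverecurrent} and Theorem~\ref{GouezelSarig}) applied to components that are strictly subdominant at $r=R_\mu$, I would check that for $r$ close enough to $R_\mu$ the maximum $P(r)$ is attained among $\{\tilde P_j(\varphi_r):1\le j\le k\}$. Since each $\tilde P_j(\varphi_r)\to P(R_\mu)=0$ by the perturbation theorem, the statement reduces to showing that $\tilde P_j(\varphi_r)/\tilde P_{j'}(\varphi_r)\to 1$ for every pair of maximal components $j,j'\in\{1,\dots,k\}$.

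Setting $P_R=0$ in Proposition~\ref{theoremeperturbation} gives
$$\mathrm{e}^{\tilde P_j(\varphi_r)}-1=\int\bigl(\mathrm{e}^{\varphi_r-\varphi_{R_\mu}}-1\bigr)\,dm_j+\varepsilon_j(r),$$
where the remainder $\varepsilon_j(r)$ is controlled by $|\tilde h_{j,r}^{(i)}-h_j^{(i)}|_w$, which tends to $0$ by Corollary~\ref{coroKellerLiveranisatisfait}, and by a bound on $|\mathrm{e}^{\varphi_r-\varphi_{R_\mu}}-1|$ inherited from Proposition~\ref{preparationcontinuiteoperateurtransfert}. The first order of business is to argue that $\varepsilon_j(r)$ is of lower order than the leading integral; granting this, the problem reduces to showing that $I_j(r):=\int(\mathrm{e}^{\varphi_r-\varphi_{R_\mu}}-1)\,dm_j$ is asymptotically independent of $j$.

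The comparison of the $I_j(r)$ is the heart of the proof. I would exploit that the cylinders of the automaton $\mathcal{G}$ are labelled by actual group elements of $\Gamma$ and that the Gibbs measures $m_j$ satisfy the two-sided equivalence~(\ref{equivalencemesuredeGibbs}) with $H(e,\cdot\,|R_\mu)$ on their support. Since $\Gamma$ is non-elementary and relatively hyperbolic, any two maximal components can be bridged by an appropriate group element: left multiplication transports a bulk of cylinders from component $j$ to component $j'$, and the strong relative Ancona inequalities guarantee that the ratios of Green functions defining $\varphi_r-\varphi_{R_\mu}$ are stable under this transport up to a multiplicative distortion tending to $1$ along the way to infinity. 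This should yield $I_j(r)\sim I_{j'}(r)$. The main obstacle is the lack of compactness of the path space of the Markov shift: unlike in \cite{Gouezel1}, the measures $m_j$ have infinite support and the integrand $\mathrm{e}^{\varphi_r-\varphi_{R_\mu}}-1$ is not uniformly small. I would address this by truncating to cylinders of length at most $N$, where the finitely-many-images property yields uniform estimates, and controlling the tails via~(\ref{dominationmesuredeGibbs}) together with the exponential decay coming from strong relative Ancona and the finite sum bound of Corollary~\ref{derivativeparabolicGreenfinite}.
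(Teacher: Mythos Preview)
Your reduction via Proposition~\ref{theoremeperturbation} is correct and matches the paper exactly, and your intended control of the remainder $\varepsilon_j(r)$ is essentially the content of the paper's Lemmas~\ref{premierterme=oracineR-r} and~\ref{deuxiemeterme=oracineR-r} (dominated convergence with the integrable majorant $\varphi$ from Lemma~\ref{lemmecontinuiteoperateurtransfert}). Your first paragraph is also fine: continuity of the pressure on subdominant components does ensure that $P(r)$ coincides with one of the $\tilde P_j(\varphi_r)$ for $r$ near $R_\mu$.

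The gap is in your comparison of the leading integrals $I_j(r)$. Left multiplication on $\Gamma$ does \emph{not} transport cylinders of the shift from one component to another: the automaton $\mathcal G$ codes relative geodesics from the fixed basepoint $e$, and changing basepoint by $g$ gives a different coding, not a map $\overline\Sigma_{A,j}\to\overline\Sigma_{A,j'}$. There is no shift-equivariant identification of $m_j$ with $m_{j'}$, nor any reason for the integrand $\mathrm e^{\varphi_r-\varphi_{R_\mu}}-1$ to be preserved by whatever correspondence you have in mind, so the asserted $I_j(r)\sim I_{j'}(r)$ does not follow from a truncation-plus-Ancona argument. The ``Calegari--Fujiwara type strategy'' you cite is, in \cite{Gouezel1} and in this paper, \emph{not} a direct bridging of Gibbs measures; it is the four-step boundary argument described in Section~\ref{Sectionpressure}.

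What actually makes the independence work is the passage to the boundary: the Birkhoff average $\frac{1}{n}S_n\varphi_r(x)=\frac{1}{n}\log H(e,\phi(x_1\cdots x_n)|r)$ has a limit depending only on the conical limit point $\xi=\phi(x)\in\partial\hat\Gamma$, and the level set $U(c)$ is $\Gamma$-invariant (this is exactly where left multiplication enters, but on $\partial\hat\Gamma$, not on $\overline\Sigma_A$). The paper then constructs a probability measure $\lambda_{R_\mu}$ on $\partial\hat\Gamma$ as a limit of $\sum_{\gamma\in\hat S_n}H(e,\gamma|R_\mu)\delta_\gamma$, proves it is conformal for the symmetrized Green distance (Lemma~\ref{conformality}), shows it has no atoms and is $\Gamma$-ergodic via a Vitali-type argument on partial shadows (Proposition~\ref{ergodicityconformal}), and finally checks that each $\phi_*\alpha_j(\cdot\cap E_*)$ is absolutely continuous with respect to $\lambda_{R_\mu}$, so that Birkhoff gives $\lambda_{R_\mu}(U(c_j))>0$ and hence $=1$ for every $j$. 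This ergodicity step is the substantial part you are missing; in the relatively hyperbolic setting it is genuinely delicate (no visual metric comparable to the Green metric, parabolic limit points to rule out), and there is no shortcut via a direct comparison of the $m_j$.
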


Combining Proposition~\ref{pressureindependentcomponents} and~(\ref{3.6Gouezel}), we get that
\begin{equation}\label{3.7Gouezel}
    I^{(1)}(r)=\sum_{\gamma \in \Gamma}H(e,\gamma|r)=\frac{\xi(r)}{|P(r)|}+O(1), r\to R_{\mu},
\end{equation}
for some non-negative function $\xi$, which is continuous in $r$ on some neighborhood of $R_{\mu}$.
Recall that $I^{(1)}(r)\asymp \sqrt{R_\mu-r}$ and $|P(r)|\asymp \sqrt{R_\mu-r}$.
Therefore, $\xi(R_{\mu})>0$, so that $\xi(r)$ is bounded away from zero on a neighborhood of $R_{\mu}$.

\subsection{Independence of the pressure: proof of Proposition~\ref{pressureindependentcomponents}}
This section is devoted to proving Proposition~\ref{pressureindependentcomponents}.
The analog of this result is proved in~\cite{Gouezel1}.
This is done showing that for $r\in[1,R_{\mu}]$, $\int \varphi_rdm_j$ does not depend on $j$, where
$m_j$ is the measure in Proposition~\ref{propperturbation}.
This in turn is proved in several steps.

\medskip
\textit{Step 1:} Fix $c$ and define $U(c)\subset \partial \hat{\Gamma}$ as the set of points $\xi\in \partial \hat{\Gamma}$ such that if $x$ is an infinite sequence in $\partial \Sigma_A^*$ defining $\xi$ (that is, denoting $\gamma_n=x_1...x_n$, $e,\gamma_1,...,\gamma_n,...$ is a relative geodesic ray that converges to $\xi$ in $\partial\hat{\Gamma}$), then $\log H(e,\gamma_n|r)/\hat{d}(e,\gamma_n)$ converges to $c$.
Then, the definition of $U(c)$ does not depend on the choice of the sequence $x_n$. Moreover, $U(c)$ is $\Gamma$-invariant, that is,
for any $\gamma \in \Gamma$, $\gamma \cdot U(c)=U(c)$.

\textit{Step 2:} Define the sequence of measures $\lambda_n=\sum_{\gamma \in \hat{S}_n}H(e,\gamma|R_{\mu})\delta_{\gamma}$ on $\Gamma$.
Then, up to a subsequence, $\tilde{\lambda}_N:=\sum_{n=1}^N\lambda_n/\left (\sum_{n=1}^N\lambda_n(\Gamma)\right )$ converges weakly to a probability measure on $\partial \hat{\Gamma}$, which we denote by $\lambda_{R_{\mu}}$.

\textit{Step 3:} The limit measure $\lambda_{R_{\mu}}$ is ergodic for the action of $\Gamma$ on $\partial \hat{\Gamma}$.

\textit{Step 4:} Let $c_j=\int \varphi_rdm_j$.
Then, $\lambda_{R_{\mu}}(U(c_j))>0$.
Since $\lambda_{R_{\mu}}$ is ergodic and $U(c_i)$ is $\Gamma$-invariant, we thus have $\lambda_{R_{\mu}}(U(c_j))=1$ for all $j$.
In particular, all the sets $U(c_j)$ intersect, which proves that $c_j$ is independent of $j$.

\medskip
\textit{Step 1} in \cite{Gouezel1} is stated with the Gromov boundary $\partial \Gamma$ of $\Gamma$ instead of $\partial \hat{\Gamma}$, since groups are hyperbolic in there and not relatively hyperbolic.
It is a consequence of the fact that geodesics converging to $\xi\in \partial \Gamma$ in a hyperbolic group stay within a bounded distance of each other.
This property still holds in our situation as we now explain.

First, let us show that the definition of $U(c)$ does not depend on the choice of the sequence $x$ defining $\xi$.
Assume that $x$ and $x'$ are two sequences such that, setting $\gamma_n=x_1...x_n$ and $\gamma'_n=x_1'...x_n'$, both sequences $e,\gamma_1,...,\gamma_n,...$ and $e,\gamma_1',...,\gamma_n',...$ are relative geodesics converging to $\xi$.
Then, according to Lemma~\ref{Proposition315Osininfinite}, for every $n$, there exists $k_n$ such that $d(\gamma_n,\gamma_{k_n}')\leq C$, so that
$H(e,\gamma_n|r)\asymp H(e,\gamma_{k_n}'|r)$.
We thus have $|\log H(e,\gamma_n|r)-\log H(e,\gamma_{k_n}'|r)|\leq C'$.
Moreover, since $d(\gamma_n,\gamma_{k_n}')\leq C$, $\hat{d}(\gamma_n,\gamma_{k_n}')\leq C$, so that $|n-k_n|\leq C''$ and thus $|\hat{d}(e,\gamma_n')-\hat{d}(e,\gamma_{k_n}')|\leq C''$.
This proves that $\log H(e,\gamma_n|r)/\hat{d}(e,\gamma_n)$ and $\log H(e,\gamma_n'|r)/\hat{d}(e,\gamma_n')$ have the same limit.

Let $\gamma\in \Gamma$ and let $\xi \in U(c)$.
We want to prove that $\gamma \cdot \xi \in U(c)$.
Consider a sequence $x$ defining $\xi$ and a sequence $x'$ defining $\gamma \cdot \xi$, that is, setting $\gamma_n=x_1...x_n$ and $\gamma'_n=x_1'...x_n'$, the sequence $e,\gamma_1,...,\gamma_n,...$ is a relative geodesic converging to $\xi$ and the sequence $e,\gamma_1',...,\gamma_n',...$ is a relative geodesic converging to $\gamma \cdot \xi$.
Then, $\gamma,\gamma\gamma_1,...,\gamma\gamma_n,...$ is a relative geodesic starting at $\gamma$ and converging to $\gamma \cdot \xi$.
According to Lemma~\ref{Proposition315Osininfinite}, for every $n$, there exists $k_n$ such that $d(\gamma \gamma_n,\gamma_{k_n}')\leq C$.
This time, the bound $C$ depends on $\gamma$, but not on the sequences $\gamma_n$ and $\gamma'_n$.
This shows that $H(\gamma^{-1},\gamma_n|r)=H(e,\gamma \gamma_n|r)\asymp H(e,\gamma_{k_n}'|r)$, and since $\gamma$ is fixed,
$H(\gamma^{-1},\gamma_n|r)\asymp H(e,\gamma_n|r)$, so that
$H(e,\gamma_n|r) \asymp H(e,\gamma_{k_n}'|r)$.
The same proof then shows that $\log H(e,\gamma_n|r)/\hat{d}(e,\gamma_n)$ and $\log H(e,\gamma_n'|r)/\hat{d}(e,\gamma_n')$ have the same limit.

\medskip
\textit{Step 2} follows directly from the convergence properties of the transfer operator $\mathcal{L}_r$ in \cite{Gouezel1} and one does not need to extract a subsequence.
However, in our situation, we can only prove convergence of $\int f d\lambda_n$ for functions $f\in \mathcal{B}_{\rho,\beta}$.
Since our space is not compact and not even locally compact, this set of function is not dense in the set of all continuous and bounded functions for the $\|\cdot\|_{\infty}$ norm.
To fix this problem, we need to consider a compact space that contains $\partial \hat{\Gamma}$ so that $m_N$ converges to a measure on this compact space (up to a subsequence).
We then prove that this limit measure gives full measure to $\partial \hat{\Gamma}$.
The compact space in question is a version of the Martin boundary that we will define.
Actually, we will both deal with the Martin boundary and the Bowditch boundary at the same time.

We first define the Green distance at the spectral radius as
$$d_G(\gamma,\gamma')=-\log F(\gamma,\gamma'|R_{\mu})F(\gamma',\gamma|R_{\mu}),$$ where
$F(\gamma,\gamma'|R_{\mu})$ is the first visit Green function at $R_{\mu}$.
More precisely, we have
\begin{equation}\label{defF}
    F(\gamma,\gamma'|r)=\sum_{n\geq 0}r^n \mathbb{P}(X_0=\gamma,X_n=\gamma',X_k\neq \gamma', 1\leq k \leq n-1),
\end{equation}
where $X_k$ is the position of the $\mu$-random walk at time $k$.
Note that for $r=1$, $F(\gamma,\gamma'|1)$ is the probability of ever reaching $\gamma'$ starting at $\gamma$.

Using the relation
$$G(\gamma,\gamma'|r)=F(\gamma,\gamma'|r)G(\gamma',\gamma'|r)=F(\gamma,\gamma'|r)G(e,e|r)$$
(see \cite[Lemma~1.13.(b)]{Woess}), we also have that
$$d_G(\gamma,\gamma')=-\log G(\gamma,\gamma'|R_{\mu})-\log G(\gamma',\gamma|R_{\mu})+2G(e,e|R_{\mu}).$$
Actually, the Green distance was introduced by S.~Blach\`ere and S.~Brofferio in \cite{BlachereBrofferio} as
$d_G(\gamma,\gamma')=-\log F(\gamma,\gamma'|1)$.
What we call the Green distance here is thus a symmetrized version at the spectral radius of what they call the Green distance.

In general, in any metric space $(X,d)$, one can consider a compactification given by the distance called the horofunction compactification.
It was introduced by C.~Kuratowski in \cite{Kuratowski} and used a lot by M.~Gromov, see for example \cite{BallmannGromovSchroeder}.
It is the smallest compact set $H$ such that the function $\phi:(x,y)\mapsto d(x,y)-d(x_0,y)$ extends continuously to $X\times H$, where $x_0$ is a base point.
Its homeomorphism type does not depend on $x_0$.
The horofunction boundary is the complement of $\Gamma$ in the horofunction compactification.
We refer to \cite[Section~3]{MaherTiozzo} for a construction and many more details.

Define the Martin kernel as
$$\tilde{K}(\gamma,\gamma')=\frac{G(\gamma,\gamma'|R_{\mu})G(\gamma',\gamma|R_{\mu})}{G(e,\gamma'|R_{\mu})G(\gamma',e|R_{\mu})}.$$
The Martin compactification is defined as the horofunction compactification for the Green distance.
In other words, a sequence $\gamma_n$ in $\Gamma$ converges to a point $\xi$ in the Martin boundary if and only if the Martin kernel $\tilde{K}(\cdot,\gamma_n)$ converge pointwise to a limit function $\tilde{K}(\cdot,\xi)$.
Usually, the Martin compactification is defined using the Martin kernel
$$K(\gamma,\gamma')=\frac{G(\gamma,\gamma')}{G(e,\gamma')}.$$
Again, our Martin compactification is a symmetrized version of the usual Martin compactification.

It is proved in \cite{GGPY} that as soon as weak relative Ancona inequalities are satisfied, there is a one-to-one continuous map from $\Gamma \cup \partial \hat{\Gamma}$ to the Martin compactification, which is a homeomorphism on its image.
Actually, this is proved for the usual definition of the Martin boundary.
Although the proof still works for our symmetrized version, the terminology is a bit different and we give a proof for completeness.

\begin{lemma}\label{homeoconicalintoMartin}
There is a one-to-one continuous map from $\Gamma \cup \partial \hat{\Gamma}$ to the Martin compactification, which is a homeomorphism on its image.
\end{lemma}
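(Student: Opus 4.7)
The plan is to define $\Phi : \Gamma \cup \partial \hat\Gamma \to \mathcal{M}$ as the identity on $\Gamma$ and, for a conical $\xi \in \partial \hat\Gamma$, by $\Phi(\xi) = \tilde K(\cdot, \xi) := \lim_n \tilde K(\cdot, \gamma_n)$, where $(e, \gamma_1, \gamma_2, \ldots)$ is any relative geodesic ray converging to $\xi$. Once this pointwise limit is shown to exist and to be independent of the ray, $\Phi$ is well-defined, and it remains to check injectivity, continuity, and continuity of the inverse on the image.

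For existence of the limit, fix $x \in \Gamma$ and observe that for $n \leq m$ sufficiently large compared to $\hat d(e,x)$, the relative geodesics $[x, \gamma_m]$ and $[e, \gamma_m]$ share the long tail $(\gamma_n, \ldots, \gamma_m)$ up to the BCP property, hence $c$-fellow-travel for time at least $m - n - O_x(1)$. Applying the strong relative $R_\mu$-Ancona inequalities to the two pairs of fellow-traveling relative geodesics entering the forward and reflected factors of $\tilde K$ yields
\[
\left| \frac{\tilde K(x, \gamma_m)}{\tilde K(x, \gamma_n)} - 1 \right| \leq C \rho^{m - n}
\]
with uniform $C \geq 0$ and $0 < \rho < 1$. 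A Harnack-type bound coming from $G(x, \gamma|R_\mu) \geq F(x,e|R_\mu)\, G(e,\gamma|R_\mu)$ and its symmetric counterpart gives $\tilde K(x, \gamma_n)$ bounded above and below by constants depending only on $x$, upgrading the ratio estimate to $|\tilde K(x, \gamma_m) - \tilde K(x, \gamma_n)| \leq C_x \rho^{m - n}$; the sequence is therefore Cauchy. Independence from the choice of ray follows from Lemma~\ref{Proposition315Osininfinite}: two relative geodesic rays to the same $\xi$ admit a reindexing for which corresponding points stay at bounded $d$-distance, their finite prefixes fellow-travel arbitrarily long, and the same Ancona bound identifies the two limiting kernels.

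For injectivity, distinct points of $\Gamma$ are separated by the pole structure of $\tilde K(\cdot, \gamma)$, while the mixed case $(\gamma, \xi)$ follows from $\tilde K(\cdot, \gamma)$ being globally bounded whereas boundary kernels are unbounded. For two distinct conical points $\xi \neq \xi'$, pick relative geodesic rays $(\gamma_n) \to \xi$ and $(\gamma'_n) \to \xi'$ diverging after a bounded branching region containing a point $y$. Evaluating at $x = \gamma_N$ and using weak Ancona along $[e, \gamma_N, \gamma_n]$, one obtains $\tilde K(\gamma_N, \xi) \asymp 1/H(e, \gamma_N | R_\mu)$; meanwhile, Lemma~\ref{lemmarelativetripod} places the nearest-point projection of $\gamma_N$ on any $[e, \gamma'_n]$ within a bounded distance of $y$ independent of $n$, giving $\tilde K(\gamma_N, \xi') \asymp H(y, \gamma_N | R_\mu)/H(e, y | R_\mu)$. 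The ratio of these two asymptotics is $\asymp 1/H(e, \gamma_N|R_\mu)^2$ up to bounded multiplicative factors, which is a non-constant function of $N$ along the ray, forcing $\tilde K(\cdot, \xi) \neq \tilde K(\cdot, \xi')$.

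Continuity of $\Phi$ follows by a diagonal extraction: if $\xi_n \to \xi$ in $\Gamma \cup \partial \hat\Gamma$, approximate each boundary $\xi_n$ by a point $\eta_n$ deep on a relative geodesic ray to $\xi_n$, chosen so that $\eta_n \to \xi$ also, and the exponential Cauchy estimate forces $\tilde K(\cdot, \eta_n) \to \tilde K(\cdot, \xi)$ pointwise and hence $\tilde K(\cdot, \xi_n) \to \tilde K(\cdot, \xi)$. For continuity of the inverse on the image, any sequence with $\Phi(\xi_n) \to \Phi(\xi)$ and $\xi_n \not\to \xi$ could, after extraction, be taken to accumulate at some $\xi^* \neq \xi$ in the Bowditch compactification $\Gamma \cup \partial_B\Gamma$; conical accumulation then contradicts injectivity, and parabolic accumulation is ruled out by the distinct asymptotic behavior of kernels along bounded parabolic rays versus escaping conical rays. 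The main obstacle I anticipate is the injectivity at the boundary: one must establish that two relative geodesic rays to distinct conical limit points genuinely separate in $\mathrm{Cay}(\Gamma, S)$ and not merely in $\hat\Gamma$, because they may share arbitrarily long initial portions through parabolic cosets before branching, and the asymptotic separation of $\tilde K(\cdot, \xi)$ and $\tilde K(\cdot, \xi')$ via Lemma~\ref{lemmarelativetripod} hinges on this genuine separation.
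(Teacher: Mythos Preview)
Your proposal follows essentially the same architecture as the paper's proof: establish the limit $\tilde K(\cdot,\xi)$ via strong relative Ancona, separate distinct conical points by comparing the growth of $\tilde K(\gamma_N,\xi)\asymp 1/H(e,\gamma_N|R_\mu)$ against the decay of $\tilde K(\gamma_N,\xi')\asymp H(e,\gamma_N|R_\mu)$, and handle continuity via diagonal approximation. The existence, injectivity, and forward continuity arguments are all sound and match the paper's (your self-identified ``main obstacle'' about genuine separation in $\Cay(\Gamma,S)$ is not a problem: Lemma~\ref{lemmarelativetripod} gives bounded $d$-distance to a fixed pivot once the projection in $\hat\Gamma$ is bounded, which is immediate from $\xi\neq\xi'$ in the Gromov boundary of $\hat\Gamma$).

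There is, however, a genuine gap in your inverse-continuity argument. You extract a subsequence of $(\xi_n)$ converging in the \emph{Bowditch} compactification to some $\xi^*\neq\xi$, and then split into the conical and parabolic cases. The conical case is fine (continuity plus injectivity). But the parabolic case you dismiss with ``distinct asymptotic behavior of kernels along bounded parabolic rays versus escaping conical rays'', which is not justified: you have not shown what $\tilde K(\cdot,\xi_n)$ does when $\xi_n$ accumulates at a parabolic point, and understanding the Martin kernel there is precisely the delicate part of the theory (it is the subject of the spectral-degeneracy analysis in \cite{DussauleGekhtman}). The paper sidesteps this entirely by working inside $\hat\Gamma$ rather than the Bowditch compactification: if $\xi_n\not\to\xi$ in $\Gamma\cup\partial\hat\Gamma$, then along a subsequence the nearest-point projection in $\hat\Gamma$ of a relative geodesic $[e,\xi_n)$ onto $[e,\xi)$ stays within bounded $\hat d$-distance of $e$ (this is just non-convergence in a Gromov boundary). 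Then Lemma~\ref{lemmarelativetripod} forces any relative geodesic from a point $\gamma_m$ on $[e,\xi)$ to any point on $[e,\xi_n)$ to pass within bounded $d$-distance of $e$, and weak Ancona gives $\tilde K_{\xi_n}(\gamma_m)\leq C$ uniformly in $n$, contradicting $\tilde K_\xi(\gamma_m)\to\infty$. Replacing your Bowditch extraction with this direct argument in $\hat\Gamma$ closes the gap.
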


\begin{proof}
Let $\xi$ be a conical limit point and let $[e,\xi)$ be a relative geodesic ray from $e$ to $\xi$.
Let $\gamma_n$ be a sequence along $[e,\xi)$ converging to $\xi$.
Let $\gamma \in \Gamma$ and let $\tilde{\gamma}$ be its projection on $[e,\xi)$ in $\hat{\Gamma}$.
Lemma~\ref{lemmarelativetripod} shows that for large enough $n$, a relative geodesic from $\gamma$ to $\gamma_n$ passes within a bounded distance of $\tilde{\gamma}$.
Also, \cite[Lemma~4.17]{DussauleLLT1} shows that for large enough $n$, relative geodesics from $e$ to $\gamma_n$ and from $\gamma$ to $\gamma_n$ fellow travel for an arbitrarily long time, when $n$ goes to infinity.
Then, strong relative Ancona inequalities show that for every $\gamma$, $\tilde{K}(\gamma,\gamma_n)$ converges to some limit $\tilde{K}_{\xi}(\gamma)$, exactly as in the proof of Lemma~\ref{GreenlocallyHolder}.
We thus proved that $\gamma_n$ converges to a limit that we still denote by $\xi$ in the Martin boundary.

More generally, let $\xi$ be a conical limit point and let $\xi_n$ be a sequence in $\Gamma \cup \partial\hat{\Gamma}$ converging to $\xi$.
Let $\alpha$ be a relative geodesic ray from $e$ to $\xi$ and let $\alpha_n$ be a (finite or infinite) relative geodesic from $e$ to $\xi_n$.
Let $d_\mu$ be an arbitrary distance on the Martin compactification.
Then, there exists $\gamma_n\in \Gamma$ on $\alpha_n$ such that $d_{\mu}(\gamma_n,\xi_n)\leq \frac{1}{n}$.
If $\xi_n\in \Gamma$, we can choose $\xi_n=\gamma_n$.
Otherwise, we use what we just proved above.
Up to choosing $\hat{d}(e,\gamma_n)$ large enough, we can also assume that $\gamma_n$ converges to $\xi$ in $\Gamma \cup \partial \hat{\Gamma}$.
Thus, there exists a sequence $k_n$ going to infinity such that the projection $\tilde{\gamma}_n$ of $\gamma_n$ on $\alpha$ in $\hat{\Gamma}$ satisfies $\hat{d}(e,\tilde{\gamma}_n)\geq k_n$.
In particular, $\tilde{\gamma}_n$ converges to $\xi$ in the Martin boundary,
that is, for any $\gamma$, $\tilde{K}(\gamma,\tilde{\gamma}_n)$ converges to $\tilde{K}_{\xi}(\gamma)$.
Let $\gamma \in \Gamma$.
Then, according to \cite[Lemma~4.17]{DussauleLLT1} applied twice, relative geodesics from $e$ to $\gamma_n$ and from $\gamma$ to $\tilde{\gamma}_n$ fellow travel for an arbitrarily long time, when $n$ goes to infinity.
Strong relative Ancona inequalities show that $\tilde{K}(\gamma,\gamma_n)$ also converges to $\tilde{K}_{\xi}(\gamma)$.
Thus, $d_{\mu}(\xi,\gamma_n)$ goes to 0.
Since $d_{\mu}(\gamma_n,\xi_n)\leq \frac{1}{n}$, we also have that $\xi_n$ converges to $\xi$ in the Martin boundary.

\medskip
We thus constructed a map from $\Gamma \cup \partial \hat{\Gamma}$ to the Martin compactification.
We also proved that this map is continuous.
Let us prove that it is one-to-one.
Let $\xi\neq \xi'$ be two conical limit points.
We just need to prove that $\xi\neq \xi'$ in the Martin boundary.
Consider two relative geodesics $[e,\xi)$ and $[e,\xi')$ from $e$ to $\xi$ and from $e$ to $\xi'$.
Let $\gamma_n$, respectively $\gamma_n'$ be a sequence on $[e,\xi)$, respectively $[e,\xi')$, converging to $\xi$, respectively $\xi'$.
Since $\xi\neq \xi'$, the projection of $\gamma_n$ on $[e,\xi')$ in $\hat{\Gamma}$ stays within a bounded distance of $e$.
Thus, for large enough $n$ and $m$, a relative geodesic from $\gamma_n$ to $\gamma_m'$ passes within a bounded distance of $e$.
Weak relative Ancona inequalities show that
$\tilde{K}(\gamma_n,\gamma_m')\asymp H(\gamma_n,e|R_\mu)$.
Letting $m$ tend to infinity, we thus have that
$\tilde{K}_{\xi'}(\gamma_n)\asymp H(\gamma_n,e|R_\mu)$, so that $\tilde{K}_{\xi'}(\gamma_n)$ converges to 0.
Weak relative Ancona inequalities also show that if $n<m$, we have
$\tilde{K}(\gamma_n,\gamma_m)\asymp \frac{1}{H(e,\gamma_n|R_\mu)}$.
Letting $m$ tend to infinity, we get
$\tilde{K}_{\xi}(\gamma_n)\asymp \frac{1}{H(\gamma_n,e|R_\mu)}$, so that $\tilde{K}_{\xi}(\gamma_n)$ goes to infinity.
We can thus find $n$ such that $\tilde{K}_{\xi}(\gamma_n)\neq \tilde{K}_{\xi'}(\gamma_n)$ and so $\xi\neq \xi'$ in the Martin boundary.

Finally, we prove that this map is a homeomorphism on its image.
Let $\xi_n$ be a sequence in $\Gamma \cup \partial \hat{\Gamma}$ converging to $\xi$ in the Martin compactification.
Assume by contradiction that it does not converge to $\xi$ in $\Gamma \cup \partial \hat{\Gamma}$.
Fix a relative geodesic $\alpha$ from $e$ to $\xi$ and for every $n$, a relative geodesic $\alpha_n$ from $e$ to $\xi_n$.
Then, up to choosing a subsequence, we can assume that the projection of $\alpha_n$ on $\alpha$ in $\hat{\Gamma}$ stays within a uniform bounded distance of $e$.
In particular, if $\gamma_m$ is a sequence on $\alpha$ converging to $\xi$ and if $\gamma_k'$ is a sequence on $\alpha_n$ converging to $\xi_n$, then a relative geodesic from $\gamma_m$ to $\gamma_k'$ passes within a bounded distance of $e$, independently of $k,m,n$.
Weak relative Ancona inequalities show that $\tilde{K}(\gamma_m,\gamma_k')\asymp H(\gamma_m,e|R_{\mu})$ so that letting $k$ tend to infinity,
$\tilde{K}_{\xi_n}(\gamma_m)\asymp H(\gamma_m,e|R_{\mu})$.
In particular, $\tilde{K}_{\xi_n}(\gamma_m)\leq C$ for some uniform $C$.
However, as we saw above, $\tilde{K}_{\xi}(\gamma_m)$ tends to infinity, so there exists $m$ such that $\tilde{K}_{\xi}(\gamma_m)\geq C+1$.
Fixing such an $m$, we obtain a contradiction, since $\tilde{K}_{\xi_n}(\gamma_m)$ converges to $\tilde{K}_{\xi}(\gamma_m)$ when $n$ tends to infinity.
\end{proof}

We first prove that $\tilde{\lambda}_N$ converges to a probability measure on the Bowditch compactification.
We will then prove that it also converges to a probability measure on the Martin compactification.

\begin{proposition}\label{convergencePattersonSullivanlike}
Up to a subsequence, the measure $\tilde{\lambda}_N$ weakly converges to a measure $\lambda_{R_\mu}$ on the Bowditch compactification.
This limit measure gives full measure to the set of conical limit points.
\end{proposition}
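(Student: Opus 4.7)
The plan is to first extract a weakly convergent subsequence using compactness of the Bowditch compactification, and then show that the limit measure puts no mass on either $\Gamma$ or on the (countable) set of bounded parabolic limit points.

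Since $\Gamma \cup \partial_B\Gamma$ is compact metrizable, Prokhorov's theorem furnishes a subsequence $\tilde{\lambda}_{N_k}$ converging weakly to a probability measure $\lambda_{R_\mu}$. The non-spectral degeneracy of $\mu$, via Proposition~\ref{relationR_kG'(R)}, gives $I^{(1)}(R_\mu) = +\infty$, so $Z_N := \sum_{n=1}^N \lambda_n(\Gamma) \to +\infty$. Each singleton $\{\gamma\}$ with $\gamma \in \Gamma$ is both open and closed in the Bowditch compactification, so by the portmanteau theorem $\lambda_{R_\mu}(\{\gamma\}) = \lim_k H(e,\gamma|R_\mu)/Z_{N_k} = 0$, and countable additivity yields $\lambda_{R_\mu}(\Gamma) = 0$.

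The core of the proof is to show $\lambda_{R_\mu}(\{\xi\}) = 0$ for each bounded parabolic limit point $\xi$ (a countable set, since there are finitely many $\Gamma$-orbits). Fix such a $\xi$, stabilized by $\gamma_\xi \mathcal{H}_k \gamma_\xi^{-1}$ for some $\mathcal{H}_k \in \Omega_0$. The key geometric input is that for a sufficiently small open neighborhood $U$ of $\xi$, every element $\gamma \in U \cap \Gamma$ admits a decomposition $\gamma = \gamma_\xi \cdot h \cdot u$ with $h \in \mathcal{H}_k$ and $u$ of bounded word length in $\Cay(\Gamma, S)$ depending only on $U$; in particular $U \cap \Gamma$ has bounded $\hat{d}$-distance to $e$. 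This follows from the horoball description of Bowditch neighborhoods of $\xi$ combined with Lemma~\ref{projectiontransitionpoints} to translate between geodesics and relative geodesics. Applying weak relative Ancona inequalities along this decomposition yields $H(e, \gamma | R_\mu) \lesssim H(e, \gamma_\xi | R_\mu)\, H(e, h | R_\mu)\, H(e, u | R_\mu)$, so summing over $\gamma \in U \cap \Gamma$ (with $\gamma_\xi$ fixed and $u$ ranging over a finite set) and invoking Corollary~\ref{derivativeparabolicGreenfinite} give
$$\sum_{\gamma \in U \cap \Gamma} H(e, \gamma | R_\mu) \lesssim H(e,\gamma_\xi|R_\mu)\sum_{h \in \mathcal{H}_k} H(e, h | R_\mu) < +\infty.$$
Consequently $\tilde{\lambda}_N(U) \leq C(U)/Z_N \to 0$, and the portmanteau theorem for open sets gives $\lambda_{R_\mu}(\{\xi\}) \leq \lambda_{R_\mu}(U) \leq \liminf_k \tilde{\lambda}_{N_k}(U) = 0$.

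The main obstacle is verifying the geometric decomposition of $U \cap \Gamma$ above in the general relatively hyperbolic setting: one must combine the horoball description of Bowditch neighborhoods of $\xi$ (via projection onto the coset $\gamma_\xi \mathcal{H}_k$ in $\Cay(\Gamma, S)$) with the transition-point Lemma~\ref{projectiontransitionpoints}, and maintain uniform bounds on the error term $u$ so that the summation remains effective. The finiteness $\sum_{h \in \mathcal{H}_k} H(e,h|R_\mu) < +\infty$ is precisely where non-spectral degeneracy is used.
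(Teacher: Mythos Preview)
Your overall strategy matches the paper's: extract a weak limit by compactness, show it avoids $\Gamma$ using $Z_N\to\infty$, then show it avoids each parabolic limit point. The treatment of $\Gamma$ is fine. The gap is in the parabolic case.

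The key geometric claim --- that every $\gamma\in U\cap\Gamma$ decomposes as $\gamma_\xi h u$ with $u$ of \emph{bounded} word length, so that $U\cap\Gamma$ has bounded $\hat d$-distance to $e$ --- is false. A Bowditch neighborhood $U$ of $\xi$ consists of those $\gamma$ for which the geodesic $[e,\gamma]$ penetrates deeply into the coset $\gamma_\xi\mathcal H_k$ (entry and exit points far apart in $\Cay(\Gamma,S)$). But once the geodesic exits at $\gamma_\xi h$, it can continue arbitrarily: for fixed $h$ with $d(e,\gamma_\xi h)$ large, every $\gamma=\gamma_\xi h u$ with $u$ not re-entering the coset still lies in $U$, and $u$ is unbounded. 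Concretely, in $\Z^2*\Z$ with $\mathcal H=\Z^2$, all elements $ht^n$ with $|h|\ge D$ lie in a fixed neighborhood of $\xi$, for every $n$. Hence $\sum_{\gamma\in U\cap\Gamma}H(e,\gamma|R_\mu)=+\infty$ in general, and your portmanteau step fails.

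The paper's fix is not to bound the total mass of a single $U$, but to take a \emph{shrinking} family $V(\xi,n)$ and show, via weak Ancona at the coset entry point together with the sphere bound Lemma~\ref{finitesumalongspheres} to absorb the unbounded tail $u$ into $\sum_{k\le N}\lambda_k(\Gamma)$, that
\[
\tilde\lambda_N(V(\xi,n))\ \lesssim\ \sum_{h\in\mathcal H_k,\ d(e,\gamma_\xi h)\ge n} H(e,h|R_\mu)
\]
uniformly in $N$. One then lets $n\to\infty$ and invokes Corollary~\ref{derivativeparabolicGreenfinite} to make the right side small. (A minor side remark: Corollary~\ref{derivativeparabolicGreenfinite} does not require non-spectral degeneracy; that hypothesis enters only through Proposition~\ref{relationR_kG'(R)} to get $Z_N\to\infty$.)
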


\begin{proof}
Convergence up to a subsequence follows directly from compactness of the Bowditch compactification.
We just need to prove that any limit measure of $\tilde{\lambda}_N$ gives full measure to the set of conical limit points.
Recall that by definition, $\lambda_n=\sum_{\gamma \in \hat{S}_n}H(e,\gamma|R_{\mu})\delta_{\gamma}$ and
$\tilde{\lambda}_N=\sum_{n=1}^N\lambda_n/\left (\sum_{n=1}^N\lambda_n(\Gamma)\right ).$

First, we prove that any limit measure $\lambda_{R_{\mu}}$ of $\tilde{\lambda}_N$ gives full mass to the Bowditch boundary.
Let $K\subset \Gamma$ be a compact subset.
Then, $K$ is finite, so that for any $N$,
$\sum_{n=1}^N\lambda_n(K)$ is bounded, independently of $N$.
Moreover, according to Proposition~\ref{relationR_kG'(R)}, $\sum_{n=1}^N\lambda_n(\Gamma)$ tends to infinity.
This proves that for any subsequence $\tilde{\lambda}_{N_j}$ of $\tilde{\lambda}_N$, $\tilde{\lambda}_{N_j}(K)$ converges to 0 when $j$ tends to infinity.
Since $K$ is both open and closed in the Bowditch compactification, the Portmanteau Theorem shows that $\lambda_{R_{\mu}}(K)=0$.

Consider a parabolic limit point $\xi$ in the Bowditch boundary.
Since the set of parabolic limit points is countable, we just need to prove that $\lambda_{R_{\mu}}(\{\xi\})=0$ to conclude.
Let $\mathcal{H}$ be the corresponding parabolic subgroup, that is $\mathcal{H}$ is the stabilizer of $\xi$.
Choose $\mathcal{H}_0\in \Omega_0$ so that $\mathcal{H}$ is conjugated to $\mathcal{H}_0$, say $\mathcal{H}=\gamma_0\mathcal{H}_0\gamma_0^{-1}$
Denote by $U_{\xi,n}$ the set of $\gamma \in \Gamma$ such that the projection of $\gamma$ on $\gamma_0 \mathcal{H}_0$ in the Cayley graph $\Cay(\Gamma,S)$ is at $d$-distance at least $n$ from $e$.
Let $V(\xi,n)$ be the closure of $U(\xi,n)$ in the Bowditch compactification.
Then, $V(\xi,n)$ contains $\{\xi\}$ so we only need to prove that $\lambda_{R_{\mu}}(V(\xi,n))$ converges to 0 when $n$ tends to infinity.

According to the BCP property, if $\zeta \in V(\xi,n)$, then there exists $\gamma\in \mathcal{H}_0$ such that $\gamma_0\gamma$ is within a bounded distance of a relative geodesic from $e$ to $\zeta$.
In particular, if $N$ is large enough,
for every $\gamma'\in V(\xi,n)\cap \Gamma$ such that $\hat{d}(e,\gamma')=N$, weak relative Ancona inequalities show that
$$H(e,\gamma'|R_\mu)\lesssim H(e,\gamma_0|R_\mu)H(e,\gamma|R_\mu)H(\gamma_0\gamma,\gamma'|R_\mu).$$
Thus,
$$\lambda_N(V(\xi,n))\lesssim \sum_{\gamma \in \mathcal{H}_0,d(e,\gamma_0\gamma)\geq n}H(e,\gamma|R_{\mu})\sum_{k=1}^N\lambda_k(\Gamma)$$
and so
$$\tilde{\lambda}_N(V(\xi,n))\lesssim \sum_{\gamma \in \mathcal{H}_0,d(e,\gamma_0\gamma)\geq n}H(e,\gamma|R_{\mu}).$$
Since $\gamma_0$ is fixed, this proves that
$$\lambda_{R_{\mu}}(V(\xi,n))\lesssim \sum_{\gamma \in \mathcal{H}_0,d(e,\gamma)\geq n}H(e,\gamma|R_{\mu}).$$
According to Corollary~\ref{derivativeparabolicGreenfinite}, this last term converges to 0 when $n$ tends to infinty, which concludes the proof.
\end{proof}

We can thus see the measure $\lambda_{R_\mu}$ as a measure on $\partial \hat{\Gamma}$.
We fix a subsequence $\tilde{\lambda}_{N_k}$ such that $\tilde{\lambda}_{N_k}$ weakly converges to $\lambda_{R_\mu}$.
We also prove the following.

\begin{lemma}\label{convergencePattersonSullivanlikeinMartin}
The measure $\tilde{\lambda}_{N_k}$ also weakly converges to $\lambda_{R_\mu}$ on $\Gamma\cup \partial \hat{\Gamma}$ and on the Martin compactification.
\end{lemma}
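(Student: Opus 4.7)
The plan is to mimic the proof of Proposition~\ref{convergencePattersonSullivanlike} on the Martin compactification, denoted $X_M$ (with $X_B$ for the Bowditch compactification), and then to identify the resulting weak limit with $\iota_*\lambda_{R_\mu}$, where $\iota$ is the injection of Lemma~\ref{homeoconicalintoMartin}. Since by that lemma the subspace topology on $\Gamma\cup\partial\hat\Gamma$ coincides with its restriction from either compactification, weak convergence on $\Gamma\cup\partial\hat\Gamma$ will follow immediately from the $X_M$-convergence.

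By compactness of $X_M$, I first extract a subsequence (still denoted $\tilde\lambda_{N_k}$) converging weakly to a probability $\tilde\mu$ on $X_M$. The finite-subset argument of Proposition~\ref{convergencePattersonSullivanlike} gives $\tilde\mu(\Gamma)=0$. For each parabolic limit point $\xi\in\partial_B\Gamma$ with stabilizer $\gamma_0\mathcal{H}_0\gamma_0^{-1}$, let $U(\xi,n)\subset\Gamma$ be the set introduced in that proof. Since $\Gamma$ is a discrete open subspace of $X_M$, $U(\xi,n)$ is open in $X_M$, so the Portmanteau theorem combined with the bound from Proposition~\ref{convergencePattersonSullivanlike} yields
$$\tilde\mu\bigl(\overline{U(\xi,n)}^{X_M}\bigr)\le\liminf_k\tilde\lambda_{N_k}(U(\xi,n))\lesssim\sum_{\gamma\in\mathcal{H}_0,\,d(e,\gamma_0\gamma)\ge n}H(e,\gamma\mid R_\mu),$$
which tends to $0$ by Corollary~\ref{derivativeparabolicGreenfinite}. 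Hence $E(\xi):=\bigcap_n\overline{U(\xi,n)}^{X_M}$ is $\tilde\mu$-null. Any Martin boundary point $\zeta\notin\iota(\partial\hat\Gamma)$ is the $X_M$-limit of some sequence $\gamma_k\in\Gamma$; extracting an $X_B$-convergent subsequence with limit $\eta$, Lemma~\ref{homeoconicalintoMartin} rules out $\eta\in\Gamma\cup\partial\hat\Gamma$ (otherwise $\zeta=\iota(\eta)$), so $\eta$ is parabolic and $\zeta\in E(\eta)$. Since the set of parabolic limit points is countable, $\tilde\mu$ is concentrated on $\iota(\partial\hat\Gamma)$.

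It remains to show $\tilde\mu=\iota_*\lambda_{R_\mu}$. The strategy is to construct a continuous surjection $\pi:X_M\to X_B$ that is the identity on $\Gamma$, coincides with $\iota^{-1}$ on $\iota(\partial\hat\Gamma)$, and sends each remaining Martin boundary point $\zeta$ to the parabolic limit point $\eta$ produced above. Granting such a $\pi$, the pushforward $\pi_*\tilde\lambda_{N_k}=\tilde\lambda_{N_k}$ converges on $X_B$ to both $\pi_*\tilde\mu$ and $\lambda_{R_\mu}$, forcing $\pi_*\tilde\mu=\lambda_{R_\mu}$; combined with $\tilde\mu$ being carried by $\iota(\partial\hat\Gamma)$ where $\pi=\iota^{-1}$, this yields the identification. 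The main obstacle is the construction of $\pi$: equivalently, one must verify that every sequence $\gamma_k\in\Gamma$ converging in $X_M$ admits a unique $X_B$-accumulation point and that the resulting map is continuous. This rigidity is ultimately a consequence of the BCP property and of the weak and strong relative Ancona inequalities --- two distinct parabolic candidate limits $\eta_1\ne\eta_2$ would force a common $X_M$-accumulation point to lie in $E(\eta_1)\cap E(\eta_2)$, which a careful comparison of the relative-geodesic projections onto the two peripheral cosets shows to be empty --- and this is the genuinely new technical input beyond what is in \cite{Gouezel1}.
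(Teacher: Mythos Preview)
Your approach has a real gap: the construction of the continuous surjection $\pi:X_M\to X_B$ is only sketched. Proving that every $X_M$-convergent sequence in $\Gamma$ has a \emph{unique} $X_B$-accumulation point, and that the resulting map is continuous, amounts to showing that the Martin compactification dominates the Bowditch compactification. Your outline treats only the case of two parabolic accumulation points; the mixed conical/parabolic case and, more seriously, continuity of $\pi$ on the whole of $X_M$ are not addressed. Even granting all of this, you would still need the standard ``every subsequential limit coincides'' argument to pass from your extracted subsequence back to the full sequence $\tilde\lambda_{N_k}$.

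More to the point, you are working much harder than necessary. The paper reverses the order of your two steps and thereby sidesteps $\pi$ entirely. First, weak convergence on $\Gamma\cup\partial\hat\Gamma$ follows directly from Proposition~\ref{convergencePattersonSullivanlike} by Portmanteau: for any closed $F\subset\Gamma\cup\partial\hat\Gamma$, its $X_B$-closure $\overline{F}$ satisfies $\tilde\lambda_{N_k}(F)=\tilde\lambda_{N_k}(\overline{F})$ (the measures live in $\Gamma$) and $\lambda_{R_\mu}(\overline{F})=\lambda_{R_\mu}(F)$ (the limit gives no mass to parabolic points), so $\limsup_k\tilde\lambda_{N_k}(F)\le\lambda_{R_\mu}(F)$. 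Second, since the embedding $\iota:\Gamma\cup\partial\hat\Gamma\to X_M$ of Lemma~\ref{homeoconicalintoMartin} is continuous, every bounded continuous $f$ on $X_M$ restricts to a bounded continuous function on $\Gamma\cup\partial\hat\Gamma$, and $\tilde\lambda_{N_k}(f)\to\lambda_{R_\mu}(f)$ is immediate from the first step. No information about the Martin boundary outside $\iota(\partial\hat\Gamma)$ is ever needed.
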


\begin{proof}
There is a one-to-one and continuous map from $\Gamma \cup \partial \hat{\Gamma}$ to the Bowditch compactification, which is a homeomorphism on its image.
Since the limit measure $\lambda_{R_\mu}$ does not give any mass to the complement of $\Gamma \cup \partial\hat{\Gamma}$, the Portmanteau Theorem shows that $\tilde{\lambda}_{N_k}$ also weakly converges to $\lambda_{R_\mu}$ on $\Gamma\cup \partial\hat{\Gamma}$.

There is also a one-to-one and continuous map from $\Gamma \cup \partial \hat{\Gamma}$ to the Martin compactification, which is a homeomorphism on its image.
Let $f$ be a bounded continuous function on the Martin compactification.
Its restriction $\tilde{f}$ to $\Gamma \cup \partial\hat{\Gamma}$ also is bounded and continuous, so that $\tilde{\lambda}_{N_k}(f)=\tilde{\lambda}_{N_k}(\tilde{f})$ converges to $\lambda_{R_\mu}(\tilde{f})$.
This proves that $\tilde{\lambda}_{N_k}$ also weakly converges to $\lambda_{R_\mu}$ on the Martin compactification.
\end{proof}

\medskip
\textit{Step 3} is a bit more complicated.
Recall that we need to prove that $\lambda_{R_\mu}$ is ergodic for the action of $\Gamma$ on $\partial \hat{\Gamma}$.
We follow the strategy of \cite{MatsuzakiYabukiJaerisch}.
We first prove that $\lambda_{R_\mu}$ is conformal for the Green distance defined above.
%
Let $\gamma\in \Gamma$ and let $L_{\gamma}$ the operator of multiplication by $\gamma$ on the left.
\begin{lemma}\label{conformality}
For every $\gamma$, we have
$$\frac{d(L_{\gamma})_*\lambda_{R_{\mu}}}{d\lambda_{R_{\mu}}}(\xi)=\tilde{K}_{\xi}(\gamma).$$
\end{lemma}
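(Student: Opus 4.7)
The plan is to test the desired identity against continuous functions on the Martin compactification. Concretely, I will show that for every continuous bounded function $f$ on the Martin compactification,
$$\int f(\gamma\xi)\,d\lambda_{R_\mu}(\xi) = \int \tilde{K}(\gamma,\xi)f(\xi)\,d\lambda_{R_\mu}(\xi),$$
which is equivalent to the Radon--Nikodym statement of the lemma.

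Set $Z_N := \sum_{n=1}^N \lambda_n(\Gamma)$, which tends to infinity by Proposition~\ref{relationR_kG'(R)}. Lemma~\ref{convergencePattersonSullivanlikeinMartin} gives weak convergence $\tilde{\lambda}_{N_k}\to\lambda_{R_\mu}$ on the Martin compactification. Since left multiplication by $\gamma$ extends continuously to Martin (immediate from the definition of the Martin topology, using that $\tilde{K}(y,\gamma x) = \tilde{K}(\gamma^{-1}y,x)/\tilde{K}(\gamma^{-1},x)$ by equivariance of the Green function), the function $f\circ L_\gamma$ is continuous there, and weak convergence yields
$$\int f(\gamma\xi)\,d\lambda_{R_\mu}(\xi) = \lim_k\frac{1}{Z_{N_k}}\sum_{\gamma'\in\hat{B}_{N_k}}H(e,\gamma'|R_\mu)f(\gamma\gamma'),$$
where $\hat{B}_N$ denotes the $\hat{d}$-ball of radius $N$.

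I then change variable $\gamma''=\gamma\gamma'$. Equivariance gives $H(e,\gamma'|R_\mu)=H(\gamma,\gamma''|R_\mu)$, which by the very definition of $\tilde{K}$ factors as $\tilde{K}(\gamma,\gamma'')H(e,\gamma''|R_\mu)$. Thus the last sum becomes
$$\frac{1}{Z_{N_k}}\sum_{\gamma''\in\gamma\hat{B}_{N_k}}\tilde{K}(\gamma,\gamma'')f(\gamma'')H(e,\gamma''|R_\mu).$$
The symmetric difference $\gamma\hat{B}_{N_k}\triangle\hat{B}_{N_k}$ is contained in the $\hat{d}$-annulus of width $\hat{d}(e,\gamma)$ around the sphere $\hat{S}_{N_k}$, and by Lemma~\ref{finitesumalongspheres} its total $H(e,\cdot|R_\mu)$-mass is bounded by a constant independent of $k$. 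Since $\tilde{K}(\gamma,\cdot)f$ is bounded on Martin (a continuous function on a compact space), this symmetric difference contributes $O(1)$, which is negligible after division by $Z_{N_k}\to\infty$. A final application of weak convergence to the continuous bounded function $\tilde{K}(\gamma,\cdot)f$ therefore yields
$$\lim_k\frac{1}{Z_{N_k}}\sum_{\gamma''\in\hat{B}_{N_k}}\tilde{K}(\gamma,\gamma'')f(\gamma'')H(e,\gamma''|R_\mu) = \int\tilde{K}(\gamma,\xi)f(\xi)\,d\lambda_{R_\mu}(\xi),$$
completing the chain of equalities.

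The main obstacle is the boundary-mismatch step: replacing $\gamma\hat{B}_{N_k}$ by $\hat{B}_{N_k}$ without producing a diverging error. This is precisely where Lemma~\ref{finitesumalongspheres} is indispensable, since the individual weights $H(e,\gamma'|R_\mu)$ are not summable on $\Gamma$ (by Proposition~\ref{relationR_kG'(R)}) yet remain uniformly summable on each sphere $\hat{S}_m$. A secondary routine point is extending an $f$ originally defined on $\partial\hat{\Gamma}$ to a continuous bounded function on the compact Martin compactification via Tietze, so that the weak convergence there can be applied.
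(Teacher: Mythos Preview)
Your proof is correct and follows essentially the same approach as the paper: both use the equivariance computation $H(e,\gamma^{-1}\gamma'')=\tilde{K}(\gamma,\gamma'')H(e,\gamma'')$, control the boundary mismatch via Lemma~\ref{finitesumalongspheres} and the divergence $Z_N\to\infty$ from Proposition~\ref{relationR_kG'(R)}, and pass to the limit using weak convergence on the Martin compactification together with continuity of $\tilde{K}(\gamma,\cdot)$ and of $L_\gamma$. The paper phrases the conclusion as $(L_\gamma)_*\tilde{\lambda}_N-\tilde{K}(\gamma,\cdot)\tilde{\lambda}_N\to 0$ in total variation before taking weak limits, whereas you test directly against continuous functions, but the content is the same.
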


\begin{proof}
Direct computation shows that for fixed $\gamma_0$, one has, for any $\gamma$ and any $N$ such that
$N\geq \hat{d}(e,\gamma_0)+\hat{d}(e,\gamma)$,
\begin{equation}\label{conformality1}
    (L_{\gamma_0})_*\tilde{\lambda}_N(\gamma)=H(\gamma_0,\gamma|R_{\mu})/H(e,\gamma|R_{\mu})\tilde{\lambda}_N(\gamma)=\tilde{K}(\gamma_0,\gamma)\tilde{\lambda}_N(\gamma).
\end{equation}
Lemma~\ref{finitesumalongspheres} shows that $\sum_{n=N-\hat{d}(e,\gamma_0)}^{N}\sum_{\gamma \in \hat{S}_n}H(e,\gamma|R_{\mu})$ is bounded.
Thus, according to Proposition~\ref{relationR_kG'(R)},
$$\frac{\sum_{n=N-\hat{d}(e,\gamma_0)}^{N}H(e,\gamma|R_{\mu})}{\sum_{n\leq N}H(e,\gamma|R_{\mu})}\underset{N\to \infty}{\longrightarrow} 0.$$
Combined with~(\ref{conformality1}), this shows that $\left ((L_{\gamma_0})_*\tilde{\lambda}_N-\tilde{K}(\gamma_0,\cdot)\tilde{\lambda}_N\right )$ converges to 0 in total variation norm.

By definition, for fixed $\gamma_0$, the function $\tilde{K}(\gamma_0,\cdot)$ is continuous and bounded on the Martin compactification.
Thus, $\tilde{K}(\gamma_0,\cdot)\tilde{\lambda}_{N_k}$ weakly converges to $\tilde{K}(\gamma_0,\cdot)\lambda_{R_\mu}$.
Moreover, left multiplication by $\gamma_0$ on $\Gamma$ extends to a homeomorphim on the Martin compactification,
so that $(L_{\gamma_0})_*\tilde{\lambda}_{N_k}$ weakly converges to $(L_{\gamma_0})_*\lambda_{R_\mu}$.
We thus proved that $(L_{\gamma_0})_*\lambda_{R_\mu}=\tilde{K}(\gamma_0,\cdot)\lambda_{R_\mu}$.
\end{proof}

We use this property to prove the following.
\begin{lemma}\label{conformalwithoutatoms}
The measure $\lambda_{R_\mu}$ on $\partial \hat{\Gamma}$ has no atom.
\end{lemma}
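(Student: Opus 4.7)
The strategy is to argue by contradiction, exploiting the conformality relation from Lemma~\ref{conformality} together with the weak relative Ancona inequalities at the spectral radius. Suppose $\lambda_{R_\mu}(\{\xi_0\}) = a > 0$; by Proposition~\ref{convergencePattersonSullivanlike} the point $\xi_0$ must be a conical limit point, so $\xi_0 \in \partial \hat\Gamma$. Lemma~\ref{conformality} gives, for every $\gamma \in \Gamma$,
\[
\lambda_{R_\mu}(\{\gamma^{-1}\xi_0\}) = \tilde K_{\xi_0}(\gamma) \cdot a.
\]
Picking one representative $\gamma_\eta$ for each distinct orbit point $\eta \in \Gamma \xi_0$ and summing yields $a \sum_\eta \tilde K_{\xi_0}(\gamma_\eta) \leq 1$. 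The plan is to derive a contradiction by showing that $\sum_\eta \tilde K_{\xi_0}(\gamma_\eta) = +\infty$.

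Fix a relative geodesic ray $\alpha = [e, \xi_0)$ in $\hat\Gamma$. The main step is to choose each representative $\gamma_\eta$ at uniformly bounded $\hat d$-distance from $\alpha$. When $\mathrm{Stab}_\Gamma(\xi_0)$ is finite, this is automatic: the group elements along $\alpha$ at $\hat d$-distance $n$ from $e$, with $n \to \infty$, lie in infinitely many distinct cosets and already form a sufficient family of representatives. When $\mathrm{Stab}_\Gamma(\xi_0)$ is infinite, the standard structure theorem for stabilizers of conical limit points in non-elementary relatively hyperbolic groups says it is virtually cyclic, generated up to finite index by a loxodromic element $g$ fixing $\xi_0$; the axis of $g$ ends at $\xi_0$ and is therefore asymptotic in $\hat\Gamma$ to $\alpha$. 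For any $\gamma \in \Gamma$ the iterates $g^k \gamma$ stay at bounded $\hat d$-distance from the axis of $g$ and converge to $\xi_0$ as $k \to \infty$, so for $k$ large they lie within bounded $\hat d$-distance of $\alpha$ and provide a representative in each coset.

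For a representative $\gamma_\eta$ at bounded $\hat d$-distance from $\alpha$, a weak (and, if needed, strong) relative Ancona argument at $R_\mu$ yields
\[
\tilde K_{\xi_0}(\gamma_\eta) = \lim_{\gamma'\to \xi_0} \frac{H(\gamma_\eta, \gamma'|R_\mu)}{H(e, \gamma'|R_\mu)} \asymp \frac{1}{H(e, \gamma_\eta|R_\mu)} \geq c,
\]
with a uniform lower bound $c > 0$ coming from the uniform upper bound $H(e, \cdot|R_\mu) \leq C$ furnished by Lemma~\ref{finitesumalongspheres}. Since $\Gamma$ is non-elementary and $\mathrm{Stab}_\Gamma(\xi_0)$ is virtually cyclic or finite, the orbit $\Gamma \xi_0$ is infinite, so $\sum_\eta \tilde K_{\xi_0}(\gamma_\eta) \geq \sum_\eta c = +\infty$, which is the desired contradiction.

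The main obstacle is the geometric step verifying that every coset of $\mathrm{Stab}_\Gamma(\xi_0)$ admits a representative at uniformly bounded $\hat d$-distance from $\alpha$, and that for such a representative the weak Ancona asymptotic $\tilde K_{\xi_0}(\gamma_\eta) \asymp 1/H(e, \gamma_\eta|R_\mu)$ holds with controlled constants. The loxodromic stabilizer case requires analyzing how translates of the axis of the stabilizing element fellow-travel with $\alpha$ in the hyperbolic graph $\hat\Gamma$, while the Ancona step requires combining the weak inequalities along $\alpha$ with a uniform fellow-traveling estimate to transfer bounds from the projection of $\gamma_\eta$ onto $\alpha$ back to $\gamma_\eta$ itself.
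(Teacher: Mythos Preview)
Your approach is far more elaborate than necessary, and the elaborations introduce real gaps. The paper's argument is five lines: pick the points $\gamma_n$ along a relative geodesic ray $[e,\xi_0)$; weak relative Ancona gives $\tilde K_{\xi_0}(\gamma_n)\gtrsim 1/H(e,\gamma_n\mid R_\mu)$, which tends to $+\infty$ because $d(e,\gamma_n)\to\infty$. Conformality then forces $\lambda_{R_\mu}(\{\gamma_n^{-1}\xi_0\})=\tilde K_{\xi_0}(\gamma_n)\cdot a\to +\infty$, and a probability measure cannot have a point of mass greater than $1$. There is no need for the translates $\gamma_n^{-1}\xi_0$ to be distinct, so the entire stabilizer analysis is superfluous.

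In your infinite-stabilizer case there are two concrete problems. First, the claim that the iterates $g^k\gamma$ stay at bounded $\hat d$-distance from the axis of $g$ is not uniform in $\gamma$: since $g$ is an isometry preserving its axis, the distance from $g^k\gamma$ to the axis equals the distance from $\gamma$ to the axis, which can be arbitrary. You therefore do not obtain coset representatives lying in a \emph{fixed} $\hat d$-neighborhood of $\alpha$, and without uniformity the asserted lower bound $\tilde K_{\xi_0}(\gamma_\eta)\geq c$ fails. Second, even a uniform bound on $\hat d$-distance would not suffice: the Green-function comparisons behind the estimate $\tilde K_{\xi_0}(\gamma_\eta)\asymp 1/H(e,\gamma_\eta\mid R_\mu)$ require $\gamma_\eta$ to lie within bounded \emph{word} distance $d$ of a point on the relative geodesic, not bounded $\hat d$-distance---two points at $\hat d$-distance $1$ can be at arbitrary $d$-distance inside a parabolic coset, and their Green functions need not be comparable. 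Both issues disappear once you drop the summation-over-orbit idea and simply exhibit a single translate of mass exceeding $1$.
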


\begin{proof}
Assume on the contrary that there exists $\xi\in \partial \hat{\Gamma}$ such that $\lambda_{R_\mu}(\xi)>0$.
Consider a sequence $\gamma_n$ converging along a relative geodesic ray to $\xi$.
Then, weak relative Ancona inequalities show that if $n\leq m$, then
$\tilde{K}(\gamma_n,\gamma_m)\geq \frac{C}{H(e,\gamma_n|R_{\mu})}$, so that letting $m$ tend to infinity,
$\tilde{K}_{\xi}(\gamma_n)\geq \frac{C}{H(e,\gamma_n|R_{\mu})}$.
Since $d(\gamma_n,e)$ tends to infinity, $H(e,\gamma_n|R_{\mu})$ converges to 0 and so
$\tilde{K}_{\xi}(\gamma_n)$ tends to infinity.
Lemma~\ref{conformality} shows that
$\lambda_{R_\mu}(\gamma_n^{-1}\xi)=\tilde{K}_{\xi}(\gamma_n)\lambda_{R_{\mu}}(\xi)$, which goes to infinity.
This is a contradiction, since $\lambda_{R_{\mu}}$ is a probability measure.
\end{proof}



In the following, it will be simpler to see the measure $\lambda_{R_\mu}$ as a measure on the Bowditch boundary that gives full mass to the set of conical limit points.
In the hyperbolic setting, using results of M.~Coornaert (see \cite{Coornaert}), conformal measures for hyperbolic distances are ergodic.
Actually, \cite{Coornaert} only deals with geodesic distances and this was generalized by \cite{BlachereHassinskyMathieu} for distances that are hyperbolic and quasi-isometric to a word distance, such as the Green distance as long as weak Ancona inequalities hold (this is also proved in \cite{BlachereHassinskyMathieu}).
Comparing a geodesic distance with the Green distance is more difficult here and we need another approach.
With the same strategy as in \cite[Theorem~4.1]{MatsuzakiYabukiJaerisch}, we prove the following generalization of M.~Coornaert's result.

\begin{proposition}\label{ergodicityconformal}
The measure $\lambda_{R_{\mu}}$ is ergodic for the action of $\Gamma$ on $\partial \hat{\Gamma}$.
\end{proposition}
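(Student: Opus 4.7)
\medskip

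My plan is to adapt the classical Sullivan--Coornaert argument, in the form used by Matsuzaki--Yabuki--Jaerisch, to the relatively hyperbolic setting: conformality (Lemma~\ref{conformality}) plus absence of atoms (Lemma~\ref{conformalwithoutatoms}) plus a shadow lemma together imply ergodicity by a density-point contradiction. For $\gamma\in\Gamma$ and $r>0$, I will define the shadow $\mathcal{O}_r(\gamma)\subset\partial\hat{\Gamma}$ as the set of conical limit points $\zeta$ such that some relative geodesic from $e$ to $\zeta$ passes through the $r$-neighbourhood of $\gamma$ in $\hat{\Gamma}$. Since $\lambda_{R_\mu}$ is supported on the conical limit set (Proposition~\ref{convergencePattersonSullivanlike}), shadows of points along relative geodesic rays form a natural neighbourhood basis for $\lambda_{R_\mu}$-almost every point of $\partial\hat{\Gamma}$.

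The first key step is a shadow lemma: for $r$ large enough and for every $\gamma$ which is a transition point, $\lambda_{R_\mu}(\mathcal{O}_r(\gamma))\asymp H(e,\gamma|R_\mu)$ with uniform constants. The upper bound follows by combining the weak relative Ancona inequalities — which give $\tilde{K}_\zeta(\gamma)\asymp H(e,\gamma|R_\mu)^{-1}$ for $\zeta\in\mathcal{O}_r(\gamma)$ — with the conformality relation $\lambda_{R_\mu}(\gamma^{-1}B)=\int_B\tilde{K}_\zeta(\gamma)\,d\lambda_{R_\mu}(\zeta)$ applied to $B=\mathcal{O}_r(\gamma)$, together with the fact that $\gamma^{-1}\mathcal{O}_r(\gamma)\subset\partial\hat{\Gamma}$ has mass at most $1$. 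The matching lower bound reduces to showing that $\lambda_{R_\mu}(\gamma^{-1}\mathcal{O}_r(\gamma))$ stays bounded away from zero as $\gamma$ varies over transition points; passing to subsequential limits $\gamma^{-1}\to\xi^-\in\partial\hat{\Gamma}$, the complement of $\gamma^{-1}\mathcal{O}_r(\gamma)$ concentrates near $\xi^-$, and atomlessness (Lemma~\ref{conformalwithoutatoms}) guarantees this residual shadow has mass tending to $0$.

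Once the shadow lemma is in hand, suppose for contradiction that $A\subset\partial\hat{\Gamma}$ is $\Gamma$-invariant with $0<\lambda_{R_\mu}(A)<1$. A Vitali/Besicovitch-type covering argument, using shadows at transition points as the neighbourhood basis (the necessary bounded-overlap property follows from the BCP property), yields a density point $\xi\in A$: for any sequence of transition points $\gamma_n$ along a relative geodesic ray to $\xi$, one has
\[
\frac{\lambda_{R_\mu}(A\cap\mathcal{O}_r(\gamma_n))}{\lambda_{R_\mu}(\mathcal{O}_r(\gamma_n))}\xrightarrow[n\to\infty]{}1.
\]
Applying conformality and the shadow lemma to both $A\cap\mathcal{O}_r(\gamma_n)$ and $\mathcal{O}_r(\gamma_n)$, and using $\gamma_n^{-1}A=A$, gives
\[
\frac{\lambda_{R_\mu}(A\cap\gamma_n^{-1}\mathcal{O}_r(\gamma_n))}{\lambda_{R_\mu}(\gamma_n^{-1}\mathcal{O}_r(\gamma_n))} \gtrsim \frac{\lambda_{R_\mu}(A\cap\mathcal{O}_r(\gamma_n))}{\lambda_{R_\mu}(\mathcal{O}_r(\gamma_n))} \xrightarrow[n\to\infty]{}1,
\]
while the same reasoning as in the shadow lemma shows $\lambda_{R_\mu}(\gamma_n^{-1}\mathcal{O}_r(\gamma_n))\to 1$. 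Hence $\lambda_{R_\mu}(A)=1$, contradicting the assumption.

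The main obstacle is the shadow lemma itself, specifically its uniformity: one must restrict the argument to \emph{transition} points $\gamma_n$ on relative geodesics, because the Ancona estimates on $\tilde{K}_\zeta(\gamma)$ only provide the needed multiplicative control at such points. This restriction is not a problem along a conical ray (infinitely many transition points exist along any relative geodesic to a conical limit point, by Lemma~\ref{projectiontransitionpoints}), but the cuspidal contributions to the shadows must be separately shown to be negligible. Here the key input is Corollary~\ref{derivativeparabolicGreenfinite} together with the estimate $\sum_{\gamma\in\mathcal{H}_0,\,d(e,\gamma)\geq n}H(e,\gamma|R_\mu)\to 0$ used in the proof of Proposition~\ref{convergencePattersonSullivanlike}, which quantifies how little mass $\lambda_{R_\mu}$ can place on deep cuspidal regions and is precisely what makes the shadow lemma (and hence the whole ergodicity argument) go through in the relatively hyperbolic setting.
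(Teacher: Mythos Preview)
Your overall strategy is the same as the paper's --- it follows Matsuzaki--Yabuki--Jaerisch using conformality, atomlessness, shadows, and a density-point contradiction --- but there is a genuine gap in the endgame.

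The issue is that you take a density point $\xi\in A$ and try to deduce that the \emph{transported} density ratio
\[
\frac{\lambda_{R_\mu}(A\cap\gamma_n^{-1}\mathcal{O}_r(\gamma_n))}{\lambda_{R_\mu}(\gamma_n^{-1}\mathcal{O}_r(\gamma_n))}
\]
tends to $1$. Conformality together with the weak Ancona inequalities only gives a \emph{two-sided multiplicative} comparison $\tilde K_\zeta(\gamma_n)\asymp H(e,\gamma_n|R_\mu)^{-1}$ on the shadow, with an implicit constant $C>1$. So all you can conclude is
\[
\frac{1}{C}\cdot\frac{\lambda_{R_\mu}(A\cap\mathcal{O}_r(\gamma_n))}{\lambda_{R_\mu}(\mathcal{O}_r(\gamma_n))}
\;\le\;
\frac{\lambda_{R_\mu}(A\cap\gamma_n^{-1}\mathcal{O}_r(\gamma_n))}{\lambda_{R_\mu}(\gamma_n^{-1}\mathcal{O}_r(\gamma_n))}
\;\le\;
C\cdot\frac{\lambda_{R_\mu}(A\cap\mathcal{O}_r(\gamma_n))}{\lambda_{R_\mu}(\mathcal{O}_r(\gamma_n))}.
\]
If the right-hand ratio tends to $1$, the transported ratio is only forced into $[1/C,1]$; it need not tend to $1$. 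Your displayed inequality ``$\gtrsim 1$'' is therefore too weak to conclude $\lambda_{R_\mu}(A)=1$: you only obtain $\lambda_{R_\mu}(A)\ge 1/C$, which you already knew.

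The fix, and this is exactly what the paper does, is to take the density point in the \emph{complement} $A^c$. Then the relative density of $A$ in $\mathcal{O}_r(\gamma_n)$ tends to $0$, and the multiplicative constant from Ancona is harmless: the transported ratio is $\le C\cdot(\text{something}\to 0)$, hence also tends to $0$. Combined with $\lambda_{R_\mu}(\partial\hat\Gamma\setminus\gamma_n^{-1}\mathcal{O}_r(\gamma_n))\le\epsilon$ (which is the correct statement --- you do not get convergence to $1$ for a fixed $r$, only smallness of the complement once $r$ is chosen large depending on $\epsilon$), this yields $\lambda_{R_\mu}(A)\le 2\epsilon$, a genuine contradiction. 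A secondary point: the ``bounded-overlap from BCP'' claim for the Vitali covering is too quick; the paper invokes the Vitali relation established in \cite{GekhtmanDussaule} for partial shadows, which is where the work actually lies.
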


Before proving this proposition, let us introduce some notions of geometric measure theory from \cite{MatsuzakiYabukiJaerisch} and some constructions of \cite{GekhtmanDussaule} and \cite{Yang2}.
Let $\Lambda$ be a metric space. A covering relation $\mathcal{C}$ is a subset of the set of all pairs $(\xi,S)$ such that $\xi \in S\subset \Lambda$. A covering relation $\mathcal{C}$ is said to be fine at $\xi \in \Lambda$ if there exists a sequence $S_n$ of subsets of $\Lambda$ with $(\xi,S_n)\in \mathcal{C}$ and such that the diameter of $S_n$ converges to $0$.
Let $\mathcal{C}$ be a covering relation.
For any measurable subset $E\subset \Lambda$, define $\mathcal{C}(E)$ to be the collection of subsets $S\subset \Lambda$ such that $(\xi,S)\in \mathcal{C}$ for some $\xi \in E$. A covering relation $\mathcal{C}$ is said to be a Vitali relation for a finite measure $\kappa$ on $\Lambda$ if it is fine at every point of $\Lambda$ and if the following holds: 
if $\mathcal{C}'\subset \mathcal{C}$ is fine at every point of $\Lambda$ then for every measurable subset $E$, $\mathcal{C}'(E)$ has a countable disjoint subfamily $\{S_n\}$ such that $\kappa(E\setminus \cup^\infty_{n=1}S_{n})=0$.
We will use the letter $\mathcal{V}$ to denote a Vitali relation in the following.

Recall that an $(\eta_1,\eta_2)$-transition point on a geodesic $\alpha$ in the Cayley graph $\Cay(\Gamma,S)$ is a point $\gamma$ such that for any coset $\gamma_0\mathcal{H}$ of a parabolic subgroup, the part of $\alpha$ consisting of points at distance at most $\eta_2$ from $\gamma$ is not contained in the $\eta_1$-neighborhood of $\gamma_0\mathcal{H}$.
Let $\xi$ be a conical limit point.
Following W.~Yang \cite{Yang2}, the partial shadow $\Omega_{\eta_1,\eta_2}(\gamma)$ at $\gamma \in \Gamma$ is the set of points $\xi$ in the Bowditch boundary such that there is a geodesic ray $[e,\xi)$ in $\Cay(\Gamma,S)$ containing an $(\eta_1,\eta_2)$-transition point in the ball $B(\gamma,2\eta_2)$.

We define the following relation $\mathcal{V}_{\eta_1,\eta_2}$ on the Bowditch boundary.
For $\xi$ parabolic, we declare $(\xi,\{\xi\})\in \mathcal{V}_{\eta_1,\eta_2}$.
For $\xi$ conical, we declare $(\xi, \Omega_{\eta_1,\eta_2}(\gamma))\in \mathcal{V}_{\eta_1,\eta_2}$ whenever $\xi \in  \Omega_{\eta_1, \eta_2}(\gamma)$.
According to \cite[Proposition~3.3]{GekhtmanDussaule}, the relation $\mathcal{V}_{\eta_1,\eta_2}$ is fine at every limit point in the Bowditch boundary.

Let $\gamma\in \Gamma$ and let $\eta_1,\eta_2>0$.
Consider a neighborhood $U$ of $\Omega_{\eta_1,\eta_2}(\gamma)$ in the Bowditch compactification.
One can choose $U$ such that for any point $\xi$ in $U$, $\gamma$ is within a bounded distance of a transition point on a geodesic from $e$ to $\xi$.
According to Lemma~\ref{projectiontransitionpoints}, $\gamma$ is within a bounded distance of a point on a relative geodesic from $e$ to $\xi$.
In particular, weak relative Ancona inequalities imply that there exists a constant $C$ (depending on $\eta_2$) such that for $N$ large enough,
$$\tilde{\lambda}_N(\Omega_{\eta_1,2\eta_2}(\gamma))\leq C \tilde{\lambda}_N(\Omega_{\eta_1,\eta_2}(\gamma)),$$
so that
$$\lambda_{R_{\mu}}(\Omega_{\eta_1,2\eta_2}(\gamma))\leq C \lambda_{R_{\mu}}(\Omega_{\eta_1,\eta_2}(\gamma)).$$
The measure $\lambda_{R_{\mu}}$ on the Bowditch boundary gives full measure to the set of conical limit points.
Thus, \cite[Proposition~3.4]{GekhtmanDussaule} shows that the relation $\mathcal{V}_{\eta_1,\eta_2}$ is a Vitali relation for $\lambda_{R_{\mu}}$.

To prove Proposition~\ref{ergodicityconformal}, we will need the following two results.
\begin{lemma}\label{Theorem4.2MYJ}\cite[Theorem~4.2]{MatsuzakiYabukiJaerisch}
Let $E$ be a measurable subset of the Bowditch boundary.
Then, for $\lambda_{R_{\mu}}$-almost every point $\xi \in E$, one has
$$\frac{\lambda_{R_{\mu}}(E\cap S_n)}{\lambda_{R_{\mu}}(S_n)}\underset{n\to \infty}{\longrightarrow}1$$
for every sequence $\{S_n\}$ such that $(\xi, S_n )\in  \mathcal{V}_{\eta_1,\eta_2}$ for all $n$ and such that the diameter of $S_n$ converges to 0 when $n$ tends to infinity.
\end{lemma}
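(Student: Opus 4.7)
My plan is to prove this by a standard Vitali-type differentiation argument, for which the Vitali property of the relation $\mathcal{V}_{\eta_1,\eta_2}$ (established just before the statement via \cite[Proposition~3.4]{GekhtmanDussaule}) is the sole nontrivial input.

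First I would argue by contradiction. If the conclusion fails, there exist a measurable set $E$, a constant $\delta>0$, and a subset $F\subseteq E$ with $\lambda_{R_\mu}(F)>0$ such that for every $\xi\in F$ one can find a sequence $S^\xi_n$ with $(\xi,S^\xi_n)\in \mathcal{V}_{\eta_1,\eta_2}$, $\mathrm{diam}(S^\xi_n)\to 0$, and $\lambda_{R_\mu}(E\cap S^\xi_n)\leq (1-\delta)\lambda_{R_\mu}(S^\xi_n)$ for every $n$. Fix a small $\epsilon>0$ and, using outer regularity of $\lambda_{R_\mu}$ on the compact metrizable Bowditch compactification, choose an open set $U\supseteq F$ with $\lambda_{R_\mu}(U)\leq \lambda_{R_\mu}(F)+\epsilon$.

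Next, define a subrelation $\mathcal{C}'\subseteq\mathcal{V}_{\eta_1,\eta_2}$ by declaring $(\xi,S)\in \mathcal{C}'$ when either $\xi\notin F$, or $\xi\in F$ together with $S\subseteq U$ and $\lambda_{R_\mu}(E\cap S)\leq (1-\delta)\lambda_{R_\mu}(S)$. Since $\mathrm{diam}(S^\xi_n)\to 0$, for $\xi\in F$ the sets $S^\xi_n$ eventually lie in the open neighborhood $U$, so $\mathcal{C}'$ is fine at every point of $F$; combined with the fineness of $\mathcal{V}_{\eta_1,\eta_2}$ at points outside $F$, $\mathcal{C}'$ is fine at every point of $\Lambda$. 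Applying the Vitali property to $F$, I extract a countable disjoint subfamily $\{T_k\}\subseteq\mathcal{C}'(F)$ with $\lambda_{R_\mu}(F\setminus\bigcup_k T_k)=0$. Every such $T_k$ arises from a pair $(\xi_k,T_k)\in \mathcal{C}'$ with $\xi_k\in F$, so by the very definition of $\mathcal{C}'$, $T_k\subseteq U$ and $\lambda_{R_\mu}(E\cap T_k)\leq (1-\delta)\lambda_{R_\mu}(T_k)$.

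Summing and using disjointness,
$$\lambda_{R_\mu}(F)\leq \sum_k \lambda_{R_\mu}(E\cap T_k)\leq (1-\delta)\sum_k \lambda_{R_\mu}(T_k)\leq (1-\delta)\lambda_{R_\mu}(U)\leq (1-\delta)\bigl(\lambda_{R_\mu}(F)+\epsilon\bigr).$$
Letting $\epsilon\to 0$ yields $\lambda_{R_\mu}(F)\leq (1-\delta)\lambda_{R_\mu}(F)$, forcing $\lambda_{R_\mu}(F)=0$, a contradiction. The only real obstacle is absorbed into the Vitali covering property itself (selecting a disjoint almost-covering sub-family from a fine cover); once that is granted, as it is here, the differentiation statement is purely measure-theoretic and proceeds as above.
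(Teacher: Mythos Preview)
The paper does not give its own proof of this lemma; it is quoted verbatim from \cite[Theorem~4.2]{MatsuzakiYabukiJaerisch} and used as a black box, the only work done in the paper being the verification (via \cite[Proposition~3.4]{GekhtmanDussaule}) that $\mathcal{V}_{\eta_1,\eta_2}$ is indeed a Vitali relation for $\lambda_{R_\mu}$.

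Your argument is the standard proof of the density theorem for abstract Vitali relations (essentially Federer's differentiation theorem), and it is correct. Two small points worth tightening: when you negate the conclusion, the failure of $\lambda_{R_\mu}(E\cap S_n)/\lambda_{R_\mu}(S_n)\to 1$ only gives $\liminf<1$, so you should first pass to a subsequence along which the ratio is $\le 1-\delta$; and the existence of a \emph{single} $\delta>0$ working on a set $F$ of positive measure follows by writing the exceptional set as a countable union over rational $\delta$. Both are routine. Everything else---outer regularity of the Borel probability measure $\lambda_{R_\mu}$ on the compact metrizable Bowditch boundary, the fineness of your subrelation $\mathcal{C}'$ at every point, and the chain of inequalities $\lambda_{R_\mu}(F)\le\sum_k\lambda_{R_\mu}(E\cap T_k)\le(1-\delta)\sum_k\lambda_{R_\mu}(T_k)\le(1-\delta)\lambda_{R_\mu}(U)$---is in order.
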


For the second result, we need to choose a distance on the Bowditch boundary.
To simplify the argument, we choose the shortcut distance, so that the following holds.
We refer to \cite[Section~2.4]{Yang2} for the definition of the shortcut distance.

\begin{lemma}\label{Proposition2.10MYJ}
For every $\epsilon>0$ and $\eta_1>0$, there exists $\eta_2>0$ such that for every $\gamma\in \Gamma$, the diameter of the complement of $\gamma^{-1}\Omega_{\eta_1,\eta_2}(\gamma)$ is smaller than $\epsilon$.
\end{lemma}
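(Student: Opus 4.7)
The plan is to unfold the complement condition via Lemma~\ref{projectiontransitionpoints} into a constraint on the Cayley-distance from $e$ of relative geodesics in $\hat\Gamma$, and then argue that this constraint forces the corresponding boundary endpoints to share long relative-geodesic prefixes from $e$, which shrinks the shortcut distance. I first observe that $\zeta$ lies in the complement of $\gamma^{-1}\Omega_{\eta_1,\eta_2}(\gamma)$ if and only if no Cayley geodesic from $\gamma^{-1}$ to $\zeta$ carries an $(\eta_1,\eta_2)$-transition point inside $B(e,2\eta_2)$. Applying the converse direction of Lemma~\ref{projectiontransitionpoints}, this forces every vertex of every relative geodesic from $\gamma^{-1}$ to $\zeta$ in $\hat\Gamma$ to lie at Cayley-distance at least $2\eta_2-r$ from $e$, where $r$ depends only on $\eta_1,\eta_2$.

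Given two such $\zeta_1,\zeta_2$, I pick relative geodesic rays $\hat\alpha_i$ from $\gamma^{-1}$ to $\zeta_i$. The Cayley-distance constraint together with the BCP property severely restricts which initial edges at $\gamma^{-1}$ can begin an admissible $\hat\alpha_i$: for $\eta_2$ large, only outgoing edges in $\hat\Gamma$ whose endpoint lies at Cayley-distance at least $2\eta_2-r$ from $e$ are compatible, and by the BCP property two such rays with different first edges would pass through a common parabolic coset near $\gamma^{-1}$, whose entry and exit points can be pinned down up to a bounded error. Iterating vertex-by-vertex, $\hat\alpha_1$ and $\hat\alpha_2$ share a common initial subpath in $\hat\Gamma$ whose length goes to infinity with $\eta_2$, uniformly in $\gamma$. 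Translating by $\gamma$, the rays $[e,\gamma\zeta_i)$ in $\hat\Gamma$ share a long common prefix; combining this with Lemma~\ref{Proposition315Osininfinite} and the nearest-projection estimate of Lemma~\ref{lemmarelativetripod} then yields that relative geodesic rays from $e$ to $\zeta_1$ and $\zeta_2$ also share a prefix whose length depends only on $\eta_2$. Since the shortcut metric on $\partial_B\Gamma$ decays as a function of such a common prefix length, taking $\eta_2$ large gives the uniform diameter bound $\epsilon$. Parabolic limit points in the complement are treated by a density or continuity argument, approximating them by conical limit points along orbits inside the stabilizing parabolic coset.

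The main obstacle is converting the Cayley-distance constraint into an $\hat\Gamma$-length common-prefix statement, since a vertex deep in a parabolic coset can be Cayley-far from $e$ while being $\hat\Gamma$-close, so both the first and subsequent edges of $\hat\alpha_i$ can enter parabolic cosets that are compatible with the constraint in many inequivalent ways. One must therefore track transitions into and out of parabolic cosets via the BCP property when comparing $\hat\alpha_1$ and $\hat\alpha_2$, and verify that the length of the forced common prefix really grows with $\eta_2$ uniformly in $\gamma$. This uniformity, combined with the exponential-type decay of the shortcut metric in the length of a common prefix from $e$, is what ultimately yields the desired diameter bound.
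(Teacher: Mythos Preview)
The paper gives no argument here; it simply cites the remark following the claim in the proof of \cite[Lemma~4.1]{Yang2}. Your attempt to supply a self-contained proof rests on a claim that is false. You assert that for $\zeta_1,\zeta_2$ in the complement, the relative geodesic rays $\hat\alpha_i$ from $\gamma^{-1}$ to $\zeta_i$ must share an initial subpath whose $\hat\Gamma$-length tends to infinity with $\eta_2$, uniformly in $\gamma$. This fails already in the free group $F_2=\langle a,b\rangle$ (hyperbolic, so every point on a geodesic is vacuously a transition point): take $\gamma^{-1}=a^N$ with $N>2\eta_2$, $\zeta_1=a^\infty$, $\zeta_2=a^Nb^\infty$. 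Both rays $[a^N,\zeta_i)$ stay entirely outside $B(e,2\eta_2)$, so $\zeta_1,\zeta_2$ lie in the complement, yet the two rays diverge at their very first step ($a^{N+1}$ versus $a^Nb$) and share no prefix beyond the basepoint. Since such $N$ exists for every $\eta_2$, the common-prefix length from $\gamma^{-1}$ is identically zero, uniformly in $\eta_2$.

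The lemma is nonetheless true in this example because the rays \emph{from $e$} to $\zeta_1$ and $\zeta_2$ share the long prefix $e,a,\dots,a^N$, whence their visual distance is of order $e^{-cN}\le e^{-2c\eta_2}$. That is the correct mechanism: the absence of a transition point of $[\gamma^{-1},\zeta)$ inside $B(e,2\eta_2)$ forces a large Gromov-type product $(\gamma^{-1},\zeta)_e$ (read through transition points in the relatively hyperbolic setting), so each $\zeta_i$ lies close to the direction of $\gamma^{-1}$ as seen from $e$, and hence $\zeta_1,\zeta_2$ are close to each other in the shortcut metric. Your argument is anchored at the wrong basepoint. The subsequent passage from a shared prefix of $[e,\gamma\zeta_i)$ to one of $[e,\zeta_i)$ via Lemmas~\ref{Proposition315Osininfinite} and~\ref{lemmarelativetripod} is likewise unjustified, since left multiplication by $\gamma$ does not act on the shortcut metric with constants uniform over $\gamma\in\Gamma$.
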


\begin{proof}
This is exactly the content of the remark after the claim inside the proof of \cite[Lemma~4.1]{Yang2}.
\end{proof}

Actually, the choice of the distance is not relevant and with a bit of work, one could have proved the same result for a visual distance on the Bowditch boundary, adapting the arguments of \cite[Proposition~2.10]{MatsuzakiYabukiJaerisch}.
We only chose the shortcut distance to avoid reproving this technical claim.

We can finally prove Proposition~\ref{ergodicityconformal}.
Everything is settled so that we only have to rewrite the proof of \cite[Theorem~4.1]{MatsuzakiYabukiJaerisch}.
We still rewrite it for convenience.
\begin{proof}
Denote by $\partial_B\Gamma$ the Bowditch boundary of $\Gamma$.
Consider a $\Gamma$-invariant measurable subset $E$ of the Bowditch boundary, such that $\lambda_{R_{\mu}}(E)>0$.
Assume by contradiction that $\lambda_{R_{\mu}}(E)<1$.
We fix $\epsilon>0$ arbitrarily small.
For technical reasons, we assume that $\lambda_{R_{\mu}}(\partial_B\Gamma)\geq 2\epsilon$, that is, $\epsilon\leq 1/2$.

According to Lemma~\ref{Theorem4.2MYJ}, if $\eta_2$ is large enough, for $\lambda_{R_{\mu}}$-almost every $\xi$ in $E^c$,
$$\frac{\lambda_{R_{\mu}}(E\cap \Omega_{\eta_1,\eta_2}(\gamma_n))}{\lambda_{R_{\mu}}(\Omega_{\eta_1,\eta_2}(\gamma_n))}\underset{n\to \infty}{\longrightarrow}0,$$
whenever $\gamma_n$ converges to $\xi$ along a relative geodesic ray.
Take such a $\xi$ and such a sequence $\gamma_n$.
Up to taking a subsequence, we can assume that $\gamma_n^{-1}$ converges to a point $\zeta$ in the Bowditch boundary.
According to Lemma~\ref{conformalwithoutatoms}, $\lambda_{R_{\mu}}(\zeta)=0$.

Then, since the Bowditch boundary is compact, there exists $\delta>0$ (not depending on $\zeta$) such that the ball centered at $\zeta$ of radius $\delta$ has measure at most $\epsilon$.
Moreover, according to Lemma~\ref{Proposition2.10MYJ}, there exists $\eta_2>0$ such that the diameter of the complement of $\gamma_n^{-1}\Omega_{\eta_1,\eta_2}(\gamma_n)$ is smaller than $\delta$.
Fixing such an $\eta_2>0$, for large enough $n$, we have that $\zeta\notin \gamma_n^{-1}\Omega_{\eta_1,\eta_2}(\gamma_n)$, so that the complement of $\gamma_n^{-1}\Omega_{\eta_1,\eta_2}(\gamma_n)$ is contained in the ball of center $\zeta$ and of radius $\delta$.
In particular,
$$\lambda_{R_{\mu}}(\partial_B\Gamma \setminus \gamma_n^{-1}\Omega_{\eta_1,\eta_2}(\gamma_n))\leq \epsilon.$$
Since $\lambda_{R_{\mu}}(\partial_B\Gamma)\geq 2\epsilon$, we thus also have
$$\lambda_{R_{\mu}}(\gamma_n^{-1}\Omega_{\eta_1,\eta_2}(\gamma_n))\geq \epsilon.$$
Since $E$ is $\Gamma$-invariant, we have
$$\lambda_{R_{\mu}}(E\cap \gamma_n^{-1}\Omega_{\eta_1,\eta_2}(\gamma_n))=(L_{\gamma_n})_*\lambda_{R_\mu}(E\cap \Omega_{\eta_1,\eta_2}(\gamma_n)).$$
Weak relative Ancona inequalities show that if $\xi'\in \Omega_{\eta_1,\eta_2}(\gamma_n)$, then
$$\frac{1}{C(\eta_2)}\frac{1}{H(e,\gamma_n|R_{\mu})}\leq \tilde{K}_{\xi'}(\gamma_n)\leq  C(\eta_2)\frac{1}{H(e,\gamma_n|R_{\mu})}.$$
In particular, Lemma~\ref{conformality} shows that
$$\lambda_{R_{\mu}}(E\cap \gamma_n^{-1}\Omega_{\eta_1,\eta_2}(\gamma_n))\leq C(\eta_2)\frac{1}{H(e,\gamma_n|R_{\mu})}\lambda_{R_\mu}(E\cap \Omega_{\eta_1,\eta_2}(\gamma_n)).$$
Similarly, we have
$$\lambda_{R_{\mu}}(\gamma_n^{-1}\Omega_{\eta_1,\eta_2}(\gamma_n))\geq \frac{1}{C(\eta_2)}\frac{1}{H(e,\gamma_n|R_{\mu})}\lambda_{R_\mu}(\Omega_{\eta_1,\eta_2}(\gamma_n)).$$
This proves that
$$\frac{\lambda_{R_{\mu}}(E\cap \gamma_n^{-1}\Omega_{\eta_1,\eta_2}(\gamma_n))}{\lambda_{R_{\mu}}(\gamma_n^{-1}\Omega_{\eta_1,\eta_2}(\gamma_n))}\leq C'(\eta_2)\frac{\lambda_{R_{\mu}}(E\cap \Omega_{\eta_1,\eta_2}(\gamma_n))}{\lambda_{R_{\mu}}(\Omega_{\eta_1,\eta_2}(\gamma_n))}.$$
The right-hand side of this last equation converges to 0 when $n$ tends to infinity.
Since $\lambda_{R_{\mu}}(\gamma_n^{-1}\Omega_{\eta_1,\eta_2}(\gamma_n))\geq \epsilon$, this proves that
$\lambda_{R_{\mu}}(E\cap \gamma_n^{-1}\Omega_{\eta_1,\eta_2}(\gamma_n)))$ converges to 0 when $n$ tends to infinity.
Finally, recall that $\lambda_{R_{\mu}}(\partial_B\Gamma \setminus \gamma_n^{-1}\Omega_{\eta_1,\eta_2}(\gamma_n))\leq \epsilon$, so that
$\lambda_{R_{\mu}}(E)\leq 2\epsilon$.
Since $\epsilon$ is arbitrarily small, we get that $\lambda_{R_{\mu}}(E)=0$, which is a contradiction.
\end{proof}

\textit{Finally, step 4} is a consequence of the two following lemmas.
We use the notation $\alpha_j=\sum_{i}\nu_j^{(i)}$, where the measures $\nu_j^{(i)}$ are given by Theorem~\ref{GouezelSarig}.

\begin{lemma}
The measure $T_*\alpha_j$ is absolutely continuous with respect to the measure $\alpha_j$.
\end{lemma}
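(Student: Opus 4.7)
The plan is to establish the stronger explicit identity $T_*\alpha_j=(\mathcal{L}_{\varphi_{R_\mu}}1)\cdot\alpha_j$, from which absolute continuity (in fact with a bounded Radon--Nikodym derivative) is immediate. The key algebraic input is the pointwise identity
$$\mathcal{L}_{\varphi_{R_\mu}}(g\circ T)(x)=\sum_{Ty=x}\mathrm{e}^{\varphi_{R_\mu}(y)}g(Ty)=g(x)\,\mathcal{L}_{\varphi_{R_\mu}}1(x),$$
valid for every measurable $g\geq 0$, which follows directly from the definition of the transfer operator since $g(Ty)=g(x)$ whenever $Ty=x$.

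Next, Lemma~\ref{pressureatthespectralradius} gives $P(R_\mu)=0$, so the eigenmeasure equation of Theorem~\ref{GouezelSarig} reads $\mathcal{L}_{\varphi_{R_\mu}}^*\nu_j^{(i)}=\nu_j^{(i-1)\,\mathrm{mod}\,p_j}$, i.e.
$$\int\mathcal{L}_{\varphi_{R_\mu}}f\,d\nu_j^{(i)}=\int f\,d\nu_j^{(i-1)\,\mathrm{mod}\,p_j}.$$
I would apply this to $f=g\circ T$ and substitute the pointwise identity above to get
$$\int g\,d\bigl(T_*\nu_j^{(i-1)\,\mathrm{mod}\,p_j}\bigr)=\int g\circ T\,d\nu_j^{(i-1)\,\mathrm{mod}\,p_j}=\int g\cdot\mathcal{L}_{\varphi_{R_\mu}}1\,d\nu_j^{(i)},$$
which after re-indexing reads $T_*\nu_j^{(i)}=\mathcal{L}_{\varphi_{R_\mu}}1\cdot\nu_j^{(i+1)\,\mathrm{mod}\,p_j}$. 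Summing over $i\in\{1,\ldots,p_j\}$, and using that $i\mapsto i+1\,\mathrm{mod}\,p_j$ is a bijection of this set, yields $T_*\alpha_j=\mathcal{L}_{\varphi_{R_\mu}}1\cdot\alpha_j$. Since $\|\mathcal{L}_{\varphi_{R_\mu}}1\|_\infty<+\infty$ by Lemma~\ref{finitepressureGreen}, this gives the desired absolute continuity with bounded density.

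There is no substantive obstacle here: the whole argument is a formal manipulation of the transfer operator duality, made especially clean by the fact that the pressure vanishes at $R_\mu$. The only mildly technical point is the extension of the eigenmeasure identity from test functions in $\mathcal{B}_{\rho,\beta}$ to functions of the form $g\circ T$, which is routine since both sides of the identity define $\sigma$-finite Borel measures agreeing on the $\pi$-system of cylinders, and thus coincide by a monotone class argument.
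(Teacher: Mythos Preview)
Your argument is correct and shares the same starting point as the paper's proof: both use that $P(R_\mu)=0$ so that $\mathcal{L}_{\varphi_{R_\mu}}^*\alpha_j=\alpha_j$ (obtained by summing the cyclic eigenmeasure relations over $i$). The difference is in the execution. You exploit the algebraic identity $\mathcal{L}_{\varphi_{R_\mu}}(g\circ T)=g\cdot\mathcal{L}_{\varphi_{R_\mu}}1$ to obtain the exact formula $T_*\alpha_j=(\mathcal{L}_{\varphi_{R_\mu}}1)\cdot\alpha_j$, and then conclude via $\|\mathcal{L}_{\varphi_{R_\mu}}1\|_\infty<\infty$ from Lemma~\ref{finitepressureGreen}. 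The paper instead works directly on cylinders: it writes $\alpha_j([x_1,\ldots,x_n])=\alpha_j(\mathcal{L}_{R_\mu}1_{[x_1,\ldots,x_n]})$, bounds this by $CH(e,x_1|R_\mu)\alpha_j([x_2,\ldots,x_n])$ using weak Ancona, and then sums over the admissible first symbols $x_1$ using Corollary~\ref{derivativeparabolicGreenfinite} to get $\alpha_j(T^{-1}[x_2,\ldots,x_n])\leq C\alpha_j([x_2,\ldots,x_n])$. Your route is a bit cleaner: it avoids invoking Ancona and the parabolic-sum bound separately, since those estimates are already packaged into the proof of Lemma~\ref{finitepressureGreen}, and it yields the explicit density rather than just a bound.
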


\begin{proof}
Since by Lemma~\ref{pressureatthespectralradius} the maximal pressure at the spectral radius is zero, we have
$$\mathcal{L}_{R_{\mu}}^{*}\alpha_j=\alpha_j.$$
Denote by $[x_1,...,x_n]$ the cylinder consisting of elements of $\overline{\Sigma}_A$ starting with the symbols $x_1,...,x_n$.
Then, we have
$$\alpha_j([x_1,...,x_n])=\alpha_j(\mathcal{L}_{R_{\mu}}1_{[x_1,...,x_n]}).$$
Moreover, weak Ancona inequalities show that
$$\alpha_j(\mathcal{L}_{R_{\mu}}1_{[x_1,...,x_n]})\leq C H(e,x_1|R_{\mu})\alpha_j([x_2,...,x_n]).$$
Now, $T^{-1}[x_2,...,x_n]$ is contained in the union of cylinders of the form $[x_1,...,x_n]$, where $x_1\in S$ or $x_1\in \mathcal{H}$ for some parabolic subgroup.
According to Corollary~\ref{derivativeparabolicGreenfinite}, the sum
$\sum_{\sigma\in \mathcal{H}}H(e,\sigma|r)$ is uniformly bounded, so that
$$\alpha_j(T^{-1}[x_2,...,x_n])\leq C\alpha_j([x_2,...,x_n]).$$
This is true for any cylinder $[x_2,...,x_n]$.
It follows that for any measurable set $E\in \overline{\Sigma}_A$, $\alpha_j(T^{-1}E)\leq C\alpha_j(E)$.
\end{proof}

Recall that $\phi$ maps paths of $\overline{\Sigma}_A$ that start with $v_*$ to $\Gamma\cup \partial\hat{\Gamma}$.
Let $\alpha_j(\cdot \cap E_*)$ be the measure $\alpha_j$ restricted to paths that start at $v_*$.
Then, $\phi_*\alpha_j(\cdot \cap E_*)$ is a measure on $\partial \hat{\Gamma}$.

\begin{lemma}
The measure $\phi_*\alpha_j(\cdot \cap E_*)$ is absolutely continuous with respect to the measure $\lambda_{R_\mu}$.
\end{lemma}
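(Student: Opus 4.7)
The plan is to prove the stronger statement that $\phi_*\alpha_j(\cdot \cap E_*) \leq C\,\lambda_{R_\mu}$ as finite positive measures on the Bowditch compactification for some constant $C>0$, from which absolute continuity on $\partial\hat{\Gamma}$ follows immediately. To achieve this, I will realize $\phi_*\alpha_j(\cdot \cap E_*)$ as the weak limit of a sequence of purely atomic measures on $\Gamma$ and compare this sequence termwise with the sequence $\lambda_n$ defined in Step~2 of the proof of Proposition~\ref{pressureindependentcomponents}.

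First I will introduce, for each $N\geq 1$, the finite positive measure on $\Gamma$
\[
\mu_N \;=\; \sum_{\gamma \in \hat S^N}\alpha_j\bigl([x_1^{(\gamma)},\dots,x_N^{(\gamma)}] \cap E_*\bigr)\,\delta_\gamma,
\]
where $x_1^{(\gamma)}\cdots x_N^{(\gamma)}$ is the unique path from $v_*$ of length $N$ in the automaton labelling $\gamma$. Since $\alpha_j$ is concentrated on $\partial \Sigma_A^*$ by Theorem~\ref{GouezelSarig}, and since for any $x \in E_* \cap \partial\Sigma_A^*$ the group element $\phi(x_1\cdots x_N) \in \Gamma$ converges to $\phi(x) \in \partial\hat{\Gamma}$ in the Bowditch compactification as $N\to\infty$, dominated convergence applied to any bounded continuous $f$ will give $\int f\,d\mu_N = \int_{E_*} f(\phi(x_1\cdots x_N))\,d\alpha_j(x) \to \int f\,d\phi_*\alpha_j(\cdot\cap E_*)$, hence weak convergence on the Bowditch compactification.

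Next I will compare $\mu_N$ with $\lambda_N$. The Gibbs-type bound~(\ref{dominationmesuredeGibbs}) applied componentwise to each $\nu_j^{(i)}$, together with the fact that $h_j^{(i)}$ is bounded away from zero on its support, yields $\alpha_j([x_1,\dots,x_N]\cap E_*) \lesssim H(e,\phi(x_1\cdots x_N)|R_\mu)$, hence $\mu_N \leq C \lambda_N$. Passing to Ces\`aro averages $\tilde\mu_N := \frac{1}{N}\sum_{n=1}^N \mu_n$ and recalling that $\tilde\lambda_N = \frac{1}{Z_N}\sum_{n=1}^N \lambda_n$ with $Z_N = \sum_{n=1}^N \lambda_n(\Gamma)$, this gives $\tilde\mu_N \leq \frac{C\,Z_N}{N}\,\tilde\lambda_N$. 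I then need that $Z_N/N$ has a positive limit, which follows from Corollary~\ref{coroKellerLiveranisatisfait} applied at $r=R_\mu$, where the pressure vanishes by Lemma~\ref{pressureatthespectralradius}, provided the leading constants $h_j^{(i)}(\emptyset)\int 1_{E_*}\,d\nu_j^{(i)}$ are strictly positive; positivity must hold since otherwise $I^{(1)}(R_\mu)$ would be finite, contradicting Proposition~\ref{relationR_kG'(R)}. Consequently $\tilde\mu_N \leq C'\,\tilde\lambda_N$ uniformly in $N$.

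To conclude, I use that Ces\`aro averaging preserves weak convergence of uniformly mass-bounded measures, so $\tilde\mu_N \to \phi_*\alpha_j(\cdot\cap E_*)$ weakly, while $\tilde\lambda_{N_k} \to \lambda_{R_\mu}$ weakly by Lemma~\ref{convergencePattersonSullivanlikeinMartin}. Testing the inequality $\tilde\mu_{N_k} \leq C'\tilde\lambda_{N_k}$ against bounded continuous non-negative functions on the Bowditch compactification and passing to the limit delivers $\phi_*\alpha_j(\cdot\cap E_*) \leq C'\,\lambda_{R_\mu}$, which implies absolute continuity. The main obstacle is the first step: establishing the weak convergence $\mu_N \to \phi_*\alpha_j(\cdot\cap E_*)$, because $\mu_N$ sits on $\Gamma$ while the limit is supported on $\partial\hat{\Gamma}$. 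This will rest on the continuity of the extended labelling map $\phi$ on $E_*\cap\partial\Sigma_A^*$ (as recorded in Subsection~\ref{SectiontransferoperatorGreen} via the identification of infinite paths through $v_*$ with conical limit points), together with the fact that $\alpha_j$ charges no finite path, a point that requires care because the symbol set $\Sigma$ is countable and $\overline{\Sigma}_A$ is not compact.
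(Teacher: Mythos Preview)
Your argument is correct and reaches the same conclusion $\phi_*\alpha_j(\cdot\cap E_*)\leq C\lambda_{R_\mu}$, but by a genuinely different route from the paper. The paper works at the level of the transfer operator: for bounded locally H\"older $f$ on $\partial\hat{\Gamma}$ it invokes Theorem~\ref{GouezelSarig} to identify the Ces\`aro limit of $\mathcal{L}_{R_\mu}^n(1_{E_*}f\circ\phi)(\emptyset)$ with $\sum_{j,i}h_j^{(i)}(\emptyset)\int (f\circ\phi)\,d\nu_j^{(\cdot)}$, compares this with $\lambda_{R_\mu}(f)$ using that the $h_j^{(i)}$ are bounded away from zero on the support of $\nu_j^{(i)}$, and then upgrades from H\"older test functions to indicators via $L^1$-density. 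Your approach bypasses both the transfer operator asymptotics and the density argument: you realise $\phi_*\alpha_j(\cdot\cap E_*)$ as a weak limit of atomic measures $\mu_N$ on $\Gamma$, compare $\mu_N\leq C\lambda_N$ pointwise via the Gibbs bound~(\ref{dominationmesuredeGibbs}), and pass to weak limits on the Bowditch compactification. What your approach buys is a cleaner measure-theoretic argument that avoids the somewhat delicate $L^1$-approximation; what it costs is the need to justify two facts that are not explicit in Theorem~\ref{GouezelSarig} but follow from its construction via Theorem~\ref{Theorem5Sarig}: that each $\nu_j^{(i)}$ is supported on $\partial\Sigma_A^*$, and that~(\ref{dominationmesuredeGibbs}) transfers from $m_j$ to $\alpha_j$ (which it does, since $h_j^{(i)}$ is bounded below on $\mathrm{supp}\,\nu_j^{(i)}$).

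Two minor simplifications: you do not actually need $Z_N/N$ to converge to a positive limit, only that it is bounded above, and this is immediate from Lemma~\ref{finitesumalongspheres}; and at $r=R_\mu$ you should cite Theorem~\ref{GouezelSarig} directly rather than Corollary~\ref{coroKellerLiveranisatisfait}, which is a perturbation statement for $r$ near $R_\mu$.
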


\begin{proof}
The sequence of measures $\tilde{\lambda}_{N_k}$ weakly converges to $\lambda_{R_{\mu}}$ in the Bowditch compactification.
Recall that according to Lemma~\ref{convergencePattersonSullivanlikeinMartin}, it also weakly converges to $\lambda_{R_\mu}$ in $\Gamma \cup \partial \hat{\Gamma}$.

If $f$ is a function defined on $\partial \hat{\Gamma}$, then $f\circ \phi$ is defined on $\overline{\Sigma}_A$ and it vanishes on the complement of $E_*$.
We see $\partial\hat{\Gamma}$ as the Gromov boundary of the hyperbolic space $\hat{\Gamma}$ and we endow $\partial \hat{\Gamma}$ with a visual distance $d_v$, as in \cite{GhysHarpe}.
Then, there exists $\epsilon>0$ such that $d_v(\xi,\xi')\leq \mathrm{e}^{-\epsilon(\xi,\xi')_e}$, where $(\xi,\xi')_e$ is the Gromov product of $\xi$ and $\xi'$, based at $e$, see \cite[Proposition~7.3.10]{GhysHarpe}.
In particular, if $f$ is a bounded locally H\"older continuous function on $\partial\hat{\Gamma}$, then $f\circ \phi$ is in $\mathcal{B}_{\rho,\beta}$.
Theorem~\ref{GouezelSarig} shows that
$\mathcal{L}_{R_{\mu}}^{np+q}(f\circ \phi)(\emptyset)$ converges to $\sum_{j=1}^k\sum_{i=1}^{p_j}h_j^{(i)}\int (f\circ \phi) d\nu_j^{((i-n) \text{ mod } p_j)}$.
Also note that
$$\sum_{\gamma \in \hat{S}_n}H(e,\gamma|R_\mu)=H(e,e|R_\mu)\mathcal{L}_{R_{\mu}}^n(1_{E_*}f\circ \phi)(\emptyset).$$
Let $\beta_j=\phi_*\alpha_j(\cdot \cap E_*)$.
Since the functions $h_j^{(i)}$ are bounded away from zero and infinity on the support of $\nu_j^{(i)}$, this proves that for any bounded locally h\"older continuous function $f$ on $\partial \hat{\Gamma}$, we have
\begin{equation}\label{boundforholderfunctions}
    \beta_j(f)\leq C \lambda_{R_\mu}(f).
\end{equation}

Bounded H\"older continuous functions are not dense in bounded continuous functions for the supremum norm, but they are dense in the space of integrable functions for the $L^1$-norm, see \cite[Corollary~3.14]{AliprantisBorder}.
Let $A\subset \partial \hat{\Gamma}$ be any measurable set.
We want to prove that $\beta_j(A)\leq C \lambda_{R_{\mu}}(A)$.
Let $\epsilon>0$.
There exists a bounded locally H\"older continuous function $f$ such that
$$\|f-1_A\|_{L^1(\partial \hat{\Gamma},\beta_j+\lambda_{R_\mu})}\leq \epsilon.$$
Then,
$\beta_j(A)\leq \beta_j(|f|)+\epsilon$.
Since $|f|$ still is locally H\"older continuous,~(\ref{boundforholderfunctions}) shows that
$$\beta_j(A)\leq C\lambda_{R_\mu}(|f|)+\epsilon\leq C\lambda_{R_\mu}(A)+(1+C)\epsilon.$$
Since $\epsilon$ is arbitrary, this concludes the proof.
\end{proof}

Those two lemmas allow us to conclude step 4.
We only outline the proof and refer to the end of the proof of \cite[Proposition~3.16]{Gouezel1} for the details.
The probability measure $dm_j=\frac{1}{p_j}\sum_{i=1}^{p_j}h_{j}^{(i)}d\nu_j^{(i)}$ is invariant and ergodic for the shift $T$ (see for example \cite[Lemma~11]{Sarig1}).
Fix $r$ close enough to $R_{\mu}$ so that the conclusions of Theorem~\ref{KellerLiveranitransfert} hold.
Let $O_j$ be the set of points where the Birkhoff sums $\frac{1}{n}\sum_{1\leq k\leq n} \varphi_r\circ T^k$ converge to $c_j=\int \varphi_r dm_j$.
By the Birkhoff ergodic theorem, $m_j(O_j)=1$.
We first deduce that $\alpha_j(O_j\cap \overline{\Sigma}_{A,j})>0$.
Using that $T_*\alpha_j$ is absolutely continuous with respect to $\alpha_j$, we then deduce that $\alpha_j(O_j)>0$.
Then, using that $\phi_*\alpha_j(\cdot \cap E_*)$ is absolutely continuous with respect to $\lambda_{R_\mu}$, we deduce that $\lambda_{R_\mu}(\phi (O_j\cap E_*))>0$.
Finally, direct computation shows that $\phi (O_j\cap E_*)\subset U(c_j)$.
This proves that $\lambda_{R_\mu}(U(c_j))>0$.
Since $U(c_j)$ is invariant and $\lambda_{R_\mu}$ is ergodic, we thus have $\lambda_{R_\mu}(U(c_j))=1$.
This holds for all $j$, so we finally obtain that every $U(c_j)$ intersect, so that $c_j$ does not depend on $j$.

We can now conclude the proof of Proposition~\ref{pressureindependentcomponents}.
\begin{proof}
According to Proposition~\ref{prophypothesesKellerLiveranisatisfaites}, the assumptions of Proposition~\ref{theoremeperturbation} are satisfied, so that
\begin{equation}\label{asymptotiqueP(r)}
\begin{split}
    \mathrm{e}^{\tilde{P}_j(r)}-1= &\int \left (\mathrm{e}^{\varphi_r-\varphi_{R_\mu}}-1\right )dm_j\\
    &\hspace{1cm}+\int \left (\mathrm{e}^{\varphi_r-\varphi_{R_\mu}}-1\right )\frac{1}{p_j}\left (\sum_{i=0}^{p_j-1}h_j^{(i)}-\tilde{h}_{j,r}^{(i)}\right)d\nu_j^{(i)}.
\end{split}
\end{equation}

Lemma~\ref{corollary3.3Gouezel} and~(\ref{3.6Gouezel}) show that $|P(r)|$ has order of magnitude $\sqrt{R_\mu-r}$.
We will actually show that
$$\mathrm{e}^{\tilde{P}_j(r)}-1= \int \left (\varphi_r-\varphi_{R_\mu}\right )dm_j+o\left (\sqrt{R_\mu-r}\right ).$$

\begin{lemma}\label{premierterme=oracineR-r}
We have
$$\int \left (\mathrm{e}^{\varphi_r-\varphi_{R_\mu}}-1\right )\frac{1}{p_j}\left (\sum_{i=0}^{p_j-1}h_j^{(i)}-\tilde{h}_{j,r}^{(i)}\right)d\nu_j^{(i)}=o\left (\sqrt{R_\mu-r}\right ).$$
\end{lemma}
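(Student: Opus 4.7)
The plan is to show that the integrand is, up to a factor of $\sqrt{R_\mu-r}$, a pointwise small function times an $L^1$-small function, so its integral is $o(\sqrt{R_\mu-r})$. Writing $g_r^{(i)}=h_j^{(i)}-\tilde{h}_{j,r}^{(i)}$ and $f_r=\mathrm{e}^{\varphi_r-\varphi_{R_\mu}}-1$, I need to prove that $\int f_r g_r^{(i)}\,d\nu_j^{(i)}=o(\sqrt{R_\mu-r})$ for each $i$.

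The key step, and the only place where anything new beyond routine manipulation is needed, is a uniform linear bound
\[
|f_r(x)|\leq C\,\varphi(x)\sqrt{R_\mu-r}.
\]
Proposition~\ref{preparationcontinuiteoperateurtransfert} already gives $|\varphi_r-\varphi_{R_\mu}|\leq 2\varphi\sqrt{R_\mu-r}$. The elementary inequalities $\mathrm{e}^a-1\leq a\mathrm{e}^a$ and $1-\mathrm{e}^{-a}\leq a$ for $a\geq 0$ then reduce the problem to showing that $\mathrm{e}^{\varphi_r-\varphi_{R_\mu}}$ is \emph{uniformly} bounded in $x$ and $r$. By the definition of $\varphi_r$ and weak relative Ancona inequalities, $\mathrm{e}^{\varphi_r(x)}=\frac{H(e,\phi(x)|r)}{H(\phi(x_1),\phi(x)|r)}\asymp H(e,\phi(x_1)|r)$ (with comparison constants independent of $r\in[1,R_\mu]$ because the Ancona inequalities hold uniformly up to the spectral radius). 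Since the Green function, and therefore $H$, is increasing in $r$, I get $\mathrm{e}^{\varphi_r(x)-\varphi_{R_\mu}(x)}\asymp H(e,\phi(x_1)|r)/H(e,\phi(x_1)|R_\mu)\leq C$.

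Once this uniform estimate is in hand, the proof is finished by a standard splitting argument. Since the functions $h_j^{(i)}$ are bounded (Theorem~\ref{Theorem5Sarig}) and $\|\tilde{h}_{j,r}^{(i)}\|_{\rho,\beta}$ is uniformly bounded (Corollary~\ref{coroKellerLiveranisatisfait}), $\|g_r^{(i)}\|_\infty\leq C$; and by Corollary~\ref{coroKellerLiveranisatisfait}, $\epsilon(r):=\int|g_r^{(i)}|\,dm\to 0$ as $r\to R_\mu$. Because $h_j^{(i)}$ is bounded below on $\operatorname{supp}\nu_j^{(i)}$, the measure $\nu_j^{(i)}$ is dominated (on its support) by a constant multiple of $m_j\leq m$, so $\varphi\in L^1(\nu_j^{(i)})$ and $\int|g_r^{(i)}|\,d\nu_j^{(i)}\lesssim\epsilon(r)$. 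Combining these,
\[
\Bigl|\int f_r g_r^{(i)}\,d\nu_j^{(i)}\Bigr|\leq C\sqrt{R_\mu-r}\int\varphi|g_r^{(i)}|\,d\nu_j^{(i)},
\]
and I split the inner integral at the level $\{\varphi\leq M\}$: the piece over $\{\varphi\leq M\}$ is at most $MC\epsilon(r)$, and the piece over $\{\varphi>M\}$ is at most $C\int_{\{\varphi>M\}}\varphi\,d\nu_j^{(i)}=:\delta(M)$, with $\delta(M)\to 0$ as $M\to\infty$ by dominated convergence. Choosing $M=M(r)\to\infty$ slowly enough that $M(r)\epsilon(r)\to 0$ (e.g.\ $M(r)=\epsilon(r)^{-1/2}$) makes both pieces $o(1)$, which gives $\int f_r g_r^{(i)}\,d\nu_j^{(i)}=o(\sqrt{R_\mu-r})$. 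Summing over $i$ and dividing by $p_j$ yields the lemma.

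The hard part is really the uniform linear bound on $f_r$; once that is established, the splitting step is routine. A subtle point to double-check in the write-up is that the weak Ancona constants and the monotonicity of $H$ are indeed uniform in $r\in[1,R_\mu]$, which is exactly what the definition of ``weak relative Ancona inequalities up to the spectral radius'' provides.
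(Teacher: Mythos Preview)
Your proof is correct and follows essentially the same route as the paper: both establish the pointwise bound $|\mathrm{e}^{\varphi_r-\varphi_{R_\mu}}-1|\lesssim \varphi\sqrt{R_\mu-r}$ via the uniform boundedness of $\mathrm{e}^{\varphi_r-\varphi_{R_\mu}}$ (from weak Ancona) together with the Lipschitz control on $\varphi_r$, then combine this with the uniform boundedness of $h_j^{(i)}-\tilde{h}_{j,r}^{(i)}$, its $L^1(m)$-convergence to zero, and the domination $\nu_j^{(i)}\lesssim m$. The only cosmetic difference is in the final step: you use a level-set splitting $\{\varphi\leq M\}\cup\{\varphi>M\}$ with $M=M(r)\to\infty$, whereas the paper extracts a subsequence on which $|h_j^{(i)}-\tilde{h}_{j,r}^{(i)}|\to 0$ almost everywhere and applies dominated convergence with $\varphi$ as the dominating function---both are standard and interchangeable.
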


\begin{proof}
According to Theorem~\ref{GouezelSarig}, the functions $h_j^{(i)}$ are bounded away from zero and infinity on the support of $\nu_j^{(i)}$.
We can thus replace $\nu_j^{(i)}$ with $m_j$, which is itself dominated by the measure $m$.
Thus, we just need to show that for every $i$,
$$\frac{1}{\sqrt{R_\mu-r}}\int \left |\mathrm{e}^{\varphi_r-\varphi_{R_\mu}}-1\right |\left |h_j^{(i)}-\tilde{h}_{j,r}^{(i)}\right|dm\underset{r\to R_\mu}{\longrightarrow}0.$$
Let $r_n$ be a sequence converging to $R_\mu$ such that
$$\frac{1}{\sqrt{R_\mu-r_n}}\int \left |\mathrm{e}^{\varphi_{r_n}-\varphi_{R_\mu}}-1\right |\left |h_j^{(i)}-\tilde{h}_{j,r_n}^{(i)}\right|dm\underset{n\to +\infty}{\longrightarrow}\alpha \in [0,+\infty].$$
According to Corollary~\ref{coroKellerLiveranisatisfait},
$$\int \left |h_j^{(i)}-\tilde{h}_{j,r_n}^{(i)}\right|dm\underset{n\to +\infty}{\longrightarrow} 0,$$
so up to taking a subsequence, $\left |h_j^{(i)}-\tilde{h}_{j,r_n}^{(i)}\right|$ converges to 0 $m$-almost everywhere

We now focus on
$\frac{1}{\sqrt{R_\mu-r_n}}\left |\mathrm{e}^{\varphi_{r_n}-\varphi_{R_\mu}}-1\right |$.
We show that
\begin{equation}\label{deuxiemeintegraleperturbationpression}
    \frac{1}{\sqrt{R_\mu-r}}\int \left |\mathrm{e}^{\varphi_{r}-\varphi_{R_\mu}}-1\right |dm\lesssim 1.
\end{equation}
Differentiating the expression $\mathrm{e}^{\varphi_{r}-\varphi_{R_\mu}}(x)$, we get
$$\left (\frac{d}{dr}\varphi_r (x)\right )\mathrm{e}^{\varphi_{r}-\varphi_{R_\mu}}(x).$$
Weak relative Ancona inequalities yield
\begin{equation}\label{varphi_rbornee}
    \mathrm{e}^{\varphi_{r}-\varphi_{R_\mu}}(x)=\frac{H(e,x|r)/H(x_1,x|r)}{H(e,x|R_\mu)/H(x_1,x|R_\mu)}\lesssim \frac{H(e,x_1|r)}{H(e,x_1|R_\mu)}\lesssim1.
\end{equation}
Thus, Lemma~\ref{lemmecontinuiteoperateurtransfert} shows that
\begin{equation}\label{varphi_rsuperbornee}
    \left |\frac{d}{dr}\left (\mathrm{e}^{\varphi_{r}-\varphi_{R_\mu}}(x)\right )\right |\lesssim \frac{1}{\sqrt{R_\mu-r}}\varphi(x).
\end{equation}
Integrating this inequality, we get
$$\left |\mathrm{e}^{\varphi_{r}-\varphi_{R_\mu}}(x)-1\right |\lesssim \varphi(x)\sqrt{R_\mu-r}.$$
Integrating with respect to $m$, we finally obtain~(\ref{deuxiemeintegraleperturbationpression}).
Also,
$$\frac{1}{\sqrt{R_\mu-r_n}}\left |\mathrm{e}^{\varphi_{r_n}-\varphi_{R_\mu}}-1\right |\left |h_j^{(i)}-\tilde{h}_{j,r_n}^{(i)}\right|\lesssim \varphi\left |h_j^{(i)}-\tilde{h}_{j,r_n}^{(i)}\right|$$
so that
$$\frac{1}{\sqrt{R_\mu-r_n}}\left |\mathrm{e}^{\varphi_{r_n}-\varphi_{R_\mu}}-1\right |\left |h_j^{(i)}-\tilde{h}_{j,r_n}^{(i)}\right|$$
converges to 0 $m$-almost everywhere.

Finally, we deduce from Corollary~\ref{coroKellerLiveranisatisfait} that $\left |h_j^{(i)}-\tilde{h}_{j,r_n}^{(i)}\right|$ is uniformly bounded, hence
$$\frac{1}{\sqrt{R_\mu-r_n}}\left |\mathrm{e}^{\varphi_{r_n}-\varphi_{R_\mu}}-1\right |\left |h_j^{(i)}-\tilde{h}_{j,r_n}^{(i)}\right|\lesssim \varphi.$$
We apply the dominated convergence theorem, so that
$$\frac{1}{\sqrt{R_\mu-r_n}}\int \left |\mathrm{e}^{\varphi_{r_n}-\varphi_{R_\mu}}-1\right |\left |h_j^{(i)}-\tilde{h}_{j,r_n}^{(i)}\right|dm\underset{n\to +\infty}{\longrightarrow}0.$$
In other words, $\alpha=0$, which concludes the proof.
\end{proof}

\begin{lemma}\label{deuxiemeterme=oracineR-r}
We have
$$\int\left ( \left (\mathrm{e}^{\varphi_r-\varphi_{R_\mu}}-1\right )-(\varphi_r-\varphi_{R_\mu})\right )dm=o\left (\sqrt{R_\mu-r}\right ).$$
\end{lemma}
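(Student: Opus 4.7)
The plan is to bound the integrand pointwise using a Taylor expansion and then carry out a dominated convergence argument. Write $f_r = \varphi_r - \varphi_{R_\mu}$. From~(\ref{varphi_rbornee}) (applied both to $r$ and, by symmetry, to a bound on $\varphi_{R_\mu} - \varphi_r$), the function $f_r$ is uniformly bounded on $\overline{\Sigma}_A$ by some constant independent of $r \in [1,R_\mu]$. Consequently, there exists $C \geq 0$ such that for every $x$,
$$\left| \mathrm{e}^{f_r(x)} - 1 - f_r(x) \right| \leq C\, f_r(x)^2.$$
Integrating, it suffices to prove that $\int f_r^2\, dm = o(\sqrt{R_\mu - r})$.

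To get the little-$o$, I would combine two pointwise bounds on $f_r$. First, Proposition~\ref{preparationcontinuiteoperateurtransfert} gives the quantitative estimate
$$|f_r(x)| \leq 2\varphi(x)\sqrt{R_\mu - r}.$$
Second, $f_r$ is uniformly bounded, so
$$f_r(x)^2 \leq \|f_r\|_\infty \cdot |f_r(x)| \leq C'\, |f_r(x)|.$$
Interpolating the two,
$$f_r(x)^2 \leq |f_r(x)|\cdot 2\varphi(x)\sqrt{R_\mu - r},$$
and hence
$$\int f_r^2\, dm \leq 2\sqrt{R_\mu - r} \int |f_r|\,\varphi\, dm.$$

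It remains to show that $\int |f_r|\,\varphi\, dm \to 0$ as $r \to R_\mu$, which is a direct application of dominated convergence. The dominating function is $\|f_r\|_\infty \cdot \varphi \leq C'' \varphi$, which is $m$-integrable by Proposition~\ref{preparationcontinuiteoperateurtransfert}. For $m$-almost every $x$ one has $\varphi(x) < +\infty$, and then Proposition~\ref{preparationcontinuiteoperateurtransfert} forces $f_r(x) \to 0$ pointwise as $r \to R_\mu$. Thus $|f_r|\,\varphi \to 0$ pointwise $m$-a.e.\ and is dominated by an integrable function, so its integral tends to $0$. Combining, $\int f_r^2\,dm = o(\sqrt{R_\mu - r})$ and the conclusion follows.

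The main point to be careful about is the interpolation step: one cannot directly bound $f_r^2$ by $\varphi^2 (R_\mu - r)$ because nothing ensures that $\varphi$ is square integrable against $m$ (only $L^1$ integrability is available from Proposition~\ref{preparationcontinuiteoperateurtransfert}). Using the uniform $L^\infty$ bound on $f_r$ to trade one factor of $f_r$ against a constant is what unlocks the dominated convergence argument and upgrades the trivial $O(\sqrt{R_\mu - r})$ to the desired $o(\sqrt{R_\mu - r})$.
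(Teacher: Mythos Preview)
There is a genuine gap: the claim that $f_r=\varphi_r-\varphi_{R_\mu}$ is uniformly bounded on $\overline{\Sigma}_A$ is not justified, and in fact fails. The reference~(\ref{varphi_rbornee}) gives only $\mathrm{e}^{f_r}\lesssim 1$, i.e.\ an \emph{upper} bound on $f_r$. Your ``by symmetry'' argument would require $H(e,x_1|R_\mu)\lesssim H(e,x_1|r)$ uniformly in $x_1$, but $x_1$ ranges over $S\cup\bigcup\mathcal{H}_k$ and can be an arbitrarily distant parabolic element; for fixed $r<R_\mu$ the ratio $H(e,x_1|r)/H(e,x_1|R_\mu)$ tends to $0$ as $d(e,x_1)\to\infty$ (Green functions at $r$ decay strictly faster than at $R_\mu$). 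Hence $\|f_r\|_\infty=+\infty$ and your domination $|f_r|\varphi\leq C''\varphi$ is vacuous. Since only $\varphi\in L^1(m)$ is available (not $L^2$), there is no obvious integrable majorant for $|f_r|\varphi$, and the dominated convergence step breaks down.

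The fix is short and brings you back to the paper's argument. Replace the Taylor bound $|\mathrm{e}^{t}-1-t|\leq C t^2$ by the elementary inequality
\[
|\mathrm{e}^{t}-1-t|\leq |t|\,|\mathrm{e}^{t}-1|,\qquad t\in\R,
\]
which follows from $1+|t|\leq \mathrm{e}^{|t|}$. Then your interpolation yields
\[
\int\bigl|\mathrm{e}^{f_r}-1-f_r\bigr|\,dm\;\leq\;2\sqrt{R_\mu-r}\int \varphi\,|\mathrm{e}^{f_r}-1|\,dm,
\]
and now dominated convergence applies: $|\mathrm{e}^{f_r}-1|$ \emph{is} uniformly bounded (because $\mathrm{e}^{f_r}\in(0,C]$ by~(\ref{varphi_rbornee}), even though $f_r$ itself is unbounded below), so $\varphi|\mathrm{e}^{f_r}-1|\lesssim\varphi\in L^1(m)$, and the integrand tends to $0$ $m$-a.e.\ since $f_r(x)\to0$ wherever $\varphi(x)<\infty$. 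The paper obtains exactly this majorant $\varphi\cdot\sup_{r\geq R}|\mathrm{e}^{f_r}-1|$ by differentiating the integrand in $r$ rather than via a pointwise inequality, but the crux is the same: one must dominate using $|\mathrm{e}^{f_r}-1|$, which is bounded, not $|f_r|$, which is not.
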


\begin{proof}
Differentiating the integrand, we get
$$\left (\frac{d}{dr}\varphi_r\right ) \left (\mathrm{e}^{\varphi_r-\varphi_{R_\mu}}-1\right ).$$
Fix $R\leq R_\mu$ and let
$$\overline{ \varphi}_R=\sup_{R\leq r\leq R_\mu}|\mathrm{e}^{\varphi_r-\varphi_{R_\mu}}-1|.$$
For every $R\leq r\leq R_\mu$, according to Lemma~\ref{lemmecontinuiteoperateurtransfert},
$$\left |\frac{d}{dr} \left (\mathrm{e}^{\varphi_r-\varphi_{R_\mu}}-1\right )-(\varphi_r-\varphi_{R_\mu}) \right |\leq \varphi \overline{\varphi}_R\frac{1}{\sqrt{R_\mu-r}}.$$
Integrating this inequality over $r$ varying between $R$ and $R_\mu$, we get
$$\left |\left (\mathrm{e}^{\varphi_R-\varphi_{R_\mu}}-1\right )-(\varphi_R-\varphi_{R_\mu}) \right |\leq \varphi \overline{\varphi}_R \sqrt{R_\mu-R}.$$
It is thus enough to prove that
$$\int \varphi\overline{\varphi}_Rdm\underset{R\to R_\mu}{\longrightarrow}0.$$
Consider a sequence $r_k$ converging to $R_\mu$ such that
$$\int \varphi\overline{\varphi}_{r_k}dm\underset{R\to R_\mu}{\longrightarrow}\alpha.$$
According to~(\ref{varphi_rsuperbornee}), $\int \overline{\varphi}_Rdm$ converges to 0, so up to taking a subsequence, $\overline{\varphi}_{r_k}$ converges to 0 $m$-almost everywhere.
Hence, $\varphi\overline{\varphi}_{r_k}$ converges to 0 $m$-almost everywhere.
Also, according to~(\ref{varphi_rbornee}), $\overline{\varphi}_r$ is uniformly bounded.
We apply the dominated convergence theorem, so that
$$\int \varphi\overline{\varphi}_{r_k}dm\underset{R\to R_\mu}{\longrightarrow}0.$$
In other words, $\alpha=0$, which concludes the proof.
\end{proof}

We combine Lemma~\ref{premierterme=oracineR-r}, Lemma~\ref{deuxiemeterme=oracineR-r} and~(\ref{asymptotiqueP(r)}) to show that
$$\mathrm{e}^{\tilde{P}_j(r)}-1=\int (\varphi_r-\varphi_{R_\mu})dm_j + o\left ( \sqrt{R_\mu-r}\right ).$$
We proved in \textit{step 4} above that the integral in the right member does not depend on $j$.
Choose $j'$ so that the pressure is maximal.
Then, for every $j$,
$$\mathrm{e}^{\tilde{P}_j(r)}-1=\mathrm{e}^{P(r)}-1+ o\left ( \sqrt{R_\mu-r}\right )$$
hence,
$$\mathrm{e}^{\tilde{P}_j(r)}-1=P(r)+o\left (P(r)\right )+o\left ( \sqrt{R_\mu-r}\right ).$$
We deduce from Lemma~\ref{corollary3.3Gouezel} and from~(\ref{3.6Gouezel}) that $|P(r)|/\sqrt{R_{\mu}-r}$ is bounded away from zero and infinity.
Thus,
$$\mathrm{e}^{\tilde{P}_j(r)}-1=P(r)+o\left (P(r)\right ).$$
Consequently, for every $j$, $\tilde{P}_j(r)$ converges to 0 as $r$ tends to $R_\mu$, so that
$$\tilde{P}_j(r)\sim \mathrm{e}^{\tilde{P}_j(r)}-1, r\to R_\mu.$$
This also proves that
$$\tilde{P}_j(r)\sim P(r), r\to R_\mu,$$
which concludes the proof.
\end{proof}

\section{Asymptotic of the second derivative of the Green function}\label{Sectionproofofpreciseequadiff}

Our goal here is to prove the following proposition.
We still assume that $\mu$ is not spectrally degenerate.

\begin{proposition}\label{propositioncomparaisonI_2etI_1}
When $r\to R_{\mu}$, we have
$$I^{(2)}(r)=\xi I^{(1)}(r)^3+O\left (I^{(1)}(r)^2\right ),$$
for some $\xi>0$.
\end{proposition}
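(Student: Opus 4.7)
The plan is to decompose the triple sum defining $I^{(2)}(r)$ according to the geometry of the triangle $(e,\gamma,\gamma')$ in the relative Cayley graph $\hat{\Gamma}$. For each pair $(\gamma,\gamma')$, relative thinness of triangles (via Lemma~\ref{lemmarelativetripod} and the BCP property) produces a \emph{tripod center} $c\in\Gamma$, essentially unique up to bounded ambiguity, at which relative geodesics from $e$, $\gamma$ and $\gamma'$ come together. Parametrizing the pair by $(c,\alpha,\beta)$ with $\alpha=c^{-1}\gamma$ and $\beta=c^{-1}\gamma'$, strong relative Ancona inequalities together with equivariance of the Green function give, up to a bounded multiplicative error,
$$G(e,\gamma|r)G(\gamma,\gamma'|r)G(\gamma',e|r)\;\asymp\;H(e,c|r)\,H(e,\alpha|r)\,H(e,\beta|r),$$
so that $I^{(2)}(r)$ essentially factorizes, after a sum over $c$, into three independent leg sums.

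Second, I would express each leg sum in terms of iterates of the transfer operator $\mathcal{L}_{r}$ introduced in Section~\ref{SectiontransferoperatorGreen}. A tripod at $c$ consists of three independent relative-geodesic rays leaving $c$ along pairwise \emph{transverse} outgoing edges in the automaton, and summing over each leg reduces to three separate applications of the spectral expansion of Corollary~\ref{coroKellerLiveranisatisfait}. Each geometric series contributes a factor $\sim|\tilde{P}_{j}(r)|^{-1}$, which by Proposition~\ref{pressureindependentcomponents} is uniformly equivalent to $|P(r)|^{-1}\sim \xi(R_{\mu})^{-1}I^{(1)}(r)$ independently of the component $j$. After summing over tripod centers $c$, weighted by the appropriate ``transverse direction'' functions coming from the three factors $\tilde{h}_{j,r}^{(i)}$ and $\tilde\nu_{j,r}^{(i)}$, one obtains a leading term $\xi I^{(1)}(r)^{3}$ for some $\xi>0$; positivity is automatic, as the eigenfunctions $h_{j}^{(i)}$ and eigenmeasures $\nu_{j}^{(i)}$ of $\mathcal{L}_{R_{\mu}}$ are non-negative and non-trivial.

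The error term $O(I^{(1)}(r)^{2})$ collects three classes of degenerate configurations. First, triples whose tripod center $c$ coincides up to bounded distance with one of the three vertices yield sums over only two independent legs, losing a full factor $I^{(1)}(r)\asymp(R_{\mu}-r)^{-1/2}$. Second, triples whose tripod center lies deep inside a parabolic coset; these are handled via non-spectral degeneracy as in Lemma~\ref{souslemmecontinuite1} and Corollary~\ref{derivativeparabolicGreenfinite}. Third, the bounded multiplicative corrections in the $\asymp$ of the first step, which by the strong relative Ancona inequalities decay geometrically in the fellow-travelling length at each of the three vertices of the tripod, and sum over geodesics to give again an $O(I^{(1)}(r)^{2})$ contribution.

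The hard part, as throughout the paper, will be the lack of compactness caused by the countably many edges of the automaton coming from the parabolic symbols. Naive estimates on the sum over tripod centers lying deep in a parabolic coset diverge and must be controlled uniformly in $r\in[1,R_{\mu}]$ using the $m$-integrability of the dominating function $\varphi$ furnished by Lemma~\ref{lemmecontinuiteoperateurtransfert}, together with the refined estimate~(\ref{meilleureestimeeintegralephin}). A secondary difficulty is that three independent applications of the spectral expansion in Corollary~\ref{coroKellerLiveranisatisfait} produce cross-terms mixing different maximal components $\overline{\Sigma}_{A,j}$ and different cycles of their periodic decompositions; these will either have to be absorbed into $\xi$, shown to vanish using the independence-of-component argument of Section~\ref{Sectionpressure}, or bounded at the level of $O(I^{(1)}(r)^{2})$.
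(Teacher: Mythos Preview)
Your tripod intuition is morally correct and is indeed the geometric picture behind the paper's proof, but the execution you sketch has a genuine gap at the very first step. The factorization
\[
G(e,\gamma|r)G(\gamma,\gamma'|r)G(\gamma',e|r)\;\asymp\;H(e,c|r)\,H(e,\alpha|r)\,H(e,\beta|r)
\]
coming from weak relative Ancona carries a bounded multiplicative error that does \emph{not} tend to $1$ as the legs get long. Your claim that strong Ancona makes this error decay geometrically is incorrect: the strong inequality~(\ref{strongAncona}) controls a \emph{four-point cross-ratio} of Green functions when two relative geodesics fellow-travel, not the three-way ratio above. With only the rough $\asymp$, the best you can extract is $I^{(2)}(r)\asymp I^{(1)}(r)^{3}$, which is exactly Proposition~\ref{roughequadiff} and not the precise asymptotic you need. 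There is also a structural problem with ``three independent relative-geodesic rays leaving $c$ along pairwise transverse outgoing edges in the automaton'': the automaton of Theorem~\ref{thmcodingrelhypgroups} encodes relative geodesics issued from $e$, and there is no reason the three legs of a tripod at an arbitrary $c$ should fit into a single path of $\mathcal{G}$, so you cannot directly apply $\mathcal{L}_r$ three times in parallel.

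The paper avoids both issues by never passing through the rough factorization. It writes $I^{(2)}(r)=\sum_{\gamma}H(e,\gamma|r)\Phi_r(\gamma)$ with $\Phi_r(\gamma)=\sum_{\gamma'}G(e,\gamma'|r)G(\gamma',\gamma|r)/G(e,\gamma|r)$, and builds a continuous partition of unity $\kappa_\alpha$ along the relative geodesic $\alpha=[e,\gamma]$ so that the \emph{exact} quotient $G(\alpha_-,\gamma'|r)G(\gamma',\alpha_+|r)/G(\alpha_-,\alpha_+|r)$ appears in a function $\Psi_r(\alpha)$ on bi-infinite paths (equation~(\ref{defPsi_r})). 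Proposition~\ref{propestimeePhietPsi} then gives $\Phi_r(\gamma)=I^{(1)}(r)\sum_k\Psi_r(T^k\alpha)+O(I^{(1)}(r))$, with the error coming only from the ends of $\alpha$ where the partition of unity fails --- this is where the precise constant, rather than an $\asymp$, is won. Because $\Psi_r$ is unbounded and depends on both past and future, the paper must first truncate it to $\Psi_r^{(D,N)}$ depending on finitely many symbols (Proposition~\ref{differencePsietPsitronque} and Lemma~\ref{Lemme3.22Gouezel}), then pass to a one-sided function via the coboundary trick of Lemma~\ref{passagededoubleshiftashiftsimple}, before finally applying the spectral expansion of Corollary~\ref{coroKellerLiveranisatisfait} \emph{twice} (not three times) through the identity $\mathcal{L}_r(u\cdot v\circ T)=v\,\mathcal{L}_r u$. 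These truncation and coboundary steps are precisely the machinery needed to tame the lack of compactness you correctly anticipate, but they are not optional refinements of your sketch --- they replace the part of your argument that does not work.
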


We introduce some notations.
We define for $r<R_{\mu}$ the function
$$\Phi_r(\gamma)=\frac{\sum_{\gamma'}G(e,\gamma'|r)G(\gamma',\gamma|r)}{G(e,\gamma|r)}.$$
By definition,
$$I^{(2)}(r)=\sum_{n\geq 0}\sum_{\gamma\in \hat{S}^n}H(e,\gamma|r)\Phi_r(\gamma)=H(e,e|r)\sum_{n\geq 0}\mathcal{L}_r^n(1_{E_*}\Phi_r\circ \phi) (\emptyset).$$


According to Theorem~\ref{GouezelSarig}, for any $f:\Gamma\cup \partial\hat{\Gamma}\rightarrow \R$ such that $f\circ \phi:\overline{\Sigma}_A\rightarrow \R$ is in $H_{\rho,\beta}$,
$$\sum_{n\geq 0}\mathcal{L}_r^n(1_{E_*}f\circ \phi)(\emptyset)=\sum_{j=1}^k\frac{c_f(j,r)}{|\tilde{P}_j(\varphi_r)|}+O(1),r\to R_{\mu}.$$
Also, according to~(\ref{3.7Gouezel}),
$$I^{(1)}(r)\sim \frac{\xi(r)}{P(r)},r\to R_\mu.$$
Finally, Proposition~\ref{pressureindependentcomponents}, shows that $|\tilde{P}_j(\varphi_r)|/P(r)$ converges to 1.
Hence,
\begin{equation}\label{estimeesommeoperateurtransfert}
    \sum_{n\geq 0}\mathcal{L}_r^n(1_{E_*}f\circ \phi)(\emptyset)\sim I^{(1)}(r) c_f,r\to R_\mu,
\end{equation}
where $c_f$ only depends on $f$.
In other words $\frac{1}{I^{(1)}(r)}\sum_{n\geq 0}\mathcal{L}_r^n(1_{E_*}f\circ \phi)(\emptyset)$ converges.
However, $\Phi_r\circ \phi \notin H_{\rho,\beta}$.
The goal of the next subsection is to transform $\Phi_r$ in order to apply~(\ref{estimeesommeoperateurtransfert}).

\subsection{A partition of unity}
We start with a rough study of $\Phi_r$.
Let $\gamma\in \Gamma$ and let $[e,\gamma]=(e,\gamma_1,...,\gamma_{\hat{d}(e,\gamma)-1},\gamma)$ be a relative geodesic from $e$ to $\gamma$.
For every $k\leq \hat{d}(e,\gamma)$, denote by $\Gamma_k$ the set of $\gamma'\in \Gamma$ whose projection on $[e,\gamma]$ is at $\gamma_k$.
If there are more than one such projections, we choose the closest to $\gamma$.
Also denote by $\tilde{\gamma}_k$ the projection of $\gamma'$ on the union $\mathcal{H}_k$ of parabolic subgroups containing $\gamma_{k-1}^{-1}\gamma_k$.
Lemma~\ref{lemmarelativetripod} shows that any relative geodesic from $e$ to $\gamma'$ passes within a bounded distance of $\gamma_{k-1}$.
Also, \cite[Lemma~1.13~(1)]{Sisto2} shows that the exit point from $\mathcal{H}_k$ of any such relative geodesic is within a bounded distance of $\tilde{\gamma}_k$.
Thus, any relative geodesic from $e$ to $\gamma$' passes first within a bounded distance of $\gamma_{k-1}$ and then within a bounded distance of $\tilde{\gamma}_k$.
Also, any relative geodesic from $\gamma'$ to $\gamma$ passes within a bounded distance of $\tilde{\gamma}_k$, then of $\gamma_k$.
Weak relative Ancona inequalities imply that for every $\gamma'\in \Gamma_k$,
\begin{align*}
    &\frac{G(e,\gamma'|r)G(\gamma',\gamma|r)}{G(e,\gamma|r)}\asymp\\
    &\hspace{1cm}\frac{G(e,\gamma_{k-1}|r)G(\gamma_{k-1},\tilde{\gamma}_k|r)G(\tilde{\gamma}_k,\gamma'|r)G(\gamma',\tilde{\gamma}_k|r)G(\tilde{\gamma}_k,\gamma_k|r)G(\gamma_k,\gamma|r)}{G(e,\gamma_{k-1}|r)G(\gamma_{k-1},\gamma_k|r)G(\gamma_{k},\gamma|r)}.
\end{align*}
We thus obtain that
$$\frac{G(e,\gamma'|r)G(\gamma',\gamma|r)}{G(e,\gamma|r)}\asymp \frac{G(\gamma_{k-1},\tilde{\gamma}_k|r)G(\tilde{\gamma}_k,\gamma_k|r)}{G(\gamma_{k-1},\gamma_k|r)}G(\tilde{\gamma}_k,\gamma'|r)G(\gamma',\tilde{\gamma}_k|r).$$
We then sum over all $\gamma'\in \Gamma_k$.
Let $\mathcal{H}_{\gamma_k}$ be the union of all parabolic subgroups in $\Omega_0$ containing $\gamma_{k-1}^{-1}\gamma_k$.
Then $\gamma_{k-1}^{-1}\tilde{\gamma}_k\in \mathcal{H}_{\gamma_k}$.
We can decompose the sum over $\gamma'\in \Gamma_k$ according to the projection on $\tilde{\gamma}_k\in \mathcal{H}_{\gamma_k}$.
Bounding $\sum G(\tilde{\gamma}_k,\gamma'|r)G(\gamma',\tilde{\gamma}_k|r)$ by $I^{(1)}(r)$, where $\tilde{\gamma}_k$ is fixed and the sum is over all $\gamma'$ projecting on $\tilde{\gamma}_k$, we finally get
$$\sum_{\gamma'\in \Gamma_k}\frac{G(e,\gamma'|r)G(\gamma',\gamma|r)}{G(e,\gamma|r)}\lesssim I_1(r) \sum_{\tilde{\gamma}_k\in \mathcal{H}_{\gamma_k}}\frac{G(\gamma_{k-1},\tilde{\gamma}_k|r)G(\tilde{\gamma}_k,\gamma_k|r)}{G(\gamma_{k-1},\gamma_k|r)}.$$

We then construct a function $\Upsilon_r$ as follows.
For every $\gamma\in \Gamma$, we choose a relative geodesic from $e$ to $\gamma$, using the automaton $\mathcal{G}$.
Let $\gamma_1$ be the first point after $e$ on this relative geodesic
Notice that $\gamma_1$ coincides with the first increment $\sigma_1$ on this relative geodesic.
In general, we denote by $\sigma_k=\gamma_{k-1}^{-1}\gamma_k$ the $k$th increment.
Also, let $\mathcal{H}_{\sigma_1}$ be the union of all parabolic subgroups in $\Omega_0$ containing $\sigma_1$.
We set
$$\Upsilon_r(\gamma)=\sum_{\sigma\in \mathcal{H}_{\sigma_1}}\frac{G(e,\sigma|r)G(\sigma,\sigma_1|r)}{G(e,\sigma_1|r)}.$$
This function  $\Upsilon_r$ only depends on the first element of the relative geodesic $[e,\gamma]$.
In other words, the function $\Upsilon_r\circ \phi(x)$ only depends on the first symbol of $x\in \overline{\Sigma}_A$.
The estimate above yields
\begin{equation}\label{equationPhiUpsilon}
    \Phi_r(\gamma)\lesssim I_1(r)\sum_{k=0}^{\hat{d}(e,\gamma)-1}\Upsilon_r \circ T^k ([e,\gamma]),
\end{equation}
where $T$ is the left shift on relative geodesic, that is
$T([e,\gamma])$ is the relative geodesic $(e,\gamma_1^{-1}\gamma_2,...,\gamma_1^{-1}\gamma)$.
Note that $\Upsilon_r$ is not bounded.
Indeed, assuming that $\sigma_1$ is only in one parabolic subgroup $\mathcal{H}_1$ to simplify, then $\Upsilon_r(\gamma)$ is essentially given by
$$\sum_{\sigma\in \mathcal{H}_{\sigma_1}}\frac{G^{(1)}_{1,r}(e,\sigma_1)}{G(e,\sigma_1|r)}.$$
This quantity is not bounded.

However, to prove Proposition~\ref{propositioncomparaisonI_2etI_1}, we need to estimate $\mathcal{L}_r(\Upsilon_r\circ \phi)(x)$.
Recall that $X_x^1$ is the set of symbols $\sigma$ that can precede $x$ in $\Sigma_A$.
Seeing $x$ and $\sigma$ as elements of $\Gamma$,
$$\mathcal{L}_r(\Upsilon_r\circ \phi)(x)=\sum_{\sigma\in X_{\gamma}^1}\frac{H(e,\sigma x|r)}{H(e,x|r)}\sum_{\sigma '\in \mathcal{H}_{\sigma}}\frac{G(e,\sigma'|r)G(\sigma',\sigma|r)}{G(e,\sigma|r)}.$$
Therefore, weak relative Ancona inequalities show that
$$\mathcal{L}_r(\Upsilon_r\circ \phi)(x)\lesssim \sum_{j}\sum_{\sigma,\sigma'\in \mathcal{H}_j}G(e,\sigma'|r)G(\sigma',\sigma|r)G(\sigma,e|r).$$
We rewrite this as
$$\mathcal{L}_r(\Upsilon_r\circ \phi)(x)\lesssim \sum_{j} I_j^{(2)}(r).$$
Since $\mu$ is not spectrally degenerate, we get
\begin{equation}\label{Upsilonuniformementbornee}
    \mathcal{L}_r(\Upsilon_r\circ \phi)(x)\lesssim 1.
\end{equation}

We only gave a rough estimate.
To obtain a precise asymptotic, we will replace the decomposition of $\Gamma$ into subsets $\Gamma_k$ as above by a continuous decomposition, using a partition of unity.
We construct such a partition of unity adapting the arguments of \cite[Lemma~8.5]{GouezelLalley}.

We first introduce the following terminology.
Consider a relative geodesic $\alpha$ that we write $\alpha=(\alpha_{-m},...,\alpha_{-1},\alpha_0,\alpha_1,...,\alpha_n)$.
Let
$\hat{l}(\alpha)=\hat{d}(\alpha_{-m},\alpha_n)$ be the relative length of $\alpha$ (here $n+m$) and let $l(\alpha)$ be its total length, defined by
$$l(\alpha)=\sum_{k=-m+1}^{n}d(\alpha_{k-1},\alpha_k).$$
Note that if $\alpha,\alpha'$ are two relative geodesic with the same endpoints, we do not have $l(\alpha)=l(\alpha')$ in general.
However, the distance formula given by \cite[Theorem~3.1]{Sisto2} shows that
$$\frac{1}{\lambda_1}d(\alpha_{-m},\alpha_n)-c_1\leq l(\alpha)\leq \lambda_1 d(\alpha_{-m},\alpha_n)+c_1$$
and so
$$\frac{1}{\lambda_2}l(\alpha)-c_2\leq l(\alpha')\leq \lambda_2l(\alpha)+c_2.$$

Our goal is to construct a partition of unity $\kappa_{\alpha}$ associated to such a relative geodesic $\alpha$.
Write $\alpha=(\alpha_{-m},...,\alpha_{-1},\alpha_0,\alpha_1,...,\alpha_n)$ and suppose that $\alpha_0=e$. 
To simplify, let $\alpha_-$ and $\alpha_+$ be the endpoints of $\alpha$, that is $\alpha_-=\alpha_{-m}$ and $\alpha_+=\alpha_n$.
Denote by $\alpha_l$ the sub-relative geodesic of $\alpha$ from $\alpha_{-}$ to $e$ and by $\alpha_r$ the sub-relative geodesic from $\alpha_1$ to $\alpha_+$.

Consider two constants $K_1$ and $K_2$ that we will choose later.
Assume that $l(\alpha_l)\geq 2K_1$ and $l(\alpha_r)\geq 2K_1$.
Denote by $A(K_1)$ the set of $\gamma\in \Gamma$ such that
\begin{itemize}
    \item there either exists a relative geodesic from $\gamma$ to $\alpha_+$ whose distance from $\alpha_1$ is at least $K_1$,
    \item or there exists a relative geodesic from $\alpha_-$ to $\gamma$ whose distance from $e$ is at least $K_1$.
\end{itemize}
Also denote by $B(K_2)$ the set of $\gamma\in \Gamma$ such that
\begin{itemize}
    \item any relative geodesic from $\gamma$ to $\alpha_+$ passes within $K_2$ of $\alpha_1$
    \item and any relative geodesic from $\alpha_-$ to $\gamma$ passes within $K_2$ of $e$.
\end{itemize}
In other words, $A(K_1)=B(K_1)^c$.
Note that $B(K_2)$ is not empty and that $A(K_1)$ is not empty, for $l(\alpha_l)\geq 2K_1$ and $l(\alpha_r)\geq 2K_1$.

The following is a simple consequence of the fact that triangles are thin along transition points \cite[Lemme~2.4]{GekhtmanDussaule} and that transition points are within a bounded distance of points on a relative geodesic \cite[Proposition~8.13]{Hruska}.
\begin{lemma}
For fixed $K_2$, if $K_1$ is large enough, then the closures of $A(K_1)$ and $B(K_2)$ in the Bowditch compactification $\overline{\Gamma}_B$ are disjoint.
\end{lemma}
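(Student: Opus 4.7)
The plan is to work with the Bowditch compactification as follows. Since by construction $A(K_1)=B(K_1)^c$, the two sets are already disjoint in $\Gamma$ as soon as $K_1\ge K_2$; as every point of $\Gamma$ is isolated in $\overline{\Gamma}_B$, only the accumulation sets in $\partial_B\Gamma$ remain to be separated. I argue by contradiction: if some $\xi\in \partial_B\Gamma$ were a common accumulation point, I fix sequences $\gamma_n\in A(K_1)$ and $\gamma_n'\in B(K_2)$ with $\gamma_n,\gamma_n'\to\xi$ and aim to produce a vertex of a relative geodesic from $\alpha_+$ to $\gamma_n$ that lies too close to $\alpha_1$.

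Up to subsequencing, and by the symmetry between $(\alpha_+,\alpha_1)$ and $(\alpha_-,e)$ in the definition of $A(K_1)$, I may assume that for every $n$ there is a relative geodesic $\hat{\beta}_n$ from $\alpha_+$ to $\gamma_n$ entirely outside the $K_1$-neighborhood of $\alpha_1$. Reversing the relative geodesic that witnesses the $\alpha_1$-condition in the definition of $B(K_2)$, for each $n$ I obtain a relative geodesic $\hat{\beta}_n'$ from $\alpha_+$ to $\gamma_n'$ containing a vertex $p_n'$ with $d(p_n',\alpha_1)\le K_2$, hence with $d(\alpha_+,p_n')\le K_2+d(\alpha_+,\alpha_1)$. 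Since the $p_n'$ all lie in a finite ball of $\Cay(\Gamma,S)$, a further extraction allows us to assume $p_n'=p$ is constant.

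The heart of the argument is then to show that $p$ is $d$-close to a vertex $q_n$ of $\hat{\beta}_n$, with a constant independent of $n$. In the case where $\xi$ is a conical limit point, I fix any infinite relative geodesic ray $\hat{\beta}_\infty$ from $\alpha_+$ to $\xi$ and apply Lemma~\ref{Proposition315Osininfinite}: since $\hat{\beta}_n$ ends at $\gamma_n$, which converges to $\xi$, pieces of $\hat{\beta}_n$ sufficiently far from $\gamma_n$ must stay $d$-close to $\hat{\beta}_\infty$ uniformly; applying the same principle to $\hat{\beta}_n'$ and using that $p$ is at bounded $d$-distance from $\alpha_+$, the vertex of $\hat{\beta}_\infty$ that is $d$-close to $p$ is in turn matched by a vertex $q_n$ of $\hat{\beta}_n$ with $d(p,q_n)\le C$ for a uniform constant $C$ depending only on the Cayley graph. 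In the case where $\xi$ is a parabolic limit point stabilised by some parabolic subgroup $\mathcal{H}$, the sequences $\gamma_n,\gamma_n'$ eventually lie in a common coset $g_0\mathcal{H}$, and the relative geodesics $\hat{\beta}_n,\hat{\beta}_n'$ end with a single step into $g_0\mathcal{H}$; the BCP property ensures that the initial segments from $\alpha_+$ up to a bounded neighbourhood of the entrance point in $g_0\mathcal{H}$ agree up to a uniform $d$-distance, and $p$---being close to the fixed point $\alpha_1$ and therefore not deep in $g_0\mathcal{H}$ for large $n$---lies on such an initial segment, again yielding a vertex $q_n\in\hat{\beta}_n$ with $d(q_n,p)\le C$ uniformly.

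Combining the two bounds, for $n$ large,
\[
d(q_n,\alpha_1)\le d(q_n,p)+d(p,\alpha_1)\le C+K_2,
\]
which contradicts $d(\hat{\beta}_n,\alpha_1)\ge K_1$ as soon as $K_1>K_2+C$. The main obstacle is the matching step in the conical case: because $\hat{\Gamma}$ is not locally finite there is no direct Arzel\`a--Ascoli extraction giving an honest limit ray from the $\hat{\beta}_n$, and one must instead pick the limit ray $\hat{\beta}_\infty$ independently of the sequence and deduce the required fellow-travel property by extending $\hat{\beta}_n$ with a relative geodesic from $\gamma_n$ to $\xi$, applying stability of relative quasi-geodesics in the hyperbolic graph $\hat{\Gamma}$ together with the BCP property, and then invoking Lemma~\ref{Proposition315Osininfinite} to transfer the bound to $\hat{\beta}_n$.
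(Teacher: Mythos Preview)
Your contradiction strategy---sequences $\gamma_n\in A(K_1)$, $\gamma_n'\in B(K_2)$ converging to a common $\xi\in\partial_B\Gamma$, extraction of a fixed vertex $p$ near $\alpha_1$ on $[\alpha_+,\gamma_n']$, and matching $p$ to a vertex of $\hat\beta_n=[\alpha_+,\gamma_n]$---is the right shape, but the matching step has real gaps in both cases. In the conical case, Lemma~\ref{Proposition315Osininfinite} concerns two \emph{infinite} relative rays, while $\hat\beta_n,\hat\beta_n'$ are finite; your fix of concatenating $\hat\beta_n$ with $[\gamma_n,\xi)$ does not give a uniform relative quasi-geodesic, since $\gamma_n\to\xi$ only forces $(\gamma_n\mid\xi)_{\alpha_+}\to\infty$ in $\hat\Gamma$ and says nothing about $(\alpha_+\mid\xi)_{\gamma_n}$, so the concatenation may backtrack arbitrarily at $\gamma_n$. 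The BCP property compares relative geodesics with the \emph{same} endpoints and does not repair this. In the parabolic case, the claim that $\gamma_n,\gamma_n'$ eventually lie in the coset $g_0\mathcal H$ is false: convergence to the parabolic point only forces their projections to $g_0\mathcal H$ to infinity, so the ``single step into $g_0\mathcal H$'' picture collapses.

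The paper's route avoids both the auxiliary ray $\hat\beta_\infty$ and the conical/parabolic split: apply the thin-triangle property for relative geodesics (this is precisely what the two cited results combine to give) directly to the triangle $(\alpha_+,\gamma_n,\gamma_n')$. The vertex $p$ on $[\alpha_+,\gamma_n']$ then lies within a universal constant $C_0$ of $[\alpha_+,\gamma_n]\cup[\gamma_n,\gamma_n']$; since $p$ is fixed while $\gamma_n,\gamma_n'\to\xi$, one checks that $p$ cannot be $C_0$-close to $[\gamma_n,\gamma_n']$ for large $n$, hence $p$ is $C_0$-close to $\hat\beta_n$, and taking $K_1>K_2+C_0$ gives the contradiction.
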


Since $\overline{\Gamma}_B$ is compact, there exists a continuous function $f_{\alpha}$ on $\overline{\Gamma}_B$ taking values in $[0,1]$, that vanishes on $A(K_1)$ and which is equal to 1 on $B(K_2)$.

\medskip

We now finish the construction of the partition of unity associated with $\alpha$.
Let $n_1=n_1(\alpha)$ be the largest integer such that translating $n_1$ times the relative geodesic $\alpha$, on the right, the length on the left is still at least $K_1$.
Formally,
\begin{equation}\label{defn_1}
    n_1(\alpha)=\sup \{k\geq 0, l\left ((T^{-k}\alpha )_l\right )\geq K_1\}.
\end{equation}
Similarly, let $n_2=n_2(\alpha)$ be the largest integer such that translating $n_2$ times $\alpha$ on the left, the length on the right is at least $K_1$.
That is,
\begin{equation}\label{defn_2}
    n_2(\alpha)=\sup \{k\geq 0, l\left ((T^{k}\alpha )_r\right )\geq K_1\}.
\end{equation}
Let $A'(K_1)$ be the set of $\gamma$ such that for every $k\in [-n_1,n_2-1]$,
\begin{itemize}
    \item there either exists a relative geodesic from $\gamma$ to $\alpha_+$ whose distance from $\alpha_{k+1}$ is at least $K_1$,
    \item or there exists a relative geodesic from $\alpha_-$ to $\gamma$ whose distance from $\alpha_k$ is at least $K_1$.
\end{itemize}
Let $B'(K_2)$ be the set of $\gamma$ such that there exists $k\in [-n_1,n_2-1]$ such that
\begin{itemize}
    \item any relative geodesic from $\gamma$ to $\alpha_+$ passes within $K_2$ of $\alpha_{k+1}$
    \item and any relative geodesic from $\alpha_-$ to $\gamma$ passes within $K_2$ of $\alpha_k$.
\end{itemize}
Again, if $K_1$ is large enough, the closures of $A'(K_1)$ and $B'(K_2)$ in the Bowditch compactification are disjoint.

For technical reasons, we will further need to truncate relative geodesics.
Letting $\beta$ be any relative geodesic with $\beta_0=e$, denote by $\beta_{(2K_1)}$ the shortest sub-relative geodesic of $\beta$ such that $l((\beta_{(2K_1)})_l)\geq 2K_1$ and $l((\beta_{(2K_1)})_r)\geq 2K_1$.
In other words, we truncate $\beta$ on the left, respectively on the right, as soon as the length on the left, respectively on the right, is at least $2K_1$.
Note that if $K_1$ is large enough, whenever $\gamma\in B'(K_2)$, we have
\begin{equation}\label{defsigmatranslateskappa}
    \Sigma=\sum_{k=-n_1}^{n_2}f_{(T^k\alpha)_{(2K_1)}}(\alpha_k^{-1}\gamma)\geq 1.
\end{equation}
Thus, there exists a function $g_{\alpha}$, which is continuous on the Bowditch compactification, which is equal to 1 on $A'(K_1)$, which is equal to the sum $\Sigma$ above on $B'(K_2)$ and whose values are between 1 and $\Sigma$.
We set
$$\kappa_{\alpha}=\frac{f_{\alpha_{(2K_1)}}}{g_{\alpha}}.$$

Denote by $\alpha'_+$ the right endpoint of $\alpha_{(2K_1)}$ and by $\alpha'_-$ the left endpoint of $\alpha_{(2K_1)}$.
According to Lemma~\ref{lemmarelativetripod}, the fact that a relative geodesic from $\gamma$ to $\alpha'_+$ passes within $K_2$ of $\alpha_1$ and a relative geodesic from $\alpha'_-$ to $\gamma$ passes within $K_2$ of $\alpha_0$ means that $\gamma$ projects on $\alpha$ approximately between $\alpha_0=e$ and $\alpha_1$.
More precisely, the projections on $\alpha_{(2K_1)}$ which are the closest to $\alpha'_+$ and to $\alpha'_-$ are within a bounded distance of $\alpha_1$ and $e$ respectively.
We deduce that the sum $\Sigma$ is bounded by some constant that only depends on $K_2$ and $K_1$.
Roughly speaking, $\kappa_{\alpha}$ is a continuous function whose successive images by the shift mimics the decomposition of $\Gamma$ into the subsets $\Gamma_k$.

\begin{center}
\begin{tikzpicture}[scale=1.8]
\draw (-1.8,.5)--(-2,-.4);
\draw (2.2,.5)--(2,-.4);
\draw (-2,-.4)--(2,-.4);
\draw (-.7,-.8)--(-.7,-.4);
\draw[dotted] (-.7,-.4)--(-.7,0);
\draw (-.7,-.8)--(-2.3,-.8);
\draw (-.7,0)--(.8,-.1);
\draw (-.7,0)--(.5,.3);
\draw[dotted] (.8,-.1)--(.8,-.4);
\draw (.8,-.4)--(.8,-1);
\draw (.8,-1)--(1.6,-1);
\draw (.5,.3)--(.5,1.1);
\draw (.8,-.1)--(.5,.3);
\draw (-.9,0)node{$e$};
\draw (-2.5,-.8) node{$\alpha_-$};
\draw (1.2,-.15) node{$\alpha_{1}$};
\draw (1.8,-1) node{$\alpha_+$};
\draw (.5,1.3) node{$\gamma$};
\end{tikzpicture}
\end{center}


\medskip
Let $\alpha=(\alpha_{-k},...,\alpha_0,\alpha_1,...,\alpha_l)$ be a relative geodesic, with $\alpha_0=e$.
Whenever $l(\alpha_l)<2K_1$ or $l(\alpha_r)<2K_1$, we set $\Psi_r(\alpha)=0$.
Otherwise, we set
\begin{equation}\label{defPsi_r}
    \Psi_r(\alpha)=\frac{1}{I^{(1)}(r)}\sum_{\gamma\in \Gamma}\kappa_{\alpha}(\gamma)\frac{G(\alpha_-,\gamma|r)G(\gamma,\alpha_+|r)}{G(\alpha_-,\alpha_+|r)}.
\end{equation}
This defines a function $\Psi_r$ on the set of relative geodesics $\alpha$ with $\alpha_0=e$.

According to the discussion above, $\kappa_{\alpha}(\gamma)\neq 0$ can only happen if the projection of $\gamma$ on $\alpha_{(2K_1)}$ lies between $e$ and $\alpha_1$, up to a bounded distance.
Choosing $K_1$ large enough, the same is true replacing $\alpha_{(2K_1)}$ with $\alpha$.
Indeed, let $\alpha_-$ and $\alpha_+$ be the endpoints of $\alpha$ and let $\alpha'_-$ and $\alpha'_+$ the endpoints of $\alpha_{(2K_1)}$.
If $K_1$ is large enough, then a relative geodesic from $\gamma$ to $\alpha'_-$ passes within a bounded distance of $e$ if and only if a relative geodesic from $\gamma$ to $\alpha_-$ also passes within a bounded distance of $e$.
Similarly, a relative geodesic from $\gamma$ to $\alpha_+$ passes within a bounded distance of $\alpha_1$ if and only if the same is true for a relative geodesic from $\gamma$ to $\alpha'_+$.
Weak relative Ancona inequalities thus show that for any $k$ large enough so that $\Psi_r(T^k[e,\gamma])\neq 0$,
\begin{equation}\label{upsilonvspsi}
    \Psi_r(T^k[e,\gamma])\asymp \Upsilon_r(T^k\gamma).
\end{equation}
Hence, the function $\Psi_r$ will replace the function $\Upsilon_r$ in~(\ref{equationPhiUpsilon}) to obtain a more accurate estimate.

\begin{proposition}\label{propestimeePhietPsi}
Let $\gamma\in \Gamma$ and let $\alpha_\gamma$ be the relative geodesic from $e$ to $\gamma$ given by the automaton $\mathcal{G}$.
Then,
$$\Phi_r(\gamma)=I^{(1)}(r)\sum_{k=0}^{\hat{d}(e,\gamma)-1}\Psi_r(T^k\alpha_\gamma)+O\left (I^{(1)}(r)\right ).$$
\end{proposition}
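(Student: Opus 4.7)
Let $w_r(\gamma')=G(e,\gamma'|r)G(\gamma',\gamma|r)/G(e,\gamma|r)$, so that $\Phi_r(\gamma)=\sum_{\gamma'}w_r(\gamma')$. My plan is to rewrite $I^{(1)}(r)\sum_k\Psi_r(T^k\alpha_\gamma)$ as $\sum_{\gamma'}K(\gamma')w_r(\gamma')$ for an explicit weight $K(\gamma')\in[0,1]$ coming from the partition of unity, then control the difference $\sum_{\gamma'}(1-K(\gamma'))w_r(\gamma')$ by showing it is supported near the endpoints of $\alpha_\gamma$ and contributes only $O(I^{(1)}(r))$.

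\textbf{Step 1 (Rewriting via equivariance).} Starting from the definition of $\Psi_r$ applied to $T^k\alpha_\gamma$, whose endpoints are $\alpha_k^{-1}e$ and $\alpha_k^{-1}\gamma$, the substitution $\gamma'=\alpha_k\gamma''$ together with equivariance of the Green function gives
$$I^{(1)}(r)\Psi_r(T^k\alpha_\gamma)=\sum_{\gamma'\in\Gamma}\kappa_{T^k\alpha_\gamma}(\alpha_k^{-1}\gamma')\,w_r(\gamma').$$
Summing over $k\in\{0,\dots,\hat{d}(e,\gamma)-1\}$ (where by convention $\kappa_{T^k\alpha_\gamma}\equiv 0$ when the length conditions $l(\alpha_l),l(\alpha_r)\geq 2K_1$ fail so that $\Psi_r(T^k\alpha_\gamma)=0$), this yields $I^{(1)}(r)\sum_k\Psi_r(T^k\alpha_\gamma)=\sum_{\gamma'}K(\gamma')w_r(\gamma')$ with $K(\gamma')=\sum_k\kappa_{T^k\alpha_\gamma}(\alpha_k^{-1}\gamma')$.

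\textbf{Step 2 (Partition-of-unity property).} Using $\kappa_\alpha=f_{\alpha_{(2K_1)}}/g_\alpha$ and the definition of $g_\alpha$ coinciding on $B'(K_2)$ with the sum in~\ref{defsigmatranslateskappa}, I show that there exists a constant $M=M(K_1,K_2)$ such that for every $\gamma'\in\Gamma$ whose projection on $\alpha_\gamma$ in $\hat{\Gamma}$ lies at $\alpha_j$ with $M\leq j\leq\hat{d}(e,\gamma)-M$, exactly the same indices $k$ contribute to $K(\gamma')$ and to the normalising denominator $g_{T^k\alpha_\gamma}(\alpha_k^{-1}\gamma')$ (they are all in the valid range for which $\Psi_r(T^k\alpha_\gamma)\neq 0$), and therefore $K(\gamma')=1$. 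In all cases $0\leq K(\gamma')\leq 1$, and $\{K<1\}$ is contained in the set of $\gamma'$ whose projection on $\alpha_\gamma$ is within $M$ (in $\hat{d}$) of $e$ or of $\gamma$.

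\textbf{Step 3 (Endpoint estimates).} For each $j\leq M$, I estimate $\sum_{\gamma'\in\Gamma_j}w_r(\gamma')$, where $\Gamma_j$ denotes the set of $\gamma'$ whose projection on $\alpha_\gamma$ is at $\alpha_j$. By Lemma~\ref{lemmarelativetripod}, a relative geodesic from $\gamma'$ to $\gamma$ passes within a bounded distance of $\alpha_j$; weak relative Ancona inequalities then yield $G(\gamma',\gamma|r)\lesssim G(\gamma',\alpha_j|r)G(\alpha_j,\gamma|r)$, and the factor $G(\alpha_j,\gamma|r)/G(e,\gamma|r)$ cancels up to a constant (again by weak Ancona applied along the geodesic $[e,\alpha_j,\gamma]$) with $1/G(e,\alpha_j|r)$. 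After decomposing $\Gamma_j$ according to the projection on the parabolic coset $\alpha_{j-1}\mathcal{H}_{\sigma_j}$ (as in the derivation of~\ref{equationPhiUpsilon}), each such sum is bounded by $I^{(1)}(r)\cdot\Upsilon_r(T^j\alpha_\gamma)\lesssim I^{(1)}(r)$, using the non-spectral-degeneracy hypothesis through Proposition~\ref{roughequadiff} (as already used in deriving~\ref{Upsilonuniformementbornee}). Since $j$ ranges over a bounded set, and the symmetric bound near $\gamma$ follows identically, the total error is $O(I^{(1)}(r))$.

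\textbf{Main obstacle.} The hard part is Step~2: proving that the partition-of-unity identity $K(\gamma')=1$ holds exactly in the bulk. One must check that the truncation $(T^k\alpha_\gamma)_{(2K_1)}$ and the indices $n_1(T^k\alpha_\gamma),n_2(T^k\alpha_\gamma)$ used to build $g_{T^k\alpha_\gamma}$ combine consistently across varying $k$, so that the denominators $g_{T^k\alpha_\gamma}(\alpha_k^{-1}\gamma')$ are independent of $k$ on the support of the numerators $f_{(T^k\alpha_\gamma)_{(2K_1)}}(\alpha_k^{-1}\gamma')$; this requires choosing $K_1$ large enough compared to $K_2$ and compared to the BCP constant, and verifying that no $k$ outside the allowed range carries nonzero mass on $\gamma'$ in the bulk.
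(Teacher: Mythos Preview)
Your overall structure matches the paper's proof exactly: the same equivariance rewriting (Step~1), the same partition-of-unity identity in the bulk (Step~2), and an endpoint error bound (Step~3). Step~2 is indeed the combinatorial heart, and your description of it (checking that $g_{T^k\alpha_\gamma}(\alpha_k^{-1}\gamma')$ is independent of $k$ on the support of the numerators) is precisely what the paper verifies.

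There is, however, a genuine gap in Step~3. You claim that for each endpoint index $j\leq M$ one has $\Upsilon_r(T^j\alpha_\gamma)\lesssim 1$, citing~(\ref{Upsilonuniformementbornee}). But~(\ref{Upsilonuniformementbornee}) bounds $\mathcal{L}_r(\Upsilon_r\circ\phi)$, an \emph{averaged} quantity; pointwise, $\Upsilon_r$ is unbounded (indeed, by Lemma~\ref{souslemmecontinuite1} it grows like $d(e,\sigma_1)$, the word-length of the first increment). Since the $O(I^{(1)}(r))$ in the proposition must be uniform in $\gamma$, and since for $j\leq M$ the increment $\sigma_{j+1}=\alpha_j^{-1}\alpha_{j+1}$ can be an arbitrarily long parabolic element, your bound fails as written.

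The paper avoids this by defining the bad endpoint range via the \emph{word-metric} total length $l$, not the relative length $\hat d$: it sets $m_1$ to be the smallest $k$ with $l((T^k\alpha)_l)\geq K_1$ and works with $k<m_1$ (and symmetrically near $\gamma$). Because $\sum_{j=1}^{m_1-1}d(\alpha_{j-1},\alpha_j)<K_1$, the individual endpoint increments satisfy $d(\alpha_k,\alpha_{k+1})\leq K_1$, so the $\Upsilon$-type sums at those indices are uniformly bounded. In your framework the fix is to replace the condition ``projection within $M$ of the endpoints in $\hat d$'' by ``projection within length $K_1$ of the endpoints in $l$''; the partition-of-unity construction was set up using $l$ for exactly this reason.

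A minor point: the assertion $K(\gamma')\leq 1$ in Step~2 is not obviously true off the bulk (the paper only uses that the sum~(\ref{defsigmatranslateskappa}) is uniformly bounded, hence $|1-K(\gamma')|\lesssim 1$), but this does not affect the argument.
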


\begin{proof}
Consider $\gamma\in \Gamma$.
To simplify things, denote by $\alpha$ the relative geodesic from $e$ to $\gamma$.
Let $m_1$ be the smallest integer such that $l\left ((T^{m_1}\alpha)_l\right )\geq K_1$
and $m_2$ the largest one such that $l\left ((T^{m_2}\alpha)_r\right )\geq K_1$.
By definition of $\Psi_r$,
\begin{align*}
    \sum_{k=0}^{\hat{d}(e,\gamma)-1}\Psi_r(T^k\alpha)&=\sum_{k=m_1}^{m_2}\Psi_r(T^k\alpha)\\
    &=\frac{1}{I^{(1)}(r)}\sum_{k=m_1}^{m_2}\sum_{\gamma'\in \Gamma}\kappa_{(T^k\alpha)}(\gamma')\frac{G(\alpha_k^{-1},\gamma'|r)G(\gamma',\alpha_k^{-1}\gamma|r)}{G(\alpha_k^{-1},\alpha_k^{-1}\gamma|r)}.
\end{align*}
Translating on the left by $\alpha_k^{-1}$ and replacing $\gamma'$ with $\alpha_k^{-1}\gamma'$ in the sum, we get
$$\sum_{k=0}^{\hat{d}(e,\gamma)-1}\Psi_r(T^k\alpha)=\frac{1}{I^{(1)}(r)}\sum_{\gamma'\in \Gamma}\left ( \sum_{k=m_1}^{m_2}\kappa_{(T^k\alpha)}(\alpha_k^{-1}\gamma')\right ) \frac{G(e,\gamma'|r)G(\gamma',\gamma|r)}{G(e,\gamma|r)}.$$
Recall that
$$\Phi_r(\gamma)=\sum_{\gamma'\in \Gamma} \frac{G(e,\gamma'|r)G(\gamma',\gamma|r)}{G(e,\gamma|r)}.$$
We are thus looking for the elements $\gamma'\in \Gamma$ such that
\begin{equation}\label{partitiondeluniteincomplete}
    \sum_{k=m_1}^{m_2}\kappa_{(T^k\alpha)}(\alpha_k^{-1}\gamma')\neq 1.
\end{equation}

Suppose there exists $i\in [m_1,m_2-1]$ such that any relative geodesic from $\gamma'$ to $\gamma$ passes within $K_2$ of $\alpha_{i+1}$ and that any relative geodesic from $e$ to $\gamma'$ passes within $K_2$ of $\alpha_i$.
Fix $k\in [m_1,m_2-1]$ and consider the translated relative geodesic $T^k\alpha$.
Then, any relative geodesic from $\alpha_k^{-1}\gamma'$ to the right endpoint of $T^k\alpha$ passes within $K_2$ of $\alpha_k^{-1}\alpha_{i+1}$ and any relative geodesic from the left endpoint of $T^k\alpha$ to $\alpha_k^{-1}\gamma'$ passes within $K_2$ of $\alpha_k^{-1}\alpha_i$.

In particular,
$\alpha_k^{-1}\gamma'\in B'(K_2)$, where $B'(K_2)$ is the set constructed as above, using the relative geodesic $T^k\alpha$.
Hence,
$$g_{T^k\alpha}(\alpha_k^{-1}\gamma')=\sum_{j=-n_1}^{n_2}f_{(T^j(T^k\alpha))_{(2K_1)}}((T^k\alpha)_j^{-1}\gamma').$$
Note that $(T^k\alpha)_j=\alpha_{k+j}$, so that
$$g_{T^k\alpha}(\alpha_k^{-1}\gamma')=\sum_{j=m_1-k}^{m_2-k}f_{(T^{k+j}\alpha)_{(2K_1)}}(\alpha_{k+j}^{-1}\gamma')=\sum_{j=m_1}^{m_2}f_{(T^{j}\alpha)_{(2K_1)}}(\alpha_j^{-1}\gamma').$$
In particular, we see that
$$\sum_{k=m_1}^{m_2}\kappa_{T^k\alpha}(\alpha_k^{-1}\gamma')= 1.$$

We proved that whenever~(\ref{partitiondeluniteincomplete}) holds, then for every $i\in [m_1,m_2-1]$,
either a relative geodesic from $\gamma'$ to $\gamma$ stays at distance at least $K_2$ from $\alpha_{i+1}$ or a relative geodesic from $e$ to $\gamma'$ stays at distance at least $K_2$ from $\alpha_i$.
We will again use Lemma~\ref{lemmarelativetripod}.
Let $\alpha_{k+1}$ be the projection of $\gamma'$ the closest to $\gamma$ on $\alpha$.
If $K_2$ was chosen large enough, then we necessarily have $k\geq m_2$ or $k\leq m_1-1$.
Also let $\mathcal{H}_{\alpha_k}$ be the union of parabolic subgroups in $\Omega_0$ containing $\alpha_{k}^{-1}\alpha_{k+1}$
and let $\tilde{\alpha}_k$ be the projection of $\gamma'$ on $\mathcal{H}_{\alpha_k}$.
According to \cite[Lemma~1.13~(1)]{Sisto2}, the exit point from $\mathcal{H}_{\alpha_k}$ of any relative geodesic from $e$ to $\gamma'$ is within a bounded distance of $\tilde{\alpha}_k$.
Thus, any such relative geodesic passes first within a bounded distance of $\alpha_k$ and then within a bounded distance of $\tilde{\alpha}_k$.
Similarly, any relative geodesic from $\gamma'$ to $\gamma$ passes first within a bounded distance of $\tilde{\alpha}_k$ and then within a bounded distance of $\alpha_{k+1}$.
Weak relative Ancona inequalities yield
$$\frac{G(e,\gamma'|r)G(\gamma',\gamma|r)}{G(e,\gamma|r)}\lesssim I^{(1)}(r) \frac{G(\alpha_k,\tilde{\alpha}_k|r)G(\tilde{\alpha}_k,\alpha_{k+1}|r)}{G(\alpha_k,\alpha_{k+1}|r)}.$$

Also recall that~(\ref{defsigmatranslateskappa}) is uniformly bounded.
Consequently, we have
\begin{align*}
    &\left |\Phi_r(\gamma)-I^{(1)}(r)\sum_{k=0}^{\hat{d}(e,\gamma)-1}\Psi_r(T^k\alpha)\right |\lesssim  \ I^{(1)}(r)\sum_{0\leq k< m_1}\sum_{\sigma\in \mathcal{H}_{\alpha_k}}\frac{G(\alpha_k,\sigma|r)G(\sigma,\alpha_{k+1}|r)}{G(\alpha_k,\alpha_{k+1}|r)}\\
    &\hspace{2cm}+I^{(1)}(r)\sum_{m_2\leq k\leq \hat{d}(e,\gamma)}\sum_{\sigma\in \mathcal{H}_{\alpha_k}}\frac{G(\alpha_k,\sigma|r)G(\sigma,\alpha_{k+1}|r)}{G(\alpha_k,\alpha_{k+1}|r)}.
\end{align*}
By definition, $m_1$ and $\hat{d}(e,\gamma)-m_2$ are bounded by $K_1$ and
$d(\alpha_{k},\alpha_{k+1})\leq K_1$.
In particular,
$$\sum_{\sigma\in \mathcal{H}_{\alpha_k}}\frac{G(\alpha_k,\sigma|r)G(\sigma,\alpha_{k+1}|r)}{G(\alpha_k,\alpha_{k+1}|r)}$$
is uniformly bounded.
We thus have
$$ \Phi_r(\gamma)-I^{(1)}(r)\sum_{k=0}^{\hat{d}(e,\gamma)-1}\Psi_r(T^k\alpha)\lesssim I^{(1)}(r),$$
which concludes the proof.
\end{proof}

\subsection{Truncating $\Psi_r$}
We say that a function $f$ defined on relative geodesic $\alpha$ satisfying $\alpha_0=e$ is locally H\"older if for every $n\geq 1$, as soon as $\alpha$ and $\alpha'$ coincide in the relative ball (for the distance $\hat{d}$) of center $e$ and radius $n$, 
$|f(\alpha)-f(\alpha')|\leq C\rho^n$, for some $C\geq 0$ and some $0<\rho<1$.

A similar function $\Psi_r$ is defined in \cite{Gouezel1} for hyperbolic groups.
It is proved there that for every $r$, $\Psi_r$ is continuous, locally H\"older and that $\Psi_r$ uniformly converges to a locally H\"older function $\Psi_{R_\mu}$, as $r$ tends to $R_\mu$.
However, in our situation, such properties do not hold, so we cannot directly apply the strategy in \cite{Gouezel1}.
We are going instead to truncate $\Psi_r$ such that our new function only depends on a finite number of symbols.

Precisely, fix a constant $N\in \N$ and denote by
$\alpha^{(N)}$ the relative geodesic $\alpha$ restricted to $[-N,N]$.
In particular, if $\hat{d}(e,\alpha_-)\leq N$ and $\hat{d}(e,\alpha_+)\leq N$, then $\alpha^{(N)}=\alpha$.
We set
$\Psi_r^{(N)}(\alpha)=\Psi_r(\alpha^{(N)})$.
Let us fix another constant $D$ and define $\Psi_r^{(D,N)}(\alpha)$ as follows.
If one of the increments $\alpha_{k-1}^{-1}\alpha_k$ of $\alpha$ satisfies $d(\alpha_{k-1}^{-1}\alpha_k)\geq D$, for some $k$ between $-N+1$ and $N$, then
we set $\Psi_r^{(D,N)}(\alpha)=0$.
Otherwise, we set $\Psi_r^{(D,N)}(\alpha)=\Psi_r^{(N)}(\alpha)$.
To simplify the notations below, we use the following convention.
Whenever $\hat{d}(e,\alpha_-)\leq N$, we set $\alpha_{-N}=\alpha_-$ and similarly, whenever $\hat{d}(e,\alpha_+)\leq N$, we set $\alpha_N=\alpha_+$.

Our goal is to prove estimates that will allow us to replace $\Psi_r$ with $\Psi_r^{(D,N)}$.
We start with the following lemma.

\begin{lemma}\label{kappaunchanged}
If $N$ is large enough, depending on $K_1$ and $K_2$, then for every relative geodesic $\alpha$, we have $\kappa_{\alpha}=\kappa_{\alpha^{(N)}}$.
\end{lemma}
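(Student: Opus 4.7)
The plan is to verify separately that the numerator $f_{\alpha_{(2K_1)}}$ of $\kappa_\alpha$ is unchanged by truncation and that the denominator $g_\alpha$ agrees with $g_{\alpha^{(N)}}$ at every point where the numerator is nonzero (elsewhere both $\kappa$'s vanish, regardless of the value of $g$).

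For the numerator, I would observe that the sub-relative geodesic $\beta_{(2K_1)}$ only depends on the portion of $\beta$ whose $d$-length from the origin is at most $2K_1$. Since each relative step has $d$-length at least $1$ and $\hat d \leq d$, this portion sits inside the $\hat d$-ball of radius $2K_1$ around $e$. Thus for any $N \geq 2K_1$ we have $\alpha_{(2K_1)} = (\alpha^{(N)})_{(2K_1)}$, and hence $f_{\alpha_{(2K_1)}} = f_{(\alpha^{(N)})_{(2K_1)}}$.

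For the denominator, I fix the canonical choice $g_\beta := \max(1, \Sigma_\beta)$ with $\Sigma_\beta(\gamma) = \sum_{k=-n_1(\beta)}^{n_2(\beta)} f_{(T^k\beta)_{(2K_1)}}(\beta_k^{-1}\gamma)$. It suffices to show that $\Sigma_\alpha(\gamma) = \Sigma_{\alpha^{(N)}}(\gamma)$ on the support of $f_{\alpha_{(2K_1)}}$. The key step is a finite-range property: if $f_{\alpha_{(2K_1)}}(\gamma) \neq 0$ then $\gamma \in B(K_1)$ for $\alpha_{(2K_1)}$, which by Lemma~\ref{lemmarelativetripod} forces every nearest-point projection of $\gamma$ onto $\alpha$ (viewed in $\hat\Gamma$) to lie in a $d$-bounded neighbourhood of $[e,\alpha_1]$. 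Likewise, a nonzero summand $f_{(T^k\alpha)_{(2K_1)}}(\alpha_k^{-1}\gamma)$ forces this projection to lie in a $d$-bounded neighbourhood of $[\alpha_k,\alpha_{k+1}]$. Since $d(\alpha_0,\alpha_k) \geq \hat d(\alpha_0,\alpha_k) = |k|$, these two constraints can coexist only for $|k| \leq K_3$, where $K_3 = K_3(K_1,K_2)$ depends solely on $K_1$ and $K_2$.

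Once this is in hand, choosing $N \geq K_3 + 2K_1$ achieves two things simultaneously: first, for every $|k| \leq K_3$ the sub-geodesic $(T^k\alpha)_{(2K_1)}$ stays inside $\alpha^{(N)}$, so $(T^k\alpha)_{(2K_1)} = (T^k\alpha^{(N)})_{(2K_1)}$ and the corresponding $f$-terms coincide; second, $[-K_3,K_3] \subset [-n_1(\alpha^{(N)}), n_2(\alpha^{(N)})]$, so the sum $\Sigma_{\alpha^{(N)}}$ actually picks up all of the surviving terms. A termwise comparison then yields $\Sigma_\alpha(\gamma) = \Sigma_{\alpha^{(N)}}(\gamma)$ on the support of $f_{\alpha_{(2K_1)}}$, and therefore $\kappa_\alpha = \kappa_{\alpha^{(N)}}$. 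The main obstacle is the finite-range step: one must carefully unwind the definitions of $A(K_1)$ and $B(K_1)$ to convert the geodesic-passage condition into control on nearest-point projections via the tripod lemma, and then exploit $d \geq \hat d$ along relative geodesics to turn that projection bound into a uniform bound on $|k|$.
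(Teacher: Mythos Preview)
Your proof is correct and follows essentially the same approach as the paper: establish $\alpha_{(2K_1)} = (\alpha^{(N)})_{(2K_1)}$ for $N \geq 2K_1$ to handle the numerator, then use the finite-range property (only boundedly many $k$ contribute to $\Sigma$, with a bound depending only on $K_1,K_2$) to show $\Sigma_\alpha = \Sigma_{\alpha^{(N)}}$ on the support of $f_{\alpha_{(2K_1)}}$. Your added care in fixing a canonical choice $g_\beta = \max(1,\Sigma_\beta)$ actually addresses something the paper leaves implicit---its own proof only shows $\Sigma_\alpha = \Sigma_{\alpha^{(N)}}$ and tacitly assumes this forces $g_\alpha = g_{\alpha^{(N)}}$, which requires $g$ to be determined by $\Sigma$.
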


\begin{proof}
First note that if $N\geq 2K_1$, $\alpha_{(2K_1)}^{(N)}=\alpha^{(2K_1)}$, so that $f_{\alpha_{(2K_1)}}(\gamma)=0$ if and only if $f_{\alpha_{(2K_1)}^{(N)}}(\gamma)=0$.
In particular, $\kappa_{\alpha}(\gamma)=0$ if and only if $\kappa_{\alpha^{(N)}}(\gamma)=0$.
Hence, we can assume that $f_{\alpha_{(2K_1)}}(\gamma)\neq 0$.
We want to prove that if $N$ is large enough, then the sum $\Sigma$ defined by~(\ref{defsigmatranslateskappa}) is the same for $\alpha$ and for $\alpha^{(N)}$.
By definition of $f$, the fact that $f_{\alpha_{(2K_1)}}(\gamma)\neq 0$ implies that any relative geodesic from $\gamma$ to the right endpoint of $\alpha_{(2K_1)}$ passes within a bounded distance of $\alpha_1$ and any relative geodesic from $\gamma$ to the left endpoint of $\alpha_{(2K_1)}$ passes within a bounded distance of $e$.
Thus, the number of $k$ in the sum defining $\Sigma$ such that $f_{(T^k\alpha)_{(2K_1)}}(\alpha_k^{-1}\gamma)\neq 0$ is finite, with a bound depending only on $K_1$ and $K_2$.
If $N$ is large enough, the same holds replacing $\alpha$ with $\alpha^{(N)}$ and for any such $k$, $f_{(T^k\alpha)_{(2K_1)}}(\alpha_k^{-1}\gamma)=f_{(T^k\alpha^{(N)})_{(2K_1)}}(\alpha_k^{-1}\gamma)$.
This concludes the proof.
\end{proof}

Thus, we do not have to replace $\kappa_{\alpha}$ with $\kappa_{\alpha^{(N)}}$ in the sum~(\ref{defPsi_r}) defining $\Psi_r$, when replacing $\Psi_r(\alpha)$ with $\Psi_r(\alpha^{(N)})$.
This will be very convenient in the following.

\begin{proposition}\label{differencePsietPsitronque}
Let $\epsilon>0$.
Then for $N$ large enough and for $D$ large enough (depending on $N$), for every $r\leq R_\mu$ and for every $n$,
\begin{align*}
    &\left |\sum_{k=0}^{n-1}\sum_{\gamma\in \hat{S}^n}H(e,\gamma|r)\left (\Psi_r \circ T^k([e,\gamma])-\Psi_r^{(D,N)}\circ T^k([e,\gamma])\right )\right |\\
    &\hspace{2cm}\leq \epsilon \sum_{k=0}^{n-1}\sum_{\gamma\in \hat{S}^n}H(e,\gamma|r) \Psi_r \circ T^k([e,\gamma]).
\end{align*}
\end{proposition}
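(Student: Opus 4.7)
The natural approach is to split
\begin{equation*}
\Psi_r - \Psi_r^{(D,N)} = \bigl(\Psi_r - \Psi_r^{(N)}\bigr) + \bigl(\Psi_r^{(N)} - \Psi_r^{(D,N)}\bigr),
\end{equation*}
choosing $N$ first (large, so the first difference is controlled uniformly in $r$) and then $D = D(N)$ (large, so the second is controlled).

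For the first piece, take $N$ at least as large as the constant from Lemma~\ref{kappaunchanged}, so that $\kappa_\alpha = \kappa_{\alpha^{(N)}}$. Then $\Psi_r(\alpha) - \Psi_r^{(N)}(\alpha)$ only involves the difference of the Green function ratios $G(\alpha_-,\gamma|r)G(\gamma,\alpha_+|r)/G(\alpha_-,\alpha_+|r)$ and the analogous ratio with $\alpha_\pm$ replaced by $\alpha_{\pm N}$. Since $\gamma$ lies in the support of $\kappa_\alpha$, its projection on $\alpha$ sits within a bounded distance of $\{e,\alpha_1\}$, so the two relative geodesics involved in each of these ratios fellow travel along $\alpha$ for time $\gtrsim N$ on both sides. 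Rewriting the quotient of the two ratios as a product of two $4$-point Ancona-type ratios (one essentially involving $\alpha_-,\alpha_{-N},\gamma,\alpha_N$, the other $\alpha_-,\gamma,\alpha_N,\alpha_+$) and applying strong relative Ancona inequalities to each factor yields, pointwise,
\begin{equation*}
|\Psi_r(\alpha) - \Psi_r^{(N)}(\alpha)| \leq C\rho^N \Psi_r(\alpha).
\end{equation*}
Summing this pointwise bound against $H(e,\gamma|r)$ and over $k$ produces an $\epsilon/2$-bound for $N$ large, uniformly in $r \leq R_\mu$.

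Fix such an $N$ for the second piece. The difference $\Psi_r^{(N)} - \Psi_r^{(D,N)}$ equals $\Psi_r^{(N)}$ on those $\alpha$ for which some one of the $2N$ increments of $\alpha^{(N)}$ has length $\geq D$, and vanishes otherwise; by the first piece, it is bounded by $2\Psi_r$. Apply a union bound over the position $j \in [-N+1,N]$ of a large increment, and reindex by $l = k+j$: for each fixed $l$, the inner sum over $k$ runs over at most $2N$ values, and the condition becomes $d(\gamma_{l-1}^{-1}\gamma_l) \geq D$ in the original path. Decomposing $\gamma = \gamma_{l-1}\cdot \sigma_l \cdot (\gamma_l^{-1}\gamma)$ and using weak relative Ancona to factor $H(e,\gamma|r)$ into $H(e,\gamma_{l-1}|r)\, H(e,\sigma_l|r)\, H(\gamma_l,\gamma|r)$ (up to a constant), one can extract the tail contribution of the large jump from the remaining sum. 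The remaining sum reconstructs (up to multiplicative constants) $\sum_k\sum_\gamma H(e,\gamma|r)\Psi_r(T^k[e,\gamma])$, while the jump contributes $\sum_{\sigma \in \mathcal{H},\, d(e,\sigma)\geq D} H(e,\sigma|r)$ for $\mathcal{H}$ ranging over the parabolic subgroups. By Corollary~\ref{derivativeparabolicGreenfinite} this tail is uniformly $o(1)$ in $r \leq R_\mu$ as $D \to \infty$, so choosing $D$ large absorbs the remaining $\epsilon/2$, and the $O(N)$ loss from the union bound is harmless since $N$ is fixed before $D$.

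The main obstacle is this second piece: producing a bound that is genuinely \emph{multiplicative} in $\sum_k\sum_\gamma H(e,\gamma|r)\Psi_r\circ T^k$ rather than merely an absolute bound. The delicate point is when the large jump sits near position $0$ or $1$ of $T^k[e,\gamma]$, i.e.\ near the center of the window where $\Psi_r^{(N)}$ depends most sensitively on the path, since then the Ancona factorization of $H(e,\gamma|r)$ must be carried out in a way that does not disturb the Green functions appearing inside $\Psi_r(T^k[e,\gamma])$ itself. This forces a case split on the position of the large increment, using strong Ancona to absorb the jump into the Green function factors of $\Psi_r$ when it lies near the center, and using weak Ancona with the bound $\Psi_r^{(N)} \lesssim \Psi_r$ when it lies further out. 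Non-spectral degeneracy enters crucially here through Corollary~\ref{derivativeparabolicGreenfinite} to ensure that the parabolic tail $\sum_{d(e,\sigma)\geq D} H(e,\sigma|r)$ can be made arbitrarily small uniformly in $r$.
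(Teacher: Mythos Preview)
Your plan is essentially the paper's proof: the same two-piece split, the same use of Lemma~\ref{kappaunchanged} plus strong Ancona to get $|\Psi_r-\Psi_r^{(N)}|\le C\rho^{N'}\Psi_r$ pointwise, and the same union bound over the position of the long increment for the second piece, with a case split according to whether that increment lies at the center of the window or not.

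One point deserves sharpening. You write that non-spectral degeneracy enters ``through Corollary~\ref{derivativeparabolicGreenfinite} to ensure that the parabolic tail $\sum_{d(e,\sigma)\ge D}H(e,\sigma|r)$ can be made arbitrarily small uniformly in $r$''. That corollary holds \emph{without} non-spectral degeneracy and is what the paper uses for the off-center case $j\neq k$. The genuinely delicate case is $j=k$, where the long increment $\sigma$ is exactly the first jump of $T^k[e,\gamma]$; there the paper replaces $\Psi_r$ by the explicit function $\Upsilon_r$ via~(\ref{upsilonvspsi}), so that the relevant tail is
\[
\sum_{\substack{\sigma\in X^1_{\gamma_2}\\ d(e,\sigma)\ge D}}\ \sum_{\sigma'\in\mathcal H_\sigma} G(e,\sigma'|r)G(\sigma',\sigma|r)G(\sigma,e|r),
\]
a tail of the second-order parabolic sum $I^{(2)}_j(r)$. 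It is here, and only here, that non-spectral degeneracy is needed (to make $I^{(2)}_j(R_\mu)<\infty$ and hence this tail small uniformly in $r$). Your phrase ``strong Ancona to absorb the jump into the Green function factors of $\Psi_r$'' is pointing at this, but the actual mechanism is the comparison $\Psi_r\asymp\Upsilon_r$ followed by the $I^{(2)}_j$ bound; strong Ancona alone does not produce the required smallness. Once you make this substitution explicit, your plan is the paper's proof.
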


\begin{proof}
We first show that we can replace $\Psi_r$ by $\Psi_r^{(N)}$, that is we prove that if $N$ is large enough, then
\begin{equation}\label{majorationPsietPsi(N)}
\begin{split}
    &\left |\sum_{k=0}^{n-1}\sum_{\gamma\in \hat{S}^n}H(e,\gamma|r)\left (\Psi_r \circ T^k([e,\gamma])-\Psi_r^{(N)}\circ T^k([e,\gamma])\right )\right |\\
    &\hspace{2cm}\lesssim \epsilon \sum_{k=0}^{n-1}\sum_{\gamma\in \hat{S}^n}H(e,\gamma|r) \Psi_r \circ T^k([e,\gamma]).
\end{split}
\end{equation}
Let $n$ and let $k\leq n-1$.
Set $\alpha=T^k([e,\gamma])$.
Then, according to Lemma~\ref{kappaunchanged},
\begin{align*}
    &\sum_{\gamma\in \hat{S}^n}H(e,\gamma|r)\left (\Psi_r \circ T^k([e,\gamma])-\Psi_r^{(N)}\circ T^k([e,\gamma])\right )=\\
    &\frac{1}{I_1(r)}\sum_{\gamma\in \hat{S}^n}H(e,\gamma|r)\sum_{\gamma'\in \Gamma}\kappa_{\alpha}(\gamma')\\
    &\hspace{3cm}\left (\frac{G(\alpha_-,\gamma'|r)G(\gamma',\alpha_+|r)}{G(\alpha_-,\alpha_+|r)}-\frac{G(\alpha_{-N},\gamma'|r)G(\gamma',\alpha_N|r)}{G(\alpha_{-N},\alpha_N|r)}\right ).
\end{align*}
We rewrite
\begin{align*}
    &\left |\frac{G(\alpha_-,\gamma'|r)G(\gamma',\alpha_+|r)}{G(\alpha_-,\alpha_+|r)}-\frac{G(\alpha_{-N},\gamma'|r)G(\gamma',\alpha_N|r)}{G(\alpha_{-N},\alpha_N|r)}\right |\\
    =&\left (\frac{G(\alpha_{-},\gamma'|r)G(\gamma',\alpha_+|r)}{G(\alpha_{-},\alpha_+|r)}\right )
    \left |1-\frac{G(\alpha_{-N},\gamma'|r)G(\gamma',\alpha_N|r)G(\alpha_{-},\alpha_+|r)}{G(\alpha_{-N},\alpha_N|r)G(\alpha_-,\gamma'|r)G(\gamma',\alpha_+|r)}\right |\\
\end{align*}
and
\begin{align*}
    &
    \left |1-\frac{G(\alpha_{-N},\gamma'|r)G(\gamma',\alpha_N|r)G(\alpha_{-},\alpha_+|r)}{G(\alpha_{-N},\alpha_N|r)G(\alpha_-,\gamma'|r)G(\gamma',\alpha_+|r)}\right |\\
    =&
    \left |1-\frac{G(\alpha_{-N},\gamma'|r)G(\alpha_{-},\alpha_+|r)}{G(\alpha_{-N},\alpha_+|r)G(\alpha_-,\gamma'|r)}\frac{G(\alpha_{-N},\alpha_+|r)G(\gamma',\alpha_N|r)}{G(\alpha_{-N},\alpha_N|r)G(\gamma',\alpha_+|r)}\right |
\end{align*}
We now show that
\begin{equation}\label{StrongAnconaarbitrarysmall}
    \left |1-\frac{G(\alpha_{-N},\gamma'|r)G(\alpha_{-},\alpha_+|r)}{G(\alpha_{-N},\alpha_+|r)G(\alpha_-,\gamma'|r)}\frac{G(\alpha_{-N},\alpha_+|r)G(\gamma',\alpha_N|r)}{G(\alpha_{-N},\alpha_N|r)G(\gamma',\alpha_+|r)}\right |.
\end{equation}
is arbitrary small when $N$ is large enough.
Let
$$u_N(\gamma')=\frac{G(\alpha_{-N},\gamma'|r)G(\alpha_{-},\alpha_+|r)}{G(\alpha_{-N},\alpha_+|r)G(\alpha_-,\gamma'|r)}$$
and
$$v_N(\gamma')=\frac{G(\alpha_{-N},\alpha_+|r)G(\gamma',\alpha_N|r)}{G(\alpha_{-N},\alpha_N|r)G(\gamma',\alpha_+|r)}.$$
so that~(\ref{StrongAnconaarbitrarysmall}) can be written as $|1-u_N(\gamma')v_N(\gamma')|$.
Assume that $\kappa_{\alpha}(\gamma')\neq 0$.
Then $f_{\alpha_{(2K_1)}}(\gamma')\neq 0$ and so any relative geodesic from $\gamma'$ to the left endpoint of $\alpha_{(2K_1)}$ passes within $K_1$ of $e$.
This implies that relative geodesics from $\alpha_{-N}$ to $\gamma'$ and from $\alpha_-$ to $\alpha_+$ fellow travel for a time at least $N'$, where $N'$ tends to infinity as $N$ tends to infinity.
Strong relative Ancona inequalities show that
$$\left |1-u_N(\gamma')\right |\leq C\rho^{N'}.$$
Similarly, one can prove that
$$\left |1-v_N(\gamma')\right |\leq C\rho^{N'}.$$
Also, weak relative Ancona inequalities imply that
$v_N(\gamma')$ is bounded.
This yields
$$\left |1-u_N(\gamma')v_N(\gamma')\right |\leq v_N(\gamma')|1-u_N(\gamma')|+|1-v_N(\gamma')|\leq C'\rho^{N'}.$$
Hence,
\begin{align*}
    &\left |\sum_{\gamma\in \hat{S}^n}H(e,\gamma|r)\left (\Psi_r \circ T^k([e,\gamma])-\Psi_r^{(N)}\circ T^k([e,\gamma])\right )\right |\\
    &\hspace{1cm}\leq C'\rho^{N'}\frac{1}{I^{(1)}(r)}\sum_{\gamma\in \hat{S}^n}H(e,\gamma|r)\sum_{\gamma'\in \Gamma}\kappa_{\alpha}(\gamma') \frac{G(\alpha_-,\gamma'|r)G(\gamma',\alpha_+|r)}{G(\alpha_-,\alpha_+|r)}\\
    &\hspace{1cm}=C'\rho^{N'}\sum_{\gamma\in \hat{S}^n}H(e,\gamma|r)\Psi_r \circ T^k([e,\gamma]).
\end{align*}
Thus, if $N$ is large enough, then~(\ref{majorationPsietPsi(N)}) holds.

Let us compare $\Psi_r^{(D,N)} \circ T^k([e,\gamma])$ and $\Psi_r^{(N)}\circ T^k([e,\gamma])$ now.
Let $\alpha=T^k[e,\gamma]$.
Then,
$\Psi_r^{(D,N)} \circ T^k([e,\gamma])-\Psi_r^{(N)}\circ T^k([e,\gamma])$ is non-zero only for elements $\gamma$ such that there exists $j$ between $-N+1$ and $N$ such that $d(\alpha_{j-1},\alpha_{j})\geq D$.
Denote by $\gamma_0=e,\gamma_1,...,\gamma_n=\gamma$ successive elements on $[e,\gamma]$, so that
$\alpha_{j}=\gamma_k^{-1}\gamma_{j+k}$.
Hence, $\Psi_r^{(D,N)} \circ T^k([e,\gamma])-\Psi_r^{(N)}\circ T^k([e,\gamma])$ is non-zero only for elements $\gamma$ such that there exists $j$ between $-N+k+1$ and $N+k$ such that $d(\gamma_{j-1},\gamma_{j})\geq D$.

Let $\hat{S}^n_{\geq D}$ be the set of $\gamma\in \hat{S}^n$ such that one of the increments of the relative geodesic $[e,\gamma]$ between $-N+k+1$ and $N+k$ has length at least $D$.
Also, for a fixed $j$ between $-N+k+1$ and $N+k$, let $\hat{S}^{n,j}_{\geq D}$ be the subset of $\hat{S}^n_{\geq D}$ of elements $\gamma$ such that the first such increment is at step $j$.
Then,
\begin{align*}
    &\sum_{\gamma\in \hat{S}^n}H(e,\gamma|r)\left (\Psi_r^{(N)} \circ T^k([e,\gamma])-\Psi_r^{(D,N)}\circ T^k([e,\gamma])\right )\\
&\hspace{1cm}=\frac{1}{I^{(1)}(r)}\sum_{\gamma\in \hat{S}^n_{\geq D}}H(e,\gamma|r)\sum_{\gamma'\in \Gamma}\kappa_{\alpha}(\gamma')\frac{G(\alpha_{-N},\gamma'|r)G(\gamma',\alpha_N|r)}{G(\alpha_{-N},\alpha_N|r)}\\
&\hspace{1cm}=\frac{1}{I^{(1)}(r)}\sum_{j=-N+k+1}^{N+k}\sum_{\gamma\in \hat{S}^{n,j}_{\geq D}}H(e,\gamma|r)\sum_{\gamma'\in \Gamma}\kappa_{\alpha}(\gamma')\frac{G(\alpha_{-N},\gamma'|r)G(\gamma',\alpha_N|r)}{G(\alpha_{-N},\alpha_N|r)}.
\end{align*}
Fix $j$.
For $\gamma\in \hat{S}^n$, we write $\gamma=\gamma_1\sigma\gamma_2$, where $\gamma_1\in \hat{S}^{j-1}$, $\sigma$ is in a factor $\mathcal{H}_k'$ and $\gamma'\in \hat{S}^{n-j}$.
If $\gamma\in \hat{S}^{n,j}_{\geq D}$, then $d(e,\sigma)\geq D$.
Weak relative Ancona inequalities show that
$$H(e,\gamma|r)\lesssim H(e,\gamma_1|r)H(e,\sigma|r)H(e,\gamma_2|r).$$
Also, using~(\ref{upsilonvspsi}), we can replace $\Psi_r$ with $\Upsilon_r$ in the right member of the sum above.
We get
\begin{equation}\label{differenceNetD_1N}
\begin{split}
    &\sum_{\gamma\in \hat{S}^n}H(e,\gamma|r)\left (\Psi_r^{(N)} \circ T^k([e,\gamma])-\Psi_r^{(D,N)}\circ T^k([e,\gamma])\right )\\
&\hspace{1cm}\lesssim \sum_{j=-N+k+1}^{N+k}\sum_{\gamma\in \hat{S}^{n,j}_{\geq D}} H(e,\gamma_1|r)H(e,\sigma|r)H(e,\gamma_2|r)\Upsilon_r(T^k\alpha).
\end{split}
\end{equation}
Recall that $X_x^1$ is the set of symbols that can precede $x$ in $\Sigma_A$.
More generally, $X_x^m$ is the set of words of length $m$ that can precede $x$.
Decompose the sum over $\gamma$ as follows.
\begin{align*}
    &\sum_{\gamma\in \hat{S}^n}H(e,\gamma|r)\left (\Psi_r^{(N)} \circ T^k([e,\gamma])-\Psi_r^{(D,N)}\circ T^k([e,\gamma])\right )\\
&\hspace{.5cm}\lesssim \sum_{j=-N+k+1}^{N+k}\sum_{\gamma_2\in \hat{S}^{n-j}}\sum_{\underset{d(e,\sigma)\geq D}{\sigma \in X_{\gamma_2}^1}}\sum_{\gamma_1\in X_{\sigma\gamma_2}^{j-1}} H(e,\gamma_1|r)H(e,\sigma|r)H(e,\gamma_2|r)\Upsilon_r(T^k\alpha).
\end{align*}
Note that $\Upsilon_r(T^k\alpha)$ only depends on the $k$th increment of $[e,\gamma]$.
In particular, for $j\neq k$, we can factorize the sum over $\gamma_1,\sigma,\gamma_2$ by $\Upsilon_r(T^k\alpha)$.
Hence, we can bound the terms $j\neq k$ by
$$\Upsilon_r(T^k\alpha)\sum_{j=-N+k+1}^{N+k}\sum_{\gamma_2\in \hat{S}^{n-j}}H(e,\gamma_2|r)\sum_{\underset{d(e,\sigma)\geq D}{\sigma \in X_{\gamma_2}^1}}H(e,\sigma|r)\sum_{\gamma_1\in X_{\sigma\gamma_2}^{j-1}}H(e,\gamma_1|r).$$
Corollary~\ref{derivativeparabolicGreenfinite} shows that
$$\sum_{\sigma\in X_{\gamma_2}^1}H(e,\sigma|r)$$ is uniformly bounded.
Thus, for large enough $D$,
\begin{equation}\label{estimeejneqk}
    \sum_{\underset{d(e,\sigma)\geq D}{\sigma \in X_{\gamma_2}^1}}H(e,\sigma|r)\leq \frac{\epsilon}{2N}\sum_{\sigma\in X_{\gamma_2}^1}H(e,\sigma|r).
\end{equation}
Let us focus on the term $j=k$.
We can still factorize the sum over $\gamma_1$ by $\Upsilon_r(T^k\alpha)$.
We want to bound the sum over $\sigma$.
According to the definition of $\Upsilon_r$, we thus need to bound
\begin{align*}
    &\sum_{\underset{d(e,\sigma)\geq D}{\sigma \in X_{\gamma_2}^1}}H(e,\sigma|r)\sum_{\sigma'\in \mathcal{H}_{\sigma}}\frac{G(e,\sigma'|r)G(\sigma',\sigma|r)}{G(e,\sigma|r)}\\
    &\hspace{3cm}=\sum_{\underset{d(e,\sigma)\geq D}{\sigma \in X_{\gamma_2}^1}}\sum_{\sigma'\in \mathcal{H}_{\sigma}}G(e,\sigma'|r)G(\sigma',\sigma|r)G(\sigma,e|r).
\end{align*}
Since $\mu$ is not spectrally degenerate, the sum
$$\sum_{\sigma \in X_{\gamma_2}^1}\sum_{\sigma'\in \mathcal{H}_{\sigma}}G(e,\sigma'|r)G(\sigma',\sigma|r)G(\sigma,e|r)$$
is uniformly bounded.
Thus, for large enough $D$,
\begin{equation}\label{estimeej=k}
\begin{split}
    &\sum_{\underset{d(e,\sigma)\geq D}{\sigma \in X_{\gamma_2}^1}}\sum_{\sigma'\in \mathcal{H}_{\sigma}}G(e,\sigma'|r)G(\sigma',\sigma|r)G(\sigma,e|r)\\
&\hspace{3cm}\leq \frac{\epsilon}{2N}\sum_{\sigma \in X_{\gamma_2}^1}\sum_{\sigma'\in \mathcal{H}_{\sigma}}G(e,\sigma'|r)G(\sigma',\sigma|r)G(\sigma,e|r).
\end{split}
\end{equation}
When $j$ is fixed, there is a unique way of decomposing $\gamma$ as $\gamma_1\sigma\gamma_2$.
Hence, combining~(\ref{differenceNetD_1N}),~(\ref{estimeejneqk}) and~(\ref{estimeej=k}), we get
\begin{equation}\label{majorationPsi(N)etPsi(D,N)}
\begin{split}
&\sum_{\gamma\in \hat{S}^n}H(e,\gamma|r)\left (\Psi_r^{(N)} \circ T^k([e,\gamma])-\Psi_r^{(D,N)}\circ T^k([e,\gamma])\right )\\
&\hspace{1cm}\lesssim \sum_{j=-N+k+1}^{N+k}\frac{\epsilon}{2N}\sum_{\gamma\in \hat{S}^n}H(e,\gamma|r)\Psi_r^{(N)} \circ T^k([e,\gamma])\\
&\hspace{1cm}\leq \epsilon\sum_{\gamma\in \hat{S}^n}H(e,\gamma|r)\Psi_r^{(N)} \circ T^k([e,\gamma]).
\end{split}
\end{equation}
Finally, combining~(\ref{majorationPsietPsi(N)}) and~(\ref{majorationPsi(N)etPsi(D,N)}), we get the desired inequality.
\end{proof}

Recall that we want to compare $I^{(2)}(r)$ and $I^{(1)}(r)$.
As we saw,
$$I^{(2)}(r)=\sum_{n\geq 0}\sum_{\gamma\in \hat{S}^n}H(e,\gamma|r)\Phi_r(\gamma).$$
Proposition~\ref{propestimeePhietPsi} thus yields
$$I^{(2)}(r)=I^{(1)}(r)\sum_{n\geq 0}\sum_{\gamma\in \hat{S}^n}\sum_{k=0}^{n-1}H(e,\gamma|r)\Psi_r(T^k[e,\gamma])+O\left (I^{(1)}(r)^2\right ).$$
We want to prove that
$$I^{(2)}(r)=\xi I^{(1)}(r)^3+O\left (I^{(1)}(r)^2\right ),$$
so that we only have to deal with
$$I^{(1)}(r)\sum_{n\geq 0}\sum_{\gamma\in \hat{S}^n}\sum_{k=0}^{n-1}H(e,\gamma|r)\Psi_r(T^k[e,\gamma]).$$
In view of Proposition~\ref{differencePsietPsitronque}, we can replace
$\Psi_r$ with $\Psi_r^{(D,N)}$.

\medskip
We now consider the set $\overline{\Sigma}_{A,\Z}$ of (finite or infinite) sequences $x=(x_n)$ indexed by $\Z$ such that $x_n\in \Sigma$ and for every $n$, $x_n$ and $x_{n+1}$ are adjacent edges in the automaton $\mathcal{G}$.
The map $T$ still defines a shift on $\overline{\Sigma}_{A,\Z}$.

Since $\Psi^{(D,N)}(\alpha)$ only depends on the truncated geodesic $\alpha^{(N)}$, $\Psi^{(D,N)}$ can be extended to a function defined on (finite or infinite) relative geodesics.
For any $x\in \overline{\Sigma}_{A,\Z}$, $(...,x_{-n},...,x_0,....x_n,...)$ defines such a relative geodesic, so
$\Psi^{(D,N)}\circ \phi$ is a well defined function on $\overline{\Sigma}_{A,\Z}$.
We will omit the reference to $\phi$ and see $\Psi^{(D,N)}$ as a function on $\overline{\Sigma}_{A,\Z}$ to simplify.
Also, since $\Psi^{(D,N)}(\alpha)$ only depends on the truncated relative geodesic $\alpha^{(N)}$ and vanishes on relative geodesics $\alpha$ whose increments are too long, the induced function on $\overline{\Sigma}_{A,\Z}$ only depends on a finite number of symbols.

For a continuous function $f:\overline{\Sigma}_{A,\Z}\rightarrow \R$, we define
$$\tilde{V}_n(f)=\sup \{|f(x)-f(y)|,x_{-n}=y_{-n},...,x_0=y_0,...,x_n=y_n\}.$$
Letting $0<\rho<1$, we  say that $f$ is $\rho$-locally H\"older if there exists $C\geq 0$ such that
$$\forall n\geq 1,V_n(f)\leq C\rho^n.$$
As before, we do not ask anything on $V_0(f)$ and $f$ can be unbounded.
Say that $f$ is locally H\"older if it is $\rho$-locally H\"older for some $\rho$.
Define the H\"older norm $D_{\rho}$ as
$$D_{\rho}(f)=\sup_n \frac{\tilde{V}_n(f)}{\rho^n}.$$
Also let $H_{\rho}$ be the set of bounded $\rho$-locally H\"older functions and define the norm
$$\|\cdot \|_{\rho}=D_{\rho}+\|\cdot \|_{\infty}$$
on this space.
Then, $(H_{\rho},\|\cdot\|_{\rho})$ is a Banach space.

We want to use Proposition~\ref{propestimeePhietPsi} and apply the transfer operator to $\Psi_r^{(D,N)}$.
To apply this operator, we first need to transform $\Psi_r^{(D,N)}$ into a function only depending on the future, that is a function on $\overline{\Sigma}_A$.
We start proving the following.

\begin{lemma}\label{Lemme3.22Gouezel}
Fix $D$ and $N$.
The functions $\Psi_r^{(D,N)}$ are $\rho$-locally H\"older and are uniformly bounded.
They uniformly converge in $(H_{\rho,\beta},\|\cdot \|_{\rho,\beta})$ to a function $\Psi_{R_{\mu}}^{(D,N)}$, as $r$ tends to $R_{\mu}$.
\end{lemma}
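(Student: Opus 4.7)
The plan is to exploit that the truncations in $\Psi_r^{(D,N)}$ collapse it onto a finite-dimensional subspace, reducing all three assertions to pointwise control on finitely many patterns.

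First, observe that $\Psi_r^{(D,N)}(\alpha)$ depends on $\alpha$ only through the increments at positions $-N+1,\dots,N$, and vanishes as soon as one of those has word-length exceeding $D$. The automaton $\mathcal{G}$ has finitely many vertices, and for every vertex only finitely many outgoing edges carry a label in $S\cup\bigcup_k\mathcal{H}_k$ of word-length at most $D$ (since $S$ is finite and each $\mathcal{H}_k$ meets any $d$-ball in a finite set). So $\Psi_r^{(D,N)}$ is a simple function indexed by a finite list of admissible truncated patterns. In particular $\tilde V_n(\Psi_r^{(D,N)})=0$ for $n\ge N$ and $\|\Psi_r^{(D,N)}\|_{\rho,\beta}\le (1+2\rho^{-N})\|\Psi_r^{(D,N)}\|_\infty$; this establishes $\rho$-local H\"olderness unconditionally, and reduces both the uniform bound and the convergence claim to their $\|\cdot\|_\infty$ versions, i.e.\ to pointwise statements on finitely many patterns.

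For the uniform bound, fix an admissible pattern $\alpha^{(N)}$. Every $\gamma$ with $\kappa_{\alpha^{(N)}}(\gamma)\ne 0$ projects within bounded distance of $[e,\alpha_1]$, so weak relative Ancona inequalities replace $G(\alpha'_-,\gamma|r)G(\gamma,\alpha'_+|r)/G(\alpha'_-,\alpha'_+|r)$ by $G(e,\gamma|r)G(\gamma,\alpha_1|r)/G(e,\alpha_1|r)$ up to uniform constants. Decomposing $\gamma$ by its projection $\tilde\gamma\in\mathcal{H}$ onto the parabolic coset traversed by $[e,\alpha_1]$ and applying Ancona once more, the translation-invariant off-coset sum $\sum_{\gamma:\mathrm{proj}=\tilde\gamma}\kappa_{\alpha^{(N)}}(\gamma)H(\tilde\gamma,\gamma|r)\lesssim I^{(1)}(r)$ cancels the normalization, reducing everything to the bound
$$\Upsilon_r(\alpha_1)=\sum_{\tilde\gamma\in\mathcal{H}}\frac{G(e,\tilde\gamma|r)G(\tilde\gamma,\alpha_1|r)}{G(e,\alpha_1|r)} = \frac{1}{G(e,\alpha_1|r)}\left.\frac{d}{dt}\right|_{t=1}\bigl(tG_{k,r}(e,\alpha_1|t)\bigr),$$
where the last identity uses Lemma~\ref{lemmafirstderivative} applied to the parabolic first-return random walk together with $G_{k,r}(\cdot,\cdot|1)=G(\cdot,\cdot|r)$. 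Non-spectral degeneracy gives $R_k(r)\ge R_k>1$ uniformly in $r\in[1,R_\mu]$, so this derivative at $t=1$ sits strictly inside the analyticity disk of $G_{k,r}(e,\alpha_1|\cdot)$ and is uniformly bounded; combined with $G(e,\alpha_1|r)\ge p^{D}$ (because $d(e,\alpha_1)\le D$ and $r\ge 1$), this yields $\|\Psi_r^{(D,N)}\|_\infty\le C(D,N)$ uniformly.

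For the convergence, by the reduction above it suffices to prove $\Psi_r(\alpha^{(N)})\to\Psi_{R_\mu}(\alpha^{(N)})$ for each admissible pattern. The same factorization writes $\Psi_r(\alpha^{(N)})$ as a sum over $\tilde\gamma\in\mathcal{H}$ of a profile factor $G(e,\tilde\gamma|r)G(\tilde\gamma,\alpha_1|r)/G(e,\alpha_1|r)$ (continuous in $r$ up to $R_\mu$ since we are strictly inside the disk of analyticity of $G_{k,r}$) times a normalized transverse mass $I^{(1)}(r)^{-1}\sum_{\gamma:\mathrm{proj}=\tilde\gamma}\kappa_{\alpha^{(N)}}(\gamma)H(\tilde\gamma,\gamma|r)$. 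Translation invariance of $H$ and the fact that $\kappa_{\alpha^{(N)}}(\gamma)$ depends on $\gamma$ only through its position relative to the base segment of $\alpha^{(N)}$ force the transverse mass to stabilize as $r\to R_\mu$ to a single limit depending only on the relative position of $\tilde\gamma$ in $[e,\alpha_1]$, with only finitely many boundary corrections at $\tilde\gamma$ near $e$ and $\alpha_1$. The uniformly summable dominator in $\tilde\gamma$ produced above legitimises dominated convergence, delivering the pointwise limit $\Psi_{R_\mu}^{(D,N)}$ and hence $\|\Psi_r^{(D,N)}-\Psi_{R_\mu}^{(D,N)}\|_{\rho,\beta}\to 0$. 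The delicate step, which I expect to be the main obstacle, is precisely this ratio analysis: both numerator and denominator of $\Psi_r$ diverge as $r\to R_\mu$, each at rate $(R_\mu-r)^{-1/2}$ by Lemma~\ref{corollary3.3Gouezel}, and the limit is well-defined only because the divergence is concentrated in the direction transverse to the parabolic coset, factoring into exactly one copy of $I^{(1)}(r)$ that cancels the normalization and leaves a parabolic-Green-function expression regular at $r=R_\mu$.
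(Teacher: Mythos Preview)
Your reduction to finitely many patterns is correct and matches the paper's first paragraph: since $\Psi_r^{(D,N)}$ depends only on $\alpha^{(N)}$ with bounded increments, $\rho$-local H\"older regularity is automatic and both the uniform bound and the convergence reduce to pointwise statements on a finite list. Your uniform-bound argument via non-spectral degeneracy and the parabolic Green function is also fine (the paper argues slightly differently via the decomposition into $\Gamma_k$, but the content is the same).

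The gap is in the convergence step. You write that the ``normalized transverse mass''
\[
\frac{1}{I^{(1)}(r)}\sum_{\gamma:\,\mathrm{proj}=\tilde\gamma}\kappa_{\alpha^{(N)}}(\gamma)\,H(\tilde\gamma,\gamma|r)
\]
``stabilizes'' as $r\to R_\mu$, justified by translation invariance and the geometric dependence of $\kappa$. But this is precisely the claim that needs proof: both numerator and denominator diverge like $(R_\mu-r)^{-1/2}$, and the assertion that the ratio converges is not a formal consequence of invariance. Even with $\kappa\equiv 1$ and $\tilde\gamma=e$, you are asking whether $\bigl(\sum_{\gamma\in A}H(e,\gamma|r)\bigr)/I^{(1)}(r)$ has a limit for a subset $A\subset\Gamma$ determined by a projection condition --- this is exactly the kind of statement that \emph{requires} the spectral analysis of the transfer operator (Corollary~\ref{coroKellerLiveranisatisfait}, Proposition~\ref{pressureindependentcomponents}, and the resulting asymptotic~(\ref{estimeesommeoperateurtransfert})). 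Your last paragraph acknowledges this is ``the delicate step'' but then asserts the factorization without proving it.

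The paper's route is genuinely different here: rather than a direct geometric factorization, it rewrites $\Psi_r^{(D,N)}(\alpha)$ as $I^{(1)}(r)^{-1}\sum_{n\ge 0}\mathcal{L}_r^n(1_{E_*}\psi_r\circ\phi)(\emptyset)$ for an auxiliary function $\psi_r$ encoding $\kappa_\alpha$ and the Green-function ratio, then feeds this into the already-established asymptotic $\sum_n\mathcal{L}_r^n f(\emptyset)\sim I^{(1)}(r)\,c_f$. This is where the cancellation of the divergent factor is actually proved, and it rests on the perturbation theory of Section~\ref{Sectioncontinuitytransferoperator}. There is additional work because $\psi_r$ itself is not in $H_{\rho,\beta}$ (the function $\kappa_\alpha$ is only continuous), so two further truncations $\psi_r^{(N')}$ and $\psi_r^{(D',N')}$ are introduced before the transfer-operator asymptotic can be applied. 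Your geometric shortcut, as written, bypasses all of this without supplying a replacement argument.
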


\begin{proof}
We first show that $\Psi_r^{(D,N)}$ is uniformly bounded.
Recall that
$$\Psi_r^{(D,N)}(\alpha)=\frac{1}{I^{(1)}(r)}\sum_{\gamma\in \Gamma}\kappa_{\alpha}(\gamma)\frac{G(\alpha_{-N},\gamma|r)G(\gamma,\alpha_N|r)}{G(\alpha_{-N},\alpha_N|r)}.$$
Denote by $\Gamma_k$ the set of $\gamma$ whose projection on $\alpha^{(N)}$ is on $\alpha_{k+1}$, where we choose the projection which is the closest to $\alpha_N$.
Also let $\mathcal{H}_k$ be the union of parabolic subgroups containing $\alpha_k^{-1}\alpha_{k+1}$.
Then, weak relative Ancona inequalities, together with Lemma~\ref{lemmarelativetripod} show that
$$\sum_{\gamma\in \Gamma_k}\frac{G(\alpha_{-N},\gamma|r)G(\gamma,\alpha_N|r)}{G(\alpha_{-N},\alpha_N|r)}\lesssim I^{(1)}(r)\sum_{\sigma \in \mathcal{H}_k}\frac{G(\alpha_k,\sigma|r)G(\sigma,\alpha_{k+1}|r)}{G(\alpha_{k},\alpha_{k+1}|r)}.$$
Since $\kappa_{\alpha}$ is bounded, we thus have
$$\Psi_r^{(D,N)}(\alpha)\leq C_{\alpha},$$
where $C_\alpha$ only depends on $\alpha$.
Actually, since $\Psi_r^{(D,N)}(\alpha)$ is non-zero for a finite number of $\alpha$ which only depends on $N$ and $D$, $C_\alpha$ also only depends on $D$ and $N$.
Moreover, $\Psi^{(D,N)}(\alpha)$ only depends on $\alpha^{(N)}$, so it is $\rho$-locally H\"older and $\|\Psi^{(D,N)} \|_{\rho,\beta}$ is bounded by some number only depending on $D$ and $N$.

Finally, since $\Psi^{(D,N)}(\alpha)$ only depends on a finite number of symbols, convergence in $(H_{\rho,\beta},\|\cdot \|_{\rho,\beta})$ is equivalent to pointwise convergence.
Let us fix $\alpha$ and prove that $\Psi_r^{(D,N)}(\alpha)$ converges to a function $\Psi_{R_\mu}^{(D,N)}(\alpha)$, as $r$ tends to $R_\mu$.
To do so, we express $\Psi_r^{(D,N)}$ as a sum using the transfer operator.
We introduce a function $\psi_r$ on $\Gamma$ as follows.
We set
$$\psi_r(\gamma)=\kappa_{\alpha}(\gamma)\frac{G(\alpha_{-N},\gamma|r)G(\gamma,\alpha_N|r)}{G(\alpha_{-N},\alpha_N|r)H(e,\gamma|r)}$$
for any relative geodesic $\alpha$ such that $\Psi_r^{(N,D)}(\alpha)\neq 0$.
Otherwise, we set $\psi_r=0$.
Weak relative Ancona inequalities imply that $\gamma \mapsto G(\alpha_{-N},\gamma|r)G(\gamma,\alpha_N|r)$ can be extended to $\partial \hat{\Gamma}$.
Since $\kappa$ is defined on the whole Bowditch compactification, $\psi_r$ can also be extended to $\Gamma \cup \partial \hat{\Gamma}$, so $\psi_r\circ \phi$ is a function on $\overline{\Sigma}_A$.
Note that
\begin{equation}\label{Psienfonctiondepsi}
    \Psi_r^{(D,N)}(\alpha)=\frac{1}{I^{(1)}(r)}\frac{1}{H(e,e|r)}\sum_{n\geq 0}\mathcal{L}_r^n(1_{E_*}\psi_r\circ \phi)(\emptyset).
\end{equation}

We want to apply~\ref{coroKellerLiveranisatisfait} to prove that $\Psi_r$ converges, so we have to transform $\psi_r$ into a locally H\"older function.
First, $\psi_{r}$ is defined using the function $\kappa_{\alpha}$ which is only continuous.
We again have to truncate $\psi_r$ to conclude our proof.

Fix $N'$ and let $\gamma_{N'}$ be the $N'$th element on the relative geodesic $[e,\gamma]$ whenever $\hat{d}(e,\gamma)\geq N'$ and $\gamma_{N'}=\gamma$ otherwise.
Set then
$$\psi_r^{(N')}=\kappa_{\alpha}(\gamma_{N'})\frac{G(\alpha_{-N},\gamma|r)G(\gamma,\alpha_N|r)}{G(\alpha_{-N},\alpha_N|r)H(e,\gamma|r)}.$$
The functions $\psi_r$ and $\psi_r^{(N')}$ implicitly depend on $\alpha$ and on $N$ and $D$.
Actually, Lemma~\ref{kappaunchanged} shows that they do not depend on $\alpha$, but only on $\alpha^{(N)}$.

\begin{lemma}\label{lemmeduN'}
For every $\epsilon>0$, for every $N$ and every $D$, there exists $N'_0$ such that for every $N'\geq N'_0$, for every $\alpha$ and for every $r<R_\mu$,
$$\left |\psi_r-\psi_r^{(N')}\right |\leq \epsilon.$$
\end{lemma}

\begin{proof}
Let $\epsilon>0$.
The function $\kappa_{\alpha}$ is continuous on the Bowditch compactification.
Endow this compactification with any distance $d$.
We can extend the definition of $\gamma_{N'}$ to any infinite relative geodesic $\alpha$ declaring $\alpha_{N'}$ to be the $N'$th point on $\alpha$.
Then $\alpha_{N'}$ uniformly converges to the conical limit point defined by $\alpha$, as $N'$ tends to infinity.
So, for any $\delta>0$, if $N'$ is large enough, then $d(\gamma,\gamma_{N'})\leq \delta$.
Note that this can be easily directly shown if one chooses the shortcut metric on the Bowditch compactification defined in \cite[Définition~2.6]{GerasimovPotyagailo2}.
By compactness, $\kappa_{\alpha}$ is uniformly continuous.
Hence, for $N'$ large enough,
$\left |\kappa_{\alpha}(\gamma_{N'})-\kappa_{\alpha}(\gamma)\right |\leq \epsilon$.
Thus,
$$\left |\psi_r(\gamma)-\psi_r^{(N')}(\gamma)\right |\leq \epsilon\frac{G(\alpha_{-N},\gamma|r)G(\gamma,\alpha_N|r)}{G(\alpha_{-N},\alpha_N|r)H(e,\gamma|r)}\lesssim C_\alpha\epsilon$$
where $C_\alpha$ only depends on $\alpha$.
The integer $N'_0$ a priori depends on $\alpha$, because of $C_\alpha$ in the upper-bounded above and because uniform continuity of $\kappa_{\alpha}$ depends on $\alpha$.
However, $\psi_r$ and $\psi_r^{(N')}$ are the null function except for a finite number of relative geodesics $\alpha$ which only depends on $N$ and $D$.
This concludes the proof.
\end{proof}

To show that $\Psi_r^{(D,N)}(\alpha)$ converges, it is enough to prove it is Cauchy, that is for every $\epsilon>0$, there exists $r_0<R_\mu$ such that for any $r,r'\in [r_0,R_\mu)$,
$$\left |\Psi_r^{(D,N)}(\alpha)-\Psi_{r'}^{(D,N)}(\alpha)\right |\leq \epsilon.$$
Fix $\epsilon>0$.
Let $N'$ be given by Lemma~\ref{lemmeduN'} so that for every $r<R_\mu$,
$$\left |\psi_r-\psi_r^{(N')}\right |\leq \epsilon.$$
According to~(\ref{Psienfonctiondepsi}),
\begin{align*}
    &\left |\Psi_r^{(D,N)}(\alpha)-\Psi_{r'}^{(D,N)}(\alpha)\right |\\
    &\hspace{1cm}\lesssim 2\epsilon+\frac{1}{I^{(1)}(r)}\left |\sum_{n\geq 0}\mathcal{L}_{r}^n(1_{E_*}\psi^{(N')}_{r}\circ \phi)(\emptyset)-\sum_{n\geq 0}\mathcal{L}_{r'}^n(1_{E_*}\psi^{(N')}_{r'}\circ \phi)(\emptyset)\right |.
\end{align*}
We thus only need to prove that
$$\frac{1}{I^{(1)}(r)}\sum_{n\geq 0}\mathcal{L}_{r}^n(1_{E_*}\psi^{(N')}_{r}\circ \phi)(\emptyset)$$
converges, as $r$ tends $R_\mu$.
Note that the functions $\gamma \mapsto \frac{G(\alpha_{-N},\gamma|r)G(\gamma,\alpha_N|r)}{G(\alpha_{-N},\alpha_N|r)}$ and $\gamma\mapsto \kappa_{\alpha_{(2K_1)}}(\gamma_{N'})$ are bounded and locally H\"older, so $\psi_r\circ \phi$ lies in $H_{\rho,\beta}$.

To prove that the above sum, we need to prove that $\psi_r\circ \phi$ uniformly converges to $\psi_{R_\mu}\circ \phi$.
This is not obvious and so we truncate $\psi_r$ as we truncated $\Psi_r$.
Fix another constant $D'$.
For $\gamma\in \Gamma$ let $[e,\gamma]=(e,\gamma_1,...,\gamma_n=\gamma)$ be the relative geodesic from $e$ to $\gamma$ given by the automaton $\mathcal{G}$.
Set then $\psi_r^{(D',N')}(\gamma)=0$ is one of the increments of $[e,\gamma]$ is at least $D'$ and set
$\psi_r^{(D',N')}(\gamma)=\psi_r^{(N')}(\gamma_{N'})$ otherwise.
Since $\psi_r\circ \phi$ is bounded and locally H\"older, the same proof as the proof of Proposition~\ref{differencePsietPsitronque} shows that for every $\eta>0$, for large enough $N'$ and $D'$,
\begin{equation}\label{differencelittlepsiandlittlepsitruncated}
    \frac{1}{I^{(1)}(r)}\sum_{n\geq 0}\mathcal{L}_{r}^n\left (1_{E_*}\left |\psi^{(N')}_{r}\circ \phi-\psi^{(N',D')}\circ \phi\right |\right )(\emptyset)\leq \eta.
\end{equation}

\begin{remark}
It might seem strange that we first had to truncate $\kappa_{\alpha}$ when defining $\psi_r^{(N')}$, before truncating again to define $\psi_r^{(D',N')}$.
However, to apply the same strategy as in Proposition~\ref{differencePsietPsitronque}, we needed to know a priori that our function was locally H\"older.
\end{remark}

Once again, to prove that
$$\frac{1}{I^{(1)}(r)}\sum_{n\geq 0}\mathcal{L}_{r}^n(1_{E_*}\psi^{(N')}_{r}\circ \phi)(\emptyset)$$
converges, it is enough to prove that this quantity is Cauchy, as $r$ tends to $R_\mu$.
In view of~(\ref{differencelittlepsiandlittlepsitruncated}), we thus only need to prove that
$$\frac{1}{I^{(1)}(r)}\sum_{n\geq 0}\mathcal{L}_{r}^n(1_{E_*}\psi^{(N',D')}_{r}\circ \phi)(\emptyset)$$
converges.
The function $\psi^{(N',D')}_{r}\circ \phi$ is bounded and locally H\"older.
Moreover, whenever $x,y,z$ are fixed,
$r\mapsto \frac{G(x,y|r)G(y,z|r)}{G(x,z|r)H(e,y|r)}$ is a continuous function.
It converges to $\frac{G(x,y|R_\mu)G(y,z|R_\mu)}{G(x,z|R_\mu)H(e,y|R_\mu)}$, as $r$ tends to $R_\mu$.
Hence, $\psi^{(N',D')}_{r}\circ \phi$ converges to a function $\psi^{(N',D')}_{R_\mu}\circ \phi$.
Also, $\psi^{(N',D')}_{r}\circ \phi$ only depends on a finite number of symbols, so this convergence also holds in $(H_{\rho,\beta},\|\cdot \|_{\rho,\beta})$.
Now that every parameter is fixed, we set $f=1_{E_*}\psi^{(N',D')}_{R_\mu}\circ \phi$ for convenience.
We are left to proving that
$$\frac{1}{I^{(1)}(r)}\sum_{n\geq 0}\mathcal{L}_{r}^nf(\emptyset)$$
converges, as $r$ tends to $R_\mu$, which is a direct consequence of~(\ref{estimeesommeoperateurtransfert}).
\end{proof}

\subsection{From the double sided to the one-sided shift}
As announced, to study
$$\sum_{n\geq 0}\sum_{\gamma\in \hat{S}^n}\sum_{k=0}^{n-1}H(e,\gamma|r)\Psi_r^{(D,N)}(T^k[e,\gamma]),$$
we will express this sum with the transfer operator and then use Theorem~\ref{GouezelSarig}, exactly like in the proof of Lemma~\ref{Lemme3.22Gouezel}.
However, we cannot apply the transfer operator to the function $\Psi_r^{(D,N)}$, which depends both on past and future.

We use the following trick.

\begin{lemma}\label{passagededoubleshiftashiftsimple}
Let $f$ be a $\rho$-locally H\"older function on $\overline{\Sigma}_{A,\Z}$.
Then, there exist $\rho^{1/2}$-locally H\"older functions $g$ and $u$ on $\overline{\Sigma}_{A,\Z}$ such that
$$f=g+u-u\circ T.$$
Moreover, $g(x)=g(y)$ as soon as $x_n=y_n$ for every non-negative $n$, so that $g$ induces a function on $\overline{\Sigma}_A$.
Also, if $f$ is bounded, then $g$ and $u$ also are bounded and the maps
$$f\in (H_{\rho},\|\cdot\|_{\rho})\mapsto g\in (H_{\rho^{1/2}},\|\cdot\|_{\rho^{1/2}}), \hspace{.5cm} f\in (H_{\rho},\|\cdot\|_{\rho})\mapsto u\in (H_{\rho^{1/2}},\|\cdot\|_{\rho^{1/2}})$$
are continuous.
\end{lemma}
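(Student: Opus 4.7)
The plan is the classical Sinai coboundary reduction. For each symbol $s\in\Sigma$ appearing as a zeroth coordinate of some element of $\overline{\Sigma}_{A,\Z}$, pick once and for all an admissible backward extension $(\dots, p_{-2}(s), p_{-1}(s))$ and define $\pi:\overline{\Sigma}_{A,\Z}\to\overline{\Sigma}_{A,\Z}$ by $\pi(x)_n=x_n$ for $n\geq 0$ and $\pi(x)_n=p_n(x_0)$ for $n<0$, so that $\pi(x)$ depends only on $(x_n)_{n\geq 0}$. Since $x$ and $\pi(x)$ agree on $[0,\infty)$, the sequences $T^n x$ and $T^n\pi(x)$ agree on $[-n,\infty)$, so each term of
\[u(x):=\sum_{n=0}^\infty\bigl(f(T^n x)-f(T^n\pi(x))\bigr)\]
is bounded by $\tilde V_n(f)\leq D_\rho(f)\rho^n$ for $n\geq 1$, with a contribution at most $2\|f\|_\infty$ at $n=0$; hence the series converges absolutely with $\|u\|_\infty\lesssim\|f\|_\rho$.

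Set $g=f+u\circ T-u$, which gives $f=g+u-u\circ T$ by construction. A short telescoping computation, reindexing $n\mapsto n+1$ in $u(Tx)$, yields
\[g(x)=f(\pi(x))+\sum_{m=1}^\infty\bigl(f(T^m\pi(x))-f(T^{m-1}\pi(Tx))\bigr),\]
where the series converges uniformly because $T^m\pi(x)$ and $T^{m-1}\pi(Tx)$ agree on $[-(m-1),\infty)$. Since both $\pi(x)$ and $\pi(Tx)$ depend only on $(x_0,x_1,\dots)$, so does $g(x)$; hence $g$ factors through a function on $\overline{\Sigma}_A$.

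The main technical step is the H\"older estimate for $u$. If $x$ and $y$ agree on $[-n,n]$, then $x_0=y_0$ forces the same canonical past, so $\pi(x)$ and $\pi(y)$ agree on $(-\infty,n]$. Writing $F_k=f\circ T^k-f\circ T^k\circ\pi$, two bounds are available: the individual bound $|F_k(\,\cdot\,)|\leq D_\rho(f)\rho^k$, valid for every $k$, and, for $k\leq n$, the direct bound $|F_k(x)-F_k(y)|\leq 2D_\rho(f)\rho^{n-k}$ obtained because the respective pairs $(T^k x, T^k y)$ and $(T^k\pi(x), T^k\pi(y))$ agree symmetrically around $0$ with radius $n-k$. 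Taking the minimum and summing the resulting geometric series,
\[|u(x)-u(y)|\leq 2D_\rho(f)\sum_{k\geq 0}\rho^{\max(k,n-k)}\leq \frac{4D_\rho(f)}{1-\rho}\,\rho^{n/2},\]
i.e.\ $D_{\rho^{1/2}}(u)\lesssim\|f\|_\rho$. Since $\|\cdot\|_{\rho^{1/2}}\leq\|\cdot\|_\rho$ on $H_\rho$ and $g=f+u\circ T-u$, the same type of bound holds for $g$. Linearity of $f\mapsto u$ and $f\mapsto g$ then gives the claimed continuity. The only potentially delicate point, but harmless here, is the countable, non-irreducible structure of $\overline{\Sigma}_A$: selecting the admissible past $(p_{-n}(s))$ requires no irreducibility, only that $s$ appear as a zeroth coordinate of some element of $\overline{\Sigma}_{A,\Z}$, which one can arrange symbol by symbol.
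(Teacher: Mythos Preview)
Your proof is correct and is exactly the classical Sinai coboundary construction that the paper has in mind: the paper does not give an argument but simply cites \cite[Proposition~1.2]{ParryPollicott} and observes that the proof there does not use finiteness of the alphabet. Your write-up is precisely that proof, adapted to the present setting, and your closing remark about the countable non-irreducible structure correctly identifies the only place where one must check that the argument carries over.
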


This is proved in \cite[Proposition~1.2]{ParryPollicott} for finite-type shifts.
However, the proof does not use that the set of symbols is finite.

According to Lemma~\ref{Lemme3.22Gouezel}, the functions $\Psi_r^{(D,N)}$ are bounded and locally H\"older on $\overline{\Sigma}_{A,\Z}$ and they converge in $(H_{\rho,\beta},\|\cdot \|_{\rho,\beta})$ to a function $\Psi_{R_\mu}^{(D,N)}$.
We thus obtain from Lemma~\ref{passagededoubleshiftashiftsimple} functions $\tilde{\Psi}_r^{(D,N)}$, $r\leq R_\mu$ defined on $\overline{\Sigma}_A$ and functions $u_r^{(D,N)}$ defined on $\overline{\Sigma}_{A,\Z}$ such that
$$\Psi_r^{(D,N)}=\tilde{\Psi}_r^{(D,N)}+u_r^{(D,N)}-u_r^{(D,N)}\circ T.$$
For any $x\in \overline{\Sigma}_A$ of length $n$,
$$\sum_{k=0}^{n-1}\Psi_r^{(D,N)}(T^kx)=\sum_{k=0}^{n-1}\tilde{\Psi}_r^{(D,N)}(T^kx)+u_r^{(D,N)}(x)-u_r^{(D,N)}(T^nx).$$
The functions $u_r^{(D,N)}$ are bounded by some number that only depends on $D$ and $N$, so
$$\sum_{k=0}^{n-1}\Psi_r^{(D,N)}(T^kx)=\sum_{k=0}^{n-1}\tilde{\Psi}_r^{(D,N)}(T^kx)+O_{D,N}(1).$$
Thus,
\begin{equation}\label{dePsiatildePsiCVfinale}
\begin{split}
    &\sum_{n\geq 0}\sum_{\gamma\in \hat{S}^n}\sum_{k=0}^{n-1}H(e,\gamma|r)\Psi_r^{(D,N)}(T^k[e,\gamma])\\
    &\hspace{2cm}=\sum_{n\geq 0}\sum_{\gamma\in \hat{S}^n}\sum_{k=0}^{n-1}H(e,\gamma|r)\tilde{\Psi}_r^{(D,N)}(T^k[e,\gamma])+O_{D,N}\left (I^{(1)}(r)\right ).
\end{split}
\end{equation}
Since $\tilde{\Psi}_r^{(D,N)}$ only depends on the future, we rewrite this as
\begin{equation}\label{reecrituretildePsiCVfinale}
\begin{split}
    &\sum_{n\geq 0}\sum_{\gamma\in \hat{S}^n}\sum_{k=0}^{n-1}H(e,\gamma|r)\tilde{\Psi}_r^{(D,N)}(T^k[e,\gamma])\\
    &\hspace{2cm}=H(e,e|r)\sum_{n\geq 0}\mathcal{L}_r^n\left (1_{E_*}\sum_{k=0}^{n-1}\tilde{\Psi}_r^{(D,N)}\circ T^k\right )(\emptyset).
\end{split}
\end{equation}

\subsection{Proof of Proposition~\ref{propositioncomparaisonI_2etI_1} : convergence of $I^{(2)}(r)/I^{(1)}(r)^3$}
We first prove that the quantity~(\ref{reecrituretildePsiCVfinale}) is asymptotic to $\xi_{D,N}I^{(1)}(r)^2$, as $r$ tends to $R_\mu$, where $\xi_{D,N}$ is some number only depending on $D$ and $N$.
Since $\Psi_r^{(D,N)}$ converges in $(H_{\rho,\beta},\|\cdot \|_{\rho,\beta})$ to $\Psi_{R_\mu}^{(D,N)}$, we deduce from Lemma~\ref{Lemme3.22Gouezel}, up to changing $\rho$, that $\tilde{\Psi}_r^{(D,N)}$ converges in $(H_{\rho,\beta},\|\cdot \|_{\rho,\beta})$ to $\tilde{\Psi}_{R_\mu}^{(D,N)}$.
We thus only need to prove that
$$\sum_{n\geq 0}\mathcal{L}_r^n\left (1_{E_*}\sum_{k=0}^{n-1}\tilde{\Psi}_{R_\mu}^{(D,N)}\circ T^k\right )(\emptyset)$$
is asymptotic to $\xi_{D,N}I^{(1)}(r)^2$.
Recall that $\mathcal{L}_r(u\cdot v\circ T)=v\mathcal{L}_r(u)$, so that
\begin{equation}\label{CVfinale00}
\begin{split}
\sum_{n\geq 0}\mathcal{L}_r^n \left (1_{E_*}\sum_{k=0}^{n-1}\tilde{\Psi}_{R_\mu}^{(D,N)}\circ T^k\right )&=\sum_{n\geq 0}\sum_{k=1}^n\mathcal{L}_r^k(\tilde{\Psi}_{R_\mu}^{(D,N)}\mathcal{L}_r^{n-k}1_{E_*})\\
&=\sum_{k\geq 1}\mathcal{L}_r^k\left (\tilde{\Psi}_{R_\mu}^{(D,N)}\sum_{n\geq 0}\mathcal{L}_r^{n}1_{E_*}\right ).
\end{split}
\end{equation}

From Corollary~\ref{coroKellerLiveranisatisfait}, we deduce that for any $r$ close enough to $R_{\mu}$ and for any $x\in \overline{\Sigma}_A$,
\begin{equation}\label{CVfinale01}
    \sum_{n\geq 0}\mathcal{L}_r^n1_{E_*}(x)=\sum_{j=1}^k\frac{1}{\tilde{P}_j(r)}\sum_{i=1}^{p_j}\tilde{h}_{j,r}^{(i)}(x)\int 1_{E_*} d\tilde{\nu}_{j,r}^{((i-n) \text{ mod } p_j)}+O(1).
\end{equation}
Let
$$\alpha_{i,j,r}=\int 1_{E_*} d\tilde{\nu}_{j,r}^{((i-n) \text{ mod } p_j)},$$
so that $\alpha_{i,j,r}$ converges to $\alpha_{i,j}$, as $r$ tends to $R_\mu$.
We now estimate
$$\sum_{k\geq 1}\mathcal{L}_r^k\left (\tilde{\Psi}_{R_\mu}^{(D,N)}\sum_{j=1}^k\frac{1}{\tilde{P}_j(r)}\sum_{i=1}^{p_j}\alpha_{i,j,r}\tilde{h}_{j,r}^{(i)}\right )(\emptyset).$$
According to~(\ref{tildehtendsimplementversh}), $\tilde{h}_{j,r}^{(i)}(\emptyset)$ converges to $h_j^{(i)}(\emptyset)$, as $r$ tends to $R_\mu$, so we can start the above sum at $k=0$.
Fix $j$ and let $1\leq i \leq p_j$.
We use again Corollary~\ref{coroKellerLiveranisatisfait} to obtain
\begin{equation}\label{CVfinale02}
\begin{split}
    &\sum_{k\geq 0}\mathcal{L}_r^k\left (\tilde{\Psi}_{R_\mu}^{(D,N)}\tilde{h}_{j,r}^{(i)}\right )(\emptyset)\\
    &=\sum_{j'}\frac{1}{|\tilde{P}_{j'}(r)|}\sum_{i'=1}^{p_{j'}}\tilde{h}_{j',r}^{(i')}(\emptyset)\int \tilde{\Psi}_{R_\mu}^{(D,N)}\tilde{h}_{j,r}^{(i)} d\tilde{\nu}_{j',r}^{((i'-n) \text{ mod } p_{j'})}+O(1).
\end{split}
\end{equation}
We show that for every $j',i'$,
$$\int \tilde{\Psi}_{R_\mu}^{(D,N)}\tilde{h}_{j',r}^{(i')} d\tilde{\nu}_{j',r}^{((i'-n) \text{ mod } p_{j'})}$$
converges, as $r$ tends to $R_\mu$.
Write
\begin{align*}
    \int \tilde{\Psi}_{R_\mu}^{(D,N)}\tilde{h}_{j',r}^{(i')} d\tilde{\nu}_{j',r}^{((i'-n) \text{ mod } p_{j'})}=&\int \tilde{\Psi}_{R_\mu}^{(D,N)}\left (\tilde{h}_{j',r}^{(i')}-h_j^{(i)}\right )d\tilde{\nu}_{j',r}^{((i'-n) \text{ mod } p_{j'})}\\
    &+\int \tilde{\Psi}_{R_\mu}^{(D,N)}h_j^{(i)}d\tilde{\nu}_{j',r}^{((i'-n) \text{ mod } p_{j'})}.
\end{align*}
Corollary~\ref{coroKellerLiveranisatisfait} shows that $\tilde{\nu}_{j',r}^{((i'-n) \text{ mod } p_{j'})}$ weakly converges to $\nu_{j'}^{((i'-n) \text{ mod } p_{j'})}$, so that the second integral in the right-hand term converges.
We show that the first one converges to 0.
Let $m_{j',r}$ be the measure defined by $dm_{j',r}=\frac{1}{p_{j'}}\sum_{i'=1}^{p_{j'}}h_{j',r}^{(i)}d\nu_{j',r}^{(i')}$.
According to \cite[Proposition~4]{Sarig1}, $m_{j',r}$ is Gibbs and according to \cite[Proposition~2]{Sarig1}, the functions $h_{j',r}^{(i)}$ are bounded away from zero and infinity on the support of $\nu_{j',r}^{(i)}$, so that
$$\nu_{j',r}^{(i')}([x_1...x_n]) \leq C H(e,x_1...x_n|r)\leq C H(e,x_1...x_n|R_\mu).$$
Using~(\ref{equivalencemesuredeGibbs}), we see that the measure $\nu_{j',r}^{(i')}$ is dominated by the measure $m$ on cylinders.
Since $\tilde{\Psi}_{R_\mu}^{(D,N)}\left (\tilde{h}_{j',r}^{(i')}-h_j^{(i)}\right )$ is locally H\"older, we have
$$\left |\int \tilde{\Psi}_{R_\mu}^{(D,N)}\left (\tilde{h}_{j',r}^{(i')}-h_j^{(i)}\right )d\tilde{\nu}_{j',r}^{((i'-n) \text{ mod } p_{j'})}\right |\lesssim \int \left | \tilde{\Psi}_{R_\mu}^{(D,N)}\left (\tilde{h}_{j',r}^{(i')}-h_j^{(i)}\right )\right |dm.$$
Finally, since $\tilde{\Psi}_{R_\mu}^{(D,N)}$ is bounded, we have
$$\left |\int \tilde{\Psi}_{R_\mu}^{(D,N)}\left (\tilde{h}_{j',r}^{(i')}-h_j^{(i)}\right )d\tilde{\nu}_{j',r}^{((i'-n) \text{ mod } p_{j'})}\right |\lesssim \int |\tilde{h}_{j,r}^{(i)}-h_j^{(i)}|dm.$$
According to Corollary~\ref{coroKellerLiveranisatisfait}, this last quantity converges to 0.

Now,~(\ref{tildehtendsimplementversh}) shows that $\tilde{h}_{j',r}^{(i')}(\emptyset)$ converges and so we deduce from~(\ref{CVfinale02}) that
$$\sum_{k\geq 0}\mathcal{L}_r^k\left (\tilde{\Psi}_{R_\mu}^{(D,N)}\tilde{h}_{j,r}^{(i)}\right )(\emptyset)=\sum_{j'}\frac{\xi^{i,j}_{j',D,N,r}}{|\tilde{P}_{j'}(r)|},$$
where $\xi^{i,j}_{j',D,N,r}$ converges, as $r$ tends to $R_\mu$.
Also recall that we proved in Proposition~\ref{pressureindependentcomponents} that $\tilde{P}_{j}(r)\sim P(r)$ for every $j$, so~(\ref{3.6Gouezel}) yields
$$O\left (\sum_{j=1}^k\frac{1}{\tilde{P}_j(r)}\right )=O\left (I^{(1)}(r)\right ).$$
Finally, we get from~(\ref{CVfinale00}) and~(\ref{CVfinale01}) that
$$\sum_{n\geq 0}\mathcal{L}_r^n \left (1_{E_*}\sum_{k=0}^{n-1}\tilde{\Psi}_{R_\mu}^{(D,N)}\circ T^k\right )=\sum_{j,j'}\frac{\xi^{j,j'}_{D,N,r}}{|\tilde{P}_{j}(r)||\tilde{P}_{j'}(r)|}+O\left (I^{(1)}(r)\right ),$$
where $\xi^{j,j'}_{D,N,r}$ converges.
Consequently,
$$\sum_{n\geq 0}\mathcal{L}_r^n \left (1_{E_*}\sum_{k=0}^{n-1}\tilde{\Psi}_{R_\mu}^{(D,N)}\circ T^k\right )=\frac{\xi_{D,N,r}}{P(r)^2}+O\left (I^{(1)}(r)\right ),$$
where $\xi_{D,N,r}$ converges to some $\xi_{D,N}$.
Therefore,
$$\sum_{n\geq 0}\mathcal{L}_r^n\left (1_{E_*}\sum_{k=0}^{n-1}\tilde{\Psi}_{R_\mu}^{(D,N)}\circ T^k\right )(\emptyset)$$
is asymptotic to $\xi_{D,N}I^{(1)}(r)^2$, as $r$ tends to $R_\mu$.

We thus deduce from~(\ref{dePsiatildePsiCVfinale}) and~(\ref{reecrituretildePsiCVfinale}) that
\begin{equation}\label{CVsommefinaleavecDN}
    \sum_{n\geq 0}\sum_{\gamma\in \hat{S}^n}\sum_{k=0}^{n-1}H(e,\gamma|r)\Psi_r^{(D,N)}(T^k[e,\gamma])=\xi_{D,N}I^{(1)}(r)^2+o_{D,N}\left (I^{(1)}(r)^2\right ).
\end{equation}
Also note that we deduce from~(\ref{CVfinale00}) that
$$\sum_{n\geq 0}\mathcal{L}_r^n \left (1_{E_*}\sum_{k=0}^{n-1}\tilde{\Psi}_{R_\mu}^{(D,N)}\circ T^k\right )=\sum_{k\geq 1}\mathcal{L}_r^k\left (\tilde{\Psi}_{R_\mu}^{(D,N)}\sum_{n\geq 0}\mathcal{L}_r^{n}1_{E_*}\right ).$$
and so according to~(\ref{3.6Gouezel}), we have
$$\sum_{n\geq 0}\mathcal{L}_r^n \left (1_{E_*}\sum_{k=0}^{n-1}\tilde{\Psi}_{R_\mu}^{(D,N)}\circ T^k\right )\lesssim I^{(1)}(r)\sum_{k\geq 1}\mathcal{L}_r^k \tilde{\Psi}_{R_\mu}^{(D,N)}.$$
Thus,~(\ref{Upsilonuniformementbornee}) and~(\ref{upsilonvspsi}) show that
$$\sum_{n\geq 0}\mathcal{L}_r^n \left (1_{E_*}\sum_{k=0}^{n-1}\tilde{\Psi}_{R_\mu}^{(D,N)}\circ T^k\right )\lesssim I^{(1)}(r)^2.$$
Hence,
\begin{equation}\label{xiDNbornee}
    \xi_{D,N}\lesssim 1.
\end{equation}

We finally conclude the proof of Proposition~\ref{propositioncomparaisonI_2etI_1}.

\begin{proof}
Recall that
$$I^{(2)}(r)=\sum_{\gamma\in \Gamma}H(e,\gamma|r)\Phi_r(\gamma)=\sum_{n\geq0}\sum_{\gamma\in \hat{S}^n}\Phi_r(\gamma).$$
According to Proposition~\ref{propestimeePhietPsi},
$$I^{(2)}(r)=I^{(1)}(r)\sum_{n\geq0}\sum_{\gamma\in \hat{S}^n}\sum_{k=0}^{n-1}\Psi_r(T^k[e,\gamma])+O(I^{(1)}(r)).$$
We need to prove that $I^{(2)}(r)/I^{(1)}(r)^3$ converges, as $r$ tends to $R_\mu$.
It is thus enough to show that
$$\frac{1}{I^{(1)}(r)^2}\sum_{n\geq0}\sum_{\gamma\in \hat{S}^n}\sum_{k=0}^{n-1}\Psi_r(T^k[e,\gamma])$$
converges.

Fix $\epsilon>0$.
Choose sequences $D_l$ and $N_l$ that tend to infinity, as $l$ tends to infinity.
Since we want to apply Proposition~\ref{differencePsietPsitronque}, the sequence $D_l$ will actually depend on the sequence $N_l$.
According to~(\ref{xiDNbornee}), we can assume, up to taking a sub-sequence, that $\xi_{D_l,N_l}$ converges to some constant $\xi$.
We will show that the above sum also converges to $\xi$.
According to Proposition~\ref{differencePsietPsitronque}, we can choose $N_l$ and $D_l$ so that for any $l$ large enough,
\begin{align*}
    &\frac{1}{I^{(1)}(r)^2}\left |\sum_{n\geq0}\sum_{\gamma\in \hat{S}^n}\sum_{k=0}^{n-1}\Psi_r(T^k[e,\gamma])-\sum_{n\geq0}\sum_{\gamma\in \hat{S}^n}\sum_{k=0}^{n-1}\Psi_r^{(D_l,N_l)}(T^k[e,\gamma])\right |\\
    &\hspace{2cm}\leq \epsilon \frac{1}{I^{(1)}(r)^2}\sum_{n\geq0}\sum_{\gamma\in \hat{S}^n}\sum_{k=0}^{n-1}\Psi_r(T^k[e,\gamma]).
\end{align*}
Fix a large enough $l$ so that this inequality is satisfied and so that $|\xi_{D_l,N_l}-\xi|\leq \epsilon$.
Now that $l$ is fixed, we set $D=D_l$ and $N=N_l$.
We thus have
\begin{align*}
    &\frac{1}{I^{(1)}(r)^2}\sum_{n\geq0}\sum_{\gamma\in \hat{S}^n}\sum_{k=0}^{n-1}\Psi_r(T^k[e,\gamma])-\xi\\
    &\hspace{2cm}\leq \frac{1}{1-\epsilon}\frac{1}{I^{(1)}(r)^2}\sum_{n\geq0}\sum_{\gamma\in \hat{S}^n}\sum_{k=0}^{n-1}\Psi_r^{(D,N)}(T^k[e,\gamma])-\xi.
\end{align*}
Hence,~(\ref{CVsommefinaleavecDN}), shows that whenever $r$ is close enough to $R_\mu$,
$$\frac{1}{I^{(1)}(r)^2}\sum_{n\geq0}\sum_{\gamma\in \hat{S}^n}\sum_{k=0}^{n-1}\Psi_r(T^k[e,\gamma])-\xi\leq \frac{1}{1-\epsilon}\xi_{D,N}-\xi\leq \frac{\epsilon}{1-\epsilon}+\xi(\frac{1}{1-\epsilon}-1).$$
Similarly
\begin{align*}
    &\xi-\frac{1}{I^{(1)}(r)^2}\sum_{n\geq0}\sum_{\gamma\in \hat{S}^n}\sum_{k=0}^{n-1}\Psi_r(T^k[e,\gamma])\\
&\hspace{2cm}\leq \xi-\frac{1}{1+\epsilon}\frac{1}{I^{(1)}(r)^2}\sum_{n\geq0}\sum_{\gamma\in \hat{S}^n}\sum_{k=0}^{n-1}\Psi_r^{(D,N)}(T^k[e,\gamma])
\end{align*}
and so whenever $r$ is close enough to $R_\mu$,
$$\xi-\frac{1}{I^{(1)}(r)^2}\sum_{n\geq0}\sum_{\gamma\in \hat{S}^n}\sum_{k=0}^{n-1}\Psi_r(T^k[e,\gamma])
\leq \xi (1-\frac{1}{1+\epsilon})-\frac{\epsilon}{1+\epsilon}.$$
Since $\epsilon$ is arbitrary, this shows that
$$\frac{1}{I^{(1)}(r)^2}\sum_{n\geq0}\sum_{\gamma\in \hat{S}^n}\sum_{k=0}^{n-1}\Psi_r(T^k[e,\gamma])\underset{r\to R_\mu}{\longrightarrow}\xi.$$
Finally, we already know that $I^{(2)}(r)/I^{(1)}(r)^3$ is bounded away from 0, independently of $r$, so that $\xi\neq 0$.
This concludes the proof.
\end{proof}

Theorem~\ref{preciseequadiff} is a direct consequence of Proposition~\ref{propositioncomparaisonI_2etI_1}.

\section{From the Green asymptotics to the local limit theorem}\label{SectionFromGreentoLLT}
We can finally prove Theorem~\ref{maintheorem}.
We first deduce from Theorem~\ref{preciseequadiff} the following.

\begin{corollary}\label{equadiffprecise2}
Let $\Gamma$ be a non-elementary relatively hyperbolic group.
Let $\mu$ be a finitely supported, admissible and symmetric probability measure on $\Gamma$.
Assume that the corresponding random walk is non-spectrally degenerate along parabolic subgroups.
Then, for every $\gamma_1,\gamma_2$,
there exists $C_{\gamma_1,\gamma_2}>0$ such that
$$\frac{d}{dr}\left (G(\gamma_1,\gamma_2|r)\right )\underset{r\to R_{\mu}}{\sim} C_{\gamma_1,\gamma_2}\frac{1}{\sqrt{R_\mu-r}}.$$
\end{corollary}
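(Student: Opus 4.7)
The plan is first to integrate the asymptotic of Theorem~\ref{preciseequadiff} into a precise equivalent for $I^{(1)}(r)$, and then to extend to arbitrary endpoints by one more application of the transfer operator framework developed in Section~\ref{Sectionapplicationthermodynamicformalism}.

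For the first step, I set $F(r) = r^2 I^{(1)}(r)$ as in the proof of Lemma~\ref{corollary3.3Gouezel}, so that $F'(r) = 2r\, I^{(2)}(r)$ by \cite[Lemma~3.2]{DussauleLLT1}. Theorem~\ref{preciseequadiff} gives $F'(r)/F(r)^3 \to 2\xi/R_\mu^5$ as $r \to R_\mu$. Writing $F'/F^3 = -\tfrac12 (1/F^2)'$, integrating from $r$ to $r_1 < R_\mu$, and letting $r_1 \to R_\mu$ using $F(R_\mu) = +\infty$ (Proposition~\ref{relationR_kG'(R)}), one obtains $1/F(r)^2 \sim 4\xi(R_\mu-r)/R_\mu^5$, hence $I^{(1)}(r) \sim C/\sqrt{R_\mu-r}$ for some $C > 0$. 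Since $\frac{d}{dr}(rG(e,e|r)) = I^{(1)}(r)$ by Lemma~\ref{lemmafirstderivative} and $G(e,e|R_\mu) < +\infty$ by non-amenability, this already handles the case $\gamma_1 = \gamma_2 = e$.

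For general endpoints, the equivariance $G(\gamma_1,\gamma_2|r) = G(e,\gamma_1^{-1}\gamma_2|r)$ reduces the problem to $\gamma_1 = e$, $\gamma_2 = \delta \in \Gamma$. Lemma~\ref{lemmafirstderivative} rewrites the first derivative as
\begin{equation*}
\frac{d}{dr}(rG(e,\delta|r)) = \sum_{\gamma \in \Gamma} H(e,\gamma|r)\, K_{\delta,r}(\gamma), \qquad K_{\delta,r}(\gamma) := \frac{G(\gamma,\delta|r)}{G(\gamma,e|r)}.
\end{equation*}
By symmetry of $\mu$, $K_{\delta,r}$ is (a symmetrized version of) the Martin kernel at level $r$. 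Repeating the argument of Lemma~\ref{GreenlocallyHolder} verbatim but applying strong relative Ancona inequalities to pairs of paths fellow-travelling before branching to $\delta$ and $e$, the lift $K_{\delta,r}\circ \phi$ extends to a bounded, $\rho$-locally H\"older function on $\overline{\Sigma}_A$, with norm bounded uniformly in $r \in [1,R_\mu]$. Consequently
\begin{equation*}
\sum_{\gamma \in \Gamma} G(e,\gamma|r)G(\gamma,\delta|r) = H(e,e|r)\sum_{n \geq 0}\mathcal{L}_r^n\bigl(1_{E_*}\, K_{\delta,r}\circ \phi\bigr)(\emptyset).
\end{equation*}

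Applying Corollary~\ref{coroKellerLiveranisatisfait} together with Proposition~\ref{pressureindependentcomponents} exactly as in the derivation of~(\ref{3.6Gouezel}) and~(\ref{3.7Gouezel}), but with $1_{E_*}$ replaced by $1_{E_*}\, K_{\delta,R_\mu}\circ \phi$, one obtains $\sum_n \mathcal{L}_r^n(1_{E_*}\, K_{\delta,R_\mu}\circ \phi)(\emptyset) \sim c_\delta/|P(r)|$. Positivity $c_\delta > 0$ follows from the strict positivity of $K_{\delta,R_\mu}$ together with the fact that at least one of the terms $h_j^{(i)}(\emptyset)\int \cdot\, d\nu_j^{(i)}$ in the spectral decomposition is nonzero --- which we already know from Paragraph~2 where the same formula with constant function $1$ yields $I^{(1)}(r) \to +\infty$. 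Since $|P(r)| \asymp \sqrt{R_\mu-r}$ by~(\ref{3.7Gouezel}), this yields $\frac{d}{dr}(rG(e,\delta|r)) \sim C_\delta'/\sqrt{R_\mu-r}$, and the boundedness of $G(e,\delta|R_\mu)$ gives the desired equivalent for $\frac{d}{dr}G(e,\delta|r)$. The main obstacle is justifying that $K_{\delta,r}$ may be replaced by $K_{\delta,R_\mu}$ in this asymptotic, as the transfer operator analysis in~(\ref{estimeesommeoperateurtransfert}) is stated for a fixed test function; this is handled by an argument identical to the one in Proposition~\ref{preparationcontinuiteoperateurtransfert}, expressing $\partial_r K_{\delta,r}$ as a sum of Green functions dominated pointwise by an $m$-integrable function of order $(R_\mu-r)^{-1/2}$, yielding the continuity in the weak norm required by Theorem~\ref{KellerLiveranitransfert}.
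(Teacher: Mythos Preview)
Your argument follows the paper's proof closely: the integration of the differential inequality for the diagonal case, the reduction by equivariance, the Martin-kernel test function $K_{\delta,r}$, its H\"older extension via strong relative Ancona inequalities, and the transfer-operator expression are all exactly what the paper does (the paper writes $f_r(\gamma')=G(\gamma,\gamma'|r)/G(e,\gamma'|r)$ and reduces to $\gamma_2=e$ rather than $\gamma_1=e$, but by symmetry of $\mu$ this is the same).

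The one substantive difference is in how you handle the $r$-dependence of the test function. The paper explicitly says that uniform convergence of $\tilde f_r$ to $\tilde f_{R_\mu}$ may fail, and invokes the \emph{truncation} scheme of Proposition~\ref{propositioncomparaisonI_2etI_1} (restrict to finitely many symbols, bounded increments) to force convergence in $H_{\rho,\beta}$ up to a controllable error. You instead propose a derivative bound \`a la Proposition~\ref{preparationcontinuiteoperateurtransfert}. This can indeed be made to work, but your justification is imprecise: Theorem~\ref{KellerLiveranitransfert} concerns continuity of the \emph{operator} $\mathcal L_r$ in the weak norm, not continuity of a test function, so invoking it here is misplaced. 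What you actually need is that the spectral coefficients $\int (1_{E_*}K_{\delta,r}\circ\phi)\,d\tilde\nu_{j,r}^{(i)}$ converge; this requires combining (i) $L^1(m)$-convergence of $K_{\delta,r}$ to $K_{\delta,R_\mu}$ (which your derivative bound does give, once one checks the analogue of~(\ref{meilleureestimeeintegralephin}) for the new dominating function), (ii) uniform domination $\tilde\nu_{j,r}^{(i)}\lesssim m$ on cylinders via the Gibbs property, and (iii) weak convergence $\tilde\nu_{j,r}^{(i)}\to\nu_j^{(i)}$ from Corollary~\ref{coroKellerLiveranisatisfait}. This is precisely the mechanism used after~(\ref{CVfinale02}) in Section~\ref{Sectionproofofpreciseequadiff}, so your route ultimately merges with the paper's machinery---it just bypasses the truncation step at the cost of a more delicate limiting argument for the integrals against the perturbed eigenmeasures.
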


\begin{proof}
For $\gamma_1=\gamma_2=e$, this is a direct consequence of Theorem~\ref{preciseequadiff}, combined with \cite[Lemma~3.2]{DussauleLLT1} which relates the derivatives of the Green function with the sums $I^{(k)}(r)$.
Note that by equivariance, we only need to prove the result with $\gamma_2=e$.
According to Lemma~\ref{lemmafirstderivative}, an asymptotic of $\frac{d}{dr}\left (G(\gamma,e|r)\right )$ is given by an asymptotic of
$$\sum_{\gamma'\in \Gamma}G(\gamma,\gamma'|r)G(\gamma',e|r).$$
Consider $\gamma\in \Gamma$ and set
$$f_r(\gamma')=\frac{G(\gamma,\gamma'|r)}{G(e,\gamma'|r)}.$$
Let $\tilde{f}_r=f_r\circ \phi$.
Then,
$$\sum_{\gamma'\in \Gamma}G(\gamma,\gamma'|r)G(\gamma',e|r)=H(e,e|r)\sum_{n\geq 0}\mathcal{L}_r^n \tilde{f}_r(\emptyset).$$
Since $\gamma$ is fixed, $\tilde{f}_r$ is uniformly bounded.
Strong relative Ancona inequalities also imply that $\tilde{f}_r$ can be extended to a function on $\overline{\Sigma}_A$ which lie in $H_{\rho,\beta}$.
If $\tilde{f}_r$ were uniformly converging to a function $\tilde{f}$, as $r$ tends to $R_\mu$, then we could directly conclude the proof, using~(\ref{3.6Gouezel}).
However, exactly like for $\Psi_r$, this uniform convergence does not necessarily hold and we have to truncate $\tilde{f}_r$.
We can apply the same strategy as for the proof of Proposition~\ref{propositioncomparaisonI_2etI_1} to conclude.
\end{proof}

Theorem~\ref{maintheorem} follows directly from Corollary~\ref{equadiffprecise2} and \cite[Theorem~9.1]{GouezelLalley}.
Corollary~\ref{maincorollary} thus follows from \cite[Proposition~4.1]{Gouezel1}.
Beware that the symmetry assumption on the measure $\mu$ is needed here, see the remarks in \cite[Section~4]{Gouezel1}. \qed

\bibliographystyle{plain}
\bibliography{LLT_2}

\begin{thebibliography}{10}

\bibitem{AaronsonDenker}
Jon Aaronson and Manfred Denker.
\newblock Local limit theorems for partial sums of stationary sequences
  generated by {G}ibbs-{M}arkov maps.
\newblock {\em Stochastics and Dynamics}, 1:193--237, 2001.

\bibitem{AaronsonDenkerUrbanski}
Jon Aaronson, Manfred Denker, and Mariusz Urba\'{n}ski.
\newblock Ergodic theory for {M}arkov fibered systems and parabolic rational
  maps.
\newblock {\em Transactions of the American Mathematical Society},
  337:495--548, 1993.

\bibitem{AliprantisBorder}
Charalambos Aliprantis and Kim Border.
\newblock {\em Infinite Dimensional Analysis: a hitchhiker's guide}.
\newblock Springer, 2006.

\bibitem{Ancona}
Alano Ancona.
\newblock Positive harmonic functions and hyperbolicity.
\newblock In {\em Potential theory-surveys and problems}, pages 1--23. Lecture
  notes in mathematics, Springer, 1988.

\bibitem{Baladi}
Viviane Baladi.
\newblock {\em Dynamical zeta functions and dynamical determinants for
  hyperbolic maps}.
\newblock Springer, 2018.

\bibitem{BallmannGromovSchroeder}
Werner Ballmann, Mikhael Gromov, and Viktor Schroeder.
\newblock {\em Manifolds of nonpositive curvature}.
\newblock Birkh\"auser, 1985.

\bibitem{BlachereBrofferio}
S\'ebastien Blach\`ere and Sara Brofferio.
\newblock Internal diffusion limited aggregation on discrete groups having
  exponential growth.
\newblock {\em Probabability Theory and Related Fields}, 137:323–343, 2007.

\bibitem{BlachereHassinskyMathieu}
S\'ebastien Blach\`ere, Peter Ha\"issinsky, and Pierre Mathieu.
\newblock Harmonic measures versus quasiconformal measures for hyperbolic
  groups.
\newblock {\em Annales Scientifiques de l’\'Ecole Normale Sup\'erieure},
  44:683–721, 2011.

\bibitem{CalegariFujiwara}
Danny Calegari and Koji Fujiwara.
\newblock Combable functions, quasimorphisms and the central limit theorem.
\newblock {\em Ergodic Theory and Dynamical Systems}, 30:1343--1369, Z010.

\bibitem{CandelleroGilch}
Elisabetta Candellero and Lorenz Gilch.
\newblock Phase transitions for random walk asymptotics on free products of
  groups.
\newblock {\em Random Structures and Algorithms}, 40:150--181, 2012.

\bibitem{Cartwright1}
Donald Cartwright.
\newblock Some examples of random walks on free products of discrete groups.
\newblock {\em Annali di Matematica Pura ed Applicata}, 151:1--15, 1988.

\bibitem{Cartwright2}
Donald Cartwright.
\newblock On the asymptotic behaviour of convolution powers of probabilities on
  discrete groups.
\newblock {\em Monatshefte f\"ur Mathematik}, 107:287--290, 1989.

\bibitem{Coornaert}
Michel Coornaert.
\newblock Mesures de {P}atterson-{S}ullivan sur le bord d'un espace
  hyperbolique au sens de {G}romov.
\newblock {\em Pacific Journal of Mathematics}, 159:241--270, 1993.

\bibitem{DrutuSapir}
Cornelia Dru\c{t}u and Mark Sapir.
\newblock Tree graded spaces and asymptotic cones of groups.
\newblock {\em Topology}, 44:959–1058, 2005.
\newblock with an {A}ppendix by {D}enis {O}sin and {M}ark {S}apir.

\bibitem{DussauleLLT1}
Matthieu Dussaule.
\newblock Local limit theorems in relatively hyperbolic groups {I} : rough
  estimates.
\newblock To appear, 2020.

\bibitem{GekhtmanDussaule}
Matthieu Dussaule and Ilya Gekhtman.
\newblock Entropy and drift for word metric on relatively hyperbolic groups.
\newblock arXiv:1811.10849, to appear in Groups, Geometry, and Dynamics, 2018.

\bibitem{DussauleGekhtman}
Matthieu Dussaule and Ilya Gekhtman.
\newblock Stability phenomena for {M}artin boundaries of relatively hyperbolic
  groups.
\newblock arXiv:1909.01577, 2019.

\bibitem{Farbthesis}
Benson Farb.
\newblock {\em Relatively Hyperbolic And Automatic Groups With Applications To
  Negatively Curved Manifolds}.
\newblock PhD thesis, Princeton University, 1994.

\bibitem{Farb}
Benson Farb.
\newblock Relatively hyperbolic groups.
\newblock {\em Geometric and Functional Analysis}, 8:810--840, 1998.

\bibitem{GGPY}
Ilya Gekhtman, Victor Gerasimov, Loenid Potyagailo, and Wenyuan Yang.
\newblock Martin boundary covers floyd boundary.
\newblock arXiv:1708.02133, 2017.

\bibitem{GerasimovPotyagailo2}
Victor Gerasimov and Leonid Potyagailo.
\newblock Quasi-isometries and {F}loyd boundaries of relatively hyperbolic
  groups.
\newblock {\em Journal of the European Mathematical Society}, 15:2115--2137,
  2013.

\bibitem{GerasimovPotyagailo}
Victor Gerasimov and Leonid Potyagailo.
\newblock Quasiconvexity in the relatively hyperbolic groups.
\newblock {\em Journal f\"{u}r die reine und angewandte Mathematik},
  710:95--135, 2016.

\bibitem{Gerl}
Peter Gerl.
\newblock A local central limit theorem on some groups.
\newblock In {\em The first Pannonian Symposium on Mathematical Statistics},
  pages 73--82. Lecture notes in Statistics, vol. 8, Springer, 1981.

\bibitem{GerlWoess}
Peter Gerl and Wolfgang Woess.
\newblock Local limits and harmonic functions for nonisotropic random walks on
  free groups.
\newblock {\em Probability Theory and Related Fields}, 71:341--355, 1986.

\bibitem{GhysHarpe}
\'Etienne Ghys and Pierre de~la Harpe.
\newblock {\em Sur les Groupes Hyperboliques d'apr\`es Mikhael Gromov}.
\newblock Birkhäuser, 1990.

\bibitem{Gouezelthesis}
S\'ebastien Gou\"ezel.
\newblock {\em Vitesse de d\'ecorr\'elation et th\'eor\`emes limites pour les
  applications non uniform\'ement dilatantes}.
\newblock PhD thesis, Universit\'e Paris 11, 2004.

\bibitem{Gouezel1}
S\'ebastien Gou\"ezel.
\newblock Local limit theorem for symmetric random walks in {G}romov-hyperbolic
  groups.
\newblock {\em Journal of the American Mathematical Society}, 27:893--928,
  2014.

\bibitem{GouezelLalley}
S\'ebastien Gou\"ezel and Steven Lalley.
\newblock Random walks on co-compact {F}uchsian groups.
\newblock {\em Annales Scientifiques de l'ENS}, 46:129--173, 2013.

\bibitem{Guivarch}
Yves Guivarc'h.
\newblock Sur la loi des grands nombres et le rayon spectral d’une marche
  al\'eatoire.
\newblock In {\em Conference on Random Walks}, pages 47--98. Ast\'erisque, vol.
  74, Soc. Math. France, 1980.

\bibitem{Hruska}
Geoffrey Hruska.
\newblock Relative hyperbolicity and relative quasiconvexity for countable
  groups.
\newblock {\em Algebra, Geometry and Topology}, 10:1807–1856, 2010.

\bibitem{IzumiNeshveyevOkayasu}
Mazaki Izumi, Sergey Neshveyev, and Rui Okayasu.
\newblock The ratio set of the harmonic measure of a random walk on a
  hyperbolic group.
\newblock {\em Israel Journal of Mathematics}, 163:285–316, 2008.

\bibitem{KellerLiverani}
Gerhard Keller and Carlangelo Liverani.
\newblock Stability of the spectrum for transfer operators.
\newblock {\em Annali della Scuola Normale Superiore di Pisa}, 28:141--152,
  1999.

\bibitem{Kesten}
Harry Kesten.
\newblock Full {B}anach mean values on countable groups.
\newblock {\em Mathematica Scandinavica}, 7:146–156, 1959.

\bibitem{Kuratowski}
Casimir Kuratowski.
\newblock Quelques probl\`emes concernant les espaces m\'etriques
  non-s\'eparables.
\newblock {\em Fundamenta Mathematicae}, 25:534--545, 1935.

\bibitem{Lalley}
Steven Lalley.
\newblock Finite range random walk on free groups and homogeneous trees.
\newblock {\em The Annals of Probability}, 21:2087--2130, 1993.

\bibitem{MaherTiozzo}
Joseph Maher and Giulio Tiozzo.
\newblock Random walks on weakly hyperbolic groups.
\newblock {\em Journal f\"{u}r die reine und angewandte Mathematik},
  742:187--239, 2018.

\bibitem{MatsuzakiYabukiJaerisch}
Katsuhiko Matsuzaki, Yasuhiro Yabuki, and Johannes Jaerisch.
\newblock Normalizer, divergence type and {P}atterson measure for discrete
  groups of the {G}romov hyperbolic space.
\newblock arXiv:1511.02664, 2015.

\bibitem{MauldinUrbanski}
Richard Mauldin and Mariusz Urba\'{n}ski.
\newblock Gibbs states on the symbolic space over an infinite alphabet.
\newblock {\em Israel Journal of Mathematics}, 125:93--130, 2001.

\bibitem{ParryPollicott}
William Parry and Mark Pollicott.
\newblock {\em Zeta functions and the periodic orbit structure of hyperbolic
  dynamics}.
\newblock Astérisque, Société Mathématique de France, 1990.

\bibitem{Sarig1}
Omri Sarig.
\newblock Thermodynamic formalism for countable {M}arkov shifts.
\newblock {\em Ergodic Theory and Dynamical Systems}, 19:1565--1593, 1999.

\bibitem{Sarig2}
Omri Sarig.
\newblock Characterization of existence of {G}ibbs measures for countable
  {M}arkov shifts.
\newblock {\em Proceedings of the American Mathematical Society},
  131:1751--1758, 2003.

\bibitem{Sisto2}
Alessandro Sisto.
\newblock Projections and relative hyperbolicity.
\newblock {\em L'Enseignement Math\'ematique}, 59:165–181, 2013.

\bibitem{Woess2}
Wolfgang Woess.
\newblock Nearest neighbour random walks on free products of discrete groups.
\newblock {\em Bollettino dell'Unione Matematica Italiana}, 5-B:961--982, 1986.

\bibitem{Woess}
Wolfgang Woess.
\newblock {\em Random Walks on Infinite Graphs and Groups}.
\newblock Cambridge Press University, 2000.

\bibitem{Yang2}
Wenyuan Yang.
\newblock Patterson-{S}ullivan measures and growth of relatively hyperbolic
  groups.
\newblock arXiv:1308.6326, 2013.

\end{thebibliography}

\end{document}